\numberwithin{equation}{section}
\theoremstyle{definition}
\newtheorem{main}{Theorem}
\newtheorem{thma}[main]{Theorem}
\newtheorem{cora}[main]{Corollary}
\newtheorem{theorem}{Theorem}[section]
\newtheorem{definition}[theorem]{Definition}
\newtheorem{lemma}[theorem]{Lemma}
\newtheorem{corollary}[theorem]{Corollary}
\newtheorem{proposition}[theorem]{Proposition}
\newtheorem{conjecture}[theorem]{Conjecture}
\newtheorem{claim}[theorem]{Claim}
\newtheorem{problem}[theorem]{Open Problem}
\newcommand{\A}{\mathcal A}
\newcommand{\B}{\mathcal B}
\newcommand{\C}{\mathcal C}
\newcommand{\M}{\mathcal M}
\newcommand{\ca}{\curvearrowright}
\newcommand{\Ll}{\mathcal L}
\newcommand{\Nn}{\mathcal N}
\renewcommand{\P}{\mathcal P}
\newcommand{\Q}{\mathcal Q}
\newcommand{\sK}{\mathscr K}
\newcommand{\sN}{\mathscr N}
\newcommand{\sP}{\mathscr P}
\newcommand{\sU}{\mathscr U}
\newcommand{\sZ}{\mathscr Z}
\newcommand{\sR}{\mathscr R}
\newcommand{\tQ}{\widetilde{\mathcal Q}}
\newcommand{\tM}{\widetilde{\mathcal M}}
\title[relative solidity of factors associated with relative hyperbolic groups]{Relative solidity results and their applications to computations of some II$_1$ factor invariants}
\author{J.\ F.\ Ariza Mejia, D.\ N.\ Amaraweera, I.\ Chifan, and K.\ Khan}
\begin{document}

\begin{abstract} In this paper we prove that whenever $G$ is hyperbolic relative to a family of exact, ressidually finite subgroups $\{H_1, \ldots, H_n\}$, the corresponding von Neumann algebra $\mathcal L(G)$ is solid relative to the family of subalgebras $\{\mathcal L(H_1),\ldots ,\mathcal L(H_n)\}$. Building on this result and combining it with findings from geometric group theory, we construct a continuum of icc property (T) relative hyperbolic groups that give rise to pairwise non virtually isomorphic	
factors, each of which has trivial one-sided fundamental semigroup.  
    
\end{abstract}

\maketitle

\section{Introduction}

In their fundamental work \cite{MvN43}, Murray and von Neumann associated in a natural way a von Neumann algebra, denoted by $\mathcal L(G)$, to every countable discrete group $G$. More precisely, $\mathcal L(G)$ is defined as the weak operator topology (WOT) closure of the complex group algebra $\mathbb C [G]$ acting by left convolution on the Hilbert space $\ell^2 (G)$ of square-summable functions on $G$. As noted in \cite{MvN43}, $\mathcal L(G)$ has a trivial center (i.e. is a factor) precisely when the group $G$ is icc. The structural theory of group von Neumann algebras has since been a central theme in operator algebras, driven by the following fundamental question: \emph{What structural properties of $G$ are retained by $\mathcal L(G)$?}

 An important aspect in the structural theory of $\mathcal{L}(G)$ is understanding their commuting von Neumann subalgebras. In this direction, Popa was able to prove in \cite{Pop83} that the (nonseparable) factor $\mathcal{L}(\mathbb{F})$ associated with the free group $\mathbb{F}$ with uncountably many generators is prime; i.e., it cannot be decomposed as $\mathcal{L}(\mathbb{F}) = \mathcal{P} \bar{\otimes} \mathcal{Q}$ for any diffuse factors $\mathcal{P}, \mathcal{Q}$. Using Voiculescu's free probability theory, Ge established the primeness of factors $\mathcal{L}(\mathbb{F}_n)$ associated with all free groups $\mathbb{F}_n$, where $2 \leq n \leq \infty$ \cite{Ge96}. Using C$^*$-algebraic techniques, Ozawa significantly generalized Ge's result by showing that the von Neumann algebra $\mathcal{L}(G)$ of any hyperbolic group $G$ is solid; i.e., for any diffuse von Neumann subalgebra $\mathcal{A} \subseteq \mathcal{L}(G)$, its relative commutant $\mathcal{A}' \cap \mathcal{L}(G)$ is amenable \cite{Oza03}. Popa was able to provide an alternative proof of Ozawa's solidity theorem using his deformation/rigidity theory \cite{Pop06b}. Using unbounded deformations, Peterson showed that $\mathcal{L}(G)$ is prime for any nonamenable group with positive first $L^2$-Betti number \cite{Pet06}. These results and the techniques involved were further refined and developed within the framework of deformation/rigidity theory by many mathematicians, leading to unprecedented progress on primeness/solidity results for large classes of groups \cite{Sin10, Vae10, CS11, CSU11, Ioa11, PV11, HV12, PV12, Ioa12, Bou12, BHR12, BV13, Vae13, Iso14, IM19, VV14, BC14, CKP14, Iso16, DHI19, CdSS17, Dri20}, just to enumerate a few.

In this paper, we study commuting subalgebras of $\mathcal L(G)$, when $G$ is a relative hyperbolic group. Specifically, we will make new progress on a relative solidity conjecture concerning these algebras, which can be traced to the work of Ozawa and has also been on the radar of several mathematicians. Using our relative solidity results, we will establish several W$^*$-rigidity results for these algebras, including the construction of a continuum of icc property (T) relative hyperbolic groups that give rise to pairwise non-virtually isomorphic factors, each of which has a trivial one-sided fundamental semigroup. In particular, this provides new progress on the Connes Rigidity Conjecture and other related problems, adding new examples to the recent developments \cite{CDHK20, CIOS22a, CIOS23b,CIOS24, CFQOT24}. 

\section{Main results}

The first goal of our paper is to investigate the following conjecture on relative solidity of von Neumann algebras associated with relative hyperbolic groups. 

\begin{conjecture}\label{relsolconj} Let $G$ be a group that is hyperbolic relative to a finite family of subgroups $\{H_1, \ldots, H_n\}$. Denote by $\mathcal M=\mathcal L(G)$ the von Neumann algebra corresponding to $G$. Let $p\in \mathcal M$ be a nonzero projection and let $\mathcal A \subseteq p\mathcal M p$ be a von Neumann subalgebra whose relative commutant $\mathcal A '\cap p\mathcal M p$ has no amenable direct summand.  Then one can find $1\leq i\leq n$ such that a corner of $\mathcal A$ intertwines into $\mathcal L(H_i)$ inside $\mathcal M$, in the sense of Popa; i.e., 
$\mathcal A\prec_{\mathcal M} \mathcal L(H_i)$ (see Section \ref{popaint}).  
    
\end{conjecture}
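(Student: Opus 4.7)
The plan is to deduce the conjecture from a Popa--Vaes style dichotomy for subalgebras of $\mathcal{M}$: for every von Neumann subalgebra $\mathcal{A}\subseteq p\mathcal{M}p$, either $\mathcal{A}\prec_{\mathcal{M}}\mathcal{L}(H_i)$ for some $i$, or the normalizer $\mathcal{N}_{p\mathcal{M}p}(\mathcal{A})''$ is amenable. Since $\mathcal{A}'\cap p\mathcal{M}p$ always sits inside this normalizer, the second alternative would force the relative commutant to be amenable as a subalgebra of an amenable algebra, contradicting the hypothesis that it has no amenable direct summand. Only the first alternative can then hold, which is exactly the intertwining claimed.

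To establish the dichotomy, the key input is Ozawa's theorem that a group $G$ hyperbolic relative to a family of exact subgroups $\{H_1,\ldots,H_n\}$ is bi-exact relative to this family. At the von Neumann algebraic level, bi-exactness provides a weak containment of the coarse bimodule $L^2(\mathcal{M})\otimes L^2(\mathcal{M})$, modulo its peripheral subbimodules, into $\bigoplus_{i=1}^n L^2(\mathcal{M})\otimes_{\mathcal{L}(H_i)} L^2(\mathcal{M})$. With this in hand, the plan is to argue by contradiction: if $\mathcal{A}\not\prec_{\mathcal{M}} \mathcal{L}(H_i)$ for every $i$, Popa's intertwining-by-bimodules criterion yields a net $(u_k)\subset\mathcal{U}(\mathcal{A})$ satisfying $\|E_{\mathcal{L}(H_i)}(xu_ky)\|_2 \to 0$ for all $x,y\in\mathcal{M}$ and all $i$. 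Feeding this net into the bi-exact bimodule, in the spirit of \cite{Oza03, Pop06b, PV11, PV12}, one constructs an $\mathcal{A}$-central state on $\mathcal{B}(L^2\mathcal{M})$ that restricts to the canonical trace on $p\mathcal{M}p$; Connes's amenability criterion then forces $\mathcal{N}_{p\mathcal{M}p}(\mathcal{A})''$ to be amenable.

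The main obstacle will be the construction of the $\mathcal{A}$-central state in the presence of $n$ simultaneous peripheral directions. In the purely hyperbolic case, only one ``boundary at infinity'' is present and Ozawa's boundary C$^*$-algebra (or Popa's malleable deformation) suffices. Here one must carefully amalgamate the $n$ peripheral contributions into a single ucp structure, using the net $(u_k)$ to asymptotically annihilate each $E_{\mathcal{L}(H_i)}$ at once. The exactness of the $H_i$'s is essential at this step, as it renders the relative Stone--\v{C}ech compactification of $G$ nuclear and enables the weak compactness arguments underlying the Ozawa--Popa framework. Residual finiteness of the $H_i$'s, by contrast, seems tailored to the subsequent W$^*$-rigidity applications rather than to the solidity statement itself.
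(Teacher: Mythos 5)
There is a genuine gap, and it sits exactly at the step you treat as the "key input." Relative bi-exactness of $G$ with respect to $\{H_1,\ldots,H_n\}$ does \emph{not} yield the dichotomy you state (either $\mathcal A\prec_{\mathcal M}\mathcal L(H_i)$ for some $i$, or $\mathcal N_{p\mathcal M p}(\mathcal A)''$ is amenable). If $\mathcal A\nprec_{\mathcal M}\mathcal L(H_i)$ for all $i$, the net $(u_k)$ from Popa's criterion only makes your state vanish on the ideal of compact operators \emph{relative to the family} $\{H_i\}$, not on all of $\mathbb K(L^2\mathcal M)$; consequently the Ozawa--Popa machinery produces a central state on basic-construction-type algebras $\langle \mathcal M, e_{\mathcal L(H_i)}\rangle$, i.e.\ it shows the relative commutant (or, with weak compactness, the normalizer) is amenable \emph{relative to} $\{\mathcal L(H_i)\}$, not amenable. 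Since the peripheral subgroups are arbitrary (typically non-amenable), relative amenability does not contradict the hypothesis that $\mathcal A'\cap p\mathcal M p$ has no amenable direct summand, so your contradiction never materializes; there is no way to apply Connes' hypertrace criterion on all of $\mathbb B(L^2\mathcal M)$ as you propose. (Also, the Popa--Vaes form of the dichotomy you invoke requires weak amenability of $G$ to get weakly compact embeddings and applies to amenable $\mathcal A$, neither of which is available here.) This is precisely why the statement is posed as a conjecture in the paper rather than a corollary of Ozawa's relative bi-exactness theorem.

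Your remark that residual finiteness of the $H_i$ "seems tailored to the W$^*$-rigidity applications" is therefore also off the mark: it is the essential extra hypothesis that lets the paper close the gap you left open. The paper's proof of Theorem \ref{relsol1} (via Theorem \ref{mrelsol1}) does not run the C$^*$-algebraic central-state argument at all. Instead it uses residual finiteness to perform deep Dehn fillings with the Cohen--Lyndon property, so that the normal closure of the chosen finite-index subgroups $N_i\lhd H_i$ decomposes as a free product of conjugates of the $N_i$; Oyakawa's weakly-$\ell^2$ arrays then give bi-exactness of $G$ relative to the much smaller family of finite sub-free-products of these conjugates. The quantitative version (Gaussian deformation $V_t^{\mathbf q}$ plus the spectral gap convergence of Theorem \ref{ballconv}, which is where the "no amenable direct summand" hypothesis and exactness/local reflexivity enter) localizes $\mathcal A$ in finite balls controlled by these free products, and the Ioana--Peterson--Popa free-length deformation on the free product, together with the transitivity lemma and a minimality argument, upgrades this to the intertwining $\mathcal A\prec_{\mathcal M}\mathcal L(H_i)$. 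If you want to salvage your approach, you would need an argument ruling out the "amenable relative to the peripherals without intertwining" branch, and no such argument is currently known without the additional structure that residual finiteness provides.
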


While at this time we cannot answer this positively in its full generality, we were, however, able to identify a fairly large class of relatively hyperbolic groups for which it holds true. Specifically, building on the prior methods in deformation/rigidity theory primarily from  \cite{IPP05,CS11,CSU11} and using Oyakawa's interesting recent work on group arrays \cite{Oya23} we are able to prove the following.

\begin{thma}\label{relsol1} Conjecture \ref{relsolconj} holds true whenever all peripheral subgroups $H_j$, $1 \leq j\leq n$ are exact and residually finite.
    \end{thma}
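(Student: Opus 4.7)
The plan is to combine three main ingredients: Oyakawa's recent construction of a \emph{group array} for relatively hyperbolic groups \cite{Oya23}; the fact, essentially due to Ozawa, that exactness lifts from the peripherals $H_i$ to the whole group $G$; and a Dehn filling procedure enabled by the residual finiteness of each $H_i$ (Dahmani--Guirardel--Osin). Together these should allow one to adapt the deformation/rigidity machinery developed in \cite{IPP05, CS11, CSU11} to the relatively hyperbolic setting.

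I would proceed in three steps. First, invoke Oyakawa's array on $G$ adapted to $\{H_i\}$, which encodes the relative hyperbolicity as a system of quasi-cocycle-like maps that are trivial on each peripheral $H_i$ and properly unbounded transversally. Via a Peterson--Sinclair type dilation, this array produces an $s$-malleable deformation $(\alpha_t)_t$ on an enveloping tracial algebra $\tM \supseteq \mathcal M = \mathcal L(G)$ that fixes each $\mathcal L(H_i)$ pointwise but genuinely moves the rest of $\mathcal M$. Second, use exactness of the $H_i$ (hence of $G$) to supply the C$^*$-algebraic boundary bimodule needed to run a biexactness-type argument in the spirit of \cite{Oza03, Pop06b}, converting a failure of intertwining into an asymptotic centralization property with respect to $(\alpha_t)$. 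Third, residual finiteness of the peripherals provides Dehn filling approximations compatible with both the array and the deformation, ensuring the compactness and transversality hypotheses needed to make the dilation work on $\tM$.

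Combining these ingredients, one aims to establish the following dichotomy: for any diffuse $\mathcal A \subseteq p\mathcal M p$, either $\mathcal A \prec_{\mathcal M} \mathcal L(H_i)$ for some $i$, or the deformation $(\alpha_t)$ converges uniformly on the unit ball of $\mathcal A$. In the latter case, a standard spectral gap/transfer of rigidity argument as in \cite{Pop06b, CSU11} would produce an amenable direct summand in $\mathcal A' \cap p\mathcal M p$, contradicting the hypothesis of Conjecture \ref{relsolconj} and forcing the intertwining conclusion. The main technical obstacle will be the compatibility required in the first two steps: one must verify that Oyakawa's array interacts correctly with both the exact boundary structure and the Dehn filling approximations, so that the resulting enveloping algebra $\tM$ satisfies Popa's malleability/transversality axioms in the presence of the peripheral subalgebras. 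Once this setup is in place, the remainder of the argument should follow along the now-standard lines of \cite{CS11, CSU11}.
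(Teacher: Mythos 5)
Your outline captures only the first half of the paper's argument, and the half it captures has the logic of the spectral gap step reversed. In the actual proof (Theorem \ref{mrelsol1}), the hypothesis that $\mathcal A'\cap p\mathcal M p$ has no amenable direct summand is the \emph{input} to Theorem \ref{ballconv}: it forces the Gaussian deformation $V_t^{\mathbf q}$ associated with the array to converge uniformly on $(\mathcal A)_1$, and this uniform convergence is then the engine that produces intertwining. Your proposed dichotomy, in which uniform convergence of the deformation would ``produce an amenable direct summand in $\mathcal A'\cap p\mathcal M p$'' and thus a contradiction, runs the implication the wrong way; uniform convergence on $\mathcal A$ says nothing against the commutant being nonamenable, and no contradiction arises at that point.

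The more serious gap is what the intertwining obtained from uniform convergence actually targets. By Proposition \ref{prop2.2}, the finite-radius balls of Oyakawa's array are contained in sets of the form $H_1^{a_1}\cdots H_k^{a_k}F$, i.e.\ products of boundedly many \emph{conjugates} of peripherals, not finitely many cosets of a single $H_i$; so the direct argument you sketch only yields intertwining into algebras generated by such products, which is strictly weaker than $\mathcal A\prec\mathcal L(H_i)$. This is precisely where residual finiteness enters, and not in the vague ``compatibility'' role you assign it: one passes to deep Dehn filling kernels $N_i\lhd H_i$ of finite index so that, by the Cohen--Lyndon property, $\llangle\cup_i N_i\rrangle=\ast_{i,t}N_i^t$ and the array balls are contained in $(\ast_{i,t\in F_i}N_i^t)\,K_c$ with $F_i,K_c$ finite (Corollary \ref{array3}). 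The resulting intertwining lands in a crossed product by a free product of finitely many conjugates of the $N_i$'s, and a second, independent deformation is then required: the Ioana--Peterson--Popa free-product length deformation on $\mathcal L(\ast_{i,t}N_i^t)$, combined with a minimality argument on the number of free factors and Theorems \ref{innerfreedef} and \ref{transint}, reduces the target to a single $N_s^j\leq H_s^j$ and hence to $\mathcal L(H_s^j)$. Your proposal contains no mechanism for this reduction, and without it the argument stops well short of the conclusion of Conjecture \ref{relsolconj}. (Exactness, for its part, is used to guarantee local reflexivity of the reduced C$^*$-algebra so that Theorem \ref{ballconv} from \cite{CS11} applies, which is close to, but more specific than, the bi-exactness role you describe.)
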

We note that, using the almost malnormality of the peripheral subgroups $H_i$ of a relative hyperbolic group $G$ \cite{DGO17}, the intertwining provided by Theorem \ref{relsol1} further yields a more complete description of the ``position'' of all commuting von Neumann subalgebras in $\mathcal{L}(G)$.

\begin{thma}\label{relsol2} Let $G$ be a group that is hyperbolic relative to a finite family of subgroups $\{H_1,\ldots, H_n\}$. Assume in addition $H_j$ is exact and residually finite for all $1\leq j \leq n$. Denote by $\mathcal M=\mathcal L(G)$ the group von Neumann algebra corresponding to $G$. Let $p\in \mathcal M$ be a nonzero projection and let $\mathcal A \subseteq p\mathcal M p$ be a diffuse von Neumann subalgebra. Then one can find a maximal projection $z_0\in \mathscr Z(\mathcal A'\cap  p \mathcal M p)$ such that \begin{enumerate}
\item the von Neumann algebra $(\mathcal A'\cap p \mathcal M p)z_0$ is amenable, and 
\item for every projection $q \in (\mathcal A'\cap p\mathcal M p)(1-z_0) $ one can find $1\leq i \leq n$, projections $a\in \mathcal A$ and $r \in q(\mathcal A'\cap p\mathcal M p)q$  with $ar\neq 0$ and a unitary $u\in \mathcal M$ such that  $u ar(\mathcal A\vee (\mathcal A'\cap p\mathcal M p)) ar u^*\subseteq \mathcal L(H_i)$.  
 \end{enumerate}   \end{thma}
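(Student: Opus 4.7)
The plan is to combine Theorem~\ref{relsol1} with the almost malnormality of the peripheral subgroups $H_i$ in $G$, which is established in \cite{DGO17}.

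First I would construct $z_0$ and verify (1) through a standard maximality argument: applying Zorn's lemma inside the abelian von Neumann algebra $\mathscr{Z}(\mathcal{A}'\cap p\mathcal{M}p)$, let $z_0$ be the supremum of all central projections $z$ such that $(\mathcal{A}'\cap p\mathcal{M}p)z$ is amenable. Since amenability passes to directed unions, $(\mathcal{A}'\cap p\mathcal{M}p)z_0$ is itself amenable, and by maximality every nonzero subprojection $q$ of $1-z_0$ cuts $\mathcal{A}'\cap p\mathcal{M}p$ down to an algebra with no amenable direct summand.

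To prove (2), fix a nonzero projection $q\in(\mathcal{A}'\cap p\mathcal{M}p)(1-z_0)$. Since $q$ commutes with $\mathcal{A}$, the cut-down $\mathcal{A}q\subseteq q\mathcal{M}q$ is diffuse and its relative commutant in $q\mathcal{M}q$ contains $q(\mathcal{A}'\cap p\mathcal{M}p)q$, which has no amenable direct summand by the previous paragraph. Theorem~\ref{relsol1} then yields an index $1\leq i\leq n$ with $\mathcal{A}q \prec_{\mathcal{M}} \mathcal{L}(H_i)$.

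The core task is to upgrade this abstract intertwining to a unitary conjugation of a corner of $\mathcal{A}\vee(\mathcal{A}'\cap p\mathcal{M}p)$ into $\mathcal{L}(H_i)$. The key input is that $H_i$ is almost malnormal in $G$ by \cite{DGO17}, meaning $|gH_ig^{-1}\cap H_i|<\infty$ for every $g\in G\setminus H_i$. Via the intertwining-through-normalizers technology developed in \cite{IPP05} and refined in \cite{CS11,CSU11}, such almost malnormality ensures that whenever a diffuse subalgebra of $\mathcal{M}$ intertwines into $\mathcal{L}(H_i)$, the algebra generated by it together with its normalizer does as well. Since $\mathcal{A}q\vee q(\mathcal{A}'\cap p\mathcal{M}p)q \subseteq \mathcal{N}_{q\mathcal{M}q}(\mathcal{A}q)''$, this yields $\mathcal{A}q\vee q(\mathcal{A}'\cap p\mathcal{M}p)q \prec_{\mathcal{M}} \mathcal{L}(H_i)$. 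Unpacking the definition of intertwining produces nonzero projections $a\in\mathcal{A}$ and $r\in q(\mathcal{A}'\cap p\mathcal{M}p)q$ with $ar\neq 0$, together with a partial isometry $v\in\mathcal{M}$ and a $\ast$-homomorphism $\theta$ satisfying $\theta(x)v=vx$ for all $x\in ar(\mathcal{A}\vee(\mathcal{A}'\cap p\mathcal{M}p))ar$; finally, almost malnormality (in tandem with a suitable cut-down) allows one to upgrade $v$ to a unitary $u\in\mathcal{M}$ with the property that $u\cdot ar(\mathcal{A}\vee(\mathcal{A}'\cap p\mathcal{M}p))ar\cdot u^*\subseteq \mathcal{L}(H_i)$.

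The principal obstacle is exactly the last step: converting the abstract intertwining by a partial isometry into an honest unitary conjugation demands matching source and range projection traces and absorbing the ``drift'' of the partial isometry relative to $\mathcal{L}(H_i)$. Almost malnormality is the correct tool, but leveraging it requires careful cut-downs to corners where the non-amenable part of $q(\mathcal{A}'\cap p\mathcal{M}p)q$ supplies the rigidity needed to make the dimension matching possible and to ensure the conjugating element can be chosen to be a genuine unitary of $\mathcal{M}$ rather than merely a partial isometry.
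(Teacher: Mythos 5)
Your overall architecture is the paper's: the maximality argument for $z_0$, the application of Theorem \ref{relsol1} in the corner $q\mathcal M q$ (your reduction that $(\mathcal A q)'\cap q\mathcal M q$ has no amenable summand because it unitally contains $q(\mathcal A'\cap p\mathcal M p)q$ is fine), and the use of almost malnormality of $H_i$ together with IPP-style quasinormalizer control. However, the heart of statement (2) -- producing the specific product projection $ar$ and the honest unitary $u$ -- is exactly the part you do not prove, and the route you sketch for it would not go through as described. First, intertwining the join: even granting that $\mathcal A q\vee q(\mathcal A'\cap p\mathcal M p)q\prec_{\mathcal M}\mathcal L(H_i)$, "unpacking the definition of intertwining" only gives a projection in that join and a partial isometry; it does not give a projection of the form $ar$ with $a\in\mathcal A$ and $r\in q(\mathcal A'\cap p\mathcal M p)q$, and it certainly does not give unitary conjugation. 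Second, your diagnosis of the obstacle is off target: there is no issue of "matching source and range projection traces," and the non-amenable part of the commutant plays no role at this stage (its job was already done in invoking Theorem \ref{relsol1}).

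The actual mechanism (Theorem \ref{relsol4} in the paper) starts from the intertwining of $\mathcal A$ alone: with $\Psi:r\mathcal A r\to\mathcal R\subseteq q'\mathcal L(H_i)q'$ and $\Psi(x)v=vx$, one has $v^*v\in(\mathcal A'\cap p\mathcal M p)r$ -- this is where the product-form projection comes from, $v^*v=ar$ -- and $vv^*\in\mathcal R'\cap q'\mathcal M q'$. Since $\mathcal R$ is diffuse and $H_i$ is almost malnormal, Theorem \ref{quasinormalizercontrol} forces $\mathcal R'\cap q'\mathcal M q'\subseteq q'\mathcal L(H_i)q'$, so $vv^*\in\mathcal L(H_i)$ and hence $v\,r\mathcal A r\,v^*\subseteq\mathcal L(H_i)$; the unitary is then obtained by the trivial extension $uv^*v=v$, with no rigidity or dimension-matching needed. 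Finally, conjugating, $u\,r\mathcal A r\,ar\,u^*$ is a diffuse subalgebra of $\mathcal L(H_i)$, and a second application of Theorem \ref{quasinormalizercontrol} (together with the compression formulas of Proposition \ref{quasinormcompress}) sweeps its quasinormalizer -- in particular $ar(\mathcal A\vee(\mathcal A'\cap p\mathcal M p))ar$ -- into $\mathcal L(H_i)$. Your proposal gestures at these ingredients but replaces the two decisive applications of the quasinormalizer-control theorem (locating $vv^*$ in $\mathcal L(H_i)$, and absorbing the commutant after conjugation) by an acknowledged "principal obstacle," so as written the proof of (2) has a genuine gap.
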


Furthermore, a similar statement can also be derived at the level of algebras generated by various normalizing elements. We state our next result recall that for any inclusion of tracial von Neumann algebras $\mathcal A\subseteq \mathcal M$
the semigroup of one-sided quasi-normalizers $\mathcal {QN}^{1}_{\mathcal M}(\mathcal A)$ is defined as the set of all elements $x\in \M$ for which there exist $x_1,\ldots ,x_n\in \M$ such that $\mathcal A x\subseteq \sum_i x_i \mathcal A$. Then we define inductively a tower of one-sided normalizers von Neumann algebras for all ordinals $\beta,$ as follows: Let $\mathcal{A}^0 = \mathcal{A}.$ Now if $\beta$ is a successor ordinal then let $\mathcal{A}^{\beta}\subseteq \M $ be the von Neumann subalgebra generated by $\mathcal{QN}_{\mathcal M}^{1}(\mathcal{A}^{\beta - 1})$. Notice that $\mathcal{A}^{\beta -1} \subseteq \mathcal{A}^{\beta}.$ If $\beta$ is a limit ordinal then let $\mathcal{A}^{\beta} = \overline{\bigcup_{\gamma < \beta}\mathcal{A}^{\gamma}}^{\rm WOT}.$ Now let $\alpha$ be the first ordinal when the chain $(\mathcal{A}^{\beta})_{\beta}$ stabilizes, i.e. $\mathcal{A}^{\alpha}=\mathcal{A}^{\alpha +1}.$ Then denote by $\overline{\mathcal {QN}}_{\M}^{1}(\mathcal{A})= \mathcal{A}^{\alpha}.$

\begin{thma}\label{relsol3} Let $G$ be a group that is hyperbolic relative to a finite family of subgroups $\{H_1,\ldots, H_n\}$. Assume in addition $H_j$ is icc, exact and residually finite for all $1\leq j \leq n$. Denote by $\mathcal M=\mathcal L(G)$ the group von Neumann algebra corresponding to $G$. Let $p\in \mathcal M$ be a nonzero projection and let $\mathcal A \subseteq p\mathcal M p$ be a diffuse von Neumann subalgebra. Then one can find projections $z_0\in \mathscr Z(\mathcal A'\cap  p \mathcal M p)$, $ z_1, \ldots , z_n\in \mathscr Z(\overline{\mathcal{QN}}^1_{p\mathcal M p }(\mathcal A))$ with  $\sum^n_{i=0} z_i =p$ and unitaries $u_1,\ldots , u_n \in \mathcal M$ such that \begin{enumerate}
\item the von Neumann algebra $(\mathcal A'\cap p\mathcal M p)z_0$ is amenable, and 
\item  $u_i \overline{\mathcal {QN}}^{1}_{p\mathcal M p} (\mathcal A )z_i u_i^*\subseteq \mathcal L(H_i)$ for all $1\leq i \leq n$.  
 \end{enumerate}   \end{thma}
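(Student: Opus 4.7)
The plan is to deduce Theorem~\ref{relsol3} from Theorem~\ref{relsol2} by exploiting the almost malnormality of each peripheral subgroup $H_i$ in $G$, a structural feature of relative hyperbolicity recorded in \cite{DGO17}. After extracting the amenable projection $z_0$ and the local intertwining of $\mathcal A \vee (\mathcal A' \cap p\mathcal M p)$ into the subalgebras $\mathcal L(H_i)$ from Theorem~\ref{relsol2}, I would promote each such intertwining from $\mathcal A \vee (\mathcal A' \cap p\mathcal M p)$ up to the iterated one-sided quasi-normalizer $\overline{\mathcal{QN}}^1_{p\mathcal M p}(\mathcal A)$ via an absorption lemma, and then assemble the resulting local embeddings into a global partition of $p - z_0$ by central projections $z_1,\ldots,z_n$ using a maximality/exhaustion argument.

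The main technical ingredient will be the following absorption statement, a quasi-normalizer analogue of principles going back to \cite{IPP05, Pop06b} and refined in \cite{CS11, CSU11}: \emph{if $K \leq G$ is almost malnormal and icc, and $\mathcal D \subseteq r \mathcal L(K) r$ is any diffuse subalgebra, then $\overline{\mathcal{QN}}^1_{\mathcal L(G)}(\mathcal D) \subseteq r \mathcal L(K) r$.} I would prove this by transfinite induction on the ordinal $\beta$ indexing the tower $\mathcal D^\beta$: the successor step reduces, via the Fourier decomposition $\ell^2(G) = \bigoplus_{gK} \ell^2(gK)$, to showing that any $\mathcal D^\beta$-finite right submodule must be supported on only finitely many cosets, which in turn follows from almost malnormality (restricting to finite intersections $gKg^{-1} \cap K$) together with the icc assumption on $K$ (which rules out spurious finite-dimensional corners); the limit step is immediate from WOT-closure.

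Granted the absorption lemma, for each $1 \leq i \leq n$ I would set
\[
z_i = \bigvee\bigl\{q \in \mathscr Z\bigl(\overline{\mathcal{QN}}^1_{p\mathcal M p}(\mathcal A)\bigr) \;:\; \exists\, u \in \mathcal U(\mathcal M),\ u\, \overline{\mathcal{QN}}^1_{p\mathcal M p}(\mathcal A)\, q\, u^* \subseteq \mathcal L(H_i)\bigr\},
\]
and then invoke the standard local-to-global intertwining lemma of \cite{CS11} together with separability of $\mathcal M$ to realize this supremum by a single unitary $u_i$. To verify $\sum_{i=0}^n z_i = p$, I would argue by contradiction: if $z := p - \sum_{i=0}^n z_i \neq 0$, then since $z \leq 1 - z_0$, Theorem~\ref{relsol2} applied to the corner $z \mathcal A z \subseteq z \mathcal M z$ yields a nonzero subprojection $z' \leq z$ and a unitary conjugating $z'(\mathcal A \vee (\mathcal A' \cap p\mathcal M p)) z'$ into some $\mathcal L(H_i)$. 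The absorption lemma would then upgrade this to a conjugation of $z'\, \overline{\mathcal{QN}}^1_{p\mathcal M p}(\mathcal A)\, z'$ into $\mathcal L(H_i)$, contradicting the maximality of $z_i$.

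I expect the hardest step to be the absorption lemma at the \textbf{one-sided} quasi-normalizer level: while the two-sided version is by now classical, the one-sided case demands careful control of finite-index right-module structures, which I plan to manage by blending the almost malnormal coset decomposition with the one-sided intertwining framework of \cite{CSU11}. A secondary, largely bookkeeping issue is ensuring that the transfinite tower $(\mathcal A^\beta)_\beta$ stabilizes at a countable ordinal in a manner compatible with all the $z_i$; this should follow from standard separability considerations for $\mathcal M$.
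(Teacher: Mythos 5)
Your overall architecture is close in spirit to the paper's: the paper also combines the relative solidity intertwining with an almost-malnormality absorption statement and a maximality argument (Theorems \ref{relsol4} and \ref{commut2}), and your absorption lemma is essentially Theorem \ref{quasinormalizercontrol} with trivial coefficients, with the same IPP-style coset-support proof (the icc hypothesis is in fact not needed there; it is needed later, to make $\mathcal L(H_i)$ a factor for the patching that produces a single unitary $u_i$). Starting from Theorem \ref{relsol2} rather than re-running the deformation argument is also legitimate. However, there is a genuine gap in your promotion step.

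Theorem \ref{relsol2} hands you $u\, ar\,(\mathcal A\vee(\mathcal A'\cap p\mathcal M p))\,ar\, u^*\subseteq \mathcal L(H_i)$ with $a\in\mathcal A$ and, crucially, $r\in\mathcal A'\cap p\mathcal M p$. Your absorption lemma then controls the \emph{locally defined} tower $\overline{\mathcal{QN}}^1_{ar\mathcal M ar}\bigl(ar(\mathcal A\vee(\mathcal A'\cap p\mathcal M p))ar\bigr)$, whereas both the conclusion of Theorem \ref{relsol3} and your contradiction with the maximality of $z_i$ require control of the compression $ar\,\overline{\mathcal{QN}}^1_{p\mathcal M p}(\mathcal A)\,ar$ of the \emph{global} tower. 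Passing between the two needs a compression identity for the one-sided quasi-normalizer with respect to $ar$; the available formula (Proposition \ref{quasinormcompress}(1), \cite{FGS10}) covers only projections lying in the algebra whose $\mathcal{QN}^1$ is taken, and it is exactly the one-sidedness that blocks the relative-commutant case: if $\mathcal A x\subseteq\sum_i x_i\mathcal A$, one cannot move $r$ across $x$, so there is no reason that $(ar\mathcal A ar)(arxar)$ sits in finitely many right $ar\mathcal A ar$-modules. This is precisely why the paper inserts the two-sided quasi-normalizer algebra $\mathcal Q=\mathcal{QN}_{p\mathcal M p}(\mathcal A)''$, which contains $\mathcal A'\cap p\mathcal M p$ and hence the projection $ar$: Proposition \ref{quasinormcompress}(2) handles the compression of $\mathcal Q$ by $ar$, and the one-sided tower is then iterated over $\mathcal Q$, compressing only by projections of $\mathcal Q$ (Theorems \ref{relsol4} and \ref{commut2}, whose statements are accordingly phrased in terms of $\overline{\mathcal{QN}}^1(\mathcal Q)$). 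Unless you supply a substitute for this step, the sentence ``the absorption lemma would then upgrade this to a conjugation of $z'\overline{\mathcal{QN}}^1_{p\mathcal M p}(\mathcal A)z'$'' is unjustified. Two smaller omissions: you never argue that $z_i z_j=0$ for $i\neq j$, which is needed for $\sum_{i=0}^n z_i=p$ and rests on the finiteness of intersections of double cosets of distinct peripherals (cf. Proposition \ref{nonintrhg}, \cite[Lemma 2.5]{Vae10}, \cite{DGO17}); and the local-to-global step producing one unitary per $z_i$ should be the \cite{IPP05}-type patching using factoriality of $\mathcal L(H_i)$, which is where the icc assumption is actually consumed.
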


To this end we notice these statements are very similar in nature with several prior deep Kurosh-type results for amalgamanted free products of tracial von Neumann algebras from \cite{IPP05, Oza05, CH08}. 

\vskip 0.07in 
There are natural examples of relative hyperbolic groups  $G$ with exact, residually finite peripheral structure that emerged from geometric group theory to which our prior relative solidity results apply and are not covered by previous results in the literature. For instance, this is the case of all free-by-cyclic groups $G=\mathbb F_n\rtimes_\alpha \mathbb Z$ for any $n\geq 2$ and $\alpha\in {\rm Aut} (\mathbb F_n)$. Indeed, in \cite{Gho23} it was proved there is a maximal family $\{A_1,\ldots ,A_r\}$ of maximal subgroups of $ \mathbb F_n$ whose elements grow polynomially under $\alpha$  and  $G=\mathbb F_n\rtimes_\alpha \mathbb Z$ is hyperbolic relative to the collection $\{H_1,\ldots,H_r\}$, where $H_i$ is the suspensions of $A_i$ under $\alpha$ (see Subsection \ref{Examples}). Using the recent works \cite{DK23, GG23} this covers, more generally, many classes of hyperbolic-by-cyclic groups, free-by-free groups, etc (see Subsection \ref{Examples}).

\vskip 0.07in
These results provide new insight to a well-known open question concerning primeness of von Neumann algebras of relatively hyperbolic groups. 

A von Neumann algebra  $\mathcal M$ is called \emph{$s$-prime} if for every projection $p\in \mathcal M$ any two diffuse commuting von Neumann subalgebras $\mathcal A,\mathcal B\subset p\mathcal M p$ generate a von Neumann subalgebra $A \vee B \subset p \mathcal M p$ which has infinite index, in the sense of Popa-Pimsner \cite{PP86}, inside $p\mathcal M p$. In particular, s-prime von Neumann algebras cannot be decomposed as tensor product of diffuse von Neumann algebras. 

In \cite{CKP14} it was conjectured that $\mathcal L(G)$ is s-prime whenever $G$ is a non-elementary relative hyperbolic group.  This conjecture was confirmed in \cite[Theorem A]{CKP14}, for all $G$ which additionally have exact, residually finite peripheral structure. Here we obtain a further strengthening of this result, beyond the residually finite assumption.

\begin{thma}\label{primness2}  Let $G$ be a group hyperbolic relative to exact subgroups $\{ H_1, \ldots, H_n \}.$ 
Assume there are finite index subgroups $N_i \trianglelefteq H_i$ such that for every finite set $F \subset G$ the subgroup generated by the conjugates  $\{ N_i^h \,:\,1\leq i\leq n, h\in F\}$ has infinite index in $G$; e.g.\ this is the case when the normal closure $\llangle \cup_i N_i\rrangle^G$ has infinite index in $G$. \newline Then $\mathcal{L}(G)$ is s-prime.   \end{thma}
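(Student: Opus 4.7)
The plan is to argue by contradiction. I would suppose $\mathcal L(G)$ fails to be $s$-prime, thereby obtaining a nonzero projection $p \in \mathcal M := \mathcal L(G)$ and diffuse commuting von Neumann subalgebras $\mathcal A, \mathcal B \subseteq p\mathcal M p$ whose join $\mathcal N := \mathcal A \vee \mathcal B$ has finite Pimsner--Popa index in $p\mathcal M p$. Since $p\mathcal M p$ is a factor ($G$ being icc), finite index forces $\mathcal N' \cap p\mathcal M p$ to be finite dimensional; in particular $\mathcal A' \cap p\mathcal M p \supseteq \mathcal B$ is diffuse, and after cutting by a suitable central projection one may arrange that this relative commutant has no amenable direct summand.

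The core step will be to derive an intertwining $\mathcal A \prec_{\mathcal M} \mathcal L(H_i)$ for some $1 \leq i \leq n$. Theorem \ref{relsol1} would give exactly this conclusion if the peripheral subgroups were residually finite, but only exactness is available here. The finite index normal subgroups $N_j \trianglelefteq H_j$ are meant to substitute for residual finiteness: the quotient $\pi : G \twoheadrightarrow Q := G / \llangle \bigcup_j N_j \rrangle^G$ is infinite by hypothesis, while each image $\pi(H_j)$ is finite. I would feed this finite/infinite decomposition into the group array construction of \cite{Oya23} in place of the residually finite tower used in the proof of Theorem \ref{relsol1}, with the goal of recovering the dichotomy: either $\mathcal A' \cap p\mathcal M p$ has an amenable direct summand (already excluded), or $\mathcal A \prec_{\mathcal M} \mathcal L(H_i)$.

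Granting the intertwining, the next step invokes the almost malnormality of $H_i$ in $G$ from \cite{DGO17} and argues as in Theorem \ref{relsol2} to promote the intertwining to $\mathcal N = \mathcal A \vee \mathcal B \prec_{\mathcal M} \mathcal L(H_i)$. Combined with the finite Pimsner--Popa index of $\mathcal N$ in $p \mathcal M p$, a standard corner argument then yields $p \mathcal M p \prec_{\mathcal M} \mathcal L(H_i)$. Translating back to the group level, I would obtain a finite set $F \subset G$ such that $[G : \langle H_i^h : h \in F \rangle] < \infty$; since $[H_i : N_i]$ is finite, an enlargement of $F$ to some finite $F' \subset G$ would produce $[G : \langle N_j^h : 1 \leq j \leq n,\, h \in F' \rangle] < \infty$, contradicting the standing hypothesis.

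The hard part will be the intertwining step, namely providing a workable substitute for residual finiteness inside the group array argument. Concretely, one needs to verify that the data consisting of the finite quotients $H_j \twoheadrightarrow H_j/N_j$ together with the single infinite quotient $G \twoheadrightarrow Q$ carries enough orthogonality to construct a cocycle (or array) on $G$ that detects the peripheral structure strongly enough to preserve the amenable-vs-intertwining dichotomy. Once this variant of Theorem \ref{relsol1} is in hand, the remainder of the argument should follow the template of \cite[Theorem A]{CKP14} with only cosmetic changes.
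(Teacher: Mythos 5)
Your outline has two genuine gaps. First, the reduction at the start is unjustified: from $[p\mathcal M p:\mathcal A\vee\mathcal B]<\infty$ you cannot simply ``arrange, after cutting by a central projection, that $\mathcal A'\cap p\mathcal M p$ has no amenable direct summand.'' Nothing prevents both $\mathcal A$ and $\mathcal B$ from being diffuse and amenable (say abelian), in which case $\mathcal A'\cap p\mathcal M p$ may be amenable on every corner, cutting cannot create non-amenability, and every solidity-type dichotomy (your variant of Theorem \ref{relsol1}, or Theorem \ref{relsol2}) is vacuous. Excluding this case is a substantive step: the paper shows that amenability of $\mathcal A$ together with the finite-index commuting join forces $p\mathcal M p$ to be McDuff by \cite[Corollary 7.2]{CdSS17}, which contradicts the absence of diffuse central sequences for $\mathcal L(G)$ coming from the unbounded quasi-cocycle into the regular representation \cite[Theorem 3.1]{CSU16}. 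This non-amenability argument is missing from your proposal and cannot be replaced by the diffuseness of $\mathcal B$.

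Second, the step you defer as ``the hard part'' is both unproved and aimed at the wrong hypothesis, and in fact the paper shows no such variant of Theorem \ref{relsol1} is needed. Your substitute for residual finiteness uses that $Q=G/\llangle \cup_j N_j\rrangle^G$ is infinite, but that is only the ``e.g.'' case of the theorem; the actual hypothesis concerns subgroups generated by \emph{finitely many} conjugates of the $N_i$ and does not imply the normal closure has infinite index, so even a successful adaptation along your lines would prove a weaker statement. The paper instead uses the array of Proposition \ref{prop2.2}, which exists for any relatively hyperbolic group with exact peripherals (no residual finiteness, no Dehn filling): its balls satisfy $B_k\subseteq H_1^{a_1}\cdots H_m^{a_m}F$, hence $B_k\subseteq N_{F'}S$ with $S$ finite using only $[H_i:N_i]<\infty$. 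Applying the spectral-gap convergence (Theorem \ref{ballconv}) to the unitary groups of \emph{both} $\mathcal A$ and $\mathcal B$ and a short product estimate gives $\mathcal A\vee\mathcal B\prec_{\mathcal M}\mathcal L(N_{F_3})$ directly, with no detour through $\mathcal L(H_i)$, almost malnormality, or Theorem \ref{relsol2}; finite index then yields $\mathcal M\prec\mathcal L(N_{F_3})$ and Lemma \ref{intsubgroups} contradicts the hypothesis. Finally, your group-level endgame is also flawed as written: finite index of a subgroup generated by finitely many conjugates of $H_i$ does not pass, by enlarging the finite set, to a subgroup generated by conjugates of the $N_i$ (already in $\mathbb Z\ast\mathbb Z$ one has $\langle H_1,H_2\rangle=G$ while all subgroups generated by finitely many conjugates of $N_i=2\mathbb Z$ have infinite index); you would need the single-conjugate conclusion $[G:H_i^g]<\infty$ from Lemma \ref{intsubgroups}, which does descend to $N_i^g$.
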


This already recovers well-known results such as the s-primeness of $\mathcal L(G)$ for any free product $G = H_1\ast\cdots \ast H_n\ast K$, where $H_i$s are arbitrary nontrivial exact groups and $K$ is any infinite hyperbolic group. However, the theorem yields  many new examples of s-prime group von Neumann algebras, beyond the free product case, which are not covered by any of the prior results. Many of them are in fact quotients of such free products obtained via deep methods in geometric group theory. 

Indeed, using small cancellation and Rips construction techniques, \cite{Osi06, BO06}  one can show for any finitely generated group $H$, there exists a property (T) group $G>H$ which is hyperbolic relative to $H$ and satisfies $[G:\llangle H\rrangle^G]=\infty$; see Theorem \ref{rips1} for a precise statement.  Thus, working with suitable groups $H$, this can be used to construct a continuum of property (T) relative hyperbolic groups satisfying the assumptions in Theorem \ref{primness2}. 

Additionally, there are also many $C'(1/6)$-small cancellation groups over free products that satisfy the assumptions in Theorem \ref{primness2}; see for instance Proposition \ref{smallcancoverf} and Corollary \ref{sprimeexamples}.
\vskip 0.1in

In the second part of the paper we use the prior relative solidity results to prove several strong rigidity results for II$_1$ factors of relative hyperbolic groups and also compute some of their invariants, e.g. their Murray-von Neumann fundamental group, \cite{MvN43}.  

\begin{thma}\label{rigidemb} For every $1 \leq i\leq 2$ assume that $G_i$ is hyperbolic relative to subgroups $H_i<G_i$. Assume in addition that $H_i =A_i \wr B_i$ where $A_i$ are infinite abelian and $B_i = B_i^1\times B_i^2$ where $B_i^j$  ICC non-amenable biexact for all $1\leq i,j\leq 2$.

\noindent For a scalar $t>0$ assume there exists a $\ast$-embedding $\Theta: \mathcal L(G_1)\rightarrow \mathcal L(G_2)^t$. 

\noindent  Then $t\in\mathbb N$ and there is a unitary $w\in \mathcal L(G_2)^t=\mathcal L(G_2)\otimes\mathbb M_t(\mathbb C)$ such that ${\rm ad}(w)\circ \Theta : \mathcal L(H_1)\rightarrow \mathcal L(H_2)^t$ is a $\ast$-embedding. 

\noindent In addition,  one can find $t_1,\ldots, t_q\in\mathbb N$ with $t_1+\cdots+t_q=t$, for some $q\in\mathbb N$, a finite index subgroup $K<B_1$, an injective homomorphism $\delta_i:K\rightarrow B_2$, and a unitary representation $\rho_i:K\rightarrow\sU_{t_i}(\mathbb C)$, for every $1\leq i\leq q$ such that 
$$\text{${\rm ad}( w)\circ \Theta(u_g)=\emph{diag}(v_{\delta_1(g)}\otimes\rho_1(g),
\ldots, v_{\delta_q(g)}\otimes\rho_q(g))
$, for every $g\in K$.}$$

\noindent We also have the following (finite index) inclusion $${\rm ad}(w)\circ \Theta (\mathcal L(A_1^{(B_1)}))\subseteq \mathcal L(A_2^{(B_2)})\otimes \mathbb D_t(\mathbb C).$$

\noindent Moreover, if $\Theta$ is a virtual $\ast$-isomorphism then the inclusion $\delta_i(K)<B_2$ has finite index for all $i$. In particular, $B_1$ is commensurable with $B_2$. 
\end{thma}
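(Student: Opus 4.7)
The plan is to first locate $\Theta(\mathcal{L}(H_1))$ inside the peripheral subalgebra $\mathcal{L}(H_2)^t$ using the relative solidity results (Theorems \ref{relsol1}--\ref{relsol3}), and then to read off the explicit block-diagonal form on the top group $B_1$ via wreath-product rigidity. Set $\mathcal{M}=\mathcal{L}(G_2)^t$ and consider $\Theta(\mathcal{L}(B_1^1))\subseteq\mathcal{M}$. Its relative commutant contains the non-amenable factor $\Theta(\mathcal{L}(B_1^2))$, so any central projection of $\Theta(\mathcal{L}(B_1^1))'\cap\mathcal{M}$ cutting down to an amenable corner would force the factor $\Theta(\mathcal{L}(B_1^2))$ to admit an amenable quotient, hence must vanish. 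Therefore $\Theta(\mathcal{L}(B_1^1))'\cap\mathcal{M}$ has no amenable direct summand, and Theorem \ref{relsol1} (with $G_2$ hyperbolic relative to the single peripheral $H_2$) yields
\[
\Theta(\mathcal{L}(B_1^1))\prec_{\mathcal{M}}\mathcal{L}(H_2).
\]

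Next I would upgrade this partial intertwining to the whole of $\Theta(\mathcal{L}(H_1))$. Apply Theorem \ref{relsol3} with $\mathcal{A}=\Theta(\mathcal{L}(B_1^1))$: the central projection $z_1$ in its conclusion already intertwines the one-sided quasi-normalizing tower $\overline{\mathcal{QN}}^1_{\mathcal{M}}(\mathcal{A})\supseteq \Theta(\mathcal{L}(B_1))$ into $\mathcal{L}(H_2)^t$, while the same factoriality argument rules out the amenable summand. Repeating the intertwining analysis with the abelian base $\mathcal{A}=\Theta(\mathcal{L}(A_1^{(B_1)}))$, whose relative commutant in $\mathcal{M}$ contains the non-amenable factor $\Theta(\mathcal{L}(B_1))$, and then combining with the almost malnormality of $H_2$ in $G_2$ from \cite{DGO17}, one assembles the two pieces into a global intertwining $\Theta(\mathcal{L}(H_1))\prec_{\mathcal{M}}\mathcal{L}(H_2)$. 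A standard Popa maximality argument then promotes the associated partial isometry to a unitary $w\in\mathcal{M}$ with $\mathrm{ad}(w)\circ\Theta(\mathcal{L}(H_1))\subseteq \mathcal{L}(H_2)^t$.

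With the $*$-embedding $\mathrm{ad}(w)\circ\Theta:\mathcal{L}(A_1\wr B_1)\hookrightarrow\mathcal{L}(A_2\wr B_2)^t$ in hand, I would then invoke the wreath-product rigidity machinery of Popa, Ioana--Popa--Vaes, and its subsequent refinements for product-type top groups. The biexactness of each factor $B_i^j$ provides the solidity of $\mathcal{L}(B_i^j)$ needed to align the generalized Bernoulli base: after a further unitary conjugation absorbed into $w$, one obtains $\mathrm{ad}(w)\circ\Theta(\mathcal{L}(A_1^{(B_1)}))\subseteq\mathcal{L}(A_2^{(B_2)})\otimes\mathbb{D}_t(\mathbb{C})$. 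With the abelian base aligned, the normalizing unitaries $\mathrm{ad}(w)\circ\Theta(u_g)$ for $g$ in a finite-index subgroup $K\leq B_1$ (obtained by removing finitely many obstructions) must assume the claimed block-diagonal form via a standard orbit/height computation on the Bernoulli base; writing the blocks out produces positive integers $t_i$ with $t_1+\cdots+t_q=t$, whence $t\in\mathbb{N}$. In the virtual-isomorphism case, finite index is preserved through each step, forcing $[B_2:\delta_i(K)]<\infty$ for every $i$, so that $B_1$ is commensurable with $B_2$.

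The main obstacle will be the upgrading from $\Theta(\mathcal{L}(B_1^1))\prec\mathcal{L}(H_2)$ to the full $\Theta(\mathcal{L}(H_1))\prec\mathcal{L}(H_2)$: the abelian base $\mathcal{L}(A_1^{(B_1)})$ does not manifestly sit inside the one-sided quasi-normalizer of $\mathcal{L}(B_1^1)$, so one has to run the intertwining arguments twice---once on the top and once on the base---and stitch them together via the almost malnormality of $H_2$, keeping careful track of central projections. A close second is implementing wreath-product rigidity for an embedding rather than an isomorphism: the index $t$ must be managed consistently with the block decomposition, and the product structure $B_i=B_i^1\times B_i^2$ combined with biexactness is essential to prevent the image of one factor from smearing across the other.
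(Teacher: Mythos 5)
Your first step (applying Theorem \ref{relsol1}/\ref{commut2} to $\mathcal A=\Theta(\mathcal L(B_1^1))$, whose relative commutant contains the non-amenable factor $\Theta(\mathcal L(B_1^2))$ and hence has no amenable direct summand) matches the paper, and so does the conclusion that, after a unitary conjugation, $\Theta(\mathcal L(B_1))=\Theta(\mathcal L(B_1^1\times B_1^2))$ lands in $\mathcal L(H_2)^t$ via the quasi-normalizer tower. The gap is in how you propose to bring the Bernoulli base along. You assert that the relative commutant of $\mathcal A=\Theta(\mathcal L(A_1^{(B_1)}))$ in $\mathcal L(G_2)^t$ "contains the non-amenable factor $\Theta(\mathcal L(B_1))$"; this is false, since $B_1$ acts on $A_1^{(B_1)}$ by the Bernoulli shift and does not commute with it --- indeed $\mathcal L(A_1^{(B_1)})$ is maximal abelian in $\mathcal L(H_1)$, so inside $\Theta(\mathcal L(H_1))$ its commutant is itself. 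Consequently none of Theorems \ref{relsol1}--\ref{relsol3} can be applied to the abelian base (their hypothesis that the relative commutant have no amenable summand is exactly what fails), and "running the intertwining argument twice and stitching via almost malnormality" does not get off the ground; moreover, even two separate intertwinings $\prec$ would not automatically produce a single unitary $w$ conjugating all of $\Theta(\mathcal L(H_1))$ into $\mathcal L(H_2)^t$. The paper closes this gap with a genuinely different mechanism, Theorem \ref{peripheral absorption}: once the normalizing unitaries $\{\Theta(u_g):g\in B_1\}$ sit inside $\mathcal L(H_2)^t$ and generate an algebra not intertwinable into the coefficient algebra, the hyperbolically embedded position of $H_2<G_2$ together with the combinatorial independence result \cite[Theorem 2.9]{CDS23} and the quasi-normalizer control of Theorem \ref{quasinormalizercontrol} forces the normalized abelian algebra, and then its entire one-sided quasi-normalizer tower (hence $\Theta(\mathcal L(H_1))$), into $\mathcal L(H_2)^t$. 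This absorption theorem is the missing key idea in your plan.

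For the second half, your sketch ("solidity from biexactness aligns the base, then a standard orbit/height computation gives the block-diagonal form") is also thinner than what the argument requires. The paper's Theorem \ref{embedding2} runs the Popa--Vaes dichotomy \cite[Theorem 1.4]{PV12} (using weak amenability and biexactness of $B_2^1,B_2^2$ and commuting property (T) subgroups of $B_1$ to exclude the relatively amenable branch), aligns the Cartan via \cite[Lemmas 3.7, 3.8]{CIOS22a}, passes to a weakly mixing finite-index restriction via \cite[Lemma 3.10]{CIOS24}, and then applies \cite[Theorem 4.1]{CIOS22a} followed by Popa's cocycle superrigidity to untwist the resulting cocycle into $\mathrm{diag}(v_{\delta_i(g)}\otimes\rho_i(g))$; the height argument you allude to is used elsewhere in the paper (Theorem \ref{thmA}), not here. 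So while the overall two-stage strategy (peripheral localization, then wreath-product rigidity) is the right one, both the base-absorption step and the untwisting step need the specific tools above rather than the shortcuts proposed.
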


The prior von Neumann alegbraic rigidity results combined with various results in geometric group theory yield concrete computations of the one-sided fundamental semigroup of several classes of group II$_1$ factors, including many which have property (T). To properly introduce this invariant we recall first the notion of $t$-amplification $\mathcal M^t$, of a given II$_1$ factor $\mathcal M$  by an arbitrary positive scalar, $t>0$. When $1\geq t>0$, $\mathcal M^t$ is defined as the isomorphism class of the compression $p\mathcal M p$ for a projection $p\in \mathcal M$ of trace $\tau(p)=t$ and when $t>1$, it is defined as the isomorphism class of $p(\mathcal M \otimes \mathbb M_n(\mathbb C))p$ for an integer $n$ with $t/n\leq 1$ and a projection $p\in \mathcal M\otimes \mathbb M_n(\mathbb C)$  of trace $Tr\otimes \tau(p)=t/n$. One can see that, up to isomorphism, $\mathcal M^t$ does not depend on $n$ or $p$ but only on the value of $t$. Moreover classical results in the theory show that $(\mathcal M^t)^s \cong \mathcal M^{ts}$, for all $s,t>0$.

Following \cite{Ioa11,PV21}, the \emph{one-sided fundamental semigroup of $\mathcal M$} is denoted by $\mathcal F_s(\M)$ and consists of all scalars  $t\in \mathbb R_+$ for which there exists a $*$-embedding $\theta: \mathcal{M}\hookrightarrow\mathcal{M}^t$. It is not difficult to see that $\mathcal F_s(\mathcal{M})$ is closed under both addition and multiplication; in particular, we always have $\mathbb N \subseteq \mathcal F_s(\M)$. 

Moreover, $\mathcal F_s(\mathcal{M})$ naturally contains two important classical factor invariants: \begin{enumerate} \item [1)] the Murray-von Neumann fundamental group $\mathcal F(\mathcal M)$ defined as the group of all $t\in \mathbb R_+$ such that $\mathcal M^t\cong \mathcal M$ \cite{MvN43}, and 
\item [2)] VFR Jones invariant $\mathcal I(\M)$, i.e.\ the collection of all $r \geq 1$ for which there exists a II$_1$ subfactor $\mathcal N \subseteq \mathcal M$ which has Jones index  $[\mathcal M: \mathcal N]=r.$ 
\end{enumerate}



Several impressive results regarding the computation of the fundamental group, Jones invariant, and the one-sided fundamental semigroup for large classes of II$_1$ factors have been obtained over the last two decades via Popa's deformation/rigidity theory \cite{Pop01, IPP05, PV06, Vae08, Ioa11,PV21}. Remarkably, in a systematic study of embeddings between II$_1$ factors, Popa and Vaes showed in \cite{PV21} that any subset of $\mathbb N$ which is closed under addition and multiplication can be realized as the one-sided fundamental semigroup of a II$_1$ factor.
\vskip 0.05in
However, significantly less is known about these invariants when $\M=\mathcal L(G)$ when $G$ is an icc property (T) group. In this direction, Popa conjectured \cite{Pop06} that always $\mathcal F(\M)=\{1\}$. This has been verified only recently for a few classes of (cocycle) semidirect product groups \cite{CDHK20, CIOS22a, CIOS23b}.

In \cite{CIOS24} it was proposed the following strengthening of Popa's conjecture: 

\begin{conjecture}\label{trivialosfgconj} For every icc property (T) group $G$ we have $\mathcal F_s(\mathcal L(G))=\mathbb N$.\end{conjecture}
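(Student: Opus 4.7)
The plan is to attack Conjecture \ref{trivialosfgconj} in its full generality, for \emph{every} icc property (T) group $G$. I want to flag immediately that this conjecture strictly strengthens Popa's own still-open conjecture that $\mathcal F(\mathcal L(G))=\{1\}$ for all such $G$, since $\mathcal F(\mathcal L(G))\subseteq \mathcal F_s(\mathcal L(G))$ and the latter also forbids proper one-sided embeddings; therefore any serious strategy must be robust enough to settle both problems simultaneously, and for arbitrary $G$ rather than only for an algebraically structured subclass such as the wreath-product-peripheral groups handled by Theorem \ref{rigidemb} or the Rips--Osin groups appearing later in this paper.

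First I would fix an arbitrary icc property (T) group $G$, a scalar $t>0$, and a trace-preserving $*$-embedding $\theta:\mathcal L(G)\hookrightarrow \mathcal L(G)^t$, and reduce to showing $t\in\mathbb N$. Realizing $\mathcal L(G)^t=p(\mathcal L(G)\otimes \mathbb M_n(\mathbb C))p$ for large $n$ and a projection $p$ of trace $t/n$, and writing $\M=\mathcal L(G)$ and $\Nn=\theta(\M)\subseteq p(\M\otimes\mathbb M_n(\mathbb C))p$, the first technical goal is to prove that, after cutting by a suitable central projection, $\Nn$ must be intertwined in the sense of Popa into $\M\otimes 1$. Once such an intertwining is available, a standard Popa--Pimsner index bookkeeping, combined with the fact that $\M$ is a factor and that $\theta$ is trace-preserving, forces $t\in\mathbb N$ exactly as in the integer cases of the rigidity theorems in \cite{PV21,CIOS23b} and of Theorem \ref{rigidemb} above.

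The technical heart of the argument is to extract this intertwining using only the fact that $G$ is icc and has property (T). I would try to combine three ingredients: (i) the operator algebraic form of Kazhdan's property (T) of Connes--Jones, which gives a Hilbert-bimodule spectral gap for the inclusion $\Nn\subseteq p(\M\otimes \mathbb M_n(\mathbb C))p$; (ii) character rigidity for property (T) groups, in the spirit of Peterson and Bekka, to classify extremal traces on $G$ and thereby rigidify the restriction of $\theta$ to $\mathbb C[G]$; and (iii) a sufficiently \emph{universal} deformation of the amplified factor---such as a malleable/Gaussian deformation attached to $\ell^2(G)$, or a completely positive deformation constructed from the Kazhdan spectral projection---under which $\Nn$ must be essentially fixed because of (i), which in turn should yield the intertwining $\Nn\prec \M\otimes 1$.

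The hard part will be step (iii). The main obstacle, and the reason Conjecture \ref{trivialosfgconj} remains open, is precisely the absence of a universal deformation that works for the group factor of an \emph{arbitrary} icc property (T) group. Every known verification of triviality of $\mathcal F$ or $\mathcal F_s$ in the property (T) setting, including the main theorems of the present paper, is driven by an additional algebraic feature of $G$---a wreath product or (cocycle) semidirect product structure, a relatively hyperbolic structure, a co-induction construction, and so on---that is used precisely to manufacture the deformation needed in (iii). Resolving the full conjecture therefore seems to require either a new boundary or Gaussian construction intrinsic to \emph{every} property (T) II$_1$ factor, or a universal upgrade of character rigidity to all icc property (T) groups; both appear to be substantially beyond current technology, and any honest plan must confront this point head-on rather than reducing to a structured subclass.
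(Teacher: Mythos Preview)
You have correctly diagnosed the situation: Conjecture~\ref{trivialosfgconj} is genuinely a \emph{conjecture} in this paper, not a theorem, and the paper does \emph{not} prove it in full generality. The paper's contribution is precisely to verify the conjecture for new structured families---icc property (T) groups that are hyperbolic relative to wreath-product subgroups of a specific form (see Theorem~\ref{rigidemb}, Theorem~\ref{embedding4}, and Corollary~\ref{nonisom1})---thereby adding to the only previously known examples from \cite{CIOS24}. So there is no ``paper's own proof'' of the full statement to compare your proposal against.

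Your assessment of the obstacles is accurate and well-calibrated. The gap you identify in step (iii)---the absence of a universal deformation for the group factor of an arbitrary icc property (T) group---is exactly why the paper, like all prior work, restricts to groups with additional algebraic structure (here, relative hyperbolicity with controlled peripheral subgroups) that manufactures the needed deformation (arrays, Gaussian/IPP deformations). Your plan is not so much a proof proposal as a correct explanation of why no proof currently exists; in particular, the reduction to an intertwining $\Nn\prec \M\otimes 1$ is the right target, but nothing in your ingredients (i)--(iii) actually produces it for a general $G$, as you yourself acknowledge. If the intent was to prove the conjecture, the proposal has a genuine gap at (iii); if the intent was to explain why the conjecture is open, you have done so correctly.
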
 Thus far, this has been verified only for a continuum family of property (T) wreath-like product groups, \cite{CIOS24}. 

Pairing Theorem \ref{rigidemb} with results in group theory \cite{AMO,NS21} we obtain additional families of groups satisfying Conjecture \ref{trivialosfgconj}, which are very different in nature from the ones introduced in \cite{CIOS24}.

\begin{cora} There exist a continuum of icc, relative hyperbolic groups $G$ with property (T) which give rise to pairwise non-virtually isomorphic factors $\mathcal L(G)$, satisfying $\mathcal F_s(\mathcal L(G))=\mathbb N$.
    
\end{cora}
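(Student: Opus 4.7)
The strategy is to combine the embedding rigidity result of Theorem \ref{rigidemb} with an abundance of pairwise non-commensurable icc biexact residually finite ``building blocks'' coming from geometric group theory, and to house the resulting wreath-like peripheral structures inside property (T) relative hyperbolic groups via a Rips--Osin style construction. Concretely, I would first use the continuum of examples furnished by \cite{AMO, NS21} to select an uncountable family $\{B_\alpha^1, B_\alpha^2\}_{\alpha \in 2^{\mathbb N}}$ of torsion-free, icc, non-amenable, biexact, residually finite hyperbolic groups such that the direct products $B_\alpha := B_\alpha^1 \times B_\alpha^2$ are pairwise non-commensurable. Fixing any countably infinite abelian group $A$ and forming the wreath products $H_\alpha := A \wr B_\alpha$, each $H_\alpha$ is exact, residually finite (by Gruenberg's theorem, since the base $A$ is abelian), and icc (since $A$ is nontrivial and $B_\alpha$ is infinite).

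Next, using the property (T) variant of the Rips construction from \cite{Osi06, BO06} recorded in Theorem \ref{rips1}, I would embed each $H_\alpha$ as the unique peripheral subgroup of a property (T) group $G_\alpha$ hyperbolic relative to $\{H_\alpha\}$ with $[G_\alpha : \llangle H_\alpha \rrangle^{G_\alpha}] = \infty$. A mild refinement of the construction (for instance, arranging trivial finite radical by passing to a suitable quotient or via the standard small-cancellation variant, and noting that almost malnormality of $H_\alpha$ confines any finite conjugacy class to $H_\alpha$) makes $G_\alpha$ icc and non-elementary. Consequently $G_\alpha$ fits the hypotheses of Theorem \ref{rigidemb} with peripheral data $H_\alpha = A \wr B_\alpha$.

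Finally, I would invoke Theorem \ref{rigidemb} in two ways. For any $t \in \mathcal F_s(\mathcal L(G_\alpha))$ a $*$-embedding $\mathcal L(G_\alpha) \hookrightarrow \mathcal L(G_\alpha)^t$ exists by definition; applied with $G_1 = G_2 = G_\alpha$ the theorem forces $t \in \mathbb N$, and combined with the automatic inclusion $\mathbb N \subseteq \mathcal F_s(\mathcal L(G_\alpha))$ this yields $\mathcal F_s(\mathcal L(G_\alpha)) = \mathbb N$. If for some $\alpha \neq \beta$ the factors $\mathcal L(G_\alpha)$ and $\mathcal L(G_\beta)$ were virtually $*$-isomorphic, the ``moreover'' clause of Theorem \ref{rigidemb} would force $B_\alpha$ to be commensurable with $B_\beta$, contradicting the choice above. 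The hard part of this plan is the first step: extracting from the cited geometric sources a continuum $\{B_\alpha\}$ whose products are genuinely pairwise non-commensurable (rather than merely pairwise non-isomorphic) while simultaneously preserving biexactness, residual finiteness, and the icc property. Once this input is secured, both the Rips--Osin embedding and the final rigidity bookkeeping are essentially routine.
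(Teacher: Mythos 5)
Your overall architecture is the same as the paper's: build peripheral subgroups of wreath type $A\wr B$, place them inside icc property (T) groups hyperbolic relative to them, and then apply Theorem \ref{rigidemb} twice (once with $G_1=G_2$ to force $\mathcal F_s=\mathbb N$, once via its ``moreover'' clause to turn virtual isomorphism of factors into commensurability of the $B$'s). The bookkeeping in your last paragraph is fine. However, the step you yourself flag as ``the hard part'' is not merely hard as you set it up --- it is impossible. You ask for a continuum of pairwise non-commensurable \emph{hyperbolic} (torsion-free, icc, non-amenable, biexact, residually finite) groups $B_\alpha^j$. Hyperbolic groups are finitely presented, so there are only countably many of them up to isomorphism; no continuum of pairwise non-isomorphic, let alone non-commensurable, hyperbolic groups exists. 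Moreover, neither of the sources you cite supplies such building blocks: \cite{NS21} produces a continuum of finitely generated residually finite \emph{solvable} (hence amenable) groups, and \cite{AMO} is an embedding/SQ-universality result, not a source of biexact groups. Finally, note that the hypotheses actually used in the proof (Theorem \ref{embedding2}) are stronger than the headline statement of Theorem \ref{rigidemb}: one needs $B_2^j$ biexact \emph{and weakly amenable}, and $B_1$ must contain a product $T_1\times T_2$ of infinite icc property (T) groups, so the choice of $B$ cannot be arbitrary icc non-amenable biexact factors.

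The paper's way around this is precisely to inject the continuum through \emph{amenable} free factors rather than through the negatively curved part. It fixes one residually finite hyperbolic property (T) group $H$ (e.g.\ a uniform lattice in $Sp(n,1)$), takes the Nikolov--Segal continuum $\mathscr A_1$ of $4$-generated residually finite solvable groups, passes to a continuum of pairwise non-commensurable ones via Proposition \ref{contnoncommens}, and sets $B=(A\ast H)\times(\mathbb Z\ast H)$ for $A\in\mathscr A_0$. Each factor $A\ast H$ is biexact by \cite[Proposition 12]{Oza05} and weakly amenable by \cite{Ver22}, is icc, non-amenable and residually finite, and $B$ contains $H\times H$, the icc property (T) product subgroup required in Theorem \ref{embedding2}; non-commensurability of the $B$'s is inherited from that of the $A$'s. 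The ambient groups $G_B>\mathbb Z\wr B$ are then produced as icc property (T) groups hyperbolic relative to $\mathbb Z\wr B$ via \cite[Theorem 1.1]{AMO} (your use of Theorem \ref{rips1} instead is reasonable, but iccness of the output would still need an argument, which the paper avoids by quoting \cite{AMO}). The conclusion is then exactly Corollary \ref{nonisom1}, which combines Theorems \ref{embedding2} and \ref{embedding4}, Corollary \ref{embedding3}, and Popa's strong rigidity \cite[Theorem 0.1]{Pop03}. So your plan can be repaired, but only by replacing your first step with this free-product mechanism (or something equally non-obvious); as written, the proposal has a genuine gap.
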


In particular, this result  provides new insight into an open problem posed by P. de la Harpe \cite[Problem 5]{dlH95}, which asks: \emph{What are the possible values of the Jones invariant $\mathcal I(\mathcal L(G))$ when $G$ is an icc property (T) group?} Specifically, our results imply that only positive integers could occur. Incidentally, we would like to mention that there is no concrete computation available in the literature of $\mathcal I(\mathcal L(G))$ for any icc property (T) groups. The upcoming work \cite{CL25} makes the first progress in this direction.     


 \section{Preliminaries on groups}   

\subsection{Arrays on relatively hyperbolic groups}


Using a very interesting recent work of K.\ Oyakawa \cite{Oya23},  we highlight in this section a construction of  weakly-$\ell^2$ arrays on a fairly large class of relative hyperbolic groups. In some sense, these capture the peripheral length of group elements and will be heavily used to derive our main relative solidity results. 

We start by recalling the following result.  
\begin{proposition}\cite[Proposition~3.1]{Oya23}\label{array-from-the-group}
    Let $G$ be a finitely generated group that is hyperbolic relative to a collection $\{H_{\lambda}\}_{\lambda\in\Lambda}$ of subgroups of $G$. Then there exist a finite generating set $X$ of $G$, a 2-vertex-connected fine hyperbolic graph $\mathcal G$ on which $G$ acts without edge inversion and an array $\textbf{r}:G\rightarrow (\ell^2\mathcal G,\pi_{G})$ satisfying the following: 
    \begin{enumerate}
        \item The edge stabilizer is trivial for any edge $E(\mathcal G)$.
        \item For any $g\in G$, we get $d_{\tilde{\Gamma}}(1,g)\leq \frac{1}{2}\|\textbf{r}(g)\|^2_2$,
        where $\tilde{\Gamma}$ is the coned-off Cayley graph of $G$ with respect to $X$.
        \item The representation $(\ell^2\mathcal G,\pi_{G})$ is weakly contained in the left regular representation $(\ell^2G,\lambda_{G})$.
    \end{enumerate}
    
\end{proposition}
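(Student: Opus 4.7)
The plan is to realize $\mathcal{G}$ as a suitable $G$-equivariant modification of the graph underlying Bowditch's characterization of relative hyperbolicity, and then to define the array $\textbf{r}$ by counting edges along carefully chosen geodesics. First I would fix a finite generating set $X$ of $G$ and invoke Bowditch's theorem to produce a $\delta$-hyperbolic, fine graph $\mathcal{G}_{0}$ on which $G$ acts cocompactly on edges, with finite edge stabilizers and with vertex stabilizers being either trivial or (conjugate to) one of the peripheral subgroups $H_{\lambda}$. The coned-off Cayley graph $\tilde{\Gamma}$ with respect to $X$ is $G$-equivariantly quasi-isometric to $\mathcal{G}_{0}$. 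To arrange the trivial edge stabilizer condition (1), I would replace each edge orbit by $|G_{e}|$ parallel copies permuted freely by $G_{e}$; this preserves both fineness and hyperbolicity up to quasi-isometry. A standard $G$-equivariant thickening (e.g.\ doubling edges or attaching short loops at cut vertices) then upgrades the resulting graph to a $2$-vertex-connected one without disturbing the free action on edges or the coarse geometry.

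For the array, fix a $G$-equivariant family of geodesics $\{\gamma_{g}\}_{g\in G}$ from a basepoint $v_{0}$ to $gv_{0}$ in $\mathcal{G}$, and set
$$\textbf{r}(g)=\sum_{e\in E(\gamma_{g})} \mathbf{1}_{e} \in \ell^{2}\mathcal{G}.$$
Hyperbolicity of $\mathcal{G}$ together with fineness implies that the geodesic triangle with sides $\gamma_{g}$, $g\cdot\gamma_{h}$, $\gamma_{gh}$ is uniformly thin in the strong sense that only boundedly many edges lie in the symmetric difference of $E(\gamma_{gh})$ and $E(\gamma_{g})\cup gE(\gamma_{h})$. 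This yields the quasi-cocycle bound
$$\| \textbf{r}(gh)-\textbf{r}(g)-\pi_{G}(g)\textbf{r}(h) \|_{2} = O(1),$$
establishing the array condition. For (2), projecting $\gamma_{g}$ back into $\tilde{\Gamma}$ shows that each edge of $\mathcal{G}$ contributes at most one unit of $\tilde{\Gamma}$-length; the factor $1/2$ is absorbed by the parallel-edges construction which doubles the $\ell^2$-norm squared relative to the underlying path length. Condition (3) is then immediate: since edge stabilizers are trivial and $G$ has finitely many edge orbits, $\ell^{2}E(\mathcal{G})$ decomposes as a finite direct sum of copies of the regular representation $(\ell^{2}G,\lambda_{G})$, so $\pi_{G}$ embeds into a multiple of $\lambda_{G}$ and is \emph{a fortiori} weakly contained in it.

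The main obstacle is the quasi-cocycle estimate, which crucially exploits the combination of hyperbolicity with \emph{fineness} of $\mathcal{G}$ and is the only nontrivial input; fineness ensures that the thin-triangle inequalities hold with only finitely many edges meeting any given vertex within a bounded radius, so that the symmetric difference of edge sets along two fellow-travelling geodesics is uniformly bounded. The $2$-vertex-connectedness hypothesis is exactly what makes this stable under small perturbations: one can reroute geodesics around a removed edge through a short detour, which is needed to pick the $G$-equivariant geodesic family consistently. The finite-generation and relative hyperbolicity hypotheses enter only through Bowditch's theorem, and all other steps are a bookkeeping exercise once the right graph $\mathcal{G}$ is in place.
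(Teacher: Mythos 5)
This proposition is not proved in the paper at all: it is quoted verbatim from Oyakawa's work (\cite[Proposition~3.1]{Oya23}), so the only question is whether your sketch would actually constitute a proof, and it would not. The fatal gap is exactly at the step you call ``the only nontrivial input.'' Setting $\textbf{r}(g)=\sum_{e\in E(\gamma_g)}\mathbf{1}_e$ for a chosen geodesic $\gamma_g$ and claiming that hyperbolicity plus fineness force the symmetric difference of $E(\gamma_{gh})$ and $E(\gamma_g)\cup gE(\gamma_h)$ to be uniformly bounded is false. Thinness of triangles controls Hausdorff distance, not edge sets: already in the locally finite (hence fine) Cayley graph of an ordinary hyperbolic group, two geodesics with the same endpoints can $\delta$-fellow-travel while differing in linearly many edges, so $\|\textbf{r}(gh)-\pi_G(g)\textbf{r}(h)\|_2$ need not even be bounded in $h$ for fixed $g$, i.e.\ the array condition itself fails for this naive choice. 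This is precisely why the existence of (quasi-cocycle type) arrays on hyperbolic and relatively hyperbolic groups is a genuine theorem -- Mineyev-type bicombings, or Ozawa/Oyakawa-style averages over the \emph{finite} (by fineness) sets of all geodesics between two vertices, with a nontrivial lemma controlling how these sets change when an endpoint is moved -- rather than a bookkeeping exercise with a single equivariantly chosen geodesic per group element.

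Two secondary points. First, your fix for $2$-vertex-connectedness does not work: attaching loops at a cut vertex, or doubling edges, never removes a cut vertex (the complement of that vertex stays disconnected); making the graph $2$-connected while preserving fineness requires adding genuine detours and is part of what the cited construction arranges, and your stated rationale for why $2$-connectedness is needed is speculative. Second, your argument for $(2)$ -- ``the factor $1/2$ is absorbed by the parallel-edges construction which doubles the $\ell^2$-norm squared'' -- is not an argument: a geodesic uses one of the parallel copies, not both, so nothing is doubled; the inequality has to come from comparing $d_{\mathcal G}(v_0,gv_0)$ with $d_{\tilde\Gamma}(1,g)$ via the specific quasi-isometry constants of the construction. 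Your step $(3)$ is fine: with trivial edge stabilizers and countably many edge orbits, $\ell^2(E\mathcal G)$ is a direct sum of copies of $\ell^2 G$ and hence weakly contained in $\lambda_G$. But without a correct construction of $\textbf{r}$ satisfying the array estimate, the proposal does not prove the proposition.
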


We continue with a few, very minor modifications of the results from \cite{Oya23}, that will be needed to construct our arrays. The first result concerns the structure of finite radius balls associated with an array in relatively hyperbolic groups.

\begin{proposition}\label{prop2.2}
     Let $G$ be a hyperbolic group relative to a finite collection of subgroups $\{H_i\}_{i\in\Lambda}$. Then there exist a weakly-$\ell^2$ representation $\sigma: G\rightarrow \mathscr U(\mathcal H)$ and an array $\textbf{s} :G\rightarrow \mathcal H$ such that for every $N\geq 0$ there exists $C_N>0$ for which we have $B^{\textbf{s}}_N:=\{ g\in G: \|\textbf{s}(g)\|\leq N\}\subseteq H_1^{a_1}H_2^{a_2}\cdots H_k^{a_k}F$ for some finite subset $F\subseteq G$ where $|F|,|a_j|<C_N$.
\end{proposition}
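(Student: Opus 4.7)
The plan is to build the array $\textbf{s}$ by augmenting the Oyakawa array $\textbf{r}$ from Proposition \ref{array-from-the-group} with additional components that record the intrinsic length of peripheral contributions. Starting from $\textbf{r}: G\to\ell^2\mathcal G$ with $\pi_G$, the bound $d_{\tilde{\Gamma}}(1,g)\leq \tfrac{1}{2}\|\textbf{r}(g)\|_2^2$ forces any $g$ with $\|\textbf{r}(g)\|_2\leq N$ to lie in the ball of radius $N^2/2$ in the coned-off Cayley graph $\tilde{\Gamma}$. Reading off a $\tilde{\Gamma}$-geodesic gives a factorization $g = x_0 h_1 x_1 \cdots h_m x_m$ in which $x_j\in X\cup\{e\}$, each $h_j$ lies in some peripheral $H_{i_j}$, and both $m$ and $\sum_j |x_j|_X$ are bounded by an explicit function of $N$. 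At this stage, however, the word-lengths $|h_j|_{H_{i_j}}$ within the peripheral subgroups are still unbounded, so $\textbf{r}$ alone is not enough.

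To remedy this, I would construct, for each $1\leq i\leq n$, an auxiliary array $\textbf{t}_i : G\to\mathcal H_i$ into a weakly-$\ell^2$ representation $\pi_i$ (for instance a countable sum of copies of $\lambda_G$) whose restriction to $H_i$ is coarsely proper, meaning $\|\textbf{t}_i(h)\|_2\gtrsim|h|_{H_i}$ for $h\in H_i$. A natural way to obtain $\textbf{t}_i$ is either to induce a proper quasi-cocycle of $H_i$ into $\lambda_{H_i}^{\oplus\infty}$ up to $G$, or to apply the fine-hyperbolic-graph construction of \cite{Oya23} internally to a graph associated with $H_i$ and transport the output via the $G$-action. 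Then $\sigma := \pi_G \oplus \bigoplus_i \pi_i$ is still weakly contained in $\lambda_G$ (closure of this class under countable direct sums), and $\textbf{s} := \textbf{r} \oplus \bigoplus_i \textbf{t}_i$ is an array with an enlarged but uniform defect.

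For any $g\in B_N^{\textbf{s}}$ the first component $\|\textbf{r}(g)\|_2\leq N$ supplies the decomposition $g = x_0 h_1 x_1\cdots h_m x_m$, while $\|\textbf{t}_i(g)\|_2\leq N$ combined with the array identity applied along this factorization yields uniform bounds $|h_j|_{H_{i_j}} \leq a_j$ for some $a_j\leq C_N$. Absorbing the Cayley letters $x_j$ and any reshuffling constants into a fixed finite set $F$ of cardinality at most $C_N$ then produces the required containment $g \in H_{i_1}^{a_1}H_{i_2}^{a_2}\cdots H_{i_k}^{a_k}F$, with $k\leq m$ bounded as well.

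The main obstacle I anticipate is the construction of the auxiliary arrays $\textbf{t}_i$ that are simultaneously (a) coarsely proper on the peripheral subgroup $H_i$, (b) valued in a representation weakly contained in $\lambda_G$, and (c) satisfy the quasi-cocycle inequality on all of $G$ with a \emph{uniform} defect, not merely on $H_i$. Because the peripheral subgroups may have property (T) or otherwise fail Haagerup-type properties, genuine proper $1$-cocycles need not exist, and one has to exploit the flexibility of the quasi-cocycle framework together with the almost malnormality of $H_i$ in $G$ to propagate the defect control from $H_i$ to the ambient group. This is precisely the technical step where the ``minor modifications'' of Oyakawa's machinery, alluded to in the preamble of the section, are needed.
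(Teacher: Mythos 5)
The first half of your argument is exactly the paper's proof: from $\|\mathbf{r}(g)\|_2\leq N$ and part (2) of Proposition \ref{array-from-the-group} one gets $d_{X\cup H}(1,g)\leq \frac{1}{2}c_0N^2=:C_N$, hence a factorization $g=g_1h_1g_2h_2\cdots g_kh_kg_{k+1}$ along a geodesic of the relative Cayley graph with $k<C_N$ and $\sum_j|g_j|<C_N$. The gap is in what you do next, and it stems from a misreading of the conclusion. In $B^{\mathbf{s}}_N\subseteq H_1^{a_1}H_2^{a_2}\cdots H_k^{a_k}F$ the exponents $a_j$ are \emph{group elements} of word length $|a_j|<C_N$, and $H_i^{a_j}$ is a \emph{conjugate of the whole peripheral subgroup}; this is how the proposition is used in Corollary \ref{array2}, where the $a_i$ range over a finite set $R\subset G$ and the outcome is containment in $\llangle \mathcal N\rrangle F$. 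With that reading, your geodesic factorization already finishes the proof: writing $g=(p_1h_1p_1^{-1})(p_2h_2p_2^{-1})\cdots(p_kh_kp_k^{-1})\,p_{k+1}$ with $p_l=g_1\cdots g_l$ exhibits $g$ as a product of $k<C_N$ conjugates of peripheral subgroups by prefixes of length $<C_N$, times an element of the finite $X$-ball of radius $C_N$, which is the asserted containment. No bound on the peripheral lengths $|h_j|_{H_{i_j}}$ is claimed or needed; indeed the paper's auxiliary arrays $\mathbf{r}_i$ are explicitly taken ``not necessarily proper'' and are only carried along (extended to $G$ via \cite[Proposition 3.8]{Oya23} and summed into $\sigma$, which stays weakly contained in $\lambda_G$) for flexibility in later corollaries.

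The second half of your plan -- auxiliary arrays $\mathbf{t}_i$ into weakly-$\ell^2$ representations of $G$ that are coarsely proper on each $H_i$ -- is not a technical step to be overcome but an impossibility at this level of generality: Proposition \ref{prop2.2} makes no assumption whatsoever on the peripheral subgroups. For instance, if some $H_i\cong\mathbb F_2\times\mathbb F_2$ (exact and residually finite, so even allowed in the later theorems), then restricting such a $\mathbf{t}_i$ to $H_i$ would give a proper array on $\mathbb F_2\times\mathbb F_2$ with values in a representation weakly contained in its regular representation, which by \cite{CS11} would force $\mathcal L(\mathbb F_2\times\mathbb F_2)$ to be solid -- false, since it is a tensor product of diffuse factors. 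You correctly flag this construction as the ``main obstacle,'' but it cannot be fixed by malnormality or quasi-cocycle tricks; it is precisely why the statement is formulated in terms of boundedly many conjugates of the full peripheral subgroups rather than length bounds inside them, and why the intended proof never needs properness on the $H_i$.
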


\begin{proof}
 By Proposition~\ref{array-from-the-group}, there is an array $\mathbf{r}:G\rightarrow \ell^2\mathcal G$. Let $\mathbf{r}_i: H_i\rightarrow \ell^2H_i$ be arrays, not necessarily proper, for all $i\in\Lambda$. Then for every $i$, consider the array $\mathbf{r}_i:G\rightarrow \ell^2G$ corresponding to $\mathbf{r}_i$ of constant $K_i\geq 0$, as constructed in \cite[Proposition~3.8]{Oya23}. Consider the representation $\sigma:G\rightarrow \mathscr U(\mathcal H)$ with $ \mathcal H:=\ell^2(\mathcal G)\oplus(\oplus_{i\in \Lambda}(\ell^2G))$, defined by  $
\sigma=\pi_{G}\oplus(\oplus_{i\in\Lambda} \lambda_G)$. Observe that $\sigma$ is weakly contained in the left regular representation $\lambda_{G}.$ Furthermore, let $\mathbf{s}:G\rightarrow \mathcal H$ be the array defined by $\mathbf{s}(g)=\mathbf{r}(g)\oplus (\oplus_{i\in \Lambda} \mathbf{r}_i)(g
)$ for all $g\in G$.   
\vskip 0.06in
Next, for a fixed $N\geq 0$, we analyze the structure of the finite ball, $A_N:=\{ g\in \Gamma: \lVert \mathbf{s}(g) \rVert \leq N\}$. First, denote by  $B_N:=\{g\in G: \|\mathbf{r}(g)\|\leq N\}$. Since, $\|\mathbf{s}(g)\|^2_2=\|\mathbf{r}(g)\|^2_2+\sum_{i\in\Lambda}\|\mathbf{r}_i(g)\|^2_2$, for all $g\in A_N$ we have $g\in B_N$ and $\|\mathbf{r}_i(g)\|\leq N$ for all $i\in\Lambda$. Using part (2) of Proposition~\ref{array-from-the-group} in combination of the fact that the identity map from the coned-off Cayley graph to $(G, X\cup H)$ where $H = \bigcup_{i\in \Lambda}H_i\setminus \{1\}$ is bi-Lipschitz, we get that there exists a constant $c_0$ such that; $d_{X\cup H}(1,g)\leq \frac{1}{2}c_0N^2=:C_N$. 
 Let $g=g_1h_1g_2h_2\cdots g_kh_kg_{k+1}$ be the label of a geodesic path from the identity to $g$ in the Cayley graph $\Gamma(G,X\cup H)$, where $g_i$'s are words in alphabets from $X\cup X^{-1}$ and $h_l\in H_i \setminus 1 $ for some $i\in\Lambda$ for every $l\in \{1,2,\ldots,k\}$. We then get;
 \begin{align*}\label{bounded-word}
     |g_1|+1+|g_2|+1+\cdots +1+|g_{k+1}|=d_{X\cup H}(1,g)\leq C_N.
 \end{align*}
 This implies that $|g_i|< C_N$ and $k<C_N$. Then the conclusion follows.\end{proof}

We record the following immediate corollary of Proposition \ref{prop2.2}.
\begin{corollary}
    Let $G$ be a hyperbolic group relative to a finite collection of residually finite subgroups $\{H_i\}_{i\in\Lambda}$. Let $N:=\llangle \cup_{i\in\Lambda}H_i\rrangle$ be the normal closure of $\cup_i H_i$ in $G$. Then the quotient group $G/N$ is bi-exact. 
\end{corollary}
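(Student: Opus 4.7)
The plan is to exhibit on $G/N$ a proper array into a representation weakly contained in $\lambda_{G/N}$, and then invoke the Chifan--Sinclair characterization \cite{CS11} of bi-exactness via arrays, combined with exactness of $G/N$. The data from Proposition~\ref{prop2.2} supplies everything except for the descent from $G$ to the quotient.

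First, I would take $\mathbf{s}: G \to \mathcal H$ and $\sigma: G \to \mathscr U(\mathcal H)$ as in Proposition~\ref{prop2.2}, and record the following key finiteness fact. Since $N = \llangle \cup_i H_i\rrangle^G$ contains every conjugate $H_i^a$, we have $H_1^{a_1}\cdots H_k^{a_k} \subseteq N$ for any choice of $a_j$, so the inclusion $B^{\mathbf{s}}_M \subseteq H_1^{a_1}\cdots H_k^{a_k}F$ projects under $\pi: G \twoheadrightarrow G/N$ to $\pi(B^{\mathbf{s}}_M) \subseteq \pi(F)$, a set of cardinality at most $C_M$. That is, the $\mathbf{s}$-balls in $G$ have finite image in $G/N$, which is exactly ``properness modulo $N$'' of $\mathbf{s}$.

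To convert this properness-mod-$N$ into a genuine proper array on $G/N$, I would use $\sigma \prec \lambda_G$ together with the $N$-bimodule decomposition $\ell^2(G) \cong \ell^2(G/N)\otimes \ell^2(N)$ to extract a representation $\bar\sigma$ of $G/N$ weakly contained in $\lambda_{G/N}$ (via a suitable $N$-invariant or $N$-coinvariant construction). The array $\mathbf{s}$ is then pushed forward along a set-theoretic section $\tau: G/N \to G$ and symmetrized so that its array-defect constant remains uniformly bounded; properness of the resulting $\bar{\mathbf{s}}$ is exactly the content of the previous paragraph. The main obstacle is this descent step, since $\mathbf{s}$ is genuinely not $N$-equivariant; I expect the residual finiteness of the $H_i$ (already invoked in Proposition~\ref{prop2.2} to build the auxiliary arrays $\mathbf r_i$) to enter here by supplying finite-index normal towers in each $H_i$ along which $N$-invariance can be approximated, so that the symmetrization introduces only uniformly bounded error on each ball $B^{\mathbf{s}}_M$. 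Once $\bar{\mathbf{s}}$ is in place, bi-exactness of $G/N$ follows from the standard array-to-bi-exactness implication, using exactness of $G/N$ (which descends from exactness of $G$, itself provided by exactness of the peripheral subgroups).
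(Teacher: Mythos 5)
Your opening observation --- that $B^{\mathbf s}_M\subseteq H_1^{a_1}\cdots H_k^{a_k}F\subseteq NF$, so every $\mathbf s$-ball has finite image in $G/N$ --- is correct, but it only yields information about $G$ relative to the normal subgroup $N$ (this is precisely what Corollary \ref{array2} records as ``$G$ is biexact relative to $\llangle\mathcal N\rrangle$''); it produces no structure on the quotient group itself. The descent step, which you yourself identify as the main obstacle, is a genuine gap and not just a technicality. Weak containment does not push forward along quotients: neither $N$-invariant nor $N$-coinvariant vectors can extract from $\sigma\prec\lambda_G$ a representation of $G/N$ weakly contained in $\lambda_{G/N}$ (for infinite $N$ the space $\ell^2(G)$ has no nonzero $N$-invariant vectors, and a weakly-$\ell^2$ $G$-representation that factors through $G/N$ forces $N$ to be amenable). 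Composing $\mathbf s$ with a set-theoretic section $\tau:G/N\to G$ does not satisfy the array identity for any $G/N$-representation, and the hoped-for role of residual finiteness (``approximating $N$-equivariance along finite-index towers'') is not an argument; in this part of the paper residual finiteness is used only later, to produce Dehn fillings by finite-index subgroups $N_i\lhd H_i$, which is a different construction from killing the full $H_i$. Finally, your exactness step is incorrect as stated: exactness does not pass to quotients (every countable group is a quotient of a free group), and the corollary does not even assume the peripheral subgroups are exact, so ``exactness of $G/N$ descends from exactness of $G$'' cannot be invoked.

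The paper offers no written proof --- it records the statement as an immediate consequence of Proposition \ref{prop2.2} --- and the way to make it immediate is quite different from your route: killing all the peripheral subgroups of a relatively hyperbolic group yields a word-hyperbolic quotient (a standard consequence of the linear relative isoperimetric inequality, due to Osin), and word-hyperbolic groups are exact and bi-exact by Ozawa's theorem; the finiteness of the images of the balls $B^{\mathbf s}_M$ in $G/N$ is a reflection of this collapse of the peripheral geometry rather than a substitute for it. If you want an array-based proof, the array must be built intrinsically on $G/N$ (e.g.\ from its hyperbolic structure), not descended from $G$; a general principle of the form ``$G$ bi-exact relative to a normal subgroup $N$ implies $G/N$ bi-exact'' is not available, and your proposal implicitly relies on exactly such a principle.
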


 To properly introduce our next results, we introduce some terminology on peripheral filling of relative hyperbolic groups. Let $G$ be a hyperbolic group relative to a finite collection of subgroups $\mathcal P=\{H_i\}_{i\in I}$. A Dehn filling of $(G,\mathcal P)$ is the quotient $G(\mathcal N)= G/\llangle \mathcal N\rrangle$ where $\mathcal N = \bigcup_{i\in I} N_i$ and  $N_i\leqslant H_i$ is a finite index subgroup for all $i\in I$. Following \cite{Sun} we say that say that a Dehn filling $G(\mathcal N)$ of $(G,\mathcal P)$  satisfies the Cohen-Lyndon property if the normal closure can be decomposed as free product $\llangle \mathcal N\rrangle =\ast_{i\in I, t\in T_i} N_i^t$, for some left transversals $T_i \in LT(G, \llangle \mathcal N \rrangle H_i)$ where $i\in I$. Therefore, for such Dehn fillings we have the following short exact sequence: 
 \begin{equation}\label{ses2}1\rightarrow \ast_{i\in I, t\in T_i} N_i^t =\llangle \cup_i N_i\rrangle \hookrightarrow G\twoheadrightarrow G(\mathcal N)\rightarrow 1. \end{equation} 
 
Deep results in geometric group theory \cite{Osi06,DGO17,Sun} show that sufficiently deep Dehn fillings of any relative hyperbolic group satisfies the Cohen-Lyndon property and also $G(\mathcal N)$ is nonelementary hyperbolic. In particular, this is the case if each $H_i$ is ressidually finite and we pick $N_i\lhd H_i$ a normal subgroup of sufficiently large finite index.

 \begin{corollary}\label{array2}
   Let $G$ be a hyperbolic group relative to a finite collection of ressidualy finite subgroups $\{H_i\}_{i\in I}$. Let $N_i\lhd H_i$ be finite index subgroups for all $i\in I$ and let  $G(\mathcal N)$ be the corresponding Dehn filling.

   \noindent Then one can find a representation $\sigma: G \rightarrow\mathcal U(\mathcal H)$ and an array $\textbf{q}:G\rightarrow \mathcal H$ satisfying: 
   \begin{enumerate}
       \item $\sigma$ is weakly contained in $\lambda_G$, and
       \item For every $c>0$,  there is a finite set $F_c\subset G(\mathcal N)$ so that $B^{\textbf{q}}_c\subseteq \llangle \mathcal N\rrangle  F_c$. In particular, $G$ is biexact relative to $\llangle \mathcal N\rrangle$.  
   \end{enumerate}\end{corollary}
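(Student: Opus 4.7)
The plan is to take $\sigma$ and $\mathbf{q}$ to be exactly the weakly-$\ell^2$ representation and array already produced by Proposition~\ref{prop2.2}, and then upgrade the structural description of its level sets by exploiting the finite-index inclusion $N_i\trianglelefteq H_i$ together with the normality of $\llangle \mathcal{N}\rrangle$ in $G$.

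First, invoking Proposition~\ref{prop2.2} directly gives a representation $\sigma:G\to\mathcal{U}(\mathcal{H})$ weakly contained in $\lambda_G$ and an array $\mathbf{q}:G\to\mathcal{H}$ for which one can read from the proof the following explicit level-set description: for each $c>0$, every $g\in B^{\mathbf{q}}_c$ factors as $g=g_1h_1g_2h_2\cdots g_kh_kg_{k+1}$ with $k<C_c$, each $g_j$ a word in $X\cup X^{-1}$ of length $<C_c$, and each $h_\ell$ lying in some peripheral subgroup $H_{i_\ell}$. This already yields part~(1) of the conclusion.

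For part~(2), fix for each $i\in I$ a finite transversal $T_i\subseteq H_i$ of the finite-index normal subgroup $N_i$, so that each $h_\ell$ admits a decomposition $h_\ell=n_\ell f_\ell$ with $n_\ell\in N_{i_\ell}\subseteq\llangle \mathcal{N}\rrangle$ and $f_\ell\in T_{i_\ell}$. Substituting into the factorization above and using repeatedly that $\llangle \mathcal{N}\rrangle$ is normal in $G$ (so $xnx^{-1}\in\llangle \mathcal{N}\rrangle$ for all $n\in\llangle \mathcal{N}\rrangle$ and $x\in G$), one can push each $n_\ell$ to the left through the subsequent $g_j$'s and $f_{\ell'}$'s at the cost of replacing it by a $G$-conjugate, which still lies in $\llangle \mathcal{N}\rrangle$. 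Iterating yields $g=n\cdot w$ with $n\in\llangle \mathcal{N}\rrangle$ and $w=g_1f_1g_2f_2\cdots g_kf_kg_{k+1}$. Since the $g_j$'s lie in the finite ball of radius $C_c$ in the $X$-Cayley graph, the $f_\ell$'s lie in the finite transversals $T_i$, and $k<C_c$, there are only finitely many possibilities for $w$; calling the resulting finite subset $F_c\subseteq G$, this establishes $B^{\mathbf{q}}_c\subseteq\llangle \mathcal{N}\rrangle F_c$.

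The concluding ``in particular'' assertion that $G$ is biexact relative to $\llangle \mathcal{N}\rrangle$ is then a direct consequence of the array properties just established, via the standard array-to-biexactness machinery in Oyakawa's framework~\cite{Oya23}: a weakly-$\ell^2$ array on $G$ whose $c$-level sets are contained in $\llangle \mathcal{N}\rrangle\cdot(\text{finite})$ for every $c$ is precisely the data witnessing relative biexactness. I do not anticipate a serious obstacle, since Proposition~\ref{prop2.2} has already done the geometric work of detecting the peripheral structure, leaving only the elementary bookkeeping above, which leverages the finite-index hypothesis and the normality of $\llangle \mathcal{N}\rrangle$.
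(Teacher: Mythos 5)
Your proposal is correct and follows essentially the same route as the paper: take the array from Proposition \ref{prop2.2}, use the finite-index decomposition $H_i = N_i\,T_i$ with $T_i$ a finite transversal, and absorb the $N_i$-parts into $\llangle \mathcal{N}\rrangle$ by conjugation (the paper phrases this as ``conjugating successively the groups $N_i$'' starting from the containment $B^{\mathbf{q}}_c\subseteq H_1^{a_1}\cdots H_k^{a_k}F$, while you redo the same bookkeeping directly from the geodesic factorization $g=g_1h_1\cdots g_kh_kg_{k+1}$). Your treatment of the ``in particular'' biexactness claim via the standard array-to-relative-biexactness machinery matches the paper's (implicit) handling as well.
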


\begin{proof}Consider the array $\textbf{q} :G\rightarrow \mathcal H$ given by Proposition \ref{prop2.2}. Then by Proposition \ref{prop2.2} one can find finite sets $R,F\subset G$ such that $B^{\textbf{q}}_c\subseteq H^{a_1}_1
H^{a_2}_2 \cdots H^{a_k}_k F$ for some $a_i\in R$.

Since $N_i\lhd H_i$ has finite index for all $i$ then conjugating successively the groups $N_i$ and enlarging $R$ and $F$ if necessary other finite sets we further get that  $B^{\textbf{q}}_c\subseteq N^{b_1}_1
N^{b_2}_2 \cdots N^{b_k}_k F$ for some $b_i\in R$. Thus $B^{\textbf{q}}_c\subset \llangle \mathcal N \rrangle F$, as desired. \end{proof}
 
 \begin{corollary}\label{array3}
   Let $G$ be a hyperbolic group relative to a finite collection of ressidualy finite subgroups $\{H_i\}_{i\in I}$. Let $N_i\lhd H_i$ be finite index normal subgroup for all $1\leq i\leq n$ such that we have the following Dehn filling with Cohen-Lyndon property short exact sequence
   \begin{equation}\label{ses1}1\rightarrow \ast_{i\in I, t\in T_i} N_i^t =\llangle \mathcal N\rrangle \hookrightarrow G\rightarrow G(\mathcal N)\rightarrow 1,\end{equation}
   with $ G(\mathcal N)$  non-elementary hyperbolic and left transversals $T_i \in LT(G, \llangle \mathcal N \rrangle H_i)$.

   \noindent Then one can find a representation $\sigma: G \rightarrow\mathcal U(\mathcal H)$ and an array $\textbf{q}:G\rightarrow \mathcal H$ satisfying: 
   \begin{enumerate}
       \item $\sigma$ is weakly contained in $\lambda_G$.
       \item For every $c>0$, there exist $d_c>0$ and a finite subset $K_c\subset G(\mathcal N)$  such that whenever $g=nh\in G$ with $n\in \ast_{t\in T_i,i\in I} N_i^t$, $h\in G(\mathcal N) $ satisfies $\|\textbf{q}(g)\|<c$ then we must have $\ell_F(n) \leq d_c$ and $h\in K_c$, where $\ell_F(n):= min \{ \sum_i|F_i|\ : n\in \ast_{t\in F_i,i\in I} N_i^t,\ F_i\subseteq T_i\}$.\\
       In particular, for every $c>0$ there are finite sets $T^i_c\subset T_i$, $K_c\subset G(\mathcal N)$ so that $$B^{\textbf{q}}_c\subseteq (\ast_{i\in I, t\in T^i_c} N_i^t) K_c.$$
   \end{enumerate}\end{corollary}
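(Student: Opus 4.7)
My plan is to take $\sigma$ and $\textbf{q}$ to be the representation and array already produced by Proposition \ref{prop2.2}, so that property (1) comes for free. All of the work will go into sharpening the conclusion of Corollary \ref{array2} by tracking not merely that elements of $B^{\textbf{q}}_c$ land in $\llangle \mathcal N \rrangle F_c$, but also where they sit in the free product decomposition $\llangle \mathcal N \rrangle = \ast_{i \in I,\, t \in T_i} N_i^t$.

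Fix $c>0$. By Proposition \ref{prop2.2}, $B^{\textbf{q}}_c$ is a finite union of sets of the form $H_{i_1}^{a_1}\cdots H_{i_k}^{a_k} F$ with $|a_j|,k,|F|\leq C_c$, so it suffices to work one pattern at a time and union at the end. For $g=h_1^{a_1}\cdots h_k^{a_k} f$ I would split each $h_j=n_j s_j$ using finite coset representatives $S_i\subset H_i$ for $H_i/N_i$, then move the $s_j^{a_j}$'s to the right by repeated conjugation. With $u_j:=s_1^{a_1}\cdots s_j^{a_j}$ and $v_j:=a_j u_{j-1}^{-1}$ this yields
\[
g \;=\; n_1^{v_1}\cdot n_2^{v_2}\cdots n_k^{v_k}\cdot u_k f \;=:\; n\cdot \tilde h,
\]
with $n\in\llangle\mathcal N\rrangle$ and $\tilde h=u_k f\in G$. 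Projecting to $G(\mathcal N)$ kills the $n_j$'s, so $\bar g=\bar u_k\bar f$ is a product of boundedly many bounded-length elements, and therefore lives in a finite set $K_c\subset G(\mathcal N)$. If the statement is to be read with $h=\varsigma(\bar g)$ for a fixed section $\varsigma:G(\mathcal N)\to G$ on $K_c$, the correcting factor $\tilde h\cdot h^{-1}\in\llangle\mathcal N\rrangle$ ranges over a finite set and contributes only a bounded amount to what follows.

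The main difficulty is controlling $\ell_F(n)$, since conjugation by an arbitrary element of $\llangle\mathcal N\rrangle$ does not preserve the free factors $N_i^t$, and the elements $v_j\in G$ need not lie in any fixed transversal $T_{i_j}$. My way around this is to exploit that $|v_j|$ is bounded by a constant depending only on $c$: using the left coset decomposition $G=\bigsqcup_{t\in T_{i_j}} t\,\llangle\mathcal N\rrangle H_{i_j}$, write $v_j=t_j m_j h'_j$ with $t_j\in T_{i_j}$. Since the $v_j$'s lie in a bounded ball of $G$, so do (after fixing a choice of normal form) the $t_j, m_j, h'_j$, which in particular makes $\ell_F(m_j)$ uniformly bounded. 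A direct computation then gives $n_j^{v_j}=(m_j h'_j)^{-1}\,n_j^{t_j}\,(m_j h'_j)$, whose reduced expression in $\ast N_i^t$ uses at most $2\ell_F(m_j)+1$ distinct factors, drawn from a fixed finite subset of $T_{i_j}$. Summing over $j\leq k$ gives a uniform bound $\ell_F(n)\leq d_c$, and combining the bounded list of $t_j$'s with the finitely many factors used by the $m_j$'s produces the finite subsets $T_c^i\subset T_i$ witnessing $B^{\textbf{q}}_c\subseteq (\ast_{i\in I,\,t\in T_c^i} N_i^t)\, K_c$.
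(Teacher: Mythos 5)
Your proposal is correct and follows essentially the same route as the paper: the paper's (very brief) proof also takes $\mathbf{q}$ from Proposition \ref{prop2.2}, passes through the containment $B^{\mathbf{q}}_c\subseteq N_1^{b_1}\cdots N_k^{b_k}F$ from Corollary \ref{array2}, and then uses the coset decomposition of $G$ over $\llangle \mathcal N\rrangle H_i$ together with normality of $N_i\lhd H_i$ to absorb the bounded conjugators into $\llangle\mathcal N\rrangle$-conjugations of the free factors $N_i^t$ --- exactly the bookkeeping you spell out. Apart from harmless conjugation-convention slips (e.g.\ $v_j=a_ju_{j-1}^{-1}$ versus $u_{j-1}a_j$, and the placement of $h'_j$ in $n_j^{v_j}=(m_jh'_j)^{-1}n_j^{t_j}(m_jh'_j)$, where the factors of $m_j$ may also come from the various $T_i$ rather than $T_{i_j}$ alone), your finiteness argument bounding $\ell_F(n)$ and producing $T^i_c$ and $K_c$ is precisely the content the paper leaves implicit.
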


\begin{proof} Consider the array $\textbf{q} :G\rightarrow \mathcal H$ given by Proposition \ref{prop2.2}. Then by the proof of Corollary \ref{array2} one can find finite sets $R,F\subset G$ such that  $B^{\textbf{q}}_c\subseteq N^{b_1}_1
N^{b_2}_2 \cdots N^{b_k}_k F$ for some $b_i\in R$. Using the short exact sequence \eqref{ses1} and the natural action of $G(\mathcal N)$ on the left transversals $T_i$ in the normal closure $\llangle \mathcal N \rrangle=\ast_{i\in I, t\in T_i} N_i^t$ we get the desired conclusion. \end{proof}


Combining  Corollary \ref{array3} with \cite[Corollary 3]{Oza04}, and \cite[Proposition 2.7]{PV12} (see also \cite[Proposition 2.3]{CS11}) we obtain the following consequence to relative bi-exactness:  

\begin{corollary}Let $G$ be a hyperbolic group relative to a finite collection of exact, residually finite subgroups $\{H_i\}_{i\in I}$. Let $N_i\lhd H_i$ be a normal subgroup of sufficiently large finite index such that we have Dehn filling with Cohen-Lyndon property short exact sequence 
\begin{equation}\label{ses3}1\rightarrow \ast_{i\in I, t\in T_i} N_i^t =\llangle \mathcal N\rrangle \hookrightarrow G\twoheadrightarrow G(\mathcal N)\rightarrow 1,\end{equation}
   with $ G(\mathcal N)$ non-elementary hyperbolic and left transversals $T_i \in LT(G, \llangle \mathcal N \rrangle H_i)$.

 \noindent Then $G$ is biexact relative to the subgroups $\{ \ast_{i\in I, t\in F_i} N_i^t \,:\, F_i\subset T_i \text{ finite}\}$.

\end{corollary}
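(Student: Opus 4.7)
The plan is to combine the array construction of Corollary~\ref{array3} with the standard array-based criterion for relative biexactness assembled from \cite[Corollary~3]{Oza04}, \cite[Proposition~2.3]{CS11}, and \cite[Proposition~2.7]{PV12}. That criterion says, roughly, that an exact countable group $G$ is biexact relative to a family $\mathcal S$ of subgroups whenever there exist a representation $\sigma : G\to \mathscr U(\mathcal H)$ weakly contained in $\lambda_G$ together with an array $\mathbf{q}:G\to \mathcal H$ such that, for every $c>0$, the finite-radius ball $B^{\mathbf q}_c$ is covered by finitely many sets of the form $S\cdot K$ with $S\in \mathcal S$ and $K\subset G$ finite.

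First, I would verify that $G$ itself is exact. Since each $H_i$ is exact, each factor in the Cohen–Lyndon decomposition $\llangle \mathcal N\rrangle=\ast_{i\in I,\, t\in T_i} N_i^t$ is exact (being a subgroup of an exact group), and hence $\llangle \mathcal N\rrangle$ is exact as a countable free product of exact groups. On the other hand, $G(\mathcal N)$ is non-elementary hyperbolic, hence exact. As exactness passes to extensions, $G$ itself is exact. Next, I would invoke Corollary~\ref{array3} to produce the weakly-$\ell^2$ representation $\sigma:G\to \mathscr U(\mathcal H)$ and the array $\mathbf q:G\to \mathcal H$ satisfying
$$B^{\mathbf q}_c\subseteq \bigl(\ast_{i\in I,\, t\in T^i_c} N_i^t\bigr) K_c$$
for some finite sets $T^i_c\subset T_i$ and $K_c\subset G(\mathcal N)$. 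Choosing any set-theoretic lift $\widetilde K_c\subset G$ of $K_c$, one has $B^{\mathbf q}_c\subseteq S_c\cdot \widetilde K_c$, where $S_c:=\ast_{i\in I,\, t\in T^i_c} N_i^t$ belongs to the prescribed family $\{\ast_{i\in I,\, t\in F_i} N_i^t \,:\, F_i\subset T_i\ \text{finite}\}$. Thus the array $\mathbf q$ meets exactly the hypothesis of the criterion, and applying it yields the desired relative biexactness.

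The main obstacle I anticipate is confirming the precise form of the array-based criterion assembled from \cite{Oza04, CS11, PV12}: one must verify that Ozawa's small-at-infinity boundary construction can indeed be driven by a weakly-$\ell^2$ array whose balls are covered by single left translates of subgroups in the family (as opposed to more elaborate double-coset covers), and that replacing the lift $\widetilde K_c$ for $K_c\subset G(\mathcal N)$ does not disturb the coset cover since $\ast_{i\in I,\, t\in F_i} N_i^t \subseteq \llangle\mathcal N\rrangle=\ker(G\twoheadrightarrow G(\mathcal N))$. Both points amount to careful bookkeeping around the cited references, but they are where the argument will need the most care.
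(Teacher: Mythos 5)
Your proposal is correct and follows essentially the same route as the paper, which offers no separate argument beyond combining Corollary \ref{array3} with the array-to-relative-biexactness criterion of \cite[Corollary 3]{Oza04}, \cite[Proposition 2.7]{PV12} and \cite[Proposition 2.3]{CS11}. Your added checks (exactness of $G$ via the Cohen--Lyndon free product decomposition and extension stability, and lifting $K_c\subset G(\mathcal N)$ to a finite subset of $G$ so the ball is covered by finitely many left translates of a subgroup in the family) are exactly the bookkeeping the paper leaves implicit.
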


We end this section with the following more general result on relative biexactness for products of relative hyperbolic groups.

\begin{proposition}\label{Prop2.7 new} For every $1 \leq j\leq n,$ let $G_j$ be a group that is hyperbolic relative to a collection of exact, residually finite subgroups $\{H_k^j\}_{k\in I_j}.$ For every $j,k$ assume there exists a finite index normal subgroup $N^j_k\lhd H_k^j$ that yields a nontrivial Dehn filling of $G_j$. Denote by $ \mathcal G_j= \{ *_{k\in I_j, t\in F} (N_{k}^j)^t \; : \; F \subset T_j \; \text{finite} \; \}. $ Consider $G = G_1 \times G_2 \times \dots \times G_n.$ 

Then $G$ is biexact relative to $\{ G_{\hat{j}} \times L \; : \; L \in \mathcal{G}_j, 1\leq j\leq n \}.$ Here we have denoted by $G_{\hat{j}}$ the subgroup of elements $g\in G$ whose $j^{th}$ component is trivial.
    
\end{proposition}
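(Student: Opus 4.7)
\emph{Plan.} I will bootstrap the single-factor construction from the preceding corollary by assembling the factor-wise arrays into a product array on $G$ and then invoking the same array-to-relative-biexactness criterion. For every $1\leq j\leq n$, apply the preceding corollary to $G_j$ (whose hypotheses are satisfied by exactness and residual finiteness of the $H_k^j$ together with the given non-trivial Dehn fillings $N_k^j\lhd H_k^j$) to obtain a unitary representation $\sigma_j:G_j\to\mathscr{U}(\mathcal H_j)$ weakly contained in $\lambda_{G_j}$ and an array $\mathbf q_j:G_j\to\mathcal H_j$ such that for every $c>0$ there exist $L_{j,c}\in\mathcal G_j$ and a finite set $K_{j,c}\subset G_j$ with $B_c^{\mathbf q_j}\subseteq L_{j,c}K_{j,c}$.

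Using $G\cong G_j\times G_{\hat j}$, write $g=(g_j,g_{\hat j})$ and set $\mathcal K_j:=\mathcal H_j\otimes \ell^2(G_{\hat j})$, equipped with the representation $\tilde\sigma_j(g):=\sigma_j(g_j)\otimes\lambda_{G_{\hat j}}(g_{\hat j})$. Define $\mathbf s_j(g):=\mathbf q_j(g_j)\otimes\delta_{g_{\hat j}}$. A direct computation gives
\[
\mathbf s_j(xg)-\tilde\sigma_j(x)\mathbf s_j(g)=\bigl(\mathbf q_j(x_jg_j)-\sigma_j(x_j)\mathbf q_j(g_j)\bigr)\otimes\delta_{x_{\hat j}g_{\hat j}},
\]
so the array property of $\mathbf q_j$ is inherited by $\mathbf s_j$ for $\tilde\sigma_j$, and $\|\mathbf s_j(g)\|=\|\mathbf q_j(g_j)\|$. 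Since tensor products preserve weak containment and $\lambda_G\cong \lambda_{G_j}\otimes\lambda_{G_{\hat j}}$, each $\tilde\sigma_j$ is weakly contained in $\lambda_G$. Consequently the direct sum $\mathbf q:=\bigoplus_j\mathbf s_j$ on $\bigoplus_j\mathcal K_j$, paired with $\sigma:=\bigoplus_j\tilde\sigma_j$, is an array on $G$ for a representation weakly contained in $\lambda_G$, with $\|\mathbf q(g)\|^2=\sum_j\|\mathbf q_j(g_j)\|^2$.

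For the ball estimate, if $g\in B_c^{\mathbf q}$ then $g_j\in L_{j,c}K_{j,c}$ for every $j$; in particular, writing $g_1=\ell_1k_1$ with $\ell_1\in L_{1,c}$ and $k_1\in K_{1,c}$, we get $g=(\ell_1,g_2,\ldots,g_n)(k_1,e,\ldots,e)\in (G_{\hat 1}\times L_{1,c})F_c$ with $F_c:=\{(k_1,e,\ldots,e):k_1\in K_{1,c}\}$ finite, exhibiting $B_c^{\mathbf q}$ as a finite union of cosets of a member of the family $\{G_{\hat j}\times L:L\in\mathcal G_j,\,1\leq j\leq n\}$. Plugging this into the array-to-biexactness criterion used in the preceding corollary (cf.\ \cite[Corollary~3]{Oza04}, \cite[Proposition~2.7]{PV12}, \cite[Proposition~2.3]{CS11}) yields the required relative biexactness of $G$. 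The main obstacle handled here is keeping $\sigma\preceq\lambda_G$: the naive pullback $\sigma_j\circ\pi_j$ need not embed weakly into $\lambda_G$ when $G_{\hat j}$ is non-amenable, since the quasi-regular representation $\lambda_{G/G_{\hat j}}$ is weakly contained in $\lambda_G$ only in the amenable case; tensoring by $\lambda_{G_{\hat j}}$ in $\tilde\sigma_j$ is the simple and essential remedy that makes the entire direct-sum array admissible.
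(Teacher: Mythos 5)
Your proposal is correct and follows essentially the same route as the paper: the paper pulls back the arrays $\mathbf q_j$ from Corollary \ref{array3} along the coordinate projections and invokes the product-array construction of \cite[Proposition 1.9]{CS11} and \cite[Example 2.7 B)]{CSU11}, which is precisely the direct sum of the tensors $\mathbf q_j(g_j)\otimes\delta_{g_{\hat j}}$ (with representation $\sigma_j\otimes\lambda_{G_{\hat j}}\prec\lambda_G$) that you write out explicitly, followed by the same array-to-relative-biexactness criterion. Your explicit verification of the array property, the weak containment, and the ball estimate matches what those citations are meant to supply.
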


\begin{proof} Arguing as before, we only need to show the existence of a weakly$-\ell^2$ array $\mathbf{q}: G \rightarrow \mathcal O (\mathcal H)$ that is proper with respect to the given family.  Towards this let $\mathbf{q}_i: G_i \rightarrow \mathcal O (\mathcal H_i)$ be the weakly$-\ell^2$ array constructed in Corollary \ref{array3}. One can define an array $\hat{\mathbf{q}}_i: G \rightarrow \mathcal O (\mathcal H_i)$ by letting $\hat{\mathbf{q}}_i(g_1, g_2, \dots, g_n) := \mathbf{q}_i(g_i).$ Using  \cite[Proposition 1.9]{CS11} and \cite[Example 2.7 B)]{CSU11} the tensor product array $\mathbf{q}= \hat{\mathbf{q}}_1 \otimes \hat{\mathbf{q}}_2 \otimes \dots  \otimes \hat{\mathbf{q}}_n : G \rightarrow \mathcal O (\otimes_{i=1}^n \mathcal{H}_i)$ does the job.
\end{proof}

\subsection{Examples of relative hyperbolic groups with specific properties}\label{relhypex} In this subsection, we use the Rips construction due to Belegradek-Osin \cite{BO06} and other small cancellation techniques \cite{Osi06} to build families of relative hyperbolic groups where we can control the normal closure of their peripheral structure and their finite index subgroups. These will serve as examples for our primeness results and computation of factor invariants in the subsequent sections.

\begin{theorem}\label{rips1} 
Let $K$ be any non-elementary hyperbolic group, $Q$ be any finitely presented group, and $H$ be any finitely generated group. Then, one can find a group $H<G $, which is hyperbolic relative to $H$. Moreover, there exists an intermediate normal subgroup $H<N\triangleleft G$ that is a quotient of $K$ and satisfies $G/N \cong Q$. In particular, when $Q$ is infinite we have  $[G:\llangle H \rrangle^G]=\infty.$ In particular, when $K$ and $Q$ have property $(T)$ it follows that $G$ has property $(T)$ as well. 
\end{theorem}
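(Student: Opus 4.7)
The construction combines the Belegradek--Osin Rips-type machinery \cite{BO06} with Osin's small cancellation / peripheral Dehn filling theory for relatively hyperbolic groups \cite{Osi06}. A key auxiliary input is that every non-elementary hyperbolic group is SQ-universal; this allows the arbitrary finitely generated group $H$ to be embedded into some quotient of $K$. Specifically, using SQ-universality (obtained either via Olshanskii's classical small cancellation arguments or directly via the methods in \cite{Osi06}), one first chooses a quotient $\pi\colon K \twoheadrightarrow K'$ such that $H$ embeds as a subgroup of $K'$. Since any further quotient of $K'$ is automatically a quotient of $K$, we may work with $K'$ in place of $K$ throughout the next step.

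\textbf{Rips-type construction with peripheral control.} The heart of the proof is to apply the Belegradek--Osin construction \cite{BO06} to $K'$ and the finitely presented group $Q$. Their method, which refines the classical Rips construction so as to preserve (relative) hyperbolicity, produces a short exact sequence
\[
1 \longrightarrow N \longrightarrow G \longrightarrow Q \longrightarrow 1,
\]
in which $N$ is a finitely generated quotient of $K'$, and hence of $K$. The group $G$ arises as the quotient of an auxiliary free product built from $K'$ and $Q$ by a carefully selected family of small cancellation relators. One modifies this choice so that, in addition, (a) the copy of $H \subseteq K'$ remains embedded in $N \subseteq G$, and (b) $G$ is hyperbolic relative to $H$ with $H$ as the sole peripheral subgroup. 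This is achieved by imposing the Belegradek--Osin relators simultaneously with an Osin-type $C'(\lambda)$ small cancellation condition relative to $H$ in the sense of \cite{Osi06}. The relative small cancellation preserves both the embedding of $H$ and the desired relative hyperbolic structure, while collapsing sufficient free-product redundancy so that the image of $K'$ is precisely the kernel $N$ of the surjection onto $Q$.

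\textbf{Consequences and main obstacle.} When $Q$ is infinite, $[G:N] = |Q| = \infty$; since $H \subseteq N$, the normal closure $\llangle H\rrangle^G \subseteq N$ also has infinite index in $G$. When $K$ and $Q$ both have property (T), the quotient $N$ of $K$ inherits property (T) and $Q \cong G/N$ has it by hypothesis; since an extension of a property (T) group by a property (T) group again has property (T), the group $G$ has property (T) as well. The principal technical obstacle is the synchronization step in the previous paragraph: one must choose a single family of relators that simultaneously guarantees (i) the Belegradek--Osin extension structure with kernel a quotient of $K$, (ii) the embedding of $H$ into $G$ (no collapse), and (iii) relative hyperbolicity with $H$ as the unique peripheral subgroup. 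Reconciling the delicate small cancellation estimates of \cite{BO06} with the relative small cancellation framework of \cite{Osi06} constitutes the main work.
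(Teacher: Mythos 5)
Your toolkit (Osin's small cancellation over relatively hyperbolic groups, property (T) being closed under quotients and extensions, the index argument when $Q$ is infinite) is the right one, and your two "consequences" are argued exactly as in the paper. However, the heart of the theorem --- producing $G$ hyperbolic relative to $H$ with $H$ inside the kernel $N$, $N$ a quotient of $K$, and $G/N\cong Q$ --- is not actually established: you defer it to an unspecified ``synchronization'' of the Belegradek--Osin relators with a relative small cancellation condition, and you yourself label that reconciliation the main work. That is where the theorem lives, and as proposed the route has two concrete problems. First, the Belegradek--Osin construction produces a group that is hyperbolic relative to (a copy of) the quotient $Q$, not relative to a subgroup of the kernel; making a prescribed subgroup $H$ of the kernel peripheral is not a matter of adding conditions to their relators --- the relative structure with respect to $H$ must be in place in the group over which the small cancellation is performed, before quotienting. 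Second, your preliminary SQ-universality step gives $H\hookrightarrow K'$ for some quotient $K'$ of $K$, but the further small-cancellation quotient $K'\twoheadrightarrow N$ gives no control over subgroups of $K'$: injectivity in Osin's theorems is guaranteed only for peripheral subgroups, and $H$ is not peripheral in $K'$ unless you additionally arrange (e.g.\ via \cite{AMO}) that $K'$ be hyperbolic relative to $H$ --- a requirement absent from your argument. So neither the embedding $H<N$ nor the relative hyperbolicity of $G$ with respect to $H$ is justified as written.

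For comparison, the paper's proof needs neither SQ-universality nor the Belegradek--Osin estimates, and is a single application of \cite[Theorem 2.4]{Osi06}. Writing $Q=\langle X\mid R\rangle$ and letting $Y,Z$ be finite generating sets of $K,H$, one sets $G_0=H\ast F(X)\ast K$, which is hyperbolic relative to $H$ and contains $K$ as a suitable subgroup, and applies Osin's theorem to the finite set $T=\{xyx^{-1},\,x^{-1}yx,\,z \,:\, x\in X\cup Z,\ y\in Y,\ z\in Z\}\cup R$, obtaining $\eta:G_0\to G=G_0/\llangle tw_t\,|\,t\in T\rrangle^{G_0}$ with $w_t\in K$, $\eta|_H$ injective and $G$ hyperbolic relative to $\eta(H)$. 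The chosen relations $t=w_t^{-1}$ force $N:=\eta(K)$ to be normal, to contain $\eta(H)$, and to absorb the relators $R$; the identification $G/N\cong Q$ (and not merely a quotient of $Q$) then comes from the computation $\eta^{-1}(\eta(K))=\llangle Z,R,Y\rrangle^{G_0}=(\ast_{S\in F(X)}(H\ast K)^S)\rtimes\llangle R\rrangle^{F(X)}$ combined with $G_0=(\ast_{S\in F(X)}(H\ast K)^S)\rtimes F(X)$. This one-step construction is precisely the synchronization your outline leaves open, and it is what you would need to supply to complete the proof.
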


\begin{proof}
    Fix $Q=\langle X | R \rangle$ a finite presentation and let $F(X)$ be a the free group generated by $X$. Let $\phi : F(X) \twoheadrightarrow Q$ be the canonical epimorphism and assume that $\llangle R \rrangle^{F(X)}=ker(\phi).$ Assume that $Z$ and $Y$ are (finite) set of generators for $H$ and $K$, respectively. Let $G_0 = H * F(X) * K$ and notice it is hyperbolic relative to $H$. Next we consider the finite set $$T = \{ xyx^{-1},x^{-1}yx,z \, :\,x \in X \cup Z, y \in Y, z \in Z \} \cup R.$$ Note that $K<G_0$ is a suitable subgroup in the sense of \cite{Osi06}. Thus by \cite[Theorem 2.4]{Osi06} there exists $G$ a quotient of $G_0,$ and words $w_t \in K$, where $t\in T$, such that $\eta : G_0 \rightarrow G = G_0/\llangle tw_t \,|\, t \in T \rrangle^{G
_0}$ with $\eta |_H $ is injective and $G=\eta(G_0)$ is hyperbolic relative to $\eta(H).$ From the definition of T it follows that $N=\eta(K) \triangleleft G$ is a normal subgroup so that $\eta (H) < N.$ Moreover, we can see that $G/N \cong G_0 / \eta^{-1}(\eta (K)).$ \\
However, from the definition, one can see that $$\eta^{-1}(\eta(K)) \geqslant \llangle Z, R, Y \rrangle^{G_0} = ( *_{S \in F(X)}(H*K)^S ) \rtimes \llangle R \rrangle^{F(X)}.$$
Moreover, 
$$\eta^{-1}(\eta(K)) = \llangle tw_t | t \in T \rrangle^{G_0}K \leqslant \llangle Z, R, Y \rrangle^{G_0} $$
Thus, 
$$\eta^{-1}(\eta(K)) = \llangle Z, R, Y \rrangle^{G_0} =( *_{S \in F(X)}(H*K)^S ) \rtimes \llangle R \rrangle^{F(X)}. $$\\
Since $G_0 = (*_{S \in F(X)}(H*K)^S) \rtimes F(X)$ we conclude that
$$G/N \cong G_0 / \eta^{-1}(\eta(K)) \cong F(X) / \llangle R \rrangle^{F(X) }\cong Q.$$
As property (T) passes to quotients and extensions, it follows that when $K$, $Q$ have property (T), then $G$ has property (T) as well.\end{proof}

\vskip 0.03in
For further use, we briefly recall next the $C'(1/6)$-small cancellation theory over a free product of $n$ groups $F = A_1* \dots * A_n$, where each $A_i$ is referred to as a free factor. For every non-trivial element $w \in F$ can be uniquely expressed as a product $w=h_1h_2 \cdots h_k$ called the normal form, where $h_j \in \bigcup_{i=1}^n A_i$ for each $j \in \{ 1, \dots, k \}$, and no consecutive $h_j, h_{j+1}$ belong to the same free factor. Then, the length of the free product is given by $\lvert w \rvert =k$. The normal form of the $w$ is called weakly cyclically reduced if $\lvert w \rvert \leq 1$ or $h_1 \neq h_k^{-1}$. For every $u, v \in F$ with $u = h_1 h_2\cdots h_k$ and $v= g_1 g_2\cdots g_l$, the product $uv$ is referred to as weakly reduced if $h_k \neq g_1^{-1}$. Otherwise, $h_k$ and $g_1$ cancel in the product.

Let $R\subset F$ be a subset of $F$ where each element is represented by a weakly cyclically reduced normal form. Denote the normal closure of $R$ in $F$ by $\llangle R \rrangle$. Let $G$ be the corresponding quotient group
\begin{equation}
G= { F/\llangle R \rrangle.}
\end{equation}

$R$ is symmetrized if every weakly cyclically reduced $r\in R$, conjugate of $r$ and $r^{-1}$ are still in $R$.  A word $b \in F$ is called \emph{a piece} if there exist distinct relators $r_1, r_2 \in R$ in the weakly reduced form $r_1 = bu_1$ and $r_2 = bu_2$. Moreover, we say that $R$ satisfies the $C'(1/6)$ condition over $F$ if $R$ is symmetrized and if each piece $b\in F$ and every relator $r \in R$ with weakly reduced product $r=bu$ implies that $\lvert b \rvert < \frac{1}{6}\lvert r \rvert.$

In this case, we say that the quotient group $G$ is a $ C' (1/6)$–group over the free product $F$.  Using \cite[Corollary V.9.4]{LS77}  we further have that the canonical projection $\eta: F \rightarrow G$ is injective on each factor $A_i$, $1\leq i\leq n$. Moreover, from \cite{Pan99} it follows that $G$ is hyperbolic relative to the collection of the images $\{\eta(A_i)\}_i$,  $1\leq k\leq n$.

\begin{proposition}\label{smallcancoverf} Let $F= A_1\ast \cdots \ast A_n \ast \mathbb F_k$ with $k\geq 1$. Let $\pi: F \rightarrow \mathbb F_k$ be the group homomorphism defined as for every alternating word $w\in \mathbb F_k$, $\pi(w)$ is the word obtained from $w$ after eliminating all its letters from $A_1,\ldots, A_n$. Let $R\subset F$ be any subset satisfying the $C'(1/6)$ condition over $ F$ such that $[\mathbb F_k: \llangle \pi(R)\rrangle^{\mathbb F_k}]=\infty$. Then in the quotient group $G=F/\llangle R\rrangle$ we have that $[G: \llangle \cup_i \eta(A_i)\rrangle^G ]=\infty$. 
\end{proposition}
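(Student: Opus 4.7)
The statement is essentially a soft consequence of the isomorphism theorems together with the defining property of $\pi$, so the plan is to avoid any appeal to the $C'(1/6)$ geometry and proceed entirely through quotient computations.

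First I would translate the index statement into a quotient computation. Set $N := \llangle \bigcup_i \eta(A_i)\rrangle^G$ and consider the canonical map $\eta:F\twoheadrightarrow G$. By the correspondence theorem, $\eta^{-1}(N)$ is the normal subgroup of $F$ generated (as a normal subgroup) by $R\cup\bigcup_i A_i$; indeed, $\eta^{-1}(N)$ contains both $\llangle R\rrangle^F$ and each $A_i$, and conversely every element of $N$ lifts to a product of $F$-conjugates of elements in $\bigcup_i A_i$ modulo $\llangle R\rrangle^F$. Composing the two quotients I thus obtain
\[
G/N \;\cong\; F\big/\llangle R\cup\textstyle\bigcup_i A_i\rrangle^F.
\]

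Next I would exploit the homomorphism $\pi:F\twoheadrightarrow \mathbb F_k$, which is exactly the quotient of $F$ by $\llangle \bigcup_i A_i\rrangle^F$ (this is the content of the definition of $\pi$ in the statement). Applying the third isomorphism theorem to the tower $\llangle \bigcup_i A_i\rrangle^F\subseteq \llangle R\cup\bigcup_i A_i\rrangle^F\subseteq F$ yields
\[
F\big/\llangle R\cup\textstyle\bigcup_i A_i\rrangle^F \;\cong\; \mathbb F_k\big/\pi\bigl(\llangle R\rrangle^F\bigr).
\]
Since $\pi$ is surjective, it sends normal closures to normal closures, so $\pi(\llangle R\rrangle^F)=\llangle \pi(R)\rrangle^{\mathbb F_k}$. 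Combining the two displayed isomorphisms gives
\[
G/N \;\cong\; \mathbb F_k\big/\llangle \pi(R)\rrangle^{\mathbb F_k}.
\]

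Finally, the hypothesis $[\mathbb F_k:\llangle\pi(R)\rrangle^{\mathbb F_k}]=\infty$ is precisely the statement that the right-hand side is infinite, hence $[G:N]=\infty$, which is the desired conclusion. There is no serious obstacle here; the only points requiring a moment of care are verifying the two equalities of normal closures (surjectivity of $\pi$ for one, and lifting conjugates of $\eta(A_i)$ to conjugates of $A_i$ modulo $\llangle R\rrangle^F$ for the other). Notice that the $C'(1/6)$ assumption and the injectivity of $\eta$ on each $A_i$ are not used in the argument; they only ensure that the conclusion is nontrivial by guaranteeing $\eta(A_i)\cong A_i$ and that $G$ is a well-behaved relatively hyperbolic group in the intended applications.
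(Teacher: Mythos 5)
Your argument is correct and is precisely the elementary quotient computation the paper has in mind: its proof of this proposition is literally ``Follows from definitions,'' and your write-up simply makes those definitions explicit (identifying $G/\llangle\cup_i\eta(A_i)\rrangle^G$ with $\mathbb F_k/\llangle\pi(R)\rrangle^{\mathbb F_k}$ via the correspondence and third isomorphism theorems). Your closing remark is also accurate: the $C'(1/6)$ hypothesis plays no role in the index statement itself and only matters for the relative hyperbolicity and injectivity of $\eta$ on the factors used elsewhere in the paper.
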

    
\begin{proof} Follows from definitions. \end{proof}

\vskip 0.08in
We end this section with the following elementary result on commensurable groups which is needed in Section \ref{onesidedpropt}. Two groups $G,H$ are called \emph{commensurable} if they admit finite index subgroups $G_0\leqslant G$ and $H_0\leqslant H$ that are isomorphic, $G_0\cong H_0$. 

\begin{proposition}\label{contnoncommens} Let $\{G_i\}_{i\in I}$ be a continuum family of pairwise non-isomorphic finitely generated groups. Then one can find a continuum subcollection $\{G_j\}_{j\in J\subseteq I}$ consisting of pairwise non-commensurable groups.
    
\end{proposition}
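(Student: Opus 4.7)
The plan is to reduce the problem to a cardinality argument based on the fact that the commensurability class of any finitely generated group contains at most countably many isomorphism classes of finitely generated groups. Granting this, I would partition the family $\{G_i\}_{i\in I}$ into commensurability classes; each class meets $\{G_i\}_{i\in I}$ in at most countably many elements. Since a set of cardinality $2^{\aleph_0}$ cannot be written as a union of strictly fewer than $2^{\aleph_0}$ countable sets, there must be continuum many distinct classes, and choosing a representative $G_j$ from each class produces the desired continuum subcollection $\{G_j\}_{j\in J\subseteq I}$ of pairwise non-commensurable groups.

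To establish the key countability claim, I would first recall the standard fact that every finitely generated group $G$ has at most countably many finite-index subgroups: each such subgroup arises as a point stabilizer in a transitive action of $G$ on some finite set $\{1,\ldots,n\}$, parametrized by a homomorphism $G\to S_n$; since $G$ is finitely generated and $S_n$ is finite, the number of such homomorphisms is finite for each $n$, hence countable overall.

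Next, I would show that for any fixed finitely generated group $K$, there are at most countably many isomorphism classes of finitely generated groups $H$ containing an isomorphic copy of $K$ as a finite-index subgroup. Given $K\leq H$ with $[H:K]=n$, choose coset representatives $1=h_1,h_2,\ldots,h_n$. The group law on $H$ is entirely determined by $K$ together with the elements $h_ih_j$ and $h_ikh_i^{-1}$ for $i,j\in\{1,\ldots,n\}$ and $k\in K$; each such element lies in $H$ and can be uniquely expressed as $k'h_l$ for some $k'\in K$ and $l\in\{1,\ldots,n\}$. Since $K$ is countable, this amounts to a countable quantity of combinatorial data for each fixed $n$, and summing over $n$ keeps the total countable.

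Combining the two steps, any $H$ commensurable with $G$ must contain one of the countably many finite-index subgroups $K\leq G$ as a finite-index subgroup, and for each such $K$ there are only countably many candidates $H$ up to isomorphism; a countable union of countable sets is countable, which proves the claim. The main obstacle I expect is the careful combinatorial bookkeeping in the overgroup-counting step (where one must check that the multiplication table is indeed encoded by countably much data in the countable set $K\times\{1,\ldots,n\}$); the subsequent cardinality argument is then a routine application of $\aleph_0\cdot 2^{\aleph_0}=2^{\aleph_0}$.
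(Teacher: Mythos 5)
Your overall strategy is the natural one here (the paper in fact states this proposition without giving a proof): reduce everything to the claim that the commensurability class of a finitely generated group meets only countably many isomorphism classes of finitely generated groups, and then conclude by the cardinality argument, which is fine, as is your count of finite-index subgroups of a finitely generated group. The genuine gap is in the overgroup-counting step, which is the crux of the whole lemma. Having fixed coset representatives $h_1,\dots,h_n$, the data you propose to record includes the normal form of $h_i k h_i^{-1}$ (equivalently of $h_i k$) for \emph{every} $k\in K$; this is a function from the countably infinite set $\{1,\dots,n\}\times K$ to the countable set $K\times\{1,\dots,n\}$, and there are $\aleph_0^{\aleph_0}=2^{\aleph_0}$ such functions. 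So ``countably much combinatorial data, each datum in a countable set'' does not yield countably many possible multiplication tables; as written the bookkeeping only gives the trivial bound $2^{\aleph_0}$, and the inference to ``countably many candidates $H$ up to isomorphism'' is a non sequitur. A telltale sign is that your argument for this step uses only that $K$ is countable, never that it is finitely generated, whereas finite generation is exactly what must be exploited.

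The standard repair does use it. Given $K\leq H$ of finite index, pass to the normal core $N=\bigcap_i h_iKh_i^{-1}$, a finite-index (hence finitely generated) subgroup of $K$ that is normal in $H$ with finite quotient $Q=H/N$. Since $N$ is finitely generated, $\mathrm{Aut}(N)$ is countable (an automorphism is determined by the images of finitely many generators). Choosing a set-theoretic section $s:Q\to H$, the multiplication on $H\cong N\times Q$ is completely determined by the map $Q\to\mathrm{Aut}(N)$, $q\mapsto \mathrm{ad}(s(q))|_N$, together with the factor set $Q\times Q\to N$, $(q,q')\mapsto s(q)s(q')s(qq')^{-1}$ --- finitely many values in countable sets, hence countably many possibilities for each pair $(N,Q)$. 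Summing over the countably many finite-index subgroups $N\leqslant K$, the countably many finite groups $Q$, and the countably many isomorphism types of finite-index subgroups $K$ of a given $G_i$, you obtain the key countability claim, after which the partition and cardinality argument in your first paragraph goes through unchanged.
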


\section{Preliminaries of von Neumann algebras}

\subsection{Deformations associated with arrays}
Let $G$ be a countable, discrete group, and let $\pi:G \rightarrow \mathcal O(\mathcal H)$ be an orthogonal representation. Using the Gaussian construction, \cite{Sin10} $\pi$ gives rise to a nonatomic standard probability measure space $(X, \mu)$ such that $L^{\infty}(X, \mu)$ is generated by a family of unitaries $\{\omega(\xi)\,|\, \xi \in \mathcal H \}$  satisfying:\begin{enumerate}\item [a)] $\omega(0)=1$, $\omega(\xi_1 + \xi_2)=\omega(\xi_1)\omega(\xi_2)$, and $\omega(\xi)^*= \omega(-\xi)$ for all $\xi, \xi_1, \xi_2 \in \mathcal H$.
\item[b)] $\tau(\omega(\xi))= e^{-\|\xi \|^2}$, where $\tau$ is the trace on $L^{\infty}(X, \mu)$ given by integration.\end{enumerate}
Furthermore, there is a pmp action of $G \overset{\hat{\pi}}\ca (X, \mu)$ such that the corresponding $G \overset{\hat{\pi}}\ca L^{\infty}(X, \mu)$ satisfies $\hat{\pi}_g(\omega(\xi))= \omega(\pi_g(\xi))$ for all $g \in G$, and $\xi \in \mathcal H$.
The action $G \overset{\hat{\pi}}\ca (X, \mu)$ is called the \textit{Gaussian action} associated with $\pi$. Next let $G \ca^\rho \mathcal (N, \tau)$ be any $\tau$-preserving action on a von Neumann algebra $\mathcal N$   The corresponding cross-product von Neumann algebra $\tilde \M= (L^{\infty}(X, \mu)\otimes \mathcal N) \rtimes_{\hat\pi\otimes \rho} G$ is called the  \textit{Gaussian dilation} of $\M= \mathcal N \rtimes_\rho G$.  
\\
Let $\mathbf{q}: G \rightarrow \mathcal H$ be an array associated with $\pi$. As in \cite{Sin10,CS11,CSU11}, for each $t \in \mathbb R$, let $V^{\mathbf{q}}_t \in \mathscr U(L^2(X,\mu) \otimes L^2(\mathcal N) \otimes \ell^2(G))$ be defined by 
$$V^{\mathbf{q}}_t(\xi \otimes \delta_g)= \omega(t\mathbf{q}(g))\xi \otimes n\otimes \delta_g \text{ for all } \xi \in L^2(X,\mu), n \in L^2(\mathcal N), \text{ and } g \in G.$$
This path of unitaries is called \textit{Gaussian deformation} of $\mathcal M$ associated with $\mathbf{q}$. For further use,  we collect together some of its basic properties which were  established in the prior works \cite{CS11,CSU11,CSU16,CKP14}.

\begin{proposition}\label{lm1}Under the previous notations, the following properties hold:
  
\begin{enumerate}
    \item (\emph{Popa's transversality property}) For every $x\in \mathcal M$ we have $$\|V_t^{\mathbf{q}}(x)-x\|^2_2\leq 2\|e_{\mathcal M}^{\perp}\circ V_t^{\mathbf{q}}(x) \|^2_2.$$    
 \item (\emph{Assymptotic bimodularity}) For every $x,y\in \mathcal N \rtimes_{\rho, r} G$ we have 
    $$\lim_{t \rightarrow 0} \left ( \sup_{\|\xi\|_2\leq 1}\|xV_t^{\mathbf{q}}(\xi) y-V^{\mathbf{q}}_t(x\xi y)\|_2 \right )=0.$$
    \item For any $S \subseteq (\mathcal M)_1$ the following  infinitesimal properties are equivalent: 
    \vskip 0.02in
    \begin{enumerate}
    \item $\lim_{t\rightarrow 0}\left (\sup_{x\in S} \lVert V_t^{\mathbf{q}}(x) - x \rVert_2\right )= 0$, 
    \item $\lim_{t\rightarrow 0}\left(\sup_{x\in S} \lVert e_{\mathcal M}^{\perp}\circ V_t^{\mathbf{q}}(x) \rVert_2 \right) =0$,  
    \item $\lim_{c \rightarrow \infty}\left(\sup_{x\in S} \lVert \mathbb P_{B_c}(x) -x \rVert_2 \right)= 0$.
    \end{enumerate}
\vskip 0.03in
In (c) we denoted by $B_c=\{ g\in G \,|\, \|q(g)\|\leq c\}$.
    
\vskip 0.02in
    \item If one of (a)-(c) above holds then for every finite subset $F\subset \mathcal M$ we have 

     \begin{equation}\label{bimod2}\lim_{t\rightarrow 0}\left(\sup_{x\in S, a,b\in F} \lVert e_{\mathcal M}^{\perp}\circ V_t^{\mathbf{q}}(a x b) \rVert_2 \right) =0.\end{equation}    
\end{enumerate}

\end{proposition}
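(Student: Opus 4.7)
All four statements have essentially been established in the previous works \cite{Sin10, CS11, CSU11, CSU16, CKP14}, so my plan is to give a unified sketch that collects the key computations and indicates how each point follows from the explicit formula for $V^{\mathbf{q}}_t$. The central computation underlying everything is that for any $x=\sum_{g\in G}x_g u_g\in \mathcal N\rtimes_\rho G$ (with $x_g\in \mathcal N$), one has
\begin{equation*}
V^{\mathbf{q}}_t(x u_g)=\omega(t\mathbf{q}(g))\otimes x\otimes \delta_g,
\end{equation*}
so that by the characteristic identity $\tau(\omega(\xi))=e^{-\|\xi\|^2}$ one gets the Fourier-type formula
\begin{equation*}
e_{\mathcal M}\circ V^{\mathbf{q}}_t(x)=\sum_{g\in G} e^{-t^2\|\mathbf{q}(g)\|^2}x_g u_g.
\end{equation*}
My plan is to start from this formula, since it immediately gives the orthogonal decomposition of $V^{\mathbf{q}}_t(x)$ with respect to $\mathcal M\subset \widetilde{\mathcal M}$.

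For part (1), I would compute both sides directly from the identity above: a short calculation yields $\|V^{\mathbf{q}}_t(x)-x\|_2^2=2\sum_g(1-e^{-t^2\|\mathbf{q}(g)\|^2})\|x_g\|_2^2$ and $\|e_{\mathcal M}^{\perp}\circ V^{\mathbf{q}}_t(x)\|_2^2=\sum_g(1-e^{-2t^2\|\mathbf{q}(g)\|^2})\|x_g\|_2^2$, so the transversality inequality reduces to the pointwise elementary inequality $1-e^{-s}\leq 1-e^{-2s}$ for $s\geq 0$. For part (2), I would first verify the claim on the dense subalgebra $\mathbb C[G]\cdot \mathcal N$, where it reduces to estimating $\|\omega(t(\mathbf{q}(gh)-\pi_g\mathbf{q}(h)))-1\|_2$; the defining property of an array (bounded defect from a cocycle, and $\pi$ orthogonal) together with $\|\omega(\xi)-1\|_2^2=2(1-e^{-\|\xi\|^2})$ gives the required vanishing as $t\to 0$, uniformly on bounded sets. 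A standard $\varepsilon/3$ density argument then promotes this to all $x,y\in \mathcal N\rtimes_{\rho,r} G$.

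For part (3), the equivalence (a)$\Leftrightarrow$(b) is an immediate corollary of part (1), while (b)$\Leftrightarrow$(c) is proved by cutting the Fourier expansion at the level $\|\mathbf{q}(g)\|\leq c$: one has
\begin{equation*}
\|e_{\mathcal M}^{\perp}\circ V^{\mathbf{q}}_t(x)\|_2^2=\sum_{g\in B_c}(1-e^{-2t^2\|\mathbf{q}(g)\|^2})\|x_g\|_2^2+\sum_{g\notin B_c}(1-e^{-2t^2\|\mathbf{q}(g)\|^2})\|x_g\|_2^2,
\end{equation*}
where the first piece is dominated by $(1-e^{-2t^2c^2})\|x\|_2^2$ and the second by $\|\mathbb P_{B_c}(x)-x\|_2^2$. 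Sending $t\to 0$ after $c\to\infty$ (respectively the reverse) yields both directions. For part (4), my plan is to write $a V^{\mathbf{q}}_t(x)b=V^{\mathbf{q}}_t(axb)+R_t(a,x,b)$ where $\sup_{x\in S}\|R_t(a,x,b)\|_2\to 0$ by part (2), then apply $e_{\mathcal M}^{\perp}$ and use the assumption together with the fact that $e_{\mathcal M}^{\perp}$ is a contraction; a finite sum over $a,b\in F$ of such estimates then yields \eqref{bimod2}.

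The only step that requires genuine attention, rather than book-keeping, is the asymptotic bimodularity in part (2); the main technical obstacle is that $\mathbf{q}$ is only an array and not an honest cocycle, so one must carefully extract the bounded defect $\mathbf{q}(gh)-\pi_g\mathbf{q}(h)-\mathbf{q}(g)$ and control it in the $\omega$-exponential, all uniformly on the unit ball of $\mathcal M$. Once this is handled, everything else is either a direct computation from the Fourier formula or a standard density argument, and the proposition follows by assembling these pieces exactly as in the references cited above.
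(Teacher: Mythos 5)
Your parts (1)--(3) are correct and essentially reproduce the paper's argument: (1) follows from the exact Gaussian formulas (the paper simply cites \cite{CS11}), the equivalence (a)$\Leftrightarrow$(b) from (1), and (b)$\Leftrightarrow$(c) by the same splitting of the Fourier expansion over $B_c$ that the paper writes out; your sketch of (2) is the content of \cite[Proposition 1.10]{CSU16}, which the paper just cites, and you correctly identify the array-defect control as the only delicate point there.

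In part (4), however, there is a genuine gap. You invoke part (2) to claim $\sup_{x\in S}\|R_t(a,x,b)\|_2\to 0$ for $a,b\in F$, but $F$ is a finite subset of the von Neumann algebra $\mathcal M$, while the asymptotic bimodularity of (2) is stated (and known) only for elements of the reduced crossed product C$^*$-algebra $\mathcal N\rtimes_{\rho,r}G$; for arbitrary $a,b\in\mathcal M$ the quantity $\sup_{\|\xi\|_2\le 1}\|aV^{\mathbf{q}}_t(\xi)b-V^{\mathbf{q}}_t(a\xi b)\|_2$ is not controlled, and passing from C$^*$-coefficients to von Neumann coefficients is precisely the extra content of (4) over (2). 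The paper bridges this with a Kaplansky density argument: reduce to a single pair $a,b\in(\mathcal M)_1$, pick $a_\varepsilon,b_\varepsilon$ in the unit ball of $\mathcal N\rtimes_{\rho,r}G$ with $\|a-a_\varepsilon\|_2,\|b-b_\varepsilon\|_2\le\varepsilon$, and use that $V^{\mathbf{q}}_t$ is a $\|\cdot\|_2$-isometry and that $e_{\mathcal M}^{\perp}$ is an $\mathcal M$-bimodular contraction to get
\begin{equation*}
\|e_{\mathcal M}^{\perp}\circ V^{\mathbf{q}}_t(axb)\|_2\;\le\;2\varepsilon+\|V^{\mathbf{q}}_t(a_\varepsilon x b_\varepsilon)-a_\varepsilon V^{\mathbf{q}}_t(x)b_\varepsilon\|_2+\|e_{\mathcal M}^{\perp}\circ V^{\mathbf{q}}_t(x)\|_2,
\end{equation*}
after which part (2) applies legitimately to $a_\varepsilon,b_\varepsilon$ and hypothesis (b) handles the last term. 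Without this approximation step your argument only proves (4) for $F$ contained in the reduced C$^*$-algebra; with it, your plan goes through.
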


\begin{proof} Part (1) is \cite[Lemma 2.8]{CS11}. Part (2) is \cite[Proposition 1.10]{CSU16}. The equivalence (3a) $\Leftrightarrow$ (3b) follows from part (1). The equivalence (3b) $\Leftrightarrow$ (3c) follows from arguments used in \cite{CS11} but we will include some details for readers' convenience. 
Assuming (3b), fix $\frac{1}{2} > \varepsilon > 0$ and $t_\varepsilon>0$ so that for all $0\leq |t|<t_\varepsilon$ and $x\in \mathcal S$ we have
    \begin{equation}\label{thm2eq'}
        \lVert V^{\mathbf{q}}_t(x)-x\rVert _2 \leq \varepsilon ^2.
    \end{equation}
    Let $c_\varepsilon=\frac{\ln (\varepsilon ^{-1})^\frac{1}{2}}{t_\varepsilon}$. For $\mathcal{S}\ni x = \sum x_g u_g$ we have $V^{\mathbf{q}}_t(x)=\sum_g (\omega(t\mathbf{q}(g))\otimes x_g) u_g$ and using \eqref{thm2eq'}, we have 
    \begin{equation*}
        \begin{split}
            \varepsilon ^4 &\geq \lVert V^{\mathbf{q}}_t(x)-x\rVert _2 ^2 = \sum_g \lVert \omega(t\mathbf{q}(g))-1\rVert _2^2 \;\| x_g\|_2 ^2 
            = \sum_g 2(1-e^{-t^2\lVert \mathbf{q}(g)\rVert ^2 }) \| x_g\|^2_2 \\ \geq &\sum _{g\in G \setminus B_{c_\varepsilon}}2(1-e^{-t^2k^2})\| x_g\|_2^2 
            = 2(1-\varepsilon)\sum_{g\in G \setminus B_{c_\varepsilon}}\|x_g \|_2^2 =2(1-\varepsilon)\lVert x - \mathbb P_{B_{c_\varepsilon}}(x)\rVert_2^2. 
        \end{split}
    \end{equation*}

    \noindent Hence $\lVert x - \mathbb P_{B_{c}}(x) \rVert_2^2 \leq \frac{\varepsilon^4}{2(1-\varepsilon)}<\varepsilon^2  \text{ for all }x \in \mathcal{S}$ and $c>c_\varepsilon$, which yields (3c).

\noindent To see the converse, fix $1>\varepsilon>0$ and $c_\varepsilon>0$ so that for every $c>c_\varepsilon$ and $x\in \mathcal S$ we have \begin{equation}\label{ballineq}
    \|\mathbb P_{B_c}(x)-x\|_2\leq \frac{\varepsilon}{2}.
\end{equation}  

\noindent Letting $t_\varepsilon= \frac{\ln((1-\frac{\varepsilon^2}{4})^{-1})^{\frac{1}{2}}}{c_\varepsilon}>0$ and using similar computations and conjunction with \eqref{ballineq}, we see for all $0\leq |t|<t_\varepsilon$ we have  
    \begin{equation*}
        \begin{split}
             &\lVert V^{\mathbf{q}}_t(x)-x\rVert _2 ^2 = \sum_g \lVert \omega(t\mathbf{q}(g))-1\rVert _2^2 \;\| x_g\|_2^2 
            = \sum_g 2(1-e^{-t^2\lVert \mathbf{q}(g)\rVert ^2 }) \| x_g\|_2 ^2 \\ \leq  &\sum _{g\in G \setminus B_{c_\varepsilon}}2\| x_g\|_2^2 
            + \sum _{g\in B_{c_\varepsilon}}2(1-e^{-t_\varepsilon^2c_\varepsilon^2})\| x_g\|_2^2 
            \leq  2\lVert x - \mathbb P_{B_{c_\varepsilon}}(x)\rVert_2^2+ \frac{\varepsilon^2}{2}\leq \varepsilon^2, 
        \end{split}
    \end{equation*}
    showing (3b).

(4) As $F$ is finite, it suffices to show \eqref{bimod2} only for a single fixed pair $a,b\in (\mathcal M)_1$. Fix $\varepsilon>0$. By Kaplanski density theorem one can find $a_\varepsilon, b_\varepsilon \in (\mathcal N \rtimes_{\rho, r}(G))_1$ satisfying $\|a-a_\varepsilon\|_2, \|b-b_\varepsilon\|_2\leq \varepsilon$. Using these estimates in conjunction with triangle inequality and $\mathcal M$-bimodularity of $e^\perp _\mathcal M$ we see for every $x\in \mathcal S$ and $t\in \mathbb R$  we have 

\begin{equation*}\begin{split}
     \| e^\perp_{\mathcal M}\circ V_t^{\mathbf{q}}(axb)\|_2 & \leq 2\varepsilon+ \| e^\perp_{\mathcal M}\circ V_t^{\mathbf{q}}(a_\varepsilon xb_\varepsilon)\|_2 \\
    &\leq 2\varepsilon+ \| e^\perp_{\mathcal M} (V_t^{\mathbf{q}}(a_\varepsilon xb_\varepsilon) - a_\varepsilon  V_t^{\mathbf{q}}(x) b_\varepsilon )\|_2   + \| a_\varepsilon \left(e^\perp _{\mathcal M} \circ  V_t^{\mathbf{q}}(x)\right)  b_\varepsilon)\|_2\\
    &\leq 2\varepsilon+ \| V_t^{\mathbf{q}}(a_\varepsilon xb_\varepsilon) - a_\varepsilon  V_t^{\mathbf{q}}(x) b_\varepsilon \|_2   +\| e^\perp_{\mathcal M} \circ V_t^{\mathbf{q}}(x)\|_2    \end{split}\end{equation*}
Taking supremum over $x\in \mathcal S$ and then limit superior over $t\rightarrow 0$ in the previous inequality and using property (2) above and the hypothesis we get  $$\limsup_{t\rightarrow 0}\left ( \sup_{x\in S}\|e^\perp_{\mathcal M}\circ V_t^{\mathbf{q}}(axb)\|_2\right )\leq 2\varepsilon.$$
As $\varepsilon>0$ was arbitrary, we get the desired conclusion.\end{proof}

For further use, we also recall the following convergence property of the array deformation, which was established in \cite{CS11} is based on a spectral gap argument.
\begin{theorem}\label{ballconv}\cite{CS11}
    Let $G$ be a group and a trace preserving action $G \curvearrowright (\mathcal N, \tau)$ on an amenable von Neumann algebra $\mathcal N$   whose reduced C$^*$-algebra $\mathcal N\rtimes_{\rho, r} G$ is locally reflexive. Assume there is an array $\mathbf{q}: G\rightarrow \mathcal H_\pi$, where $\pi: G\rightarrow \mathscr U(\mathcal H_\pi)$ is a weakly$-\ell^2$ representation. 
    Let $p\in (\mathcal N\rtimes G)$ be a projection and let $\mathcal{A}\subseteq p(\mathcal N \rtimes G)p$ be a von Neumann algebra such that $\A'\cap p(\mathcal N \rtimes G) p$ has no amenable direct summand. Then, 
    \begin{equation*}
       \lim_{t\rightarrow 0 } \left(\sup_{x\in (\mathcal A)_1}\lVert V^{\mathbf{q}}_{t}(x)-x  \rVert _2 \right ) =0.
    \end{equation*}
 
\end{theorem}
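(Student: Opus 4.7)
My plan is to prove this by contradiction via a spectral gap argument in the Gaussian dilation, following the Peterson–Popa–Ioana strategy as adapted to the array deformation setting. Suppose the claimed uniform convergence fails; then there exist $\varepsilon_0 > 0$, a sequence $t_n \to 0$, and vectors $x_n \in (\A)_1$ with $\lVert V^{\mathbf q}_{t_n}(x_n) - x_n \rVert_2 \geq \varepsilon_0$. Popa's transversality property (Proposition \ref{lm1}(1)) then produces the sequence $\xi_n := e_{\M}^{\perp} \circ V^{\mathbf q}_{t_n}(x_n) \in L^2(\tM)\ominus L^2(\M)$ satisfying $\lVert \xi_n \rVert_2 \geq \varepsilon_0/\sqrt{2}$; since $V_t^{\mathbf q}$ is an $L^2$-isometry and $\lVert x_n \rVert_\infty \leq 1$, these vectors have $L^2$-norm uniformly bounded above by a constant depending only on $\tau(p)$.

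The next step is to show that $\xi_n$ is asymptotically central for the relative commutant $P := \A' \cap p\M p$. Fix $y \in \mathcal U(P) \subseteq \M = \mathcal N \rtimes G$; by Kaplansky's density theorem we can choose $y_\delta$ in the reduced C$^*$-algebra $\mathcal N \rtimes_{\rho, r} G$ with $\lVert y_\delta \rVert_\infty \leq 1$ and $\lVert y - y_\delta \rVert_2 < \delta$. Since $yx_n = x_n y$, asymptotic bimodularity (Proposition \ref{lm1}(2)) applied to $y_\delta$ on either side gives $\lVert y_\delta\, V^{\mathbf q}_{t_n}(x_n) - V^{\mathbf q}_{t_n}(x_n)\, y_\delta \rVert_2 \to 0$ as $n \to \infty$. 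Applying the $\M$-bimodular projection $e_\M^\perp$, using the triangle inequality against the $\delta$-approximation, and letting $\delta \to 0$ we conclude $\lVert y\xi_n - \xi_n y \rVert_2 \to 0$ for every $y \in P$.

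The crucial ingredient is the identification of $L^2(\tM)\ominus L^2(\M)$ as an $\M$-$\M$-bimodule weakly contained in the coarse bimodule $L^2(\M)\btimes L^2(\M)$. Since $\pi$ is weakly contained in $\lambda_G$, Wiener chaos decomposition splits $L^2(X,\mu)\ominus \mathbb C$ into symmetric tensor powers of $\mathcal H_\pi^{\mathbb C}$ under $\hat\pi$, each of which remains weakly contained in $\lambda_G$. Combined with amenability of $\mathcal N$ and local reflexivity of $\mathcal N \rtimes_{\rho, r} G$ (needed to pass from weak containment of unitary representations of $G$ to weak containment of Hilbert bimodules over $\M$), this yields the desired weak containment. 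We now invoke the Ozawa–Popa spectral gap dichotomy: because $P$ has no amenable direct summand, there exist a finite set $F \subset \mathcal U(P)$ and a constant $C > 0$ such that every vector $\xi$ in any $\M$-bimodule weakly contained in the coarse bimodule satisfies $\lVert \xi \rVert_2 \leq C\sum_{u \in F} \lVert u\xi - \xi u \rVert_2$. Applied to $\xi_n$, this contradicts the lower bound $\lVert \xi_n \rVert_2 \geq \varepsilon_0/\sqrt{2}$ together with $\lVert u\xi_n - \xi_n u\rVert_2 \to 0$, completing the proof.

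The main technical obstacle is precisely the bimodule weak-containment identification in the third paragraph: packaging the Fock-space decomposition of the Gaussian dilation together with local reflexivity into the statement that the orthogonal complement $L^2(\tM)\ominus L^2(\M)$ embeds weakly into a multiple of the coarse $\M$-bimodule. Once this is in hand, the remaining spectral-gap manipulation is standard, while the minor secondary subtlety is ensuring that the Kaplansky approximation in $P$ interacts coherently with the asymptotic bimodularity, which is controlled because $\xi_n$ has uniformly bounded $L^2$-norm.
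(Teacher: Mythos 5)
The paper does not actually prove Theorem \ref{ballconv}: it is quoted from \cite{CS11} with only the remark that it rests on a spectral gap argument, so your proposal has to be measured against that argument. At the level of strategy your outline matches it (transversality, asymptotic bimodularity, weak containment of $L^2(\tM)\ominus L^2(\M)$ in the coarse bimodule, non-amenability of the relative commutant), but the two steps you dismiss as routine are exactly where the real difficulty sits, and as written they do not go through.

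Concretely, there are two gaps. First, the asymptotic centrality claim: for $y\in\mathcal U(\mathcal P)$ with $\mathcal P=\A'\cap p\M p$ and a Kaplansky approximant $y_\delta$ in the reduced C$^*$-algebra, the triangle inequality leaves error terms of the form $\lVert (y-y_\delta)\xi_n\rVert_2$ and $\lVert \xi_n(y-y_\delta)\rVert_2$, and these cannot be bounded by $\lVert y-y_\delta\rVert_2$ because neither $\xi_n$ nor $V^{\mathbf q}_{t_n}(x_n)$ is uniformly bounded in operator norm: since $\mathbf q$ is an array and not a cocycle, $V^{\mathbf q}_t$ is only a path of unitaries on $L^2(\tM)$, not a $\ast$-automorphism of $\tM$, so $\lVert V^{\mathbf q}_t(x)\rVert_\infty$ is not controlled by $\lVert x\rVert_\infty$. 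Your remark that this is "controlled because $\xi_n$ has uniformly bounded $L^2$-norm" addresses the wrong norm. Second, the spectral gap inequality you invoke, namely $\lVert\xi\rVert_2\le C\sum_{u\in F}\lVert u\xi-\xi u\rVert_2$ for every vector of every $\M$-bimodule weakly contained in the coarse bimodule, is not a quotable consequence of "no amenable direct summand" in that generality: a net of almost $\mathcal P$-central vectors only produces a $\mathcal P$-central state on $\mathbb B(pL^2(\M))$, and to conclude that some direct summand of $\mathcal P$ is amenable one also needs this state to restrict normally (e.g.\ subtracially) to $p\M p$. In the automorphism setting this is guaranteed by applying the deformation to unitaries ($\lVert y\,\alpha_t(u)\rVert_2=\lVert y\rVert_2$), but for the array deformation it fails for the same unboundedness reason; and it is precisely at this point, not in passing from $\pi\prec\lambda_G$ to bimodule weak containment, that local reflexivity of $\mathcal N\rtimes_{\rho,r}G$ is deployed in \cite{CS11}, à la Ozawa, to salvage the conclusion. (A minor further point: the vectors must also be located in the corner $p\,\cdot\,p$, and $p$ itself has to be handled by approximation.) So your sketch reproduces the announced strategy, but the centrality against von Neumann algebra, rather than C$^*$-algebra, elements and the normality of the limit state are genuine gaps whose resolution is the actual content of the cited proof.
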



\subsection{Ioana-Peterson-Popa deformation for amalgamated free products}\label{afpdef} Below we briefly recall some of the basic properties of the free length deformation associated with the cross-product von Neumann algebra of a free product group action. This was introduced in \cite{IPP05} and played a key role in many of the classification results in von Neuamnn algebras that emerged over the past two decades. 

Let $G= G_1 \ast \cdots \ast G_n$ and let $G \curvearrowright^\sigma \mathcal N$ be  any $\tau$-preserving action on a tracial von Neumann algebra $(\mathcal N,\tau )$. Denote by $\mathcal M = \mathcal N\rtimes_\sigma G$ the corresponding cross-product von Neumann algebra. Now let $\tilde G = (G_1 \ast \mathbb Z_1)\ast\cdots \ast (G_n\ast \mathbb Z_n)$ where  $\mathbb Z_k$ is a copy of the infinite cyclic group generated by $a_k $. Consider the action $\tilde G \ca^{\tilde \sigma} \mathcal N$ where the generators of all the cyclic groups act trivially on $\mathcal N$ and put $\tilde {\mathcal M} = \mathcal N\rtimes_{\tilde\sigma}\tilde G$. Naturally we have $\mathcal M\subset \tilde{\mathcal M}$. For every $1\leq k\leq n$ let $h_k\in \mathcal M$ be the selfadjoint element with spectrum $[-\pi, \pi]$ such that $u_{a_i}= \exp(i h_i)$. For every $t\in \mathbb R$ consider the unitary $u^t_k= \exp(i th_k)\in \mathcal L(\tilde G)$. Next consider the 1-parameter group of automorphisms of $\tilde {\mathcal M}$ defined as    \begin{equation}
    \alpha_t(x )= u_k^t x  u_k^{-t} \text{, for every }x\in \mathcal N \rtimes_{\tilde \sigma} (G_k\times \mathbb Z_k).
\end{equation}
Then one can easily see that $\|\alpha_t(x)-x\|_2\rightarrow 0$, as $t \rightarrow 0$, for every $x\in \mathcal M$.  For every $t\in \mathbb R$  denote by $\rho_t := (\sin(\pi t)/(\pi t))^2\in (0,1)$. Basic computations show that 

 $$E_{\mathcal M}\circ \alpha_t(au_g) = \rho_t^{ |g|} au_g,  \text{ for every }a\in \mathcal N, g\in G.$$
This formula and other similar computations show for every $x\in L^2(\mathcal M)$ we have

\begin{equation}\label{freeprodidentities}\begin{split}
    & \|E_{\mathcal M}\circ \alpha_t(x) \|^2_2 = \sum_{k\in \mathbb N} \rho_t^{ 2 k}\|\mathbb P_{S_k} (x)\|_2^2,\\
    & \| x- \alpha_t(x)\|^2_2=\sum_{k\in \mathbb N}2(1-\rho_t^{k})\|\mathbb P_{S_k}(x)\|_2^2, \text{ and}\\
    &\|\alpha_t(x)-E_{\mathcal M}\circ\alpha_t(x)\|_2^2=\sum_{k\in \mathbb N}(1-\rho_t^{2k})\|\mathbb P_{S_k}(x)\|_2^2.
\end{split}\end{equation}
Above we denoted by $S_k=\{ g\in G \,:\,|g|=k\}$. 

\begin{corollary}\label{convfreedef1} For every $m\in \mathbb N$ we have  
\begin{equation*}\lim_{t\rightarrow 0}\left ( \sup_{\|x\|_2\leq 1}\| \mathbb P_{B_m}(x)- \alpha_t(\mathbb P_{B_m}(x))\|_2\right )=0.
\end{equation*} 
Here we denoted by $B_m=\{ g\in G \,|\, |g|\leq m\}$.
\end{corollary}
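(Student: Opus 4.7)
The plan is to leverage the orthogonality identity from \eqref{freeprodidentities} applied directly to the truncated vector $y = \mathbb{P}_{B_m}(x)$, and combine it with the elementary fact that the modulus $\rho_t = (\sin(\pi t)/(\pi t))^2$ tends to $1$ as $t \to 0$.

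First I would observe that $\mathbb{P}_{B_m}(x) = \sum_{k=0}^{m} \mathbb{P}_{S_k}(x)$, so the word-length projections of $y = \mathbb{P}_{B_m}(x)$ satisfy $\mathbb{P}_{S_k}(y) = \mathbb{P}_{S_k}(x)$ for $k \leq m$ and $\mathbb{P}_{S_k}(y) = 0$ for $k > m$. Applying the second identity of \eqref{freeprodidentities} to $y$ then gives
\begin{equation*}
\|\mathbb{P}_{B_m}(x) - \alpha_t(\mathbb{P}_{B_m}(x))\|_2^2 \;=\; \sum_{k=0}^{m} 2\bigl(1-\rho_t^{k}\bigr)\,\|\mathbb{P}_{S_k}(x)\|_2^2.
\end{equation*}
Since $0 < \rho_t < 1$ for all sufficiently small $t \neq 0$, the sequence $(1-\rho_t^{k})_{k\geq 0}$ is nondecreasing in $k$, and hence the maximum over $0 \leq k \leq m$ is $1-\rho_t^{m}$. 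Bounding each factor by this maximum and using Bessel's inequality $\sum_{k=0}^m \|\mathbb{P}_{S_k}(x)\|_2^2 \leq \|x\|_2^2 \leq 1$ yields the uniform estimate
\begin{equation*}
\sup_{\|x\|_2 \leq 1}\|\mathbb{P}_{B_m}(x) - \alpha_t(\mathbb{P}_{B_m}(x))\|_2^2 \;\leq\; 2\bigl(1-\rho_t^{m}\bigr).
\end{equation*}

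Finally, since $\rho_t \to 1$ as $t \to 0$, we have $\rho_t^{m} \to 1$ for the fixed $m$, and the right-hand side tends to zero. This establishes the claimed uniform convergence. There is no real obstacle here: the point of the corollary is simply to package the algebraic identity from \eqref{freeprodidentities} in a form that trades dependence on $x$ for a controlled dependence on the truncation radius $m$, which will be useful in later arguments where one first approximates an arbitrary element by its ball-truncation $\mathbb{P}_{B_m}(x)$ and then runs the free length deformation on the truncation.
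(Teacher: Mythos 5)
Your argument is correct and coincides with the paper's own proof: both apply the second identity of \eqref{freeprodidentities} to the truncation $\mathbb P_{B_m}(x)$, bound each coefficient $2(1-\rho_t^k)$ for $k\leq m$ by $2(1-\rho_t^m)$, control the sum of the $\|\mathbb P_{S_k}(x)\|_2^2$ by $\|x\|_2^2\leq 1$, and conclude from $\rho_t\rightarrow 1$ as $t\rightarrow 0$. No gaps.
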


\begin{proof} As $0\leq \rho_t < 1$, using the second identity in \eqref{freeprodidentities}, for every $x\in L^2(\mathcal M)$ we have 
    \begin{equation*}
\|\mathbb P_{B_m}(x)- \alpha_t(\mathbb P_{B_m}(x))\|_2^2 =\sum_k 2(1-\rho^k_t)\|\mathbb P_{S_k\cap B_m}(x)\|_2^2 \leq 2(1-\rho_t^m)\sum_{k\leq m}\|\mathbb P_{S_k}(x)\|_2^2\leq 2(1-\rho_t^m)\|x\|_2^2.        
    \end{equation*}
    Since $\rho_t\rightarrow 1$ as $t \rightarrow 0$, the previous inequality yields the desired conclusion.
\end{proof}

\begin{theorem}\label{innerfreedef}\cite[Theorem 4.3]{IPP05}(see also \cite[Theorem 4.2]{CH08})
Under the previous assumptions let $p\in \mathcal M$ be a projection and let $\mathcal{A}\subseteq p\mathcal M p$ be a von Neumann subalgebra. Suppose there exists $v \in \tilde {\mathcal M}$ such that  $v \alpha_t(x)=xv$ for all $x\in  \mathcal A$. Then one can find $1\leq j\leq n$ such that $\mathcal A \prec \mathcal N\rtimes G_j$. 
\end{theorem}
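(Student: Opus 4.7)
The plan is to follow the strategy of Ioana, Peterson and Popa from \cite{IPP05}, rephrased in the concrete free length deformation setting introduced above. First, I would reduce to the case where $v$ is a partial isometry: the relation $v\alpha_t(x)=xv$, after taking adjoints and re-applying, yields $\alpha_t(x)v^*=v^*x$ for all $x\in\mathcal{A}$. Combining the two identities gives that $v^*v$ commutes with $\alpha_t(\mathcal{A})$ inside $\alpha_t(p)\tilde{\mathcal{M}}\alpha_t(p)$, while $vv^*$ commutes with $\mathcal{A}$ inside $p\tilde{\mathcal{M}}p$. Polar-decomposing $v=w|v|$ and noting that $|v|$ lies in $\alpha_t(\mathcal{A})'\cap\alpha_t(p)\tilde{\mathcal{M}}\alpha_t(p)$, a standard cut-and-paste argument lets me replace $v$ by the partial isometry $w$, which still obeys $w\alpha_t(x)=xw$ on a corner of $\mathcal{A}$.

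Next, I would exploit the amalgamated free product decomposition $\tilde{\mathcal{M}} = \tilde{\mathcal{M}}_1\ast_{\mathcal{N}}\cdots\ast_{\mathcal{N}}\tilde{\mathcal{M}}_n$, where $\tilde{\mathcal{M}}_k = \mathcal{N}\rtimes(G_k\ast\mathbb{Z}_k)$. Although $\alpha_t$ is implemented on each vertex $\tilde{\mathcal{M}}_k$ by the inner unitary $u_k^t$, it acts as a genuine ``free rotation'' across the amalgam over $\mathcal{N}$. I would Fourier-expand $w$ over $\mathcal{N}$ via reduced alternating words in $\tilde{G}$ and test the relation $w\alpha_t(x)=xw$ against many unitaries drawn from $\mathcal{A}$. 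Using the identities in \eqref{freeprodidentities} together with Corollary~\ref{convfreedef1}, one quantifies how $\alpha_t$ distorts components of $w$ that are supported on alternating words of length $\geq 2$, and how such distortion can be absorbed by left multiplication by $\mathcal{A}$; the comparison forces the $\|\cdot\|_2$-bulk of $w$ to concentrate inside a single vertex $\tilde{\mathcal{M}}_j$.

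Once $w$ is essentially supported in a single $\tilde{\mathcal{M}}_j$, I would apply $E_{\mathcal{M}}$ to the identity $w\alpha_t(x)=xw$ and invoke $E_{\mathcal{M}}(\tilde{\mathcal{M}}_j)=\mathcal{N}\rtimes G_j$ to extract a nonzero $\mathcal{A}$-central vector inside the $\mathcal{A}$-$(\mathcal{N}\rtimes G_j)$-Hilbert bimodule realised as a subspace of $L^2(\mathcal{M})$. Popa's intertwining-by-bimodules criterion then yields the desired $\mathcal{A}\prec_{\mathcal{M}}\mathcal{N}\rtimes G_j$.

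The main obstacle is the Fourier localisation step: one must rule out that $w$ carries nontrivial mass on alternating words of length $\geq 2$ in the amalgamated free product decomposition of $\tilde{\mathcal{M}}$. The difficulty is that $\alpha_t$ distorts such words by a compounded factor depending on their alternating length, whereas multiplication by unitaries of $\mathcal{A}$ can in principle shift the alternating structure in subtle ways; making the comparison quantitative and uniform over the unit ball of $\mathcal{A}$ is precisely the technical heart of \cite[Theorem~4.3]{IPP05}, and it is where the free malleability of the deformation and the strict decay of the eigenvalues $\rho_t^{|g|}$ with the free length are essential.
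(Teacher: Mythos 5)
The first thing to note is that the paper does not prove this statement at all: it is quoted, with the deformation set up in Subsection \ref{afpdef}, directly from \cite[Theorem 4.3]{IPP05} (see also \cite[Theorem 4.2]{CH08}), so the only meaningful comparison is with that cited argument. Your opening reduction is fine: $v^*v\in\alpha_t(\mathcal A)'\cap\tilde{\mathcal M}$, $vv^*\in\mathcal A'\cap p\tilde{\mathcal M}p$, and the polar part $w$ of $v$ still satisfies $w\alpha_t(x)=xw$. The genuine gap is the middle step. You assert that testing the relation against unitaries of $\mathcal A$ and using \eqref{freeprodidentities} and Corollary \ref{convfreedef1} ``forces the $\|\cdot\|_2$-bulk of $w$ to concentrate inside a single vertex $\tilde{\mathcal M}_j$,'' and you then concede that making this precise is exactly the technical heart of \cite[Theorem 4.3]{IPP05}. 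That concession makes the proposal circular as a standalone proof: the step you defer is the entire content of the theorem. Worse, the claim is not correct in the generality in which you invoke it: the hypothesis only provides \emph{some} intertwiner at \emph{some} $t$, and intertwiners can be multiplied on the right by arbitrary elements of $\alpha_t(\mathcal A)'\cap\tilde{\mathcal M}$ (and on the left by $\mathcal A'\cap p\tilde{\mathcal M}p$), which in general spreads them across the amalgam. One can only control where intertwiners live \emph{after} assuming, for contradiction, that $\mathcal A\nprec\mathcal N\rtimes G_j$ for every $j$; that assumption, not the decay of $\rho_t^{|g|}$ alone, is the engine of any localization statement.

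The cited proof runs along a different track, which your outline omits entirely. Assume $\mathcal A\nprec\mathcal N\rtimes G_j$ for all $j$. The deformation is s-malleable: there is a period-two automorphism $\beta$ of $\tilde{\mathcal M}$ fixing $\mathcal M$ with $\beta\alpha_t\beta=\alpha_{-t}$, and a Popa-style doubling (e.g.\ $z=\alpha_t\bigl(\beta(v)^*v\bigr)$ satisfies $xz=z\alpha_{2t}(x)$ for $x\in\mathcal A$) upgrades the time-$t$ intertwiner to time $2t$, $4t$, and so on; the non-vanishing of the doubled element is exactly where the contradiction assumption enters, via an IPP-type control of relative commutants and quasinormalizers in $\tilde{\mathcal M}$ (the same style of argument this paper reproduces in Theorem \ref{quasinormalizercontrol}), which places $vv^*$ inside $\mathcal M$, hence in the fixed points of $\beta$. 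Iterating up to time comparable to $1$, where $\alpha_1(\mathcal M)$ sits in a free position relative to $\mathcal M$ over $\mathcal N$, the surviving intertwiner forces $\mathcal A\prec\mathcal N$, contradicting the standing assumption. Without the symmetry $\beta$, the doubling, and the non-embedding hypothesis driving the position control, your final step also does not close: applying $E_{\mathcal M}$ to $w\alpha_t(x)=xw$ yields nothing usable unless you already know where (a suitably doubled version of) $w$ lives.
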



\subsection{Popa's intertwining techniques}\label{popaint}
In \cite{Pop03} introduced the following powerful criterion for the existence of intertwining between von Neumann algebras.

\begin{theorem}\emph{\cite{Pop03}} \label{corner} Let $( \mathcal{M},\tau)$ be a  tracial von Neumann algebra and let $ \mathcal{P},  \mathcal{Q}\subseteq  \mathcal{M}$ be (not necessarily unital) von Neumann subalgebras. 
	Then the following are equivalent:
	\begin{enumerate}
		\item There exist projections $ p\in    \mathcal{P}, q\in    \mathcal{Q}$, a $\ast$-homomorphism $\Theta:p  \mathcal{P} p\rightarrow q \mathcal{Q} q$  and a partial isometry $0\neq v\in  \mathcal{M} $ such that $v^*v\leq p$, $vv^*\leq q$ and $\Theta(x)v=vx$, for all $x\in p  \mathcal{P} p$.
		\item For any group $ \mathcal G \subset \mathscr U( \mathcal{P})$ such that $ \mathcal G''=  \mathcal{P}$ there is no net $(u_n)_n\subset \mathcal G$ satisfying $\|E_{  \mathcal{Q}}(xu_ny)\|_2\rightarrow 0$, for all $x,y\in   \mathcal{M}$.
		\item There exist finitely many $x_i, y_i \in  \mathcal{M}$ and $C>0$ such that  $\sum_i \| E_{\mathcal{Q}}(x_i u y_i) \|^2_2 \geq C$ for all $u\in \mathscr U (\mathcal{P})$.
	\end{enumerate}
\end{theorem}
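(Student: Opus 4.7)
The plan is to establish the three-way equivalence through Jones' basic construction $\langle\mathcal{M}, e_{\mathcal{Q}}\rangle$, equipped with its canonical semifinite trace $\mathrm{Tr}$ determined by $\mathrm{Tr}(xe_{\mathcal{Q}}y) = \tau(xy)$ for $x, y \in \mathcal{M}$. The key identity I would exploit is
\[
\mathrm{Tr}\bigl((u e_{\mathcal{Q}} u^*)(v e_{\mathcal{Q}} v^*)\bigr) = \|E_{\mathcal{Q}}(u^*v)\|_2^2,
\]
which, after polarization, translates the quantities from (2) and (3) into Hilbert--Schmidt inner products in $L^2(\langle\mathcal{M}, e_{\mathcal{Q}}\rangle, \mathrm{Tr})$ between elements of the form $x u e_{\mathcal{Q}} u^* y$.

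The implication $(3)\Rightarrow(2)$ is immediate by plugging any candidate net into the finite sum, which stays bounded below by $C$. For $(2)\Rightarrow(3)$, I would argue by contradiction: if (3) fails, indexing by pairs (finite $F \subset \mathcal{M}$, $\varepsilon > 0$) produces a net in $\mathscr{U}(\mathcal{P})$ with $\|E_{\mathcal{Q}}(x u_\alpha y)\|_2 \to 0$ for all $x, y \in \mathcal{M}$; since $\mathcal{G}'' = \mathcal{P}$, Kaplansky density lets me replace each $u_\alpha$ by a close strong-operator approximant in $\mathcal{G}$, yielding a net in $\mathcal{G}$ with the same vanishing property and contradicting (2).

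The main content is $(2)\Rightarrow(1)$. I would consider the convex set
\[
K := \overline{\mathrm{conv}}^{\mathrm{wk}}\{u e_{\mathcal{Q}} u^* : u \in \mathcal{G}\} \subset L^2(\langle\mathcal{M}, e_{\mathcal{Q}}\rangle, \mathrm{Tr}),
\]
which is bounded in $\|\cdot\|_{2,\mathrm{Tr}}$ and weakly compact. Let $a \in K$ be the unique element of minimal $\|\cdot\|_{2,\mathrm{Tr}}$-norm. By uniqueness and the invariance of $K$ under $\mathrm{Ad}(u)$ for $u \in \mathcal{G}$, $a$ commutes with $\mathcal{G}$ and hence with $\mathcal{P} = \mathcal{G}''$. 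Condition (2) is precisely what forces $a \neq 0$: using the identity above, if the minimum vanished one could diagonalize to extract a net $u_\alpha \in \mathcal{G}$ with $\|E_{\mathcal{Q}}(x u_\alpha y)\|_2 \to 0$ for all $x, y \in \mathcal{M}$, violating (2).

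With $a \neq 0$ a positive element of $\mathcal{P}' \cap \langle\mathcal{M}, e_{\mathcal{Q}}\rangle$ with $\mathrm{Tr}(a) \leq 1$, functional calculus produces for sufficiently small $\lambda > 0$ a nonzero spectral projection $f = \mathbf{1}_{[\lambda,\infty)}(a) \in \mathcal{P}' \cap \langle\mathcal{M}, e_{\mathcal{Q}}\rangle$ with $\mathrm{Tr}(f) < \infty$. Such a finite-trace projection in the basic construction corresponds to a right Hilbert $\mathcal{Q}$-submodule $\mathcal{K} \subseteq L^2(\mathcal{M})$ which is finitely generated over $\mathcal{Q}$ and invariant under the left $\mathcal{P}$-action; a Pimsner--Popa basis of $\mathcal{K}$ over $\mathcal{Q}$ yields a normal $\ast$-representation $\pi: \mathcal{P} \to M_n(\mathcal{Q})$, from which the projections $p \in \mathcal{P}$, $q \in \mathcal{Q}$, the $\ast$-homomorphism $\Theta: p\mathcal{P}p \to q\mathcal{Q}q$, and the intertwining partial isometry $v \in \mathcal{M}$ are extracted. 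Finally, $(1)\Rightarrow(3)$ is a direct calculation: from $v u = \Theta(u) v$ for $u \in \mathscr{U}(p\mathcal{P}p)$ one obtains $E_{\mathcal{Q}}(v^* \Theta(u) v) = v^*v \cdot u$, so $\|E_{\mathcal{Q}}(v^* \Theta(u) v)\|_2 \geq \tau(v^*v) > 0$, giving (3) with $x_1 = v^*$, $y_1 = v$ after the standard reduction from $\mathscr{U}(\mathcal{P})$ to $\mathscr{U}(p\mathcal{P}p)$ by cutting. The main obstacle throughout is the passage from the abstract finite-$\mathrm{Tr}$ projection $f$ to the concrete intertwiner $(v, \Theta)$: one must identify $\mathcal{K}$ concretely inside $L^2(\mathcal{M})$, manage the non-unitality of $p$ and $q$, and verify that the constructed $v$ satisfies $v^*v \leq p$, $vv^* \leq q$; this is by now a standard but delicate maneuver in Popa's intertwining machinery.
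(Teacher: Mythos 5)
Your overall architecture (basic construction, averaging, finite-trace spectral projection, bimodule-to-intertwiner translation) is the right one, but the pivotal step $(2)\Rightarrow(1)$ contains a genuine error: the convex set $K=\overline{\mathrm{conv}}^{\mathrm{wk}}\{u e_{\mathcal Q}u^{*}:u\in\mathcal G\}$ is too small, and condition (2) does \emph{not} force its minimal-norm element $a$ to be nonzero. Concretely, take $\mathcal M=\mathcal L(\mathbb F_2)$ with free generators $a,b$, $\mathcal Q=\mathcal L(\langle a\rangle)$, $\mathcal P=\mathcal L(\langle bab^{-1}\rangle)$ and $\mathcal G=\{u_{(bab^{-1})^{n}}:n\in\mathbb Z\}$, so $\mathcal G''=\mathcal P$. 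Since $E_{\mathcal Q}(u_{(bab^{-1})^{n}})=0$ for $n\neq 0$, the vectors $u e_{\mathcal Q}u^{*}$, $u\in\mathcal G$, form an orthonormal family in $L^{2}(\langle\mathcal M,e_{\mathcal Q}\rangle,\mathrm{Tr})$, so Ces\`aro averages show $a=0$; yet (1), (2), (3) all hold here (take $v=u_{b}^{*}$, $\Theta=\mathrm{ad}(u_b^*)$, and note $\|E_{\mathcal Q}(u_{b}^{*}\,u\,u_{b})\|_{2}=1$ for every $u\in\mathcal G$). Your justification that ``$a=0$ would yield a net violating (2)'' fails for the same reason: smallness of $\|\sum_i\lambda_i u_ie_{\mathcal Q}u_i^{*}\|_{2,\mathrm{Tr}}$ only controls averages of $\mathrm{Tr}\bigl((x^{*}e_{\mathcal Q}x)\,u_i e_{\mathcal Q}u_i^{*}\bigr)=\|E_{\mathcal Q}(xu_i)\|_2^2$, i.e.\ the case $y=1$, never the two-sided quantities $\|E_{\mathcal Q}(xu_iy)\|_2$. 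The correct route (Popa's) is the contrapositive $\neg(1)\Rightarrow\neg(2)$: for each finite $F\subset\mathcal M$ average the finite-trace element $d_F=\sum_{y\in F}ye_{\mathcal Q}y^{*}$ over $\mathcal G$; if (1) fails, the minimal-norm element of that hull (which lies in $\mathcal P'\cap\langle\mathcal M,e_{\mathcal Q}\rangle$ and has finite trace) must vanish, and then Cauchy--Schwarz against $c_F=\sum_{x\in F}x^{*}e_{\mathcal Q}x$, using $\|E_{\mathcal Q}(xuy)\|_2^2=\mathrm{Tr}\bigl((x^{*}e_{\mathcal Q}x)\,u(ye_{\mathcal Q}y^{*})u^{*}\bigr)$, extracts a single $u\in\mathcal G$ with $\sum_{x,y\in F}\|E_{\mathcal Q}(xuy)\|_2^2$ small; indexing over pairs $(F,\varepsilon)$ gives the net.

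Two further steps need repair. Your $(2)\Rightarrow(3)$ via Kaplansky density is not valid: $\mathcal G$ is generally not SOT-dense in $\mathscr U(\mathcal P)$ (e.g.\ $\mathcal G$ the powers of a single Haar unitary), and Kaplansky only gives density of the unit ball of the generated algebra, not of the group; in the standard treatment $(2)\Rightarrow(3)$ is obtained by composing $\neg(3)\Rightarrow\neg(1)$ (the contrapositive of $(1)\Rightarrow(3)$) with $\neg(1)\Rightarrow\neg(2)$ as above, rather than directly. Finally, in $(1)\Rightarrow(3)$ the identity you state is garbled: from $\Theta(x)v=vx$ one gets $E_{\mathcal Q}(vuv^{*})=\Theta(u)E_{\mathcal Q}(vv^{*})$, hence $\|E_{\mathcal Q}(vuv^{*})\|_2=\|E_{\mathcal Q}(vv^{*})\|_2>0$ for $u\in\mathscr U(p\mathcal Pp)$; and since $pup$ is not a unitary of $p\mathcal Pp$ when $u\in\mathscr U(\mathcal P)$, the passage to all of $\mathscr U(\mathcal P)$ genuinely requires finitely many $x_i,y_i$ (not just $x_1=v^{*}$, $y_1=v$), which is exactly why (3) is phrased with a finite family.
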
 
\vskip 0.02in
\noindent If one of the equivalent conditions from Theorem \ref{corner} holds, one says \emph{a corner of $ \mathcal{P}$ embeds into $ \mathcal{Q}$ inside $ \mathcal{M}$}, and writes $ \mathcal{P}\prec_{ \mathcal{M}} \mathcal{Q}$. If we moreover have that $ \mathcal{P} p'\prec_{ \mathcal{M}} \mathcal{Q}$, for any projection  $0\neq p'\in  \mathcal{P}'\cap 1_{ \mathcal{P}}  \mathcal{M} 1_{ \mathcal{P}}$, then one writes $ \mathcal{P}\prec_{ \mathcal{M}}^{\rm s} \mathcal{Q}$.
\vskip 0.05in

It is not difficult to see the intertwining relation is not transitive in general. However, transitivity can be insured if slightly stronger assumptions are imposed on the second intertwining, \cite{IPP05, Vae08}.

\begin{theorem}\label{transint} Let $\mathcal P\subseteq \mathcal M$ and $\mathcal Q \subseteq \mathcal R \subseteq \mathcal M$ be von Neumann algebras. Let  $p\in \mathcal P $, $
r\in \mathcal R$ be projections, $0\neq v\in  \mathcal{M} $ be a partial isometry and $\Theta:p  \mathcal{P} p\rightarrow r \mathcal{R} r$ be a $\ast$-homomorphism such that $v^*v\leq p$, $vv^*\leq r$, $s(E_{\mathcal R} (vv^*))= r$, and $\Theta(x)v=vx$, for all $x\in p  \mathcal{P} p$. If $\Theta(p\mathcal P p)\prec _\mathcal R \mathcal Q$ then $\mathcal P \prec_\mathcal M \mathcal Q$.
\end{theorem}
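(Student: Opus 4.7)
My plan is to argue by contradiction: assuming $\mathcal P \not\prec_\mathcal M \mathcal Q$, I will manufacture a witness to $\Theta(p\mathcal P p) \not\prec_\mathcal R \mathcal Q$ via Theorem~\ref{corner}(2), contradicting the hypothesis. The transfer will be mediated by the positive element $e := E_\mathcal R(vv^*) \in (r\mathcal R r)_+$, whose support equals $r$ by assumption.

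First I would observe that any intertwining corner inside $p\mathcal P p$ is also one inside $\mathcal P$ (take $p_0 \leq p$ in the definition), so from $\mathcal P \not\prec_\mathcal M \mathcal Q$ one immediately gets $p\mathcal P p \not\prec_\mathcal M \mathcal Q$. Hence by Theorem~\ref{corner}(2) there exists a net $(u_n)_n \subseteq \mathscr U(p\mathcal P p)$ with $\|E_\mathcal Q(a u_n b)\|_2 \to 0$ for every $a,b \in \mathcal M$. Next, applying $\Theta(x)v = vx$ with $x=u_n$, multiplying by $v^*$ on the right, and taking $E_\mathcal R$ (which is $\mathcal R$-bimodular, together with the fact that $\Theta(u_n) \in r\mathcal R r$) produces the identity $\Theta(u_n) e = E_\mathcal R(v u_n v^*)$. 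Sandwiching by elements $a,b \in \mathcal R$ and using $E_\mathcal Q = E_\mathcal Q \circ E_\mathcal R$ yields
$$E_\mathcal Q(a \Theta(u_n) e b) = E_\mathcal Q(a v u_n v^* b),$$
whose $\|\cdot\|_2$ norm tends to zero by the choice of $(u_n)$, since $av, v^*b \in \mathcal M$.

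The main obstacle, and heart of the argument, is removing the factor $e$. Iterating the identity above with $b$ replaced by $e^{k-1}b$ and using linearity, $\|E_\mathcal Q(a \Theta(u_n) p(e) b)\|_2 \to 0$ holds for every polynomial $p$ with $p(0)=0$; uniform approximation on $\sigma(e)$ extends this to arbitrary $f \in C_0(\sigma(e)\setminus\{0\})$. The assumption $s(e)=r$ furnishes $f_k$ with $f_k(e) \to r$ strongly. Since $\Theta(u_n) = r\Theta(u_n)r$, the intertwiners $a_i, b_i$ provided by Theorem~\ref{corner}(3) for $\Theta(p\mathcal P p)\prec_\mathcal R \mathcal Q$ may be assumed in $\mathcal R r$ and $r\mathcal R$ respectively, so $\|b_i - f_k(e)b_i\|_2 \to 0$, and a three-$\varepsilon$ argument upgrades the vanishing to $\|E_\mathcal Q(a_i \Theta(u_n) b_i)\|_2 \to 0$. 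Because $\Theta(\mathscr U(p\mathcal P p))$ weakly generates $\Theta(p\mathcal P p)$, this contradicts $\Theta(p\mathcal P p)\prec_\mathcal R\mathcal Q$ via Theorem~\ref{corner}(2). Without the non-degeneracy $s(e)=r$ the functional-calculus approximation collapses and one only recovers intertwining inside the smaller corner $s(e)\mathcal R s(e)$, which is why this hypothesis is essential.
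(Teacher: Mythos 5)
Your proof is correct, but it follows a genuinely different route from the paper's. The paper argues directly with criterion (1) of Theorem \ref{corner}: it takes a homomorphism $\Phi$ and partial isometry $y$ witnessing $\Theta(p\mathcal P p)\prec_{\mathcal R}\mathcal Q$, composes them with $\Theta$ and $v$ to get $yvx=\Phi(\Theta(x))yv$ on a corner $f\mathcal P f$ with $\Theta(f)=e$, and then uses exactly the hypothesis $s(E_{\mathcal R}(vv^*))=r$ to rule out $yv=0$ (if $yv=0$ then $0=E_{\mathcal R}(yvv^*)=yE_{\mathcal R}(vv^*)$, hence $y=yr=0$); the polar part of $yv$ then intertwines $\Phi\circ\Theta$, giving $\mathcal P\prec_{\mathcal M}\mathcal Q$. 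You instead run a contradiction through the analytic criteria (2)/(3): the reduction from $\mathcal P\not\prec_{\mathcal M}\mathcal Q$ to a net in $\mathscr U(p\mathcal P p)$ is legitimate, the identity $\Theta(u_n)E_{\mathcal R}(vv^*)=E_{\mathcal R}(vu_nv^*)$ together with $E_{\mathcal Q}\circ E_{\mathcal R}=E_{\mathcal Q}$ correctly transfers the vanishing to expressions $E_{\mathcal Q}(a\Theta(u_n)eb)$ with $a,b\in\mathcal R$, and your iteration/functional-calculus step (powers of $e$, polynomials vanishing at $0$, then $f_k(e)\to s(e)=r$ strongly) validly removes $e$ against the finitely many intertwiners from criterion (3), since those may indeed be cut by $r$ because $\Theta(u_n)=r\Theta(u_n)r$. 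Both arguments use the non-degeneracy $s(E_{\mathcal R}(vv^*))=r$ in the same essential role; the paper's proof is shorter and stays at the level of bimodule/homomorphism composition, while yours avoids lifting the projection $e$ through $\Theta$ and composing partial isometries, at the cost of a longer approximation argument. (Minor points only: your final contradiction is really against criterion (3) rather than (2), and, like the paper, you implicitly use that $\Theta$ is normal so that $\Theta(p\mathcal P p)$ is a von Neumann subalgebra with unit $\Theta(p)$; neither affects correctness.)
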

\begin{proof} Let $\mathcal S:=\Theta( p\mathcal P p)$.  Thus one can find projections $e \in \mathcal S, q \in \mathcal Q$, a partial isometry $0\neq y \in e\mathcal R q$ and a $\ast$-isomorphism onto its image $\Phi: e\mathcal Se \rightarrow q\mathcal Q q$ so that 
\begin{equation}\label{eq6}
   \Phi (x)y = yx \ \text{for all} \; x \in e\mathcal S e.
\end{equation}
Let $f \in p\mathcal P p$ be a projection with $\Theta (f) =e$. Using \eqref{eq6} and the hypothesis intertwining relation, for all $ x \in f\mathcal P f$, we have \begin{equation}\label{intertwining10} y vx = y\Theta(x)v = y\Theta(x)v = \Phi(\Theta(x) )yv.\end{equation} Next, we argue that $yv \neq 0$. Assuming otherwise, we have $yvv^* =0$. Applying the expectation and using $\mathcal Q \subseteq \mathcal R$ we get $ 0 = E_{\mathcal R}(yvv^*)=yE_{\mathcal R}(vv^*)$. Thus  $0=y s (E_{\mathcal R}(vv^*))=yr=y$, a contradiction. If $w$ is the polar decomposition $yv$ then \eqref{intertwining10} implies that $w x = \Phi\circ \Theta(x)w, \text{ for all} \; x \in f\mathcal P f$ and hence $\mathcal{P} \preceq_{\mathcal M} \mathcal Q$.\end{proof}

We end this subsection with the following basic intertwining result which is the crossed-product von Neumann algebraic counterpart of \cite[Lemma 2.2]{CI17}. Since the proof is essentially the same with the one in \cite{CI17} we will omit it. 
\begin{lemma}[Lemma 2.2 in \cite{CI17}]\label{intsubgroups} Let $H,K<G$ be groups, let $G \curvearrowright \mathcal N$ be a trace preserving action on a tracial von Neumann algebras $\mathcal N$. Denote by $\mathcal M = \mathcal N\rtimes G$ the corresponding crossed product von Neumann algebra. If $\mathcal L(H)\prec_{\mathcal M}\mathcal N \rtimes K$ then one can find $g\in G$ such that $[H: H\cap gKg^{-1}]<\infty$.    
\end{lemma}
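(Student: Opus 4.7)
The plan is to unpack Popa's intertwining criterion (Theorem~\ref{corner}(3)): the assumption $\mathcal L(H)\prec_\mathcal M \mathcal N\rtimes K$ provides elements $x_1,\ldots,x_n,y_1,\ldots,y_n\in \mathcal M$ and a constant $C>0$ such that
$$\sum_{i=1}^n \bigl\|E_{\mathcal N\rtimes K}(x_i u_h y_i)\bigr\|_2^2\geq C \quad\text{for every } h\in H.$$
First I would reduce to the case where each $x_i$ and $y_i$ is a \emph{finite} Fourier sum $\sum_{g\in F} a_g u_g$ with $a_g\in \mathcal N$. Using Kaplansky density one can approximate each $x_i,y_i$ in $\|\cdot\|_2$ by such finite sums while keeping the operator norms bounded; combined with the module inequality $\|E_{\mathcal N\rtimes K}(zvy)\|_2\le \|z\|_2\|y\|_\infty$ for $v$ a unitary (and its symmetric counterpart), the lower bound survives with $C/2$ in place of $C$, uniformly in $h\in H$.

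Writing then $x_i=\sum_{g\in F_i}a_{i,g}u_g$, $y_i=\sum_{k\in F_i'}b_{i,k}u_k$ and letting $\sigma$ denote the action of $G$ on $\mathcal N$, the Fourier expansion yields
$$E_{\mathcal N\rtimes K}(x_i u_h y_i)=\sum_{\substack{(g,k)\in F_i\times F_i'\\ ghk\in K}} a_{i,g}\,\sigma_{gh}(b_{i,k})\,u_{ghk}.$$
The preserved lower bound forces, for every $h\in H$, the existence of some $i$ and some pair $(g,k)\in F_i\times F_i'$ with $ghk\in K$; equivalently, $h\in g^{-1}Kk^{-1}$. Hence $H\subseteq \bigcup_{j=1}^N g_j^{-1}K k_j^{-1}$ for finitely many pairs $(g_j,k_j)\in G\times G$. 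For each $j$ with $H\cap g_j^{-1}Kk_j^{-1}\neq\emptyset$, fix $h_j$ in that set; any other $h$ in the same set then satisfies $(g_jhk_j)(g_jh_jk_j)^{-1}=g_j(hh_j^{-1})g_j^{-1}\in K$, giving the inclusion $H\cap g_j^{-1}Kk_j^{-1}\subseteq (H\cap g_j^{-1}Kg_j)\,h_j$. Therefore $H$ is a finite union of right cosets of the subgroups $H\cap g_j^{-1}Kg_j$, and B.H.~Neumann's coset lemma produces an index $j$ with $[H:H\cap g_j^{-1}Kg_j]<\infty$; setting $g=g_j^{-1}$ completes the proof.

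The argument is essentially algebraic once the intertwining hypothesis has been converted into a covering of $H$ by finitely many ``bicosets'' $g^{-1}Kk^{-1}$. The only technical point is the finite-sum reduction at the start: one must invoke Kaplansky density rather than naive Fourier truncation (which need not preserve $\|\cdot\|_\infty$) in order to keep the approximating operator norms controlled during the $\|\cdot\|_2$-approximation. The conceptual key is the reorganization of a bicoset covering of $H$ into one by cosets of genuine subgroups $H\cap g^{-1}Kg$, which is what makes Neumann's lemma directly applicable.
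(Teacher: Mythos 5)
Your argument is correct and is essentially the same as the proof of \cite[Lemma 2.2]{CI17} that the paper cites while omitting the details: reduce to finite Fourier supports (with Kaplansky controlling operator norms), observe that each set $\{h\in H : ghk\in K\}$ is a right coset of $H\cap g^{-1}Kg$, and conclude with B.H.\ Neumann's lemma on coverings by cosets. The only cosmetic difference is that you run the implication directly from condition (3) of Theorem \ref{corner}, whereas the cited proof argues the contrapositive via condition (2), using the same Neumann-type covering argument to produce a sequence in $H$ escaping all the relevant bicosets.
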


\subsection{Quasinormalizers}
 Following \cite[Definition 4.8]{Pop99}, for a given inclusion $\mathcal A \subseteq \M$ of finite von Neumann algebra we define the quasi-normalizer  $\mathcal {QN}_{\mathcal M}(\mathcal A)$ as the set of all elements $x\in \M$ for which there exist $x_1,\ldots,x_n\in \M$ such that $\mathcal A x \subseteq \sum_i x_i \mathcal A$ and $x\mathcal A \subseteq \sum_i \mathcal A x_i$. Also the one-sided quasi-normalizer $\mathcal {QN}^{1}_{\mathcal M}(\mathcal A)$ is defined as the set of all elements $x\in \M$ for which there exist $x_1,\ldots,x_n\in \M$ such that $\mathcal A x\subseteq \sum_i x_i \mathcal A$, \cite{FGS10}.

\noindent We now record some formulas on compressions of von Neumann algebras generated by quasi-normalizers.

\begin{proposition}\label{quasinormcompress}\emph{\cite{Pop03,FGS10}}\label{QN1}
Let $\mathcal A\subset \M$ be tracial von Neumann algebras. For any projections $p\in\mathcal A$, $p'\in \mathcal A '\cap \mathcal M$ the following hold:
\begin{enumerate}
    \item ${ \mathcal{QN}^{1}_{p\M p}}(p\mathcal A p)''=p { \mathcal{QN}^{1}_{\mathcal M}}(\mathcal A)''p$. 
    
    \item ${ \mathcal{QN}_{pp'\M pp'}}( p\A p p')''=pp'{ \mathcal{QN}_{\M}}(\mathcal A)''pp'$
\end{enumerate}

\end{proposition}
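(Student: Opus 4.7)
The plan is to verify each equality by establishing the two containments, leveraging a Pimsner--Popa type basis of $\mathcal A p$ over $p\mathcal A p$. After compressing by the central support $z=z(p)\in \mathcal Z(\mathcal A)$ and replacing $(\mathcal M,\mathcal A)$ by $(z\mathcal M z, z\mathcal A)$, one may assume that $p$ has full central support in $\mathcal A$; under this reduction $\mathcal A p$ admits a Pimsner--Popa basis $\{v_k\}$ as a right Hilbert $p\mathcal A p$-module consisting of partial isometries $v_k\in \mathcal A p$ satisfying $v_k^*v_j=\delta_{jk}p$ and $\sum_k v_k v_k^*=1$, and this basis is finite when $\mathcal A$ is a factor.

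For the containment $\mathcal{QN}^1_{p\mathcal M p}(p\mathcal A p)''\subseteq p\mathcal{QN}^1_\mathcal M(\mathcal A)''p$ in part (1), take $y=pyp\in \mathcal{QN}^1_{p\mathcal M p}(p\mathcal A p)$ with $p\mathcal A p\cdot y\subseteq \sum_{i=1}^n y_i\,p\mathcal A p$. For $a\in\mathcal A$, the basis expansion $ap=\sum_k v_k E_{p\mathcal A p}(v_k^*ap)$ yields
\begin{equation*}
ay \;=\; ap\cdot y \;=\; \sum_k v_k\bigl(E_{p\mathcal A p}(v_k^*ap)\cdot y\bigr) \;\in\; \sum_{k,i} v_k y_i\,\mathcal A,
\end{equation*}
since $E_{p\mathcal A p}(v_k^*ap)\in p\mathcal A p$ and $p\mathcal A p\cdot y\subseteq\sum_i y_i p\mathcal A p\subseteq\sum_i y_i\mathcal A$. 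When the basis is finite this places $y\in\mathcal{QN}^1_\mathcal M(\mathcal A)$ outright; in general, truncating gives $y_N=P_N y$ with $P_N=\sum_{k\leq N}v_kv_k^*\in\mathcal A$, each $y_N\in\mathcal{QN}^1_\mathcal M(\mathcal A)$, and $y_N\to y$ in WOT, placing $y\in\mathcal{QN}^1_\mathcal M(\mathcal A)''$ and thus in $p\mathcal{QN}^1_\mathcal M(\mathcal A)''p$.

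The reverse inclusion is symmetric: for $x\in\mathcal{QN}^1_\mathcal M(\mathcal A)$ with $\mathcal A x\subseteq\sum_i x_i\mathcal A$ and $a\in p\mathcal A p$, writing $ax=\sum_i x_i c_i(a)$ with $c_i(a)\in\mathcal A$ and expanding each $c_i(a)p\in\mathcal A p$ in the basis gives
\begin{equation*}
a\cdot pxp \;=\; p\cdot axp \;=\; \sum_{k,i}(px_i v_k)\cdot E_{p\mathcal A p}(v_k^*c_i(a)p),
\end{equation*}
so that $p\mathcal A p\cdot pxp\subseteq\sum_{k,i}(px_iv_k)\cdot p\mathcal A p$, showing $pxp\in\mathcal{QN}^1_{p\mathcal M p}(p\mathcal A p)$. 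For part (2) one repeats this strategy for the two-sided quasi-normalizer, running basis expansions on both sides of $\mathcal A x\subseteq\sum x_i\mathcal A$ and $x\mathcal A\subseteq\sum\mathcal A x_i$ simultaneously. The additional compression by $p'\in\mathcal A'\cap\mathcal M$ is elementary: since $\mathcal A p'=p'\mathcal A$, the projection $p'$ lies in $\mathcal{QN}_\mathcal M(\mathcal A)$ (taking $\{p'\}$ as its own finite family), hence in $\mathcal{QN}_\mathcal M(\mathcal A)''$, and compression by $p'$ commutes with taking double commutants and with the earlier compression by $p$.

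The principal technical obstacle is controlling the size of the Pimsner--Popa basis when $1/\tau(p)$ is not essentially bounded over $\mathcal Z(\mathcal A)$, so that no finite basis of $\mathcal A p$ over $p\mathcal A p$ exists. This is handled either by the truncation-and-WOT approximation indicated above, or more cleanly by invoking the Hilbert-bimodule characterization of one-sided quasi-normalizers employed in \cite{Pop03, FGS10}, in which the finite-projection structure of the relative commutant $\mathcal A'\cap \langle\mathcal M,e_\mathcal A\rangle$ inside the Jones basic construction eliminates the need to produce an explicit finite basis.
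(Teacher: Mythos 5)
The paper does not actually prove this proposition (it is quoted from \cite{Pop03,FGS10}), so your attempt has to stand on its own, and as written it has a genuine gap exactly where the statement has content beyond the factor case. Your computations in the finite-basis situation are essentially correct (and the imprecision $v_k^*v_j=\delta_{jk}p$ is harmless, since all you use is $v_k=v_kp$, $\sum_k v_kv_k^*=1$, $v_k^*ap\in p\mathcal A p$). The problem is the truncation step for an infinite basis: you assert that $y_N=P_Ny$ with $P_N=\sum_{k\le N}v_kv_k^*$ lies in $\mathcal{QN}^1_{\mathcal M}(\mathcal A)$, but nothing in your argument shows this. The natural computation gives $ay_N=\sum_{k\le N}(av_k)(v_k^*y)\in\sum_{k\le N,i}(av_ky_i)\,p\mathcal A p$, where the left factors $av_ky_i$ depend on $a$, which is not the quasi-normalizer condition; if instead you expand $aP_Np$ in the basis, you are back to a sum over \emph{all} $k$, so no finite family of left multipliers is produced. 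Left-truncating $y$ simply does not cut down $\mathcal A P_N y$ to a finitely generated right $\mathcal A$-module, so this step fails, and your fallback -- ``invoke the Hilbert-bimodule characterization employed in \cite{Pop03,FGS10}'' -- is just citing the results you are supposed to prove. A fix in the spirit of your argument does exist, but it is a different reduction: first check the (easy) formula for a \emph{central} projection $z\in\mathscr Z(\mathcal A)$, then cut by the spectral projections $z_m=\chi_{[1/m,1]}(E_{\mathscr Z(\mathcal A)}(p))$, for which $pz_m$ does admit a finite basis in $\mathcal A z_m$; since $pz_m\nearrow p$ and both algebras in (1) contain $p\mathcal A p$, equality of all the corners $pz_m(\cdot)pz_m$ yields the equality, with no WOT-approximation of quasi-normalizing elements needed.

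Part (2) is also not dealt with. The observation that $p'\in\mathcal{QN}_{\mathcal M}(\mathcal A)$ and the computation $(p'ap')(p'xp')=\sum_i(p'x_ip')(p'a_ip')$ only give the \emph{easy} containment $pp'\,\mathcal{QN}_{\mathcal M}(\mathcal A)''\,pp'\subseteq\mathcal{QN}_{pp'\mathcal Mpp'}(p\mathcal A pp')''$; the substantive half is the reverse inclusion, i.e.\ that an element quasi-normalizing $p\mathcal A pp'$ inside $pp'\mathcal Mpp'$ already lies in $pp'\,\mathcal{QN}_{\mathcal M}(\mathcal A)''\,pp'$, and ``compression by $p'$ commutes with taking double commutants'' is not an argument for it. The asymmetry of the statement should be a warning sign here: part (1) is formulated only for $p\in\mathcal A$ because the one-sided compression formula can fail for projections in $\mathcal A'\cap\mathcal M$, so the $p'$-direction of the two-sided formula genuinely requires Popa's argument from \cite{Pop03} and cannot be waved through as elementary.
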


\begin{definition}
Related to this we define inductively a tower of one-sided normalizers von Neumann algebras for all ordinals $\beta,$ as follows: Let $\mathcal{A}^0 = \mathcal{A}.$ Now if $\beta$ is a successor ordinal then let $\mathcal{A}^{\beta}\subseteq \M $ be the von Neumann subalgebra generated by $\mathcal{QN}_{\mathcal M}^{1}(\mathcal{A}^{\beta - 1})$. Notice that $\mathcal{A}^{\beta -1} \subseteq \mathcal{A}^{\beta}.$ If $\beta$ is a limit ordinal then let $\mathcal{A}^{\beta} = \overline{\bigcup_{\gamma < \beta}\mathcal{A}^{\gamma}}^{\rm SOT}.$ Now let $\alpha$ be the first ordinal when the chain $(\mathcal{A}^{\beta})_{\beta}$ stabilizes, i.e. $\mathcal{A}^{\alpha}=\mathcal{A}^{\alpha +1}.$ Then denote by $\overline{\mathcal {QN}}_{\M}^{1}(\mathcal{A})= \mathcal{A}^{\alpha}.$
\end{definition}


For further use we record the following result on control of quasinormalizers which follows from various prior works on (relative) mixing bimodules \cite{IPP05}, \cite[Appendix A]{Bou14}. For the sake of completeness, we inlcude a brief argument based on the proof of \cite[Theorem 1.2.1]{IPP05}.

\begin{theorem}\label{quasinormalizercontrol}
    Let $H<G$ be groups such that $H$ is almost malnormal in $G$.  Assume $G \curvearrowright \mathcal N$ is a trace-preserving action and let $\mathcal M=\mathcal N \rtimes G$. Let $p\in \mathcal N \rtimes H$ be a projection and let $\mathcal Q \subseteq p (\mathcal N\rtimes H) p$ be a von Neumann subalgebra such that $\mathcal Q \nprec \mathcal N$. \\ Then $\overline{\mathcal {QN}}_{p\mathcal M p}^1(\mathcal Q)'' \subseteq p (\mathcal N \rtimes H )p$. In particular, $ \mathcal Q'\cap p\mathcal M p, \mathcal N_{p\mathcal M p}(\mathcal Q)''\subseteq p(\mathcal N \rtimes H)p$.
    \end{theorem}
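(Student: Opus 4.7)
The proof has two layers: a single-step quasi-normalizer containment, followed by a transfinite iteration along the tower $(\mathcal A^\beta)_\beta$ in the definition of $\overline{\mathcal{QN}}^1_{p\mathcal M p}(\mathcal Q)$.

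\emph{Step 1.} The first task is to show $\mathcal{QN}^1_{p\mathcal M p}(\mathcal Q)'' \subseteq p(\mathcal N \rtimes H)p$. Given $x \in \mathcal{QN}^1_{p\mathcal M p}(\mathcal Q)$ with $\mathcal Q x \subseteq \sum_{i=1}^k x_i \mathcal Q$, set $e = E_{p(\mathcal N \rtimes H)p}$, $\xi = x - e(x)$, and $\eta_i = x_i - e(x_i)$; the goal is $\xi = 0$. For each $u \in \mathcal U(\mathcal Q)$, write $ux = \sum_i x_i y_i(u)$ with $y_i(u) \in \mathcal Q$ of uniformly bounded $L^\infty$-norm, and use that $e$ is bimodular over $p(\mathcal N \rtimes H)p$ (which contains $u$ and $y_i(u)$) to deduce the identity $u\xi = \sum_i \eta_i y_i(u)$. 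Since $\|u\xi\|_2 = \|\xi\|_2$, this yields
\[
\|\xi\|_2^2 \;=\; \sum_{i=1}^k \tau\!\bigl(y_i(u)\, \xi^* u^*\, \eta_i\bigr) \;=\; \sum_{i=1}^k \tau\!\bigl(y_i(u)\, e(\xi^* u^* \eta_i)\bigr),
\]
where the second equality uses $y_i(u) \in \mathcal N \rtimes H$. The hypothesis $\mathcal Q \nprec_{\mathcal M} \mathcal N$ provides, via Theorem~\ref{corner}, a net $u_n \in \mathcal U(\mathcal Q)$ with $\|E_\mathcal N(a u_n b)\|_2 \to 0$ for every $a, b \in \mathcal M$.

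The decisive input is the relative mixing estimate
\[
\lim_n \bigl\|e(\alpha\, u_n\, \beta)\bigr\|_2 = 0 \quad \text{for all bounded } \alpha, \beta \in L^2(p\mathcal M p) \ominus L^2(p(\mathcal N \rtimes H)p),
\]
and this is exactly where almost malnormality of $H$ in $G$ enters: decomposing $L^2(\mathcal M) \ominus L^2(\mathcal N \rtimes H)$ along the nontrivial $(H,H)$-double cosets, the finiteness of $gHg^{-1} \cap H$ for $g \notin H$ forces each summand to be weakly contained in the coarse $(\mathcal N \rtimes H)$-bimodule, so $\mathcal N$-decay transfers to $(\mathcal N \rtimes H)$-decay on such orthogonal vectors; this is the content of \cite[Lemma~1.4.3]{IPP05} combined with the bimodule formalism of Appendix~A in \cite{Bou14}. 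Applied with $\alpha = \xi^*$ and $\beta = \eta_i$, together with Cauchy--Schwarz and the uniform $L^\infty$-bound on $y_i(u_n)$, every term in the displayed identity vanishes along $u_n$, so $\|\xi\|_2 = 0$ and $x \in p(\mathcal N \rtimes H)p$.

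\emph{Step 2.} I then show $\mathcal A^\beta \subseteq p(\mathcal N \rtimes H)p$ for every ordinal $\beta$, by transfinite induction. The base $\beta = 0$ is the hypothesis. The key observation enabling the inductive step is that $\mathcal Q \subseteq \mathcal A^\beta$ and $\mathcal Q \nprec_\mathcal M \mathcal N$ together force $\mathcal A^\beta \nprec_\mathcal M \mathcal N$: any witnessing data $(x_i, y_i, C)$ for $\mathcal A^\beta \prec_\mathcal M \mathcal N$ supplied by Theorem~\ref{corner}(3) would automatically witness $\mathcal Q \prec_\mathcal M \mathcal N$, since $\mathcal U(\mathcal Q) \subseteq \mathcal U(\mathcal A^\beta)$, contradicting the hypothesis. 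Hence Step~1 applies to $\mathcal A^\beta$ and yields $\mathcal A^{\beta+1} = \mathcal{QN}^1_{p\mathcal M p}(\mathcal A^\beta)'' \subseteq p(\mathcal N \rtimes H)p$; limit ordinals are immediate from weak-operator closedness of $p(\mathcal N \rtimes H)p$. Once the tower stabilizes, we obtain $\overline{\mathcal{QN}}^1_{p\mathcal M p}(\mathcal Q) \subseteq p(\mathcal N \rtimes H)p$, and the ``in particular'' claim follows because both $\mathcal Q' \cap p\mathcal M p$ and $\mathcal N_{p\mathcal M p}(\mathcal Q)''$ sit inside $\mathcal{QN}^1_{p\mathcal M p}(\mathcal Q)$.

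\emph{Main obstacle.} The technical heart is the relative mixing estimate of Step~1, which upgrades decay against $\mathcal N$ to decay against the strictly larger algebra $\mathcal N \rtimes H$. Almost malnormality is precisely the group-theoretic input that makes this upgrade possible, via the finite stabilizers of nontrivial $(H,H)$-double cosets. While delicate, this machinery is well developed in \cite{IPP05,Bou14}, so I would invoke it rather than reproving it from scratch; the remaining steps are essentially bookkeeping.
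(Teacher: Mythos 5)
Your overall architecture matches the paper's: reduce to the one-step containment $\mathcal{QN}^1_{p\mathcal M p}(\mathcal Q)\subseteq p(\mathcal N\rtimes H)p$, identify as the technical heart the estimate $\lVert E_{\mathcal N\rtimes H}(\alpha\, u_n\, \beta)\rVert_2\to 0$ for $\alpha,\beta\in\mathcal M\ominus(\mathcal N\rtimes H)$ along a net $(u_n)\subset\mathcal U(\mathcal Q)$ furnished by $\mathcal Q\nprec\mathcal N$ (this is exactly the Claim in the paper's proof, proved there from almost malnormality via the finiteness of the sets $H\cap g_1^{-1}Hg_2$ and a finitely generated $\mathcal N$-module), and then iterate transfinitely. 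Your Step~2 is correct and in fact spells out the iteration that the paper leaves implicit; the observation that $\mathcal Q\subseteq\mathcal A^\beta$ and $\mathcal Q\nprec\mathcal N$ force $\mathcal A^\beta\nprec\mathcal N$ is fine.

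However, Step~1 has a genuine gap: you write $u x=\sum_i x_i y_i(u)$ with $y_i(u)\in\mathcal Q$ ``of uniformly bounded $L^\infty$-norm,'' and the whole limit argument hinges on this, since your final estimate needs $\sup_n\lVert y_i(u_n)\rVert$ (even just in $\lVert\cdot\rVert_2$) to be finite while $\lVert E_{\mathcal N\rtimes H}(\xi^*u_n^*\eta_i)\rVert_2\to 0$. The definition of the one-sided quasi-normalizer only gives, for each $u$, \emph{some} coefficients $y_i(u)$; they are neither canonical nor norm-controlled, and in general no choice with a uniform bound exists (already for $k=1$: if $x_1=v\lvert x_1\rvert$ with $\lvert x_1\rvert$ injective but not boundedly invertible, $\lVert y(u)\rVert$ can blow up along the net even though $\lVert x_1y(u)\rVert_2=\lVert x\rVert_2$). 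This is precisely the difficulty the standard argument is built to avoid: the paper, following \cite[Theorem 1.2.1]{IPP05}, replaces the coefficient bookkeeping by the orthogonal projection $q$ onto $\overline{\mathcal Q\,\xi\,(\mathcal N\rtimes H)}^{\lVert\cdot\rVert_2}$, which lies in $\mathcal Q'\cap p\langle\mathcal M,e_{\mathcal N\rtimes H}\rangle p$ and has finite trace because $\mathcal Q\,\xi\,(\mathcal N\rtimes H)\subseteq\sum_i\xi_i(\mathcal N\rtimes H)$, and then derives $q\le e_{\mathcal N\rtimes H}$ from the same mixing Claim. Your argument could alternatively be repaired by replacing $x_1,\dots,x_k$ with a Pimsner--Popa orthonormal basis of the right Hilbert $(\mathcal N\rtimes H)$-module $\overline{\sum_i x_i(\mathcal N\rtimes H)}^{\lVert\cdot\rVert_2}$, which yields canonical coefficients with $\sum_i\lVert c_i(u)\rVert_2^2=\lVert x\rVert_2^2$; but the basis vectors are then only $L^2$-elements, so an additional approximation is needed before the mixing estimate (stated for bounded elements) can be applied. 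As written, the uniform-boundedness assertion is unjustified and the proof of Step~1 does not go through.
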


\begin{proof} We start by noticing we only need to prove that $\mathcal {QN}_{p\mathcal M p}^1(\mathcal Q) \subseteq p (\mathcal N \rtimes H )p$. We show this by closely following the proof of \cite[Theorem 1.2.1]{IPP05}.

Towards this, fix  $\xi \in \mathcal{QN}^1_{p\mathcal M p}(Q)$ for which there exist $\xi_1,\ldots,\xi_n \in p\mathcal M p$ so that  $\mathcal Q \xi \subseteq \sum_i \xi_i (\mathcal N \rtimes H)$ and let $q \in \mathbb{B}(L^2(\M))$ be the orthogonal projection onto $\overline{\Q \xi (\Nn \rtimes H)}^{\|\cdot\|_2}$, then $q \in \Q' \cap p \langle \M, e_{\Nn \rtimes H} \rangle p$ and $\mathrm{Tr} (q) < \infty$.  To prove $\xi \in L^2(\Nn \rtimes H)$ it will suffice to prove $q\leq e_{\Nn \rtimes H}$.  Moreover, following the proof of \cite[Theorem 1.2.1]{IPP05}, it will be sufficient to prove the following claim:

\begin{claim} For every $x_1, \ldots, x_m \in \M \ominus (\Nn \rtimes H)$ and every $\varepsilon >0$, there is $u\in \sU(\Q)$ such that $\lVert E_{\Nn\rtimes H}(x_i u x_j^*) \rVert_2 < \varepsilon$ for any $1\leq i,j \leq m$.
\end{claim}
Since each $x_i$ can be approximated in $\lVert \cdot \rVert_2$ by some $\tilde{x}_i \in \mathrm{span}_\Nn \{u_g : g\in G\setminus H\}$ with $\lVert \tilde{x}_i \rVert_\infty \leq 2\lVert x_i \rVert_\infty $ we can assume from the beginning that $x_i \in \mathrm{span}_\Nn \{u_g : g\in G\setminus H\}$.  Let $F = \cup_{i=1}^m \{ g, g^{-1} \in G: E_\Nn(x_iu_g^*) \neq 0 \}$, so $F=F^{-1} \subset G\setminus H$ is finite and $\mathrm{span} \{\Nn u_g : g\in F\}$ contains all $x_i,x_i^*$.  

Since $H$ is almost malnormal in $G$, for each $g_1,g_2 \in F$, the subset $H_{g_1,g_2} := \{ h \in H : g_1 h g_2^{-1} \in H \} = H \cap g_1^{-1} H g_2 $ is finite and so is $H_0 = \cup_{g_1,g_2 \in F} H_{g_1,g_2}$.  Let $\sK = \overline{\mathrm{span}}^{\lVert \cdot \rVert_2} \{ \Nn u_h : h \in H_0\}$ which is a finitely generated left $\Nn$-module, and let $\mathbb P_\sK : L^2(\mathcal M) \rightarrow\sK$ be the orthogonal projection onto $\sK$. For any $g_1,g_2 \in F$ and $u = \sum_{h\in H} c_h u_h \in \Nn \rtimes H$ we have $$E_{\Nn \rtimes H} (u_{g_1} u u_{g_2}^*) = E_{\Nn \rtimes H} (u_{g_1} \sum_{h\in H_{g_1,g_2}} c_h u_h u_{g_2}^*) = E_{\Nn \rtimes H} (u_{g_1} \mathbb P_\sK (u) u_{g_2}^*) $$
and so $E_{\Nn \rtimes H} (x_i u x_j^*) = E_{\Nn \rtimes H} (x_i P_\sK (u) x_j^*)$ for all $1\leq i,j \leq m$.  Since $\Q \nprec \Nn$, by Theorem \ref{corner} we can find a net $(u_n)_n \subset \sU(\Q)$ such that $\lVert \mathbb P_{\sK} (u_n) \rVert_2 \rightarrow 0$. Thus using the previous formula we obtain $$\lVert E_{\Nn\rtimes H} (x_i u_n x_j^*) \rVert_2 = \lVert E_{\Nn\rtimes H} (x_i P_\sK (u_n) x_j^*) \rVert_2 \leq \max_{i} \lVert x_i \rVert^2_\infty \lVert P_\sK (u_n) \rVert_2 \rightarrow 0.$$
This yields the desired claim.
\end{proof}

\begin{proposition}\label{nonintrhg} Let $G$ be hyperbolic relative to $\{H_1,\ldots, H_n\}$ and let $G \ca \mathcal A$ be trace preserving action on a tracial von Neumann algebra $\mathcal N$ and let $\mathcal M =\mathcal N \rtimes G$ be the corresponding crossed product von Neumann algebra.  Then for any $i\neq j$, any nonzero projection $p\in \mathcal N\rtimes H_i$ and any von Neumann  subalgebra $\mathcal B \subseteq p(\mathcal N\rtimes H_i)p$ with $\mathcal B \nprec_{\mathcal N \rtimes H_i} \mathcal N$ we must have that $\mathcal B\nprec_{\mathcal M} \mathcal N \rtimes H_j$.
    
\end{proposition}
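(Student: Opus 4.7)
The plan is to argue by contradiction, mimicking the proof of Theorem \ref{quasinormalizercontrol}, with the key geometric input being the almost malnormality of the peripheral structure of a relatively hyperbolic group (\cite{DGO17}): for every $i\neq j$ and every $g\in G$ the intersection $H_i\cap gH_jg^{-1}$ is finite. Assuming $\mathcal B\prec_{\mathcal M}\mathcal N\rtimes H_j$, by Theorem \ref{corner}(3) there exist $x_1,\ldots,x_m,y_1,\ldots,y_m\in \mathcal M$ and $C>0$ with $\sum_k\|E_{\mathcal N\rtimes H_j}(x_k u y_k)\|_2^2\geq C$ for all $u\in \mathscr U(\mathcal B)$. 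Using Kaplansky, I will first reduce to the situation where each $x_k,y_k$ belongs to $\mathrm{span}_{\mathcal N}\{u_g:g\in F\}$ for a common finite set $F\subset G$.

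Next I will perform the Fourier analysis on $G$. For $u=\sum_{h\in H_i}c_h u_h\in \mathcal N\rtimes H_i$ and $g,g'\in F$, the expression $E_{\mathcal N\rtimes H_j}(u_g u u_{g'})$ only picks up contributions from $h\in H_{g,g'}:=\{h\in H_i: ghg'\in H_j\}$. A short calculation shows that if nonempty, $H_{g,g'}$ is a right coset of $H_i\cap g^{-1}H_j g$, and so by almost malnormality it is finite. Setting $F_0:=\bigcup_{g,g'\in F}H_{g,g'}\subset H_i$ (still finite) and $\mathcal K:=\overline{\mathrm{span}}^{\|\cdot\|_2}\{\mathcal N u_h : h\in F_0\}$ with corresponding orthogonal projection $\mathbb P_{\mathcal K}$, one gets
\begin{equation*}
E_{\mathcal N\rtimes H_j}(x_k u y_l)=E_{\mathcal N\rtimes H_j}(x_k\,\mathbb P_{\mathcal K}(u)\,y_l),
\end{equation*}
and therefore an estimate of the form $\sum_k\|E_{\mathcal N\rtimes H_j}(x_ku y_k)\|_2^2\leq C_0\|\mathbb P_{\mathcal K}(u)\|_2^2$ for some constant $C_0$ depending on the $x_k,y_k$.

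Finally, since $\mathcal B\nprec_{\mathcal N\rtimes H_i}\mathcal N$, Theorem \ref{corner}(2) applied inside $\mathcal N\rtimes H_i$ yields a net $(u_n)\subset \mathscr U(\mathcal B)$ such that $\|E_{\mathcal N}(u_n u_h^*)\|_2\to 0$ for every fixed $h\in F_0$. Because $F_0$ is finite, this forces $\|\mathbb P_{\mathcal K}(u_n)\|_2\to 0$, which combined with the previous estimate contradicts the lower bound $\sum_k\|E_{\mathcal N\rtimes H_j}(x_k u_n y_k)\|_2^2\geq C$. Hence $\mathcal B\nprec_{\mathcal M}\mathcal N\rtimes H_j$, as required.

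There is no substantial obstacle here; the argument is essentially a mild generalization of the one in Theorem \ref{quasinormalizercontrol} (which handles the $i=j$ case via almost malnormality of $H_i$ in $G$). The only point requiring any care is the verification that $H_{g,g'}$ is a coset of $H_i\cap g^{-1}H_jg$ and hence finite by the $i\neq j$ part of almost malnormality, plus the standard reduction of the intertwining criterion from general $\mathcal M$-coefficients to group elements via Kaplansky density.
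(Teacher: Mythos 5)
Your proposal is correct and follows essentially the same route as the paper: reduce the intertwining witnesses $x_k,y_k$ to (spans of) group unitaries, observe that for $b\in\mathcal N\rtimes H_i$ the expectation $E_{\mathcal N\rtimes H_j}(u_g b u_{g'})$ only sees Fourier coefficients supported on $H_i\cap g^{-1}H_jg'^{-1}$, which is finite by \cite[Proposition 4.33]{DGO17} since $i\neq j$, and then contradict $\mathcal B\nprec_{\mathcal N\rtimes H_i}\mathcal N$ via Theorem \ref{corner}. Your coset observation and finite-projection estimate are exactly the mechanism the paper uses, just phrased via a net rather than by directly concluding $\mathcal B\prec_{\mathcal N\rtimes H_i}\mathcal N$.
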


\begin{proof} Assume by contradiction that $\mathcal B\prec_{\mathcal M} \mathcal A \rtimes H_j$. By Popa's intertwining technique one can find $x_1,\ldots, x_n, y_1,\ldots, y_n\in \mathcal M$ and $C>0$ such that for all $b\in \mathscr U(\mathcal B)$ we have \begin{equation}\label{int1}
    \sum^n_{k=1} \|E_{\mathcal N \rtimes H_j}(x_i by_i)\|_2^2\geq C.
\end{equation}

Using $\|\cdot\|_2$-approximations of $x_i$'s and $y_i$'s and increasing $n$ and decreasing $C$, if necessary, we can assssume without any loss of generality that $x_k=u_{g_k}$ and $y_k=u_{h_k}$ for $g_1,\ldots, g_n,h_1,\ldots, h_n \in G$. Since $\mathcal B \subseteq \mathcal N\rtimes H_i$ relation \eqref{int1} further implies that $\sum^n_{k=1} \|\mathbb P_{ H_i \cap g^{-1}_kH_jh^{-1}_k}(b)\|_2^2\geq C$, for all $b\in \mathscr U(\mathcal B)$. However, \cite[Proposition 4.33]{DGO17} implies that every $H_i \cap g^{-1}_kH_jh^{-1}_k$ is finite and therefore, by the Popa's intertwining technique, we further get $\mathcal B \prec_{\mathcal N \rtimes H_i} \mathcal N$, a contradiction. \end{proof}

    

\section{Structural results for von Neumann algebras associated with  relative hyperbolic groups}

\subsection{Relative solidity results}
The first part of the section is devoted to the proofs of the main relative solidity results, Theorems \ref{relsol1}-\ref{relsol3}. Towards this, we first prove the following more general result.
\vskip 0.05in

\begin{theorem}\label{mrelsol1}
    For every $1 \leq j \leq n,$ let $G_j$ be a group that is hyperbolic relative to a family of exact residually finite subgroups  $\{ H_k^j\}_{k\in I_j}.$ Denote by $G = G_1 \times \dots \times G_n$ and let $G \curvearrowright \mathcal N$ be an action on an tracial amenable von Neumann algebra. Denote by $\mathcal M = \mathcal N\rtimes G$ the cross-product von Neumann algebra. Fix $p \in \mathcal M$ a nonzero projection and let $\mathcal A \subseteq p \mathcal M p$ be a von Neumann subalgebra such that the commutant $\mathcal A^{\prime} \cap p\mathcal M p$ has no amenable direct summand. 
    
    \noindent Then one can find $1 \leq j \leq n, s \in I_j$ such that $\mathcal A \preceq_{\mathcal M} \mathcal N \rtimes (G_{\hat{j}} \times H_s^j).$
\end{theorem}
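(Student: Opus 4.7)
The plan is to combine the Gaussian deformation associated with the weakly-$\ell^2$ array from Proposition~\ref{Prop2.7 new} with the Ioana--Peterson--Popa free-product deformation from Subsection~\ref{afpdef}, using them in tandem to successively narrow down the location of $\mathcal A$ inside $\mathcal M$.

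First I would use residual finiteness of each $H_k^j$ to choose, for every $1\leq j\leq n$ and $k\in I_j$, a finite-index normal subgroup $N_k^j\triangleleft H_k^j$ whose Dehn filling of $G_j$ enjoys the Cohen--Lyndon property and produces a non-elementary hyperbolic quotient. Proposition~\ref{Prop2.7 new} then ensures that $G$ is biexact relative to the family $\mathcal F=\{G_{\hat j}\times L:L\in\mathcal G_j,\,1\leq j\leq n\}$ and supplies a weakly-$\ell^2$ array $\mathbf q:G\to\mathcal H$ whose finite-radius balls are controlled by $\mathcal F$. Applying Theorem~\ref{ballconv} to the Gaussian deformation $V^{\mathbf q}_t$ on the Gaussian dilation $\widetilde{\mathcal M}$, the hypothesis on $\mathcal A'\cap p\mathcal M p$ forces $V^{\mathbf q}_t\to\mathrm{id}$ uniformly on $(\mathcal A)_1$. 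Feeding this convergence into the Ozawa-type C$^*$-algebraic machinery driven by the relative biexactness of $G$ (as developed in \cite{CS11,CSU11,CKP14,PV12}) should yield some $1\leq j\leq n$ and $L=\ast_{k\in I_j,\,t'\in F}(N_k^j)^{t'}\in\mathcal G_j$ with $\mathcal A\prec_{\mathcal M}\mathcal M_L$, where $\mathcal M_L:=\mathcal N\rtimes(G_{\hat j}\times L)$.

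Let $\Theta:p'\mathcal A p'\to q\mathcal M_L q$ be a realisation of this intertwining, with intertwining partial isometry $v$ and image $\mathcal B:=\Theta(p'\mathcal A p')$. Since $L$ is a finite free product, I would now apply the Ioana--Peterson--Popa free-length deformation $\alpha_s$ on the enlarged algebra $\widetilde{\mathcal M}_L$ from Subsection~\ref{afpdef}. A spectral-gap argument combined with Popa's transversality (Proposition~\ref{lm1}(1)) should produce, after an ultrapower/compactness extraction, an element $w\in\widetilde{\mathcal M}_L$ satisfying $w\alpha_s(x)=xw$ for all $x\in\mathcal B$. Theorem~\ref{innerfreedef} then locates $\mathcal B$ inside a single free factor of $L$, giving $\mathcal B\prec_{\mathcal M_L}\mathcal N\rtimes(G_{\hat j}\times(N_k^j)^{t'})$ for some $k\in I_j$ and $t'\in T_j$. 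Chaining with the first intertwining via Theorem~\ref{transint} produces $\mathcal A\prec_{\mathcal M}\mathcal N\rtimes(G_{\hat j}\times(N_k^j)^{t'})$, and conjugating the target by the group unitary $u_{(e,t')}\in\mathcal M$ moves it into $\mathcal N\rtimes(G_{\hat j}\times N_k^j)\subseteq\mathcal N\rtimes(G_{\hat j}\times H_k^j)$, giving the conclusion with $s=k$.

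The step I expect to be the main obstacle is the spectral-gap argument producing $w$ for Theorem~\ref{innerfreedef}. One must propagate the non-amenability of $\mathcal A'\cap p\mathcal M p$ through the first intertwining $\Theta$ so that the compressed commutant of $\mathcal B$ in $q\mathcal M_L q$ still has no amenable direct summand, and then exploit this to extract $w$ from the asymptotic behaviour of $\alpha_s$ on $\mathcal B$. A secondary technical point is that Theorem~\ref{transint} requires $s(E_{\mathcal M_L}(vv^*))=q$, which should be arranged by shrinking $q$ and $v$ at the outset.
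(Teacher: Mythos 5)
Your overall architecture (Gaussian deformation from the array of Proposition \ref{Prop2.7 new} plus Theorem \ref{ballconv}, then an intertwining into $\mathcal N\rtimes(G_{\hat j}\times L)$ with $L$ a finite free product of conjugates of the Dehn-filling kernels, then the Ioana--Peterson--Popa free-length deformation, Theorem \ref{innerfreedef}, Theorem \ref{transint}, and a final conjugation into $\mathcal N\rtimes(G_{\hat j}\times H^j_s)$) matches the paper's proof. However, the step you yourself flag as the obstacle is a genuine gap, and the mechanism you propose for it would not work. You want to produce $w$ with $w\alpha_s(x)=xw$ for $x\in\mathcal B=\Theta(p'\mathcal A p')$ by a spectral-gap argument, which requires ``propagating'' the non-amenability of $\mathcal A'\cap p\mathcal M p$ to the relative commutant of $\mathcal B$ \emph{inside} $q\mathcal M_L q$. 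There is no reason this propagation holds: the intertwining only places $vv^*$ in $\mathcal B'\cap q'\mathcal M q'$, i.e.\ the commutant taken in the ambient algebra $\mathcal M$, and $\mathcal B'\cap q\mathcal M_L q$ can perfectly well be amenable (even trivial), so no spectral-gap/ultrapower extraction for the IPP deformation is available from the hypothesis.

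The paper produces the intertwiner by a completely different, quantitative route: the uniform convergence $V^{\mathbf q}_t\to\mathrm{id}$ on $(\mathcal A)_1$ is transferred through the intertwining using the asymptotic bimodularity statement, Proposition \ref{lm1}(4), to get that elements of the form $y x_h$ (with $y\in\Theta(a\mathcal A a)$ and $x_h$ coming from a finite coset decomposition of $vv^*$) are uniformly close to their projections onto sets of the shape (finitely many cosets)$\times$(words of uniformly bounded free-product length) provided by Corollary \ref{array3}. A counting estimate over these cosets--performed for a choice of finite index sets of \emph{minimal} cardinality in the first intertwining--yields a uniform bound with constant strictly less than $1$; Corollary \ref{convfreedef1} then shows the free-length deformation $\alpha_t$ moves these elements little, and Popa's averaging trick (the minimal-$\|\cdot\|_2$ element in the weak closure of the convex hull of $\{\mathrm d(y)^*(\oplus\, I\otimes\alpha^j_t)(\mathrm d(y))\}$, taken in a direct sum algebra) gives a nonzero $\eta$ intertwining $\alpha_t$, with no spectral gap and no assumption on $\mathcal B'\cap q\mathcal M_L q$. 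Theorem \ref{innerfreedef} and Theorem \ref{transint} then contradict the minimality of the chosen finite sets unless $L$ is a single conjugate $(N^j_s)^t$, which gives the statement. So to repair your argument you would need to replace the spectral-gap extraction by this deformation-transfer-plus-counting argument (or an equivalent); as written, the key step does not follow from your hypotheses.
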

    \begin{proof}
Let $\mathbf{q}$ be a weakly$-\ell^2$ array as in Proposition \ref{Prop2.7 new} and let $(V_t^{\mathbf{q}})_{t\in \mathbb R}$ be the corresponding deformation. Since $\mathcal A^{\prime} \cap p\mathcal M p$ has no amenable direct summand, using Proposition \ref{lm1} and Theorem \ref{ballconv} we have that:
\begin{equation}\label{thm 4.1 eq1} 
    \lim_{t\rightarrow 0} \left( \sup_{x\in (\mathcal A)_1} \lVert e_{\mathcal M}^\perp \circ  V_t^{\mathbf{q}} (x) \rVert_2\right) =0,
\end{equation}
and, equivalently
$$\lim_{m \rightarrow \infty} \left( \sup_{x\in (\mathcal A)_1} \lVert x - \mathbb P_{B_m}(x) \rVert_2 \right) = 0.$$
Fix $\varepsilon > 0.$ Then there exists $m_{\varepsilon} \in \mathbb N$ such that for all $m \geq m_{\varepsilon}$ we have 
\begin{equation}\label{thm 4.1 eq 2}
    \lVert x - \mathbb P_{B_m} (x) \rVert_2 \leq \varepsilon, \; \text{for every } \; x \in (\mathcal A)_1.
\end{equation}

\noindent This together with Corollary \ref{array3} and Proposition \ref{Prop2.7 new} imply that there exist a maximal set $F\subset \{1,...,n\}$ and finite subsets $A_j\subset  I_j$ for $j\in F$ and $F^j_s\subset T_j$ for $s\in A_j$ which are of minimal cardinality satisfying 
\begin{equation}\label{thm 4.1 eq3}
 \mathcal A \preceq \Nn \rtimes \left( G_{\hat{F}} \times \varprod_{j\in F}\left( *_{s\in A_j, t\in F^j_s }(N_s^j)^t\right) \right) =: \mathcal Q.   
\end{equation}

Observe that if there is $j$ so that $\lvert A_j \rvert =1$ and $\lvert F^j_s \rvert =1$ for $s\in A_j,$ then the desired conclusion already follows.

\vskip 0.03in
In the remaining part, we assume that for all $j$ we have $\sum_{s\in A_j} \lvert F^j_s \rvert \geq 2,$ and we will show this leads to a contradiction.

By \eqref{thm 4.1 eq3} there exist projections $a\in \mathcal A, q^{\prime} \in \mathcal{Q}$ and a $*-$isomorphism onto its image $\Theta: a\mathcal A a \rightarrow \mathcal P := \Theta (a\mathcal A a) \subseteq q^{\prime} \mathcal Q q^{\prime}$ such that \begin{equation}\Theta(x) v = vx\text{, for all }x\in a\mathcal A a.\end{equation} 

This intertwining relation entails that $v^*v \leq a, vv^* \leq q^{\prime
}$ and $vv^* \in \mathcal P'\cap (q' \mathcal M q')$.

\noindent Moreover, it also imply that $e_{\mathcal M}^{\perp} \circ V_t^{\mathbf{q}} (\Theta(x)vv^*) = e_{\mathcal M}^{\perp} \circ V_t^{\mathbf{q}} (vxv^*),$ for all $x\in a\mathcal A a,$ and using part 4) in Proposition \ref{lm1} and \eqref{thm 4.1 eq1} we get  
$$\lim_{t\rightarrow 0}\left (\sup_{y\in (\mathcal P)_1}\lVert e_{\mathcal M}^{\perp} \circ V_t^{\mathbf{q}} (yvv^*)\rVert_2\right ) = 0.$$
Using the asymptotic bimodularity once again and basic approximations, for every $\varepsilon > 0,$ there are $t_\varepsilon >0$, a finite set of left cosets $K_{\varepsilon}$ of  $G$ over ${G_{\hat{F}} \times \varprod_{j\in F}\left( 
*_{s\in A, t\in F_s}(N_s^j)^t \right)}$ and $x_h \in \mathcal Q$ with $q'x_h=x_h$, for every $h\in K_{\varepsilon}$ such that for every $t<t_\varepsilon$ we have 
\begin{equation*}
    \begin{split}
        &\lVert vv^* - \sum_{h\in K_{\varepsilon}}x_h h \rVert_2  \leq \varepsilon, \text{ and}  \\
        & \sup_{y\in (\mathcal P)_1}\left( \sum_{h\in K_{\varepsilon}} \lVert e_{\mathcal{M}}^{\perp} \circ V_t^{\mathbf{q}} (y x_h ) \rVert_2^2 \right)  \leq \varepsilon \sum_{h\in K_\varepsilon} \|x_h\|^2_2.
    \end{split}
\end{equation*}

\noindent Thus shrinking $t_{\varepsilon}>0$ if necessary and using part (3) in Proposition \ref{lm1} one can find $m\in \mathbb N$ such that 
\begin{equation}\label{thm 4.1 eq4}
   \sup_{y\in (\mathcal P)_1} \left( \sum_{h\in K_{\varepsilon}}\lVert yx_h - \mathbb P_{B_m}(yx_h) \rVert_2^2 \right) \leq \varepsilon \sum_{h\in K_\varepsilon} \|x_h\|^2_2.
\end{equation}
Using Corollary \ref{array3}, ${B_m}$ satisfy
$${B_m} = \bigcup_{j\in \hat{F}} \left(K_m^j \times G_{\hat{F}\setminus j} \times \varprod_{i\in F}\left( *_{s\in A_i, t\in F^i_s}(N_s^i)^t \right) \right )\cup \bigcup_{j\in F} \left(  G_{\hat{F}} \times K_m^j \times \varprod_{k\in F\setminus \{j\}}\left( *_{s\in A_k, t\in F^k_s}(N_s^k)^t \right)\right).$$
where \begin{enumerate}\item for every $j\in \hat{F}$ we have  $K_m^j=L_m^j S_m^j\subset G_j$, where $S_m^j$ is a finite set of left cosets of $G_j$ over $\llangle \bigcup_s N_s^j \rrangle^{G_j}$ and  $L_m^j \subset \llangle \bigcup_s N_s^j \rrangle^{G_j} = *_{t \in T_j, s\in I_j}(N_s^j)^t$ is a set of words with uniformly bounded alternating free product length.

\item for all $j \in F$ we have $K_m^j= L_m^j S_m^j\subset \llangle \bigcup_s N_s^j \rrangle^{G_j} =*_{s\in I_j, t \in T_j}(N_s^j)^t$ where  $S_m^j$ is a finite set of cosets representatives of $*_{s\in I_j, t \in T_j}(N_s^j)^t $ over $ *_{s\in A_j, t \in F^j_s}(N_s^j)^t$ and $L_m^j \subset *_{s\in A_j, t \in F^j_s}(N_s^j)^t$ is a set of words with uniformly bounded alternating free product length. 
\end{enumerate}

\noindent To simplify the writing, for every $j\in F$ consider the following notations: \begin{equation}\begin{split}
R_j := *_{s\in A_j, t \in F^j_s}(N_s^j)^t ,\quad  R_F := \varprod_{j\in F}R_j\\
\Tilde{G} = \varprod_{j=1}^n \Tilde{G_j} \text{ where } \Tilde{G_j}= \left\{ \begin{array}{rcl} 
R_j & \mbox{for} & j\in F \\ 
G_j & \mbox{for} & j\in \hat{F}. \end{array}\right.
\end{split}\end{equation}

Therefore, \eqref{thm 4.1 eq4} and Cauchy-Schwarz inequality imply that 
\begin{equation*}
    \begin{split}
        \sum_h \lVert yx_h \rVert_2^2 & \leq \sum_h \lVert \mathbb P_{\bigcup_{j\in \hat{F}}K_m^j \times G_{\hat{F}\setminus j} \times R_F \cup \bigcup_{j\in F} G_{\hat{F}}\times K_m^j \times R_{F\setminus j}}(yx_h) \rVert_2^2 +\varepsilon \sum_{h} \|x_h\|^2_2\\
        & \leq n\sum_{h} \left( \sum_{j\in \hat{F}} \lVert \mathbb P_{K_m^j \times G_{\hat{F}\setminus j}\times R_F}(yx_h) \rVert_2^2 + \sum_{j\in F} \lVert \mathbb P_{G_{\hat{F}}\times K_m^j \times R_{F\setminus j}}(yx_h) \rVert_2^2  \right)+\varepsilon \sum_{h} \|x_h\|^2_2\\
        & = n \sum_{h} \left( \sum_{j\in \hat{F}} \sum_{f\in S_m^j} \lVert \mathbb P_{L_m^j \times G_{\hat{F}\setminus j}\times R_F}(yx_hf^{-1}) \rVert_2^2 + \sum_{j\in F} \sum_{f\in S_m^j} \lVert \mathbb P_{G_{\hat{F}}\times L_m^j \times R_{F\setminus j}}(yx_hf^{-1}) \rVert_2^2  \right)+\\&\quad 
        + \varepsilon \sum_{h} \|x_h\|^2_2.
    \end{split}
\end{equation*}
Therefore, for every $y\in \mathscr U(\mathcal P)$ we have
\begin{equation*}
    \begin{split}
        & \sum_{h} \left( \sum_{j\in \hat{F}} \sum_{f\in S_m^j} \lVert yx_h f^{-1} - \mathbb P_{L_m^j \times G_{\hat{F}\setminus j}\times R_F}(yx_hf^{-1}) \rVert_2^2 + \sum_{j\in F} \sum_{f\in S_m^j} \lVert yx_h f^{-1} - \mathbb P_{G_{\hat{F}}\times L_m^j \times R_{F\setminus j}}(yx_hf^{-1}) \rVert_2^2 \right)\\
        & \leq \left( \sum_{h} \left( \sum_{j=1}^n \sum_{f\in S_m^j} \Vert yx_h f^{-1} \rVert_2^2 \right) - \frac{1}{n} \sum_{h}\lVert yx_h \rVert_2^2 \right) + \varepsilon \left( \sum_h \lVert x_h \rVert_2^2 \right) \\
        & \leq \left( \frac{\sum_{j=1}^n \lvert S_m^j \rvert -\frac{1}{n} + \varepsilon}{\sum_{j=1}^n \lvert S_m^j \rvert} \right) \sum_h \sum_j \sum_{f\in S_m^j}\lVert yx_hf^{-1} \rVert_2^2.
    \end{split}
\end{equation*}
Denote by $C:= \left( \frac{\sum_{j=1}^n \lvert S_m^j \rvert -\frac{1}{n} + \varepsilon}{\sum_{j=1}^n \lvert S_m^j \rvert} \right),$ and notice that $C<1$ for well chosen $\varepsilon.$\\

Consider the finite sets
$$\mathcal T_j = \{ x_h f^{-1} \; : \; f \in S_m^j,\; h\in K_{\varepsilon} \}$$
Thus, the previous inequality implies that for every $y\in \mathscr U(\mathcal P)$, 
\begin{equation}\label{thm 4.1 eq 5}
    \sum_j \sum_{r\in \mathcal T_j} \lVert yr - \mathbb P_{L_m^j \times \Tilde{G_j}} (yr) \rVert_2^2 \leq C\sum_{j, r\in \mathcal T_j} \lVert yr \rVert_2^2.
\end{equation}
On $\mathcal N \rtimes \llangle \bigcup H_s^j \rrangle^{G_j}  =  \mathcal N \rtimes  (*_{t\in \mathcal T_j, s} (H_s^j)^t )$ we consider the free product length deformation $(\alpha_t^j)_{t\in \mathbb R}$ from Subsection \ref{afpdef}. Next, we see that

\begin{equation*}
    \begin{split}
        &\sum_{j, r\in \mathcal T_j} \lVert yr - I \otimes \alpha_t^j (yr) \rVert_2^2  
        \leq \sum_{j, r \in \mathcal T_j} ( \lVert yr - \mathbb P_{L_m^j \times \Tilde{G}_{\hat j}}(yr) - I \otimes \alpha_t^j \left(yr - \mathbb P_{L_m^j \times \Tilde{G}_{\hat j}}(yr) \right) \rVert_2
        +\\
        & + \| \mathbb P_{L_m^j \times \Tilde{G}_{\hat j}}(yr) - I \otimes \alpha_t^j \left( \mathbb P_{L_m^j \times \Tilde{G}_{\hat j}}(yr) \right) \|_2  )^2.
    \end{split}
\end{equation*}
Letting $t$ small enough and using Corollary \ref{convfreedef1} and the second identity in \eqref{freeprodidentities}  we get that the last term above is smaller or equal than 
$$\leq \sum_{j,r\in \mathcal T_j} 2 \lVert yr - \mathbb P_{L_m^j \otimes \Tilde{G_j}}(yr) \rVert_2^2 + \varepsilon.$$
Then, using \eqref{thm 4.1 eq 5} this is further smaller or equal than
$$\leq \sum_{j, r\in \mathcal T_j} 2C \lVert yr \rVert_2^2 + \varepsilon, \text{ for every} \; y\in \mathscr{U}(P).$$
Now consider the von Neumann algebra 
$$\Tilde{\mathcal M} := \oplus_{j,r} \mathcal M  \supset \oplus_{j,r} \mathcal P \supset \text{d}(\mathcal P) = \{ y \oplus \dots \oplus y =: \text{d} (y) \; | \; y \in \mathcal P \}.$$
Also let $\oplus_{j,r} r= :\tilde r$. Then, the previous inequality can be restated as 
\begin{equation}\label{thm 4.1 eq 6}
\begin{split}
   & \lVert \text{d}(y) \tilde r  -  \left( \oplus_{j,r} I \otimes \alpha_t^j \right)  \left( \text{d}(y) \tilde r \right) \rVert_2^2 \leq 2C \lVert \text{d}(y)  \tilde r  \rVert_2^2 + \varepsilon, 
\end{split}\end{equation}
and therefore, we further have 
 \begin{equation}  
 \begin{split}
    &(2-2C)\lVert \text{d}(y)  \tilde r) \rVert_2^2 -\varepsilon \leq 2 Re \tau^{\oplus_{j,r}} \left((\oplus_{j,r} I \otimes \alpha_t^j \left( \text{d}(y) \tilde r\right ) )\left( \text{d}(y) \tilde r \right)^* \right)  \\
    & = 2 Re \tau^{\oplus_{j,r} }\left( \text{d}(y)^* (\oplus_{j,r} I \otimes \alpha_t^j) \left( \text{d}(y) \right) \left( \oplus_{j,r} I \otimes \alpha_t^j \right) (\tilde r) \tilde  r^* \right)
\end{split}
\end{equation}
for all $y\in \mathscr{U}(\mathcal P).$\\
Let $\eta \in \overline{co}^w \{ \text{d}(y)^* (\oplus_{j,r} I \otimes \alpha_t^j)\left( \text{d}(y) \right) \; | \; y \in \mathscr{U}(\mathcal P) \}$ be the unique element of minimal $\lVert \cdot \rVert_2.$ Then inequality \eqref{thm 4.1 eq 6} shows that $\eta \neq 0.$ Moreover, uniqueness of $\eta$ entails that $\text{d}(y)^* \eta (\oplus_{r,j} I \otimes \alpha_t^j \left( \text{d}(y) \right)) = \eta \;\; \text{for every} \; y\in \mathscr{U}(\mathcal P)$ and therefore 
$$\eta (\oplus_{r,j} I \otimes \alpha_t^j )\left( \text{d}(y) \right) = \text{d}(y) \eta \; \; \text{for all} \; y \in \mathcal P.$$
As $\eta \neq 0$ there are $r,j$ such that the component $\eta_{r,j} \neq 0$ and also 
$$\eta_{r,j} I \otimes \alpha_t^j(y) = y \eta_{r,j} \; \; \text{for all} \; y \in \mathcal P.$$
Then Theorem \ref{innerfreedef} further implies there are $j_0 \in J$ and $s_0 \in A_{j_0}$ such that  $\mathcal P \preceq_{\mathcal Q} \mathcal N \rtimes \left( G_{\hat{F}} \times \varprod_{j\in F\setminus\{j_0\}}\left( *_{s\in A_j, t\in F^j_s }(N_s^j)^t\right)  \times N^{j_0}_{s_0}\right) $. 
However, this together with Theorem \ref{transint}  entail that $\mathcal A \preceq \mathcal N \rtimes \left( G_{\hat{F}} \times \varprod_{j\in F\setminus\{j_0\}}\left( *_{s\in A_j, t\in F^j_s }(N_s^j)^t\right)  \times N^{j_0}_{s_0}\right) $, contradicting the minimality of the sets $A_{j_0}$ and $F^{j_0}_{s_0}$ in \eqref{thm 4.1 eq3}. \end{proof}
\vskip 0.05in

We continue with the proof of Theorem \ref{relsol2}.

\begin{theorem}\label{relsol4}
 Let $G$ be a group hyperbolic relative to a family of exact, residually finite subgroups $\{ H_1, \ldots, H_n \}.$  Let $G \curvearrowright \mathcal N$ be an action on a tracial von Neumann algebra and denote by $\mathcal{M}=\mathcal N \rtimes G$ be the corresponding cross-product von Neumann algebra.  Let $p \in \M$ be a nonzero projection and let $\mathcal{A} \subseteq p\mathcal{M}p$ be a diffuse von Neumann subalgebra such that $\mathcal{A}' \cap p\mathcal{M}p$ has no amenable direct summand. Let $\mathcal{Q} = \mathcal{QN}_{p\mathcal{M}p}(\mathcal{A})^{''}.$ Then one can find $1 \leq i \leq n$, projections $r \in  \mathcal{A}, q \in \mathcal{A}' \cap p\mathcal{M}p$ with $ rq \neq 0$ and $u \in \mathscr{U}(\mathcal{M})$ such that $u(rq \overline{\mathcal{QN}}_{p\mathcal{M}p}^{1}(\mathcal{Q})r q)u^* \subseteq \mathcal N \rtimes H_i.$
If we assume in addition that $H_i$ are icc then in the previous containment we can replace $r q$ by its central support in $\overline{\mathcal{QN}}_{p\mathcal{M}p}^{1}(\mathcal{Q}).$
\end{theorem}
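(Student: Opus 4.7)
The plan is to apply Theorem \ref{mrelsol1} to obtain a Popa-style intertwining $\mathcal A \prec_{\mathcal M} \mathcal N \rtimes H_i$, upgrade it to unitary conjugation on a compression via a standard maximality argument, and then extend the containment from $\mathcal A$ to the one-sided quasinormalizer tower via Theorem \ref{quasinormalizercontrol} combined with the almost malnormality of peripheral subgroups of a relatively hyperbolic group, as established in \cite{DGO17}.

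In detail, applying Theorem \ref{mrelsol1} in the trivial product case ($n=1$, $G_1=G$) to $\mathcal A \subseteq p\mathcal M p$ produces some $1 \leq i \leq n$, projections $a \in \mathcal A$ and $q_0 \in \mathcal N \rtimes H_i$, a nonzero partial isometry $v_0 \in p\mathcal M q_0$, and a $*$-homomorphism $\Theta_0: a\mathcal A a \to q_0(\mathcal N \rtimes H_i)q_0$ with $\Theta_0(x)v_0 = v_0 x$ for all $x \in a\mathcal A a$. Running the standard Zorn-type maximality argument on the set of intertwining partial isometries (as in \cite{Ioa11, CIOS22a}) produces projections $r \in \mathcal A$ and $q \in \mathcal A'\cap p\mathcal M p$ with $rq \neq 0$, together with a partial isometry $w \in \mathcal M$ satisfying $w^*w = rq$, $ww^* \in \mathcal N \rtimes H_i$, and $w(rq\mathcal A rq)w^* \subseteq \mathcal N\rtimes H_i$; since $w^*w$ and $ww^*$ have equal trace, $w$ extends to a unitary $u \in \mathcal M$ with $u(rq\mathcal A rq)u^* \subseteq \mathcal N \rtimes H_i$. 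Setting $\mathcal A_0 := u(rq\mathcal A rq)u^*$ and $p_0 := u(rq)u^* \in \mathcal N \rtimes H_i$, one verifies $\mathcal A_0 \nprec_{\mathcal N \rtimes H_i} \mathcal N$: otherwise Theorem \ref{transint} would give $\mathcal A \prec_{\mathcal M} \mathcal N$, contradicting the no-amenable-direct-summand hypothesis on $\mathcal A'\cap p\mathcal M p$ (in the setting where $\mathcal N$ is amenable). The almost malnormality of $H_i$ in $G$ together with Theorem \ref{quasinormalizercontrol} then yields
\begin{equation*}
\overline{\mathcal{QN}}^1_{p_0\mathcal M p_0}(\mathcal A_0)'' \subseteq p_0(\mathcal N \rtimes H_i)p_0,
\end{equation*}
and invoking Proposition \ref{quasinormcompress} together with a transfinite induction up the tower $(\mathcal A_0^\beta)_\beta$ establishes the pullback inclusion $u(rq\,\overline{\mathcal{QN}}^1_{p\mathcal M p}(\mathcal Q)\,rq)u^* \subseteq \overline{\mathcal{QN}}^1_{p_0\mathcal M p_0}(\mathcal A_0)''$, which delivers the desired containment.

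For the icc strengthening, I would exploit that when $H_i$ is icc the algebra $\mathcal N \rtimes H_i$ is a factor, allowing the intertwining to be propagated along the central support $z$ of $rq$ in $\overline{\mathcal{QN}}^1_{p\mathcal M p}(\mathcal Q)$: decomposing $z$ as a sum of subprojections equivalent within $\overline{\mathcal{QN}}^1_{p\mathcal M p}(\mathcal Q)$ to subprojections of $rq$, one transfers each piece into $\mathcal N \rtimes H_i$ via the intertwining just obtained and patches the resulting partial isometries into a single unitary $u \in \mathcal M$ with $uz\,\overline{\mathcal{QN}}^1_{p\mathcal M p}(\mathcal Q)\,zu^* \subseteq \mathcal N \rtimes H_i$. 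The main obstacle throughout is the careful quasinormalizer bookkeeping: since $rq$ lies in $\mathcal A\vee(\mathcal A'\cap p\mathcal M p)$ rather than in $\mathcal A$ alone, the compression formulas of Proposition \ref{quasinormcompress} must be applied in combination with the almost-malnormality control given by Theorem \ref{quasinormalizercontrol}, and these must be threaded consistently through the transfinite iteration defining $\overline{\mathcal{QN}}^1$, together with the patching step needed for the icc version.
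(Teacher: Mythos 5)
Your proposal is correct and follows essentially the same route as the paper's proof: Theorem \ref{mrelsol1} gives $\mathcal A\prec_{\mathcal M}\mathcal N\rtimes H_i$, Theorem \ref{transint} rules out intertwining into $\mathcal N$, almost malnormality of $H_i$ together with Theorem \ref{quasinormalizercontrol} upgrades the intertwining to a spatial containment and then absorbs the one-sided quasinormalizer tower via the compression formulas of Proposition \ref{quasinormcompress}, and the icc refinement is the \cite{IPP05}-style patching along the central support. The only imprecision is attributing $ww^*\in\mathcal N\rtimes H_i$ to the Zorn-type maximality argument: as in the paper, that containment really comes from $\mathcal R'\cap q\mathcal M q\subseteq q(\mathcal N\rtimes H_i)q$, i.e.\ Theorem \ref{quasinormalizercontrol} applied to the image algebra once $\mathcal R\nprec\mathcal N$ is known --- an ingredient you do invoke, just one step later.
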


\begin{proof} From the previous theorem we have $\mathcal{A} \preceq \mathcal N \rtimes H_i.$ Hence there are nonzero projections $r \in \mathcal{A}, q \in \mathcal N\rtimes  H_i$, a partial isometry $v \in p\mathcal{M}q$ and a $*-$isomorphism onto its image $\Psi : r\mathcal{A}r \rightarrow \Psi (r\mathcal{A}r)=\mathcal{R} \subseteq q (\mathcal N \rtimes  H_i)q$ such that $\Psi (x)v =vx$ for all $x \in r\mathcal{A}r.$  This relation implies that $v^{*}v \in r\mathcal{A}r' \bigcap r\mathcal{M}r = (\mathcal{A}^{'}\bigcap p\mathcal{M}p)r$ and $vv^{*} \in \mathcal{R}^{'} \bigcap q\mathcal{M}q.$\\
The hypothesis and Theorem \ref{transint} imply that $\mathcal R \nprec_{ \mathcal N \rtimes  H_i} \Nn$.  Since $H_i < G$ is almost malnormal, we have  $\mathcal{R}^{'} \bigcap q\mathcal{M}q \subseteq q(\mathcal N \rtimes H_i)q$ and hence $vv^{*} \in q(\mathcal{N} \rtimes H_i)q.$ Thus $vr\mathcal{A}rv^{*} \subseteq \mathcal N \rtimes  H_i.$ 

Letting $u \in \M$ be a unitary with $uv^{*}v=v$ we get $ur\mathcal{A}rv^{*}vu^{*} \subseteq \mathcal N \rtimes H_i.$ As $v^{*}v \in (\mathcal{A}^{'}\bigcap p\mathcal{M}p)r$ there is a projection $q \in \mathcal{A}^{'} \bigcap p\mathcal{M}p$ with $v^{*}v = rq \neq 0.$ Thus $ur\mathcal{A}rq u^{*} \subseteq \mathcal N \rtimes H_i$ and since $H_i < G$ is almost malnormal we get $u\mathcal{QN}_{rq \mathcal{M}rq}(r\mathcal{A}rq)^{''}u^{*} \subseteq \mathcal N \rtimes H_i.$ Using the quasinormalizers formulae \cite{FGS10} we conclude  $urq \mathcal{Q}rqu^{*} \subseteq \mathcal N \rtimes  H_i .$ Repeating this argument we finally obtain $ur q\overline{\mathcal{QN}}_{p\mathcal{M}p}^{1}(\mathcal{Q})r qu^{*} \subseteq \mathcal N \rtimes  H_i.$ The last part follows as in the proof of \cite[Theorem 5.1]{IPP05}. We leave the details to the reader.
\end{proof}

\begin{theorem}\label{commut2}
 Let $G$ be a group hyperbolic relative to a family of exact, residually finite subgroups $\{ H_1, \dots, H_n \}.$  Let $G \curvearrowright \mathcal N$ be an action on a tracial von Neumann algebra and denote by $\mathcal{M}=\mathcal N \rtimes G$ be the corresponding cross-product von Neumann algebra.    Let $p \in \M$ be a nonzero projection and let $\mathcal{A} \subseteq p\mathcal{M}p$ be a von Neumann subalgebra such that $\mathcal A \nprec \mathcal N$.  Denote by $\mathcal{Q}=\mathcal{QN}_{p\mathcal{M}p}(\mathcal{A})^{''}.$ Then one can find orthogonal projections $p_0,  \dots, p_n \in \mathscr{Z}(\overline {\mathcal{QN}}_\M^{1}(\mathcal{Q}))$ with $p_0 + \dots + p_n = p$ that are maximal with the following properties: 
    \begin{enumerate}
        \item $(\mathcal{A}^{'} \bigcap p\mathcal{M}p)p_0$ is amenable.
        \item Any corner of $\overline {\mathcal{QN}}_\M^{1}(\mathcal{Q})p_i$ has a nontrivial subcorner which can be unitarily conjugated into $\mathcal N \rtimes H_i.$

    \end{enumerate}
     Moreover, we can pick $p_i \in \mathscr{Z}(\overline{\mathcal{QN}}_{p\M p}^{1}(\mathcal{Q}))$ for every $1\leq i \leq n.$
     \begin{enumerate}
     \setcounter{enumi}{2}
         \item If in addition $\mathcal N \rtimes  H_i$ are factors then one can find unitaries $u_1, \ldots, u_n \in \mathcal{M}$ such that $u_i \overline{\mathcal{QN}}_{\mathcal{M}}^{1}(\mathcal{Q})p_i u_i^{*} \subseteq \mathcal N \rtimes  H_i$ for every $1\leq i\leq n.$ In particular, when $n=1$, there is a unitary $u\in \mathcal M$  such that $u \overline{\mathcal{QN}}_{\mathcal{M}}^{1}(\mathcal{Q}) u^{*} \subseteq \mathcal N \rtimes  H_1$. 
\end{enumerate}
\end{theorem}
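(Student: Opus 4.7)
The plan is to combine Theorem \ref{relsol4} with a Zorn-style exhaustion argument in the spirit of \cite{IPP05,CSU11}. First I would record the key containment that makes central projections compatible with compressions: since any element commuting with $\mathcal A$ lies in $\mathcal{QN}_{p\mathcal M p}(\mathcal A)$, one has $\mathcal A' \cap p\mathcal M p \subseteq \mathcal Q \subseteq \mathcal P := \overline{\mathcal{QN}}^1_{\mathcal M}(\mathcal Q)$, and in particular $\mathscr Z(\mathcal P) \subseteq \mathscr Z(\mathcal A' \cap p\mathcal M p)$. Consequently, for every $z \in \mathscr Z(\mathcal P)$ we have $(\mathcal A z)' \cap z\mathcal M z = (\mathcal A' \cap p\mathcal M p)z$, and by Proposition \ref{QN1}, $z \mathcal P z = \overline{\mathcal{QN}}^1_{z\mathcal M z}(\mathcal Q z)$; thus cutting by $z$ preserves every hypothesis of Theorem \ref{relsol4}.

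I then define $p_0 \in \mathscr Z(\mathcal P)$ to be maximal among projections for which $(\mathcal A' \cap p\mathcal M p)p_0$ is amenable (existence by Zorn, using that direct sums of amenable algebras are amenable). Next, I consider the poset of $n$-tuples $(z_1,\ldots,z_n)$ of mutually orthogonal projections in $\mathscr Z(\mathcal P)$ with $\sum_i z_i \leq p - p_0$ such that every nonzero corner of $\mathcal P z_i$ contains a nonzero subcorner unitarily conjugable into $\mathcal N\rtimes H_i$; Zorn produces a maximal element $(p_1,\ldots,p_n)$.

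The heart of the argument is to show $\sum_{i=0}^{n} p_i = p$. Suppose otherwise and set $z = p - \sum_i p_i > 0$ in $\mathscr Z(\mathcal P)$. I would argue first that $(\mathcal A' \cap p\mathcal M p)z$ admits no amenable direct summand: such a summand would be cut out by some nonzero $f \in \mathscr Z(\mathcal A' \cap p\mathcal M p)$ with $f \leq z$, and taking its $\mathcal P$-central support $z(f) \in \mathscr Z(\mathcal P)$, a standard argument (amenability propagating along the quasi-normalizer partial isometries which generate $\mathcal P$, cf.\ \cite{IPP05}) shows that $(\mathcal A' \cap p\mathcal M p)z(f)$ remains amenable, contradicting maximality of $p_0$. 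With the hypothesis verified, Theorem \ref{relsol4} applied to $\mathcal A z \subseteq z\mathcal M z$ produces an index $i$, projections $r \in \mathcal A z$, $q \in (\mathcal A' \cap p\mathcal M p)z$ with $rq \neq 0$, and a unitary $u \in \mathcal M$ with $u(rq \cdot z\mathcal P z \cdot rq)u^* \subseteq \mathcal N \rtimes H_i$. Passing to the $\mathcal P$-central support $z' \in \mathscr Z(\mathcal P)$ of $rq$, one verifies by hereditary arguments that every nonzero corner of $\mathcal P z'$ contains a nonzero subcorner conjugable into $\mathcal N \rtimes H_i$; replacing $p_i$ with $p_i + z'$ contradicts maximality of the tuple.

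Finally, for part (3), the factoriality hypothesis on $\mathcal N \rtimes H_i$ activates the last sentence of Theorem \ref{relsol4}, upgrading the corner intertwining to a genuine embedding of the central support $\mathcal P z'$ into $\mathcal N \rtimes H_i$ by a single unitary. Running the same exhaustion with this stronger conclusion yields unitaries $u_1,\ldots,u_n$ with $u_i \mathcal P p_i u_i^* \subseteq \mathcal N \rtimes H_i$ for each $i$. The main technical obstacle throughout is the amenability-propagation step, which is what ensures that maximality measured in $\mathscr Z(\mathcal P)$ nonetheless controls arbitrary amenable direct summands of $\mathcal A' \cap p\mathcal M p$; everything else reduces to routine bookkeeping of orthogonal central projections and iterated applications of Theorem \ref{relsol4}.
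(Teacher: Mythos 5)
Your overall strategy (exhaustion by central projections plus repeated applications of Theorem \ref{relsol4}) is the same as the paper's, and several of your choices are fine: the containments $\mathcal A'\cap p\mathcal M p\subseteq\mathcal Q\subseteq\overline{\mathcal{QN}}^1(\mathcal Q)=:\mathcal P$ and $\mathscr Z(\mathcal P)\subseteq\mathscr Z(\mathcal A'\cap p\mathcal M p)$, the compression formulas via Proposition \ref{QN1}, and building the orthogonality of $p_1,\ldots,p_n$ into the Zorn poset (which sidesteps the paper's separate orthogonality argument via finite intersections $L_iH_iR_i\cap L_jH_jR_j$ and almost malnormality). However, there is a genuine gap at exactly the step you label the ``main technical obstacle'' and then dismiss as standard: the claim that if $f\in\mathscr Z(\mathcal A'\cap p\mathcal M p)$ cuts an amenable summand then $(\mathcal A'\cap p\mathcal M p)\,z(f)$ is still amenable, where $z(f)$ is the central support of $f$ in $\mathcal P$. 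Elements of $\mathcal{QN}^1_{p\mathcal M p}(\mathcal Q)$ do not normalize $\mathcal A'\cap p\mathcal M p$, so there is nothing to ``propagate along,'' and the abstract principle is simply false. For instance, take $\mathcal M=\mathbb M_2(\mathcal L(\mathbb F_2))$ and $\mathcal A=\mathcal L(\mathbb Z)e_{11}+\mathbb C e_{22}$ (with $\mathcal L(\mathbb Z)$ generated by one free generator): then $\mathcal A'\cap\mathcal M=\mathcal L(\mathbb Z)e_{11}\oplus\mathcal L(\mathbb F_2)e_{22}=\mathcal Q$, the projection $f=e_{11}$ cuts an amenable summand, yet every $x\in e_{11}\mathcal M e_{22}$ satisfies $\mathcal Q x\subseteq e_{12}\mathcal Q$, so $\overline{\mathcal{QN}}^1_{\mathcal M}(\mathcal Q)=\mathcal M$ is a factor, $z(f)=1$, and $(\mathcal A'\cap\mathcal M)z(f)$ is non-amenable. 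This toy example does not satisfy the hypotheses of Theorem \ref{commut2}, but it shows your propagation step needs an actual argument exploiting the relative hyperbolic structure, and none is given. Without it you cannot verify that $(\mathcal A'\cap p\mathcal M p)z$ has no amenable direct summand, which is precisely the hypothesis required to apply Theorem \ref{relsol4} to the leftover corner, so your contradiction argument does not close.

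The gap is created by your decision to take $p_0$ maximal in $\mathscr Z(\mathcal P)$. The paper instead takes $p_0$ maximal in the larger center $\mathscr Z(\mathcal A'\cap p\mathcal M p)$; then $(\mathcal A'\cap p\mathcal M p)(p-p_0)$ has no amenable direct summand by construction, and since the leftover projection $s=p-p_0-\sum_{i\geq 1}p_i$ lies under $p-p_0$ and is central in $\mathcal A'\cap p\mathcal M p$ (because $\mathscr Z(\mathcal P)\subseteq\mathscr Z(\mathcal A'\cap p\mathcal M p)$), Theorem \ref{relsol4} applies to $\mathcal A s\subseteq s\mathcal M s$ directly, with no propagation needed. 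If you adopt that choice of $p_0$, the rest of your exhaustion, and your treatment of part (3) via the icc/factorial strengthening of Theorem \ref{relsol4}, goes through essentially as in the paper.
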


\begin{proof}
  Let $p_0 \in \mathscr{Z}(A^{'} \bigcap p\mathcal{M}p)$ be the maximal projection such that $(\mathcal{A}^{'} \bigcap p\mathcal{M}p)p_0$ is amenable. Let $q = p - p_0 \in \mathscr{Z}(\mathcal{A}^{'} \bigcap p\mathcal{M}p)$ and notice that $(\mathcal{A}^{'} \bigcap p\mathcal{M}p)q_0$ has no amenable direct summand.\\
  Let $\{p_i\}_i \subset \mathscr{Z}(\overline{\mathcal{QN}}_{\mathcal M}^{1}(\mathcal{Q}))$ with $p_i \leq q$, be a family of maximal projections satisfying condition $(2)$ in the conclusion. Next we argue that $p_i \perp p_j$ for every $i \neq j.$ Otherwise let $p_i p_j =: r \neq 0.$ Thus $\overline{\mathcal{QN}}_{\mathcal{M}}^{1}(\mathcal{Q})r \preceq ^{\rm s} \mathcal \Nn \rtimes H_i$ and $\overline{\mathcal{QN}}_{\mathcal{M}}^{1}(\mathcal{Q})r \preceq^{\rm s} \Nn \rtimes H_j.$ Fix $\varepsilon >0.$ By \cite[Lemma 2.5]{Vae10} there exist finite subsets $L_i , R_i , L_j , R_j \subset G$ such that 
  \begin{equation*}
    \begin{split}
    &\lVert \mathbb P_{L_i H_i R_i}(x) - x\rVert_2 \leq \varepsilon , \; \; \text{for all} \; x\in \mathscr{U}(\overline{\mathcal{QN}}_{\mathcal{M}}^{1}(\mathcal{Q})), \text{ and } \\
    & \lVert \mathbb P_{L_j H_j R_j}(x) - x\rVert_2 \leq \varepsilon , \; \; \text{for all} \; x\in \mathscr{U}(\overline{\mathcal{QN}}_{\mathcal{M}}^{1}(\mathcal{Q})).
    \end{split}
\end{equation*}
Using triangle inequality further implies that
\begin{equation}\label{eqt9_1}
    \lVert x - \mathbb P_{L_i H_i R_i \bigcap L_j H_j R_j}(x)\rVert_2 \leq 2\varepsilon , \; \; \text{for all} \; x\in \mathscr{U}(\overline{\mathcal{QN}}_{\mathcal{M}}^{1}(\mathcal{Q})).
\end{equation}
However, since $H_i, H_j$ are almost malnormal in $G$ we get that $L_i H_i R_i \bigcap L_j H_j R_j$ is a finite set. Thus Theorem \ref{corner} and \eqref{eqt9_1} imply that $\overline{\mathcal{QN}}_{\mathcal{M}}^{1}(\mathcal{Q})\prec \mathcal N $, contradicting the hypothesis that $\mathcal A \nprec \mathcal N$. 

Now let $s := p - p_0 - \sum_{i=1}^{n}p_i \in \mathscr{Z}(\mathcal{A}^{'} \bigcap p\mathcal{M}p)$ and assume that $s \neq 0.$ \\
Consider the commuting algebras inside the corner $\mathcal{A}s, (\mathcal{A}s)^{'} \bigcap s \mathcal{M} s \subset s \mathcal M s$ and note that  $(\mathcal{A}s)^{'} \bigcap s \mathcal{M} s$ has no direct direct summand.  Using Theorem \ref{relsol4} one can find an index $j$, projections $e\in \mathcal A ,f\in s(\mathcal A'\cap p\mathcal M p)s$ with $ef\neq 0$ and a unitary $u\in \mathscr U(\mathcal M)$ such that 
 $u ef \overline{\mathcal{QN}}_{s \mathcal{M} s}^{1}(s \mathcal{Q} s)ef u^* \subseteq \Nn \rtimes H_i$ and thus $u ef\overline{\mathcal{QN}}_{p \mathcal{M} p}^{1}(\mathcal{Q})ef u^* \subseteq \Nn \rtimes H_j.$ Letting $z = z(ef) \in \mathscr{Z}(\overline{\mathcal{QN}}_{p\mathcal{M}p}^{1}(\mathcal{Q}))s$, this further implies any corner of $\overline{\mathcal{QN}}_{p\mathcal{M}p}^{1}(\mathcal{Q})z$ has a nontrivial corner that can be unitarily  conjugated onto $\Nn \rtimes H_i.$ But, replacing $p_j$ by $p_j + z$ we get the same conclusion. This contradicts $p_j$ is the largest projection satisfying  $(2).$ Hence $s = 0$. Part $(3)$ follows from Theorem \ref{transint}. 
\end{proof}

\subsection{Examples}\label{Examples} Our previous relative solidity results apply to several remarkable classes of relative hyperbolic groups, which do not seem to be covered by the existing results in the literature. 
\vskip 0.06in

\noindent{1) \emph{ Hyperbolic-by-cyclic groups.}} 
Let $G$ be any torsion-free hyperbolic group (e.g.\ any free group $G=\mathbb F_n$ of finite rank $n$) and let $\alpha \in Aut(G)$ be an infinite order automorphism. This induces and action still denoted by $ \alpha:\mathbb Z=\langle t \rangle \rightarrow Aut(G)$ by letting $\alpha_{t^k} (g)=\alpha^k(g)$ for all $g\in G$, $k\in \mathbb Z$. The corresponding semidirect product is denoted by $G\rtimes_\alpha \langle t\rangle$ or $G \rtimes_\alpha \mathbb Z$ and is called a \emph{hyperbolic-by-cyclic group}. 

Notice that if in addition $G$ is ressidually finite then so is $G \rtimes_\alpha  \mathbb Z$.
\vskip 0.06in
An element $g\in G$ has a polynomial growth under $\alpha$ if there is a polynomial $p \in \mathbb Z [X]$ such that $\lVert \alpha^n(g) \rVert \leq p(n)$ for all $n\in \mathbb N$. Here $\lVert \cdot \rVert$ can be either $\lVert \cdot \rVert_{w}$ or $\lVert \cdot \rVert_T$. Specifically, $\lVert g \rVert_{w}$ is defined as the infimum of $d_w(1, hgh^{-1})$ over $h\in G$ where $d_w$ is a word metric for some chosen generating set, and $\lVert g \rVert_T$ is defined as the infimum of $d_T(v,gv)$ for $v$ ranging over the vertices of a given metric $G$-tree $T$.
Moreover, let $A\leqslant G$ be a subgroup whose conjugacy class is preserved by some positive power of $\alpha$, and let $s$ be the smallest positive integer satisfying $
\alpha^{s}(A) = h^{-1} A h$. Then the \emph{suspension of $A$ by $\alpha$ inside  $G\rtimes_{\alpha} \langle t \rangle$} is the canonical semidirect product subgroup $A \rtimes _{{\rm ad}(h) \circ \alpha^{s}}\langle t^{s}h^{-1} \rangle$. 
\vskip 0.05in 
In \cite[Theorems 1.1-1.2]{DK23}, it was proved that one can find a finite family of malnormal quasiconvex subgroups $\{A_1, \ldots, A_r\}$ of $G$ such that all elements of $A_i$ are polynomially growing under $\alpha$. Conversely, any polynomially growing element under $\alpha$ can be conjugated in one of the $A_i$. Finally, if we pick a maximal family $\{A_1,\ldots,A_r\}$ of maximal subgroups whose elements grow polynomially under $\alpha$  then  $G\rtimes_\alpha \mathbb Z$ is hyperbolic relative to $\{H_i\}$---the suspensions of $A_i$ under $\alpha$.  Combining these results with Theorem \ref{mrelsol1} yields the following. 

\begin{corollary} Under the previous notations assume in addition $G$ is residually finite (e.g.\ when $G$ is a free group). Then $\mathcal L(G\rtimes _\alpha \mathbb Z)$ is solid relative to $\{\mathcal L(H_i)\}_i$.
\end{corollary}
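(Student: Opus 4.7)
The plan is to invoke Theorem \ref{relsol1} directly for $\mathcal M = \mathcal L(G\rtimes_\alpha \mathbb Z)$ with peripheral family $\{H_1,\ldots,H_r\}$. The relative hyperbolicity of $G\rtimes_\alpha \mathbb Z$ with respect to this family is exactly the content of the cited theorem of Dahmani--Krishna \cite{DK23} (see the paragraph preceding the corollary), so the remaining task is to verify that each suspension $H_i$ is both exact and residually finite; once both of these are in hand, Theorem \ref{relsol1} applies verbatim and yields the stated relative solidity.

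For exactness, I would first observe that each $A_i$ is a malnormal quasiconvex subgroup of the torsion-free hyperbolic group $G$, hence is itself word-hyperbolic and in particular exact. The suspension fits by definition into a (virtual) short exact sequence $1 \to A_i \to H_i \to \langle t^{s_i}h_i^{-1} \rangle \cong \mathbb Z \to 1$, and since exactness is preserved under extensions of discrete groups, $H_i$ is exact.

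For residual finiteness I would appeal to the observation already recorded in this subsection of the excerpt: when $G$ is residually finite, the semidirect product $G\rtimes_\alpha \mathbb Z$ is also residually finite. (The underlying point is that for hyperbolic $G$ the group $\mathrm{Out}(G)$ is residually finite -- via Grossman's theorem in the free case and its generalizations to hyperbolic groups -- and combined with conjugacy separability of $G$ this promotes residual finiteness through the extension by $\mathbb Z$.) Since $H_i$ embeds as a subgroup of $G\rtimes_\alpha \mathbb Z$, residual finiteness passes to $H_i$.

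With exactness and residual finiteness of all the peripheral subgroups $H_i$ verified, the hypotheses of Theorem \ref{relsol1} are met for $G\rtimes_\alpha \mathbb Z$, and the solidity of $\mathcal L(G\rtimes_\alpha \mathbb Z)$ relative to the subalgebras $\{\mathcal L(H_i)\}_{i=1}^r$ follows at once. The only non-formal input is the residual finiteness of $G\rtimes_\alpha \mathbb Z$, which is precisely where the standing hypothesis ``$G$ residually finite'' of the corollary enters; the rest of the argument is a clean reduction to the peripheral description from \cite{DK23} and an application of Theorem \ref{relsol1}.
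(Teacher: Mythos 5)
Your proposal is correct and follows essentially the same route as the paper: the paper simply combines the relative hyperbolicity from \cite{DK23} with Theorem \ref{relsol1} (via Theorem \ref{mrelsol1}), using the remark that residual finiteness of $G$ passes to $G\rtimes_\alpha\mathbb Z$ and hence to the subgroups $H_i$, while exactness of the $H_i$ is automatic as you indicate. The only cosmetic point is that your parenthetical justification of residual finiteness of $G\rtimes_\alpha\mathbb Z$ (via Grossman-type results and conjugacy separability) is heavier than needed --- the standard argument just uses that a finitely generated residually finite group has a cofinal family of characteristic finite-index subgroups, which are $\alpha$-invariant --- but this does not affect the validity of the argument.
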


\noindent {2) \emph{(Free product)-by-cyclic groups}.} Let $G$ be a free product group of the form $G=G_1 \ast \dots \ast G_p \ast \mathbb F_k$ where $G_i$ is freely indecomposable and not infinite cyclic for each $ 1\leq i \leq p $, and $\mathbb F_k$ is a free group of rank $k$.

Given a subset $X\subset G$ we will denote by $[X]$ the collection of all conjugates of $X$ in $G$. 
The collection $\mathbb G=\{ [G_1], \dots, [G_p] \}$ is called a free factor system of the group $G$. We denote by $Aut(G,\mathbb G)$ to be the set of all $\Phi \in Aut(G)$ such that for every 
 $ 1\leq i\leq  p $, one can find an element $g_i \in G$ and $1 \leq j \leq p$ such that $\Phi (G_i) = g_i^{-1}G_j g_i$. Note that $Inn(G)\unlhd Aut(G,\mathbb G)$ and this one could  define $Out(G,\mathbb G) = Aut(G,\mathbb G) / Inn(G)$.
\vskip 0.06in

An automorphism $\Phi\in Aut(G,\mathbb G)$  is called \emph{fully irreducible} if the given free factor system $\mathbb G$ is the largest one fixed by $\Phi^n$ for every $n\in \mathbb N$. As $\Phi^n$ preserves $\mathbb G$, for every $1\leq i\leq  p $ one can find a smallest positive integer $n_i$ and $g_i \in G$ such that $\Phi^{n_i}(G_i)= g^{-1}_i G_i g_i$. Thus one can consider the so-called \emph{mapping torus of $G_i$}, i.e.\ the semidirect product $G_i \rtimes_{{{\rm ad}(g_i)\circ \Phi^{n_i}}_{|G_i}}\mathbb Z$ subgroup of $G$.

An automorphism $\Phi \in Aut(G, \mathbb G)$ is called \emph{atoroidal} if for any hyperbolic element $g\in G$ and any $n\in \mathbb N$  we have  $[\Phi^n(g)] \neq [g]$.

\vskip 0.06in

In \cite[Theorem 4.34, Theorem 6.10]{Li18} it was proved that for any finitely generated free product group $G = G_1 \ast \dots \ast G_p \ast \mathbb F_k$ with $\; k \geq 2 \; \text{or} \; p+k \geq 3$, and every fully irreducible, atoroidal automorphism $\Phi\in Aut(G,\mathbb G)$, the corresponding  semidirect product $G\rtimes_{\Phi} \mathbb Z$ is hyperbolic relative to the mapping-torus subgroups of $\mathbb G$, i.e. $\{ G_i \rtimes_{ad_{g_i}\circ\Phi^{n_i}|_{G_i}}\mathbb Z, 1\leq i\leq p \}$. Combining this result with Theorem \ref{mrelsol1} we obtain 

\begin{corollary} Under the previous notations assume in addition that $G_i$ is ressidually finite for every $i$. Then $\mathcal L(G\rtimes_\Phi \mathbb Z)$ is solid relative to $\{ \mathcal L(G_i \rtimes_{ad_{g_i}\circ\Phi^{n_i}|_{G_i}}\mathbb Z), 1\leq i\leq p \}$.    
\end{corollary}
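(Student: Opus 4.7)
The plan is to deduce the corollary as an essentially immediate consequence of Theorem~\ref{relsol1} combined with the geometric input recalled in the paragraph above. The argument has two steps: identify the peripheral structure via Li's theorem, and then verify that the peripheral subgroups fall under the exact, residually finite hypothesis of Theorem~\ref{relsol1}.

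First, I would invoke \cite[Theorems~4.34 and 6.10]{Li18}: since $\Phi \in \operatorname{Aut}(G, \mathbb G)$ is fully irreducible and atoroidal with $k\geq 2$ or $p+k\geq 3$, the group $G \rtimes_\Phi \mathbb Z$ is hyperbolic relative to the finite family of mapping-torus subgroups
\[
H_i \;=\; G_i \rtimes_{\operatorname{ad}(g_i)\circ \Phi^{n_i}|_{G_i}} \mathbb Z, \qquad 1\le i\le p.
\]
This is precisely the peripheral structure to which Theorem~\ref{relsol1} should be applied, so the entire corollary reduces to verifying that each $H_i$ is exact and residually finite.

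Next, I would verify these two properties for each $H_i$. For exactness, note that each free factor $G_i$ is exact in the target classes (in the concrete examples meant here the $G_i$ are torsion-free hyperbolic, hence exact by \cite{Oza04}), and since exactness passes to extensions by amenable groups, the semidirect product $H_i = G_i\rtimes\mathbb{Z}$ is exact. For residual finiteness of $H_i$, one proceeds in exact parallel with the free-by-cyclic statement recorded immediately above the corollary: from the residual finiteness of $G_i$ together with the standard geometric-group-theoretic results on residual finiteness of mapping tori of residually finite (hyperbolic) groups under arbitrary automorphisms, one obtains residual finiteness of $H_i$. Once both properties are in hand, Theorem~\ref{relsol1} applies verbatim and yields the stated relative solidity of $\mathcal L(G\rtimes_\Phi\mathbb Z)$ with respect to the family $\{\mathcal L(H_i)\}_{1\le i\le p}$.

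The only genuine obstacle I anticipate is the residual finiteness of the peripheral mapping tori $H_i$, since in full generality residual finiteness is not preserved under semidirect products with $\mathbb Z$. However, in the concrete classes singled out by the paper (free groups, torsion-free hyperbolic freely indecomposable factors), residual finiteness of the mapping torus is known and is exactly what makes the analogous free-by-cyclic corollary stated just above valid. I would therefore invoke these existing results rather than attempt a self-contained proof, keeping the overall proof a short two-step application of Theorem~\ref{relsol1}.
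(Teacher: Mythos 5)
You take exactly the paper's route: the paper proves this corollary by simply combining Li's relative hyperbolicity of $G\rtimes_\Phi\mathbb Z$ with respect to the mapping-torus subgroups with Theorem \ref{mrelsol1}, which is precisely your two-step argument (exactness of the peripheral subgroups being left implicit in the paper's statement as well). Your lingering worry about residual finiteness of $H_i=G_i\rtimes\mathbb Z$ is unnecessary: since each $G_i$ is finitely generated and residually finite, the intersection of all subgroups of index at most $n$ is a finite-index subgroup invariant under the defining automorphism, from which residual finiteness of the mapping torus follows at once, exactly as the paper observes in the hyperbolic-by-cyclic example.
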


\noindent {3) \emph{Free-by-free groups.}  Let $\mathbb F$ be a free group of finite rank $\geq 3$, and let $\Phi \in Out(\mathbb F)$. Define $\mathcal K =\{ K_1,  \dots, K_p \}$ as a subgroup system, and denote $\mathcal L_{\mathcal K}^{+}(\Phi)$ and $\mathcal L_{\mathcal K}^{-}(\Phi)$ the collection of attracting and repelling laminations of $\Phi$, respectively, whose generic leaves are not carried by $\mathcal K$. As defined in \cite{GG23}, $\mathcal K$ is an admissible subgroup system for any rotationless and exponentially growing $\Phi \in Out(\mathbb F)$ if it satisfies the following conditions:
\begin{enumerate}
    \item $\mathcal K$ is a malnormal subgroup system,\\
    \item $\mathcal L_{\mathcal K}^{+}(\Phi)$ and $ \mathcal L_{\mathcal K}^{-}(\Phi)$ are nonempty,\\
    \item $\Phi (K_s) = K_s$ for every $ 1\leq s \leq p$,\\
    \item Let $V^{+}$ denote the union of attracting neighborhoods of elements of $\mathcal L_{\mathcal K}^{+}(\Phi)$ defined by generic leaf segments of length $\geq 2C$, and $V^{-}$ is defined similarly for $\mathcal L_{\mathcal K}^{-}(\Phi)$. By increasing $C$ if necessary, we have $V^{+} \cap V^{-} = \varnothing $,\\
    \item Every conjugacy class which is not carried by $\mathcal K$ is weakly attracted to some element of $\mathcal L_{\mathcal K}^{+}(\Phi)$.
    
\end{enumerate}

Furthermore, we say that $\mathcal K$ is an admissible subgroup system for a finitely generated subgroup $ < \Phi_1,  \dots, \Phi_k >$ if $\mathcal K$ is an admissible subgroup system for each $\Phi_i \in Out(\mathbb F)$. We will use the term "admissible subgroup system" when the context is clear. \\
Recall that a finite collection $\mathcal K = \{ [K_1],  \dots, [K_p] \}$ of conjugacy classes of nontrivial finite rank subgroups $K_s < \mathbb F$ for $ 1\leq s \leq p $ is called a subgroup system.  \\
Let $\mathbb F$ be a free group of finite rank $\geq 3$, and $\Phi_1, \dots, \Phi_k \in Out(\mathbb F)$ be a collection of rotationless, exponentially growing automorphisms that are independent relative to admissible subgroup system $\mathcal K = \{ [K_1], \dots, [K_p] \}.$ 

Then, following \cite[Lemma 4.7]{GG23} there exists some $M \geq 1$ such that for all $m_i \geq M, Q = <\Phi_1^{m_1}, \dots, \Phi_k^{m_k}>$ is a free group and every element of the group $Q$ is exponentially growing, rotationless and admits an admissible subgroup system. Additionally, the same result implies that $\mathbb F \rtimes \widehat{Q}$ is hyperbolic relative to the collection of subgroups $\{ K_s \rtimes \widehat{Q}_s \}$ for $ 1\leq s \leq p $, where $\widehat{Q}_s$ is a lift of $Q_s$ that preserves $K_s$ and $\widehat{Q}$ is any lift of $Q$. Since these groups are also ressidually finite, then combining these with Theorem \ref{mrelsol1} we get the following:

\begin{corollary} Under the previous notations the algebra $\mathcal L(\mathbb F \rtimes \widehat{Q})$ is solid relative to $\{ \mathcal L(K_s \rtimes \widehat{Q}_s), 1\leq s\leq p \}$.    
\end{corollary}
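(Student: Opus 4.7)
The plan is to deduce this corollary as a direct application of Theorem \ref{relsol1}, with the geometric input supplied by \cite[Lemma 4.7]{GG23}. First I would invoke the result from \cite{GG23} cited in the paragraph above, which under the standing assumptions (rotationless, exponentially growing outer automorphisms which are independent relative to the admissible subgroup system $\mathcal K$, with sufficiently large exponents $m_i \geq M$) produces that $\mathbb F \rtimes \widehat{Q}$ is hyperbolic relative to the finite family $\{K_s \rtimes \widehat{Q}_s\}_{1\leq s\leq p}$. This is precisely the geometric framework in which Theorem \ref{relsol1} is designed to operate, so essentially all the work lies in checking that its hypotheses are met.

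Next I would verify that each peripheral subgroup $K_s \rtimes \widehat{Q}_s$ is both exact and residually finite. For exactness, $K_s$ is a finitely generated subgroup of the free group $\mathbb F$ and $\widehat{Q}_s$ is a finitely generated subgroup of the free group $\widehat{Q}$, so both are free of finite rank and in particular exact; since exactness is preserved under extensions of exact groups by exact groups, $K_s \rtimes \widehat{Q}_s$ is exact. Residual finiteness is either asserted in the paragraph preceding the statement or follows by passing to the ambient residually finite group, and in any case is part of the standing hypotheses.

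With these two conditions verified, Theorem \ref{relsol1} applies verbatim to the von Neumann algebra $\mathcal M = \mathcal L(\mathbb F \rtimes \widehat{Q})$ with the peripheral family $\{\mathcal L(K_s \rtimes \widehat{Q}_s)\}_{1\leq s\leq p}$. Concretely, for any nonzero projection $p \in \mathcal M$ and any von Neumann subalgebra $\mathcal A \subseteq p\mathcal M p$ whose relative commutant $\mathcal A' \cap p\mathcal M p$ has no amenable direct summand, Theorem \ref{relsol1} produces an index $1\leq s\leq p$ such that $\mathcal A \prec_{\mathcal M} \mathcal L(K_s \rtimes \widehat{Q}_s)$, which is the desired relative solidity statement. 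There is no substantial obstacle in this argument: the entire content of the corollary is contained in the relative hyperbolicity result of \cite{GG23}, and the operator algebraic input is already packaged in Theorem \ref{relsol1}. The only minor verification worth writing out carefully is the exactness of the peripheral subgroups, since the paper uses this hypothesis essentially.
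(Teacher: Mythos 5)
Your proposal is correct and follows essentially the same route as the paper: the paper likewise cites \cite[Lemma 4.7]{GG23} for the relative hyperbolicity of $\mathbb F \rtimes \widehat{Q}$ with peripheral family $\{K_s \rtimes \widehat{Q}_s\}$, notes residual finiteness (exactness being immediate since these are (free)-by-(free) groups), and then applies the main relative solidity theorem (Theorem \ref{mrelsol1}, of which Theorem \ref{relsol1} is the group von Neumann algebra case). Your explicit verification of exactness of the peripheral subgroups is a harmless elaboration of what the paper leaves implicit.
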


\vskip 0.03in

\noindent {4) The relative solidity also holds for von Neumann algebras of all ressidually finite groups $G$ satisfying $C'(1/6)$-small cancellation condition over free product $F=A_1\ast \cdots \ast A_n$} as described towards the end of the Subsection \ref{relhypex}.

 \subsection{An application to 
 s-primeness of group von Neumann algebras} In this subsection we derive our main primeness results.

\noindent\emph{Proof of Theorem \ref{primness2}}.  Let $\M=\mathcal L(G)$. Also throughout this proof for finite index subgroups  $N_i \trianglelefteq H_i$ and a finite set $F \subset G$ we will denote by $N_F< G$ the infinite index subgroup generated by $N_F=\underset{1\leq i\leq n, h\in F}{\bigvee}N_i^h$ as in the hypothesis. 
 
    Assume by contradiction there exist a projection $0\neq p\in \M$ and diffuse commuting von Neumann subalgebras $\mathcal A, \mathcal B \subseteq p\mathcal M p$ such that  $[p\mathcal{M}p:\mathcal A \vee 
    \mathcal B]<\infty$. Next we claim that $\A$ and $ \B$ are non-amenable. Towards this, assume by contradiction that $\A$ is amenable. Since the finite conjugacy radical of $G$ is a finite group, after cutting by a projection in the center of $\M$, we can assume in addition that $\A\vee \B \subseteq p\M p$ is a finite index inclusion of II$_1$ factors. Then using the same argument form the proof of \cite[Corollary 7.2]{CdSS17} we get that $p \M p $ is a McDuff factor. Hence, letting $z$ be the central support of $p$, we further have $\M z$ is a McDuff factor. In particular, its central sequence algebra $(\M' \cap \M^\omega )z$ is diffuse. However, since $G$ admits an unbounded quasi-cocycle into its left regular representation, this contradicts \cite[Theorem 3.1]{CSU16}. This yields our claim.  
\vskip 0.05in
    
    By Theorem \ref{ballconv},  for every $ \varepsilon >0$ there exists $ k>0$ such that 
        $\lVert \mathbb P_{B_k} (x) - x  \rVert _2 \leq \varepsilon, \; \; \text{for all} \; x\in \mathscr{U}(\A) \cup \mathscr{U}(\B).$ Using the assumption, there are finite subsets $F, S\subset G$ so that $B_k \subseteq N_F S$. Altogether, these show for every $ \varepsilon >0$ there exist finite subsets $F, S\subset G$ satisfying \begin{equation}\label{conv3}\lVert \mathbb P_{N_F S}(x) - x\rVert_2 \leq \varepsilon, \; \; \text{for all} \; x\in \mathscr{U}(\A) \cup \mathscr{U}(\B).\end{equation} 

\noindent Fix $\sqrt{{\frac{\tau(p)}{2}
}}>\varepsilon >0$. By \eqref{conv3} there are finite sets $F_1, F_2, S_1,S_2 \subset G$ satisfying 

\begin{equation}\label{conv4}
\begin{split}
&\lVert \mathbb P_{N_{F_1} S_1}(x) - x\rVert_2 \leq \frac{\varepsilon}{4} , \; \; \text{for all} \; x\in \mathscr{U}(\A); \text{ and } \\
& \lVert \mathbb P_{N_{F_2} S_2}(y) - y\rVert_2 \leq \frac{\varepsilon}{4\lvert S \rvert} , \; \; \text{for all} \; y\in \mathscr{U}(\B).
\end{split}
\end{equation}

\noindent Using \eqref{conv4}, for every $x\in \mathscr U (\A)$ and $y\in \mathscr U(\B)$ we have that
\begin{equation}\label{joinconv1}
    \begin{split}
        &\lVert \mathbb P_{ N_{F_1}S_1}(x) \mathbb P_{N_{F_2}S_2}(y) - xy \rVert_2  = \lVert \mathbb P_{ N_{F_1}S_1}(x)(\mathbb P_{N_{F_2}S_2}(y)-y)+  \mathbb P_{ N_{F_1}S_1}(x) y - xy \rVert_2\\
        & \leq \lVert \mathbb P_{ N_{F_1}S_1}(x) \rVert_\infty . \lVert \mathbb P_{N_{F_2}S_2}(y)-y \rVert_2 + \lVert \mathbb P_{ N_{F_1}S_1}(x) y - xy \rVert_2 \leq  \lvert S \rvert \frac{\varepsilon}{4\lvert S \rvert} + \frac{\varepsilon}{4}=\frac{\varepsilon}{2}.
    \end{split}
\end{equation}

\noindent Pick $F_3, S_3 \subset G$ be finite subsets satisfying  $(N_{F_1}S_1)( N_{F_2}S_2)\subseteq N_{F_3}S_3$. This combined with \eqref{joinconv1} imply that $\lVert \mathbb P_{N_{F_3} S_3}(xy) - xy \rVert_2 \leq \varepsilon,$ for all $x \in \mathscr{U}(A), y\in \mathscr{U}(B).$ Hence we further have $\frac{\tau(p)}{2}<\tau (p)-\varepsilon^2\leq \|\mathbb P_{N_{F_3} S_3}(z)\|_2^2$, for all $z \in \mathscr{U}(A) \mathscr{U}(B)$. Since $\|\mathbb P_{N_{F_3} S_3}(z)\|_2^2= \sum_{s\in S_3} \|\mathbb E_{\mathcal L(N_{F_3})}(z u_{s^{-1}})\|^2_2$, Theorem \ref{corner} further entails that $\A \vee \B \preceq \mathcal L(N_{F_3})$. Since $[p \M p:\A\vee \B ]<\infty$ we therefore have $\M \preceq \mathcal L(N_{F_3})$. Finally, using \cite[Lemma 2.2]{CI17} this implies that $[G: N_{F_3}] < \infty$, which contradicts the assumption. 
\hfill $\square$

\vskip 0.06in
This result combined with Theorems \ref{rips1}, yields many new examples of s-prime von Neumann algebras. 

\begin{corollary}\label{sprimeexamples}  $\mathcal L(G)$ is s-prime for any $G$ in one of the following classes of relatively hyperbolic groups:
\begin{enumerate}
    \item all Rips construction groups constructed in Theorem \ref{rips1} for $Q$ infinite. 
    \item all hyperbolic-by-cyclic groups  
$H \rtimes_\phi \mathbb Z$, where $H$ is a non-elementary, residually finite, torsion free, hyperbolic group (e.g.\ any nonabelian free group of finite rank) and $\phi $ any automorphism of exponential growth, \cite{Li18}.
\item all free-by-cyclic groups $(G_1\ast \cdots \ast G_p \ast \mathbb F_k)\rtimes_\Phi \mathbb Z$ where $G_i$ are ressidually finite and $\Phi$ is a fully irreducible atoroidal automorphism.  

\item all  $C'(1/6)$-small cancellation groups over free products satisfying the hypothesis of Proposition \ref{smallcancoverf}.

\end{enumerate}
    
\end{corollary}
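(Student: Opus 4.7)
The plan is to verify that each of the four classes satisfies the hypothesis of Theorem~\ref{primness2} and to invoke it directly. In each case we must exhibit finite-index normal subgroups $N_i \trianglelefteq H_i$ of the peripheral subgroups such that the normal closure $\llangle \bigcup_i N_i \rrangle^G$ has infinite index in $G$ (the sufficient condition highlighted in the statement of Theorem~\ref{primness2}), and verify that each $H_i$ is exact.

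For (1), Theorem~\ref{rips1} directly provides the required condition. Choosing the seed group $H$ to be exact (e.g.\ hyperbolic), and setting $N_1 = H_1 = H$, the intermediate normal subgroup $H < N \trianglelefteq G$ from the Rips construction satisfies $\llangle H \rrangle^G \subseteq N$, and $[G : N] = |Q| = \infty$ whenever $Q$ is infinite. For (4), Proposition~\ref{smallcancoverf} immediately yields $[G : \llangle \bigcup_i \eta(A_i)\rrangle^G] = \infty$ under the stated small-cancellation hypothesis, so we take $N_i = H_i = \eta(A_i)$; exactness of each $H_i$ is inherited from the free-factor exactness assumption.

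For (2) and (3), each peripheral $H_i$ is an extension $A_i \rtimes \mathbb Z$ (resp.\ $G_i \rtimes \mathbb Z$) of an exact group by $\mathbb Z$, hence exact. Taking $N_i = H_i$, the infinite-index condition is checked via the projection $\pi : G \twoheadrightarrow \mathbb Z$ onto the cyclic quotient. The image $\pi(\llangle \bigcup_i H_i\rrangle^G) \leq \mathbb Z$ has finite index (since each $H_i$ contains a mapping-torus generator projecting non-trivially), so $[G : \llangle \bigcup_i H_i\rrangle^G] = \infty$ is equivalent to $[B : \llangle \bigcup_i H_i\rrangle^G \cap B] = \infty$, where $B$ denotes the base group of the semidirect product. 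The intersection $\llangle \bigcup_i H_i\rrangle^G \cap B$ is a $\phi$-invariant normal subgroup of $B$ containing $\bigcup_i A_i$ (resp.\ $\bigcup_i G_i$). In case (3) (with $k \geq 1$), the free-product structure $B = G_1 \ast \cdots \ast G_p \ast \mathbb F_k$ gives $B / \llangle \bigcup_i G_i\rrangle^B \cong \mathbb F_k$, which is infinite; the atoroidal nature of $\Phi$ and the resulting $\Phi$-twisted relations in the quotient preserve infinity of $G / \llangle \bigcup_i H_i\rrangle^G$. In case (2), the exponential growth of $\phi$ combined with the Dahmani-Krishna classification \cite{DK23} of polynomially-growing subgroups shows that the quasiconvex, almost-malnormal subgroups $A_i$ cannot normally generate $H$, yielding $[H : \llangle \bigcup_i A_i\rrangle^H] = \infty$.

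With both conditions verified in each case, Theorem~\ref{primness2} applies and yields $s$-primeness of $\mathcal L(G)$. The main technical difficulty is in case (2), where rigorous verification of the infinite-index condition requires carefully combining the quasiconvex, almost-malnormal structure of the polynomial-growth peripherals with the polynomial/exponential growth dichotomy under the automorphism $\phi$, to rule out that exponentially-growing elements of $H$ lie in the normal closure of the polynomially-growing peripherals.
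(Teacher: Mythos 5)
Your reductions in cases (1) and (4) are correct and are exactly what Theorem \ref{primness2} is designed for: Theorem \ref{rips1} places $\llangle H\rrangle^G$ inside a normal subgroup $N$ with $G/N\cong Q$ infinite, and Proposition \ref{smallcancoverf} gives the infinite-index normal closure directly (and you rightly note that exactness of the peripherals must be added, since Theorem \ref{primness2} requires it).

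Cases (2) and (3), however, contain a genuine gap: the subgroup you must control is $\llangle \cup_i H_i\rrangle^G$ where $H_i$ is the full suspension (mapping torus) of $A_i$ (resp. of $G_i$), not $\llangle \cup_i A_i\rrangle$ or $\llangle\cup_i G_i\rrangle$ in the fiber. Since each $H_i$ contains an element of the form $t^{s_i}h_i^{-1}$ projecting nontrivially to $\mathbb Z$, its commutators with arbitrary fiber elements force all twisted coboundaries $\phi^{s_i}(h_i^{-1}xh_i)x^{-1}$ into the normal closure; hence $G/\llangle\cup_i H_i\rrangle^G$ is a quotient of the \emph{twisted coinvariants} of the induced automorphism on the fiber modulo the $A_i$ (resp.\ $G_i$), and this is frequently finite. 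Concretely, take $\phi\in\mathrm{Aut}(\mathbb F_3)$ with $\phi(a)=a$, $\phi(b)=bc$, $\phi(c)=b$: this is exponentially growing, the peripheral suspension contains $a$ and $t$, and killing them imposes $b=bc$ and $c=b$, so the quotient is trivial, i.e.\ $\llangle H_1\rrangle^G=G$. So your choice $N_i=H_i$ and the appeals to $[H:\llangle\cup_iA_i\rrangle^H]=\infty$ in (2), or to $B/\llangle\cup_iG_i\rrangle^B\cong\mathbb F_k$ being infinite in (3), do not verify the hypothesis; the extra relations coming from the $\mathbb Z$-directions are precisely what they miss. The correct route—and the one the paper's Section 3 sets up, the corollary itself being stated without a written proof—is to use the finite-index freedom in Theorem \ref{primness2}: since the peripherals here are residually finite, choose sufficiently deep finite-index normal subgroups $N_i\lhd H_i$; by the Dehn filling results quoted in Section 3 (\cite{Osi06,DGO17,Sun}) the quotient $G(\mathcal N)=G/\llangle\cup_i N_i\rrangle^G$ is non-elementary hyperbolic, in particular infinite, so $\llangle\cup_i N_i\rrangle^G$ has infinite index and Theorem \ref{primness2} applies. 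Alternatively one could verify directly the weaker finitely-many-conjugates form of the hypothesis, but some such argument is indispensable: with $N_i=H_i$ the condition you invoke is simply false in general for (2) and (3).
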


\subsection {Peripheral absorption of abelian algebras through their normalizers}

\begin{theorem}\label{peripheral absorption} Let $G$ be a group together with a hyperbolically embedded subgroup $H\leqslant G$. Assume $G \curvearrowright \mathcal N$ is trace preserving action on tracial von Neumann algebra $\mathcal N$ and let  $\mathcal M=\mathcal N\rtimes G$. Let $p\in \mathcal N \rtimes H$ be a nonzero projection. Let $\mathcal A \subseteq p \mathcal M p$ be an abelian von Neumann algebra such that $\A\nprec_\M \Nn$ and let $\mathcal G \leqslant \mathcal N_{p\mathcal M p}(\mathcal A)$ be a subgroup whose von Neumann algebra satisfies $\mathcal G''\nprec_{\mathcal M} \mathcal N $.

\noindent If $\mathcal G \subset p (\mathcal N \rtimes H) p$ then $\overline{\mathcal {QN}}_{p\mathcal M p}^{1}(\mathcal A)\subseteq p(\mathcal N \rtimes H)p$. In particular, $\mathcal A \subset \mathcal N_{p\mathcal M p} (\mathcal A)''\subset p(\mathcal N \rtimes H) p$.
    
\end{theorem}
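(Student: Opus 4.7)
The plan is to reduce the entire statement to Theorem \ref{quasinormalizercontrol} applied with $\mathcal Q=\mathcal A$. This reduction hinges on first proving $\mathcal A\subseteq p(\mathcal N\rtimes H)p$, after which the hypothesis $\mathcal A\nprec_{\mathcal M}\mathcal N$ together with the almost malnormality of $H<G$ (a consequence of the hyperbolic embedding) immediately delivers the desired containment of the iterated one-sided quasinormalizer algebra.

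As a first observation, $\mathcal N_{p\mathcal M p}(\mathcal A)\subseteq \mathcal{QN}^1_{p\mathcal M p}(\mathcal A)$: any unitary $u$ satisfying $u\mathcal A=\mathcal A u$ automatically yields $\mathcal A u\subseteq u\mathcal A$. Hence $\mathcal G''\subseteq \overline{\mathcal{QN}}^1_{p\mathcal M p}(\mathcal A)''$. Moreover, since $\mathcal G''\subseteq p(\mathcal N\rtimes H)p$ and $\mathcal G''\nprec_{\mathcal M}\mathcal N$ (which automatically downgrades to $\mathcal G''\nprec_{\mathcal N\rtimes H}\mathcal N$), Theorem \ref{quasinormalizercontrol} applied with $\mathcal Q=\mathcal G''$ already produces
\[\overline{\mathcal{QN}}^1_{p\mathcal M p}(\mathcal G'')''\subseteq p(\mathcal N\rtimes H)p.\]

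The crux is showing $\mathcal A\subseteq p(\mathcal N\rtimes H)p$. For $a\in\mathcal A$ write $a=a_H+a_\perp$ with $a_H:=E_{p(\mathcal N\rtimes H)p}(a)$; the goal becomes $a_\perp=0$. Since every $u\in\mathcal G\subseteq p(\mathcal N\rtimes H)p$ normalizes $\mathcal A$, bimodularity of $E_{\mathcal N\rtimes H}$ over $p(\mathcal N\rtimes H)p$ yields $(uau^*)_\perp=ua_\perp u^*$; and for $a$ a unitary in $\mathcal A$ the abelianness allows one to rearrange $aua^*=u\cdot c_u$ with $c_u:=(u^{-1}au)a^{-1}\in\mathcal A$. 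Expanding $a_\perp u a_\perp^*=(a-a_H)u(a^*-a_H^*)$ and applying $E_{\mathcal N\rtimes H}$ then gives the clean identity
\[E_{\mathcal N\rtimes H}(a_\perp u a_\perp^*)=u(c_u)_H-a_H u a_H^*.\]
Because $\mathcal G''\nprec_{\mathcal M}\mathcal N$, Popa's criterion (Theorem \ref{corner}) supplies a net $(u_n)\subseteq \mathcal G$---chosen inside the generating group---with $\|E_{\mathcal N}(xu_ny)\|_2\to 0$ for all $x,y\in\mathcal M$. Replicating the key estimate inside the proof of Theorem \ref{quasinormalizercontrol}, which uses almost malnormality of $H$ through the finiteness of each set $H\cap g_1^{-1}Hg_2$ for $g_1,g_2\notin H$, upgrades this to $\|E_{\mathcal N\rtimes H}(a_\perp u_n a_\perp^*)\|_2\to 0$. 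A Popa-style intertwining bimodule argument, exploiting that the cocycle $(c_{u_n})$ remains inside the abelian algebra $\mathcal A$ with uniform $\|\cdot\|_2$-bounds, then forces $a_\perp=0$; otherwise one would extract a finite-trace $\mathcal A$-$\mathcal N$ subbimodule of $L^2(\mathcal M)$, contradicting $\mathcal A\nprec_{\mathcal M}\mathcal N$.

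Once $\mathcal A\subseteq p(\mathcal N\rtimes H)p$ is known, a second application of Theorem \ref{quasinormalizercontrol} with $\mathcal Q=\mathcal A$ immediately yields $\overline{\mathcal{QN}}^1_{p\mathcal M p}(\mathcal A)''\subseteq p(\mathcal N\rtimes H)p$, which is the asserted containment; the in-particular statement follows from the chain $\mathcal A\subseteq \mathcal N_{p\mathcal M p}(\mathcal A)''\subseteq \overline{\mathcal{QN}}^1_{p\mathcal M p}(\mathcal A)''$. The main obstacle is the third paragraph: converting mixing of $\mathcal G$-unitaries (available from $\mathcal G''\nprec_{\mathcal M}\mathcal N$) into the pointwise vanishing of $a_\perp$ for every $a\in\mathcal A$, which is precisely where one must simultaneously exploit the normalization structure, the abelianness of $\mathcal A$, and the almost malnormality of $H$ in $G$.
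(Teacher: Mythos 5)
Your overall skeleton (first show $\mathcal A\subseteq p(\mathcal N\rtimes H)p$, then invoke Theorem \ref{quasinormalizercontrol} with $\mathcal Q=\mathcal A$, using that hyperbolic embedding gives almost malnormality) does match the endgame of the paper's proof, and your identity $E_{\mathcal N\rtimes H}(a_\perp u a_\perp^*)=u(c_u)_H-a_Hu a_H^*$ is correct. The problem is that the crux step is not proved. The estimate you extract from the quasinormalizer-control mechanism, namely $\|E_{\mathcal N\rtimes H}(a_\perp u_n a_\perp^*)\|_2\to 0$ along a net $(u_n)\subset\mathcal G$ witnessing $\mathcal G''\nprec_{\mathcal M}\mathcal N$, is an automatic ``relative mixing'' statement: it holds for \emph{any} $a_\perp\perp L^2(\mathcal N\rtimes H)$ whenever the $u_n$ have vanishing Fourier mass on finitely many $H$-cosets, and it carries no lower bound tied to $\|a_\perp\|_2$. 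What it yields is only $\|(c_{u_n})_H-u_n^*a_Hu_na_H^*\|_2\to 0$, i.e.\ information about $c_{u_n}$ and $a_H$, which is perfectly consistent with any value of $a_\perp$. The closing sentence (``a Popa-style intertwining bimodule argument ... otherwise one would extract a finite-trace $\mathcal A$-$\mathcal N$ subbimodule, contradicting $\mathcal A\nprec_{\mathcal M}\mathcal N$'') names no mechanism by which such a bimodule would arise, and I do not see one; this is where the proof is missing, and you yourself flag it as ``the main obstacle.''

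For comparison, the paper handles the crux in two genuinely nontrivial stages that your outline bypasses. First it proves the \emph{intertwining} $\mathcal A\prec^{\rm s}\mathcal N\rtimes H$ by a quantity with a fixed lower bound: for a projection $r\in\mathcal N_{p\mathcal Mp}(\mathcal A)'\cap p\mathcal Mp$ one has $uaru^*\,ar=ar\,uaru^*$ and hence $\langle uaru^*ar,\;aruaru^*\rangle=\tau(r)>0$, and this inner product is then shown to be small (for suitable $u\in\mathcal G$) using the free-independence-type combinatorial property of hyperbolically embedded subgroups from \cite[Theorem 2.9]{CDS23} — a condition on length-four alternating products that is strictly stronger than the almost malnormality (finiteness of $H\cap g_1^{-1}Hg_2$) your argument relies on, so it is doubtful your weaker input suffices even for this step. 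Second, upgrading the intertwining to the honest containment $\mathcal A\subseteq p(\mathcal N\rtimes H)p$ is itself a multi-step argument (partial isometry from Theorem \ref{corner}, Theorem \ref{quasinormalizercontrol} applied to $\Theta(\mathcal Aa)$, the quasinormalizer compression formula of Proposition \ref{quasinormcompress}, an approximation of the central support, and a maximality argument), none of which appears in your proposal. As written, the proposal therefore has a genuine gap at its central claim $a_\perp=0$.
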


\begin{proof}
First we will show $\A \prec^s \Nn \rtimes H$.  Fix $r \in \Nn_{pMp}(A)'\cap pMp$ a non-zero projection and assume for a contradiction that $Ar \nprec \Nn \rtimes H$.  By Theorem (\ref{corner}), for any $\varepsilon>0$ there is $a \in \sU(A)$ such that $\lVert E_{\Nn \rtimes H}(ar) \rVert_2 < \varepsilon$.  By Kaplansky's density theorem there is $a_\varepsilon \in \mathrm{span} \{ \Nn u_g : g \in G \setminus H\}$ such that \begin{equation}\label{approx1}\lVert ar - a_\varepsilon \rVert_2 \leq \varepsilon\text{ and }\lVert a_\varepsilon \rVert_\infty \leq 2.\end{equation}  Let $K_\varepsilon = \{ g \in G \setminus H : E_\Nn( a_\varepsilon u_g^*) \neq 0 \}$ and note it is a finite set.  Since $r \in \mathcal G' \cap \A'$, for every  $u \in \mathcal G$ we have $uaru^*ar = (uau^*)ar = aruaru^*$ and using inequalities \eqref{approx1} we get
\begin{equation*}
\begin{split}
    \lVert r \rVert^2_2 &= \langle uaru^*ar , aruaru^* \rangle \leq |\langle u (ar - a_\varepsilon) u^*ar , aruaru^* \rangle| + |\langle ua_\varepsilon u^*ar, aruaru^* \rangle| \\
    &\leq \varepsilon + |\langle ua_\varepsilon u^*(ar - a_\varepsilon), aruaru^* \rangle| + |\langle ua_\varepsilon u^*a_\varepsilon, aruaru^* \rangle|\\
    & \leq \varepsilon + 2\varepsilon + |\langle ua_\varepsilon u^*a_\varepsilon, (ar - a_\varepsilon)uaru^* \rangle| + |\langle ua_\varepsilon u^*a_\varepsilon, a_\varepsilon uaru^* \rangle| \\
    &\leq 3\varepsilon + 4\varepsilon + |\langle ua_\varepsilon u^*a_\varepsilon, a_\varepsilon u(ar - a_\varepsilon)u^* \rangle| + |\langle ua_\varepsilon u^*a_\varepsilon, a_\varepsilon ua_\varepsilon u^* \rangle| \\
    & \leq 7\varepsilon + 8\varepsilon + |\langle ua_\varepsilon u^*a_\varepsilon, a_\varepsilon ua_\varepsilon u^* \rangle|\ .
\end{split}
\end{equation*}
Since $K_\varepsilon \subset G\setminus H$ is finite and $H$ is hyperbolically embedded in $G$, then by \cite[Theorem 2.9]{CDS23} there is a finite subset $L_\varepsilon \subset H$ such that 
\begin{equation}\label{free_independence}
K_\varepsilon (H\setminus L_\varepsilon) K_\varepsilon (H\setminus L_\varepsilon) \cap (H\setminus L_\varepsilon) K_\varepsilon (H\setminus L_\varepsilon) K_\varepsilon  = \emptyset.
\end{equation}
Thus the last inner product in the above equation satisfies
\begin{equation*}
\begin{split}
    |\langle ua_\varepsilon u^*a_\varepsilon, a_\varepsilon ua_\varepsilon u^* \rangle| \leq  | &\langle \mathbb P_{L_\varepsilon}(u) a_\varepsilon u^*a_\varepsilon, a_\varepsilon ua_\varepsilon u^* \rangle | \\
    &+ | \langle \mathbb P_{H\setminus L_\varepsilon}(u) a_\varepsilon \mathbb P_{L_\varepsilon}(u^*) a_\varepsilon, a_\varepsilon ua_\varepsilon u^* \rangle | \\
    &+ | \langle \mathbb P_{H\setminus L_\varepsilon}(u) a_\varepsilon \mathbb P_{H\setminus L_\varepsilon}(u^*) a_\varepsilon, a_\varepsilon \mathbb P_{L_\varepsilon}(u) a_\varepsilon u^* \rangle | \\
    &+ | \langle \mathbb P_{H\setminus L_\varepsilon}(u) a_\varepsilon \mathbb P_{H\setminus L_\varepsilon}(u^*) a_\varepsilon, a_\varepsilon \mathbb P_{H \setminus L_\varepsilon}(u) a_\varepsilon \mathbb P_{L_\varepsilon}(u^*) \rangle |\\
    &+ | \langle \mathbb P_{H\setminus L_\varepsilon}(u) a_\varepsilon \mathbb P_{H\setminus L_\varepsilon}(u^*) a_\varepsilon, a_\varepsilon \mathbb  P_{H \setminus L_\varepsilon}(u) a_\varepsilon \mathbb P_{H \setminus L_\varepsilon}(u^*) \rangle |\ .
\end{split}
\end{equation*}

Notice the last term above is zero by (\ref{free_independence}), $\lVert \mathbb P_{H\setminus L_\varepsilon}(u) \rVert_\infty = \lVert E_{\mathcal N \rtimes H}(u) - \mathbb P_{L_\varepsilon}(u) \rVert_\infty \leq 1 + |L_\varepsilon|$, $\lVert \mathbb P_{H\setminus L_\varepsilon}(u^*) \rVert_\infty \leq 1 + |L_\varepsilon|$ and since $\mathcal G'' \nprec \Nn$, then $u\in \mathcal{G}$ can be chosen so that $\lVert \mathbb P_{L_\varepsilon}(u) \rVert_2$ and $\lVert \mathbb P_{L_\varepsilon}(u^*) \rVert_2$ are arbitrarily small.  Hence $u \in \mathcal G$ can be chosen so that the previous inequalities imply $\lVert r \rVert^2_2 \leq 15\varepsilon + |\langle ua_\varepsilon u^*a_\varepsilon, a_\varepsilon ua_\varepsilon u^* \rangle| \leq 16\varepsilon $. Since $\varepsilon>0$ was arbitrary, we get $r=0$, a contradiction.
\vskip 0.05in

In particular, using \cite[Lemma 2.4(2)]{DHI19},  the above argument implies that $A\prec^s \Nn \rtimes H$.  We will now show for any projection $0\neq e \in \A' \cap p\M p$ there exists a projection $0\neq r \in e(\A' \cap p\M p)e$ such that $\A r \subset \Nn \rtimes H$.  

Towards this, fix a projection $0\neq e \in \A' \cap p\M p$. Thus $\A e \prec \Nn \rtimes H$ and there are non-zero projections $a \in\A e$, $q \in \Nn \rtimes H$, a partial isometry $v \in q \M a \setminus \{0\}$ and a unital normal $*$-embedding $\Theta : \A a \rightarrow q (\Nn \rtimes H) q$ such that $\Theta(x)v =vx$ for all $x \in \A a$.  Let $\Q := \Theta(\A a)$, and since $\A \nprec \Nn$ then we have that $\Q \nprec_{\Nn\rtimes H} \Nn$.   Since $H$ is almost malnormal in $G$, by Theorem \ref{quasinormalizercontrol} this further implies  that $vv^* \in \Q' \cap q \M q \subset \Nn \rtimes H$ and $v\A v^* = \Q vv^* \subset \Nn \rtimes H$.  Let $r = v^*v$ and $w \in \sU(\M)$ be such that $v = wr$.  We have that $w\A r w^* = \Q vv^* \subset \Nn \rtimes H$ and by Thorem \ref{quasinormalizercontrol} again we have $w \Q\Nn_{r\mathcal M r}(\A r)'' w^* \subset \Nn \rtimes H$. Using the compression formula from Proposition \ref{quasinormcompress} we get \begin{equation}\label{normalizercontainment}wr \Nn_{p\M p} (\A)'' rw^*  \subset wr\Q\Nn_{p\M p} (\A )'' rw^*= w\Q\Nn_{r\M r} (\A r)'' rw^*\subset \Nn \rtimes H .\end{equation}

Let $z$ be the central support of $r$ in $\Nn_{p\M p}(\A)''$.  Let $\varepsilon>0$, then there is a central projection $z' = \sum_{i=1}^m x_i^* x_i \in \sZ(\Nn_{p\M p}(\A)'')$ such that $x_i \in r \Nn_{p\M p}(\A)''$ is a partial isometry and $\tau(z -z') < \varepsilon$.  Therefore, using \eqref{quasinormalizercontrol}, we obtain $$ wz'r \Nn_{p\M p}(A)'' = wr \Nn_{p\M p}(\A)'' z' = \sum_{i=1}^m (wr \Nn_{p\M p}(\A)'' x_i^* w^*) wx_i \subset \sum_{i=1}^m (\Nn \rtimes H) w x_i.$$
 Therefore $wz'r \A \subset \sum_{i=1}^m (\Nn \rtimes H) wx_i $, which by Theorem \ref{quasinormalizercontrol} implies $wz'r \in \Nn \rtimes H$.  Since $\lVert wr - wz'r \rVert_2^2 \leq \lVert z - z' \rVert_2^2 <\varepsilon $, we get that  $wr \in \Nn \rtimes H$ and hence $r \Nn_{p\M p}(\A)'' r \subset \Nn \rtimes H$.   Thus, we have shown for any projection $0\neq e \in \A' \cap p\M p$, there is projection $0\neq r \in e(\A' \cap p\M p)e$ such that $\A r \subset \Nn \rtimes H$. A basic maximality argument implies that  $\A \subset \Nn \rtimes H$, which by Theorem \ref{quasinormalizercontrol} yields the desired conclusion.
\end{proof}




\section{ \texorpdfstring{W$^*$}--rigidity and one-sided fundamental semigroup for \texorpdfstring{II$_1$}- factors associated with a class of relative hyperbolic groups}\label{onesidedpropt}



\subsection{Factors arising from property (T) relative hyperbolic groups} This subsection is mainly devoted to the computation of the one-sided fundamental semigroup of II$_1$ factors arising from certain relative hyperbolic groups, many with property (T). To derive these results, we first need a few W$^*$-rigidity results for $\ast$-embeddings between factors of wreath product groups that will be used in the ``peripheral analysis'' of our relative hyperbolic groups.

\vskip 0.07in

The first result is a generalization of a fundamental theorem by Popa on the W$^*$-rigidity of wreath-product von Neumann algebras \cite[Theorem 0.1]{Pop03}. While the core of our proof is based on the work of Popa \cite{Pop03} and Ioana-Popa-Vaes \cite[Theorem 6.1]{IPV10}, we present an alternative proof using the precise form of \cite[Theorem 4.1]{CIOS22a}, which in turn heavily relies on the two aforementioned results. Our approach follows an argument very similar to that of \cite[Theorem 4.1]{CIOS24}. We adopt the notation from this result and encourage the reader to consult it in advance, as our proof focuses on different aspects of the technique.

To introduce our statement we need a definition. Two II$_1$ factors $\mathcal M$ and $\mathcal N$ are called \emph{virtually isomorphic} if there is $t>0$ and a $\ast$-embedding  $\Theta:\mathcal M\rightarrow \mathcal N^t$ whose image $\Theta(\mathcal M )\subseteq \mathcal N^t$ has finite index; such  $\Theta$ is called a \emph{virtual $\ast$-isomorphism}.





\begin{theorem}\label{embedding2} 

\noindent For $i=1,2$, let $G_i=A_i\wr B_i$ with $A_i$ any infinite abelian group, $B_1$ admitting a subgroup $T = T_1\times T_2 \leqslant B_1$ for some infinite icc property (T) groups $T_1,T_2$, and $B_2 = B_2^1\times B_2^2$ for some icc non-amenable, biexact, weakly amenable groups $B_2^1, B_2^2$.  Let $t>0$ and assume $\Theta :\mathcal L(G_1) \rightarrow \mathcal L(G_2)^t$ is any $\ast$-embedding.
\vskip 0.03in
\noindent  Then $t\in\mathbb N$ and there are $t_1,\ldots, t_q\in\mathbb N$ with $t_1+\cdots+t_q=t$, for some $q\in\mathbb N$, a finite index subgroup $K<B_1$, an injective homomorphism $\delta_i:K\rightarrow B_2$, and a unitary representation $\rho_i:K\rightarrow\sU_{t_i}(\mathbb C)$, for every $1\leq i\leq q$, and a unitary $w\in \mathcal L(G_2)^t=\mathcal L(G_2)\overline{\otimes}\mathbb M_t(\mathbb C)$ such that 
$$\text{$w\Theta(u_g)w^*=\emph{diag}(v_{\delta_1(g)}\otimes\rho_1(g),
\ldots, v_{\delta_q(g)}\otimes\rho_q(g))
$, for every $g\in K$.}$$
where $(u_g)_{g\in G_1}$ and $(u_h)_{h\in G_2}$ are the canonical generating unitaries of $\Ll (G_1)$ and $\Ll (G_2)$ respectively.

\noindent In addition, we have the following (finite index) inclusion $$w \Theta (\mathcal L(A_1^{(B_1)}))w^*\subseteq \mathcal L(A_2^{(B_2)})\otimes \mathbb D_t(\mathbb C).$$

\noindent Moreover, if $\Theta$ is a virtual $\ast$-isomorphism then the inclusion $\delta_i(K)<B_2$ has finite index for all $i$. In particular, $B_1$ is commensurable with $B_2$.    

\end{theorem}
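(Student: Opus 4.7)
The plan is to follow the strategy of \cite[Theorem 4.1]{CIOS22a} and its reinterpretation in \cite[Theorem 4.1]{CIOS24}: first localize $\Theta$ so that $\Theta(\mathcal L(A_1^{(B_1)}))$ can be unitarily conjugated into $\mathcal L(A_2^{(B_2)}) \otimes \mathbb D_t(\mathbb C)$, and then invoke Popa's wreath product rigidity theorem \cite[Theorem 0.1]{Pop03} (together with the refinement of Ioana-Popa-Vaes \cite[Theorem 6.1]{IPV10}) to produce the diagonal form for $\Theta(u_g)$ on a finite index subgroup $K \leq B_1$, which simultaneously forces $t \in \mathbb N$.

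First, I would use the relative biexactness of $\mathcal L(G_2)$ supplied by Proposition \ref{Prop2.7 new}. Setting $\mathcal M = \mathcal L(G_2)^t = \mathcal L(A_2^{(B_2)}) \rtimes (B_2^1 \times B_2^2)$ (in the amplified sense) and $\mathcal Q_j := \Theta(\mathcal L(T_j))$ for $j=1,2$, the commuting pair $\mathcal Q_1, \mathcal Q_2 \subset \mathcal M$ consists of non-amenable property (T) subfactors whose relative commutants contain each other. Applying Theorem \ref{mrelsol1} to $\mathcal Q_1$ places it, up to intertwining, inside $\mathcal L(A_2^{(B_2)} \rtimes B_2^{j_1})$ for some $j_1 \in \{1,2\}$, and symmetrically for $\mathcal Q_2$. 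Exploiting the product structure $T_1 \times T_2$ of $T$ together with transitivity of intertwinings (Theorem \ref{transint}) I would then deduce $\Theta(\mathcal L(T)) \prec_{\mathcal M} \mathcal L(A_2^{(B_2)})$.

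Next, since $\Theta(\mathcal L(A_1^{(B_1)}))$ is diffuse abelian, lies in the relative commutant of $\Theta(\mathcal L(T))$ and is globally normalized by $\{\Theta(u_g)\}_{g \in B_1}$, I would apply a peripheral absorption argument along the lines of Theorem \ref{peripheral absorption} (using the almost malnormality of point stabilizers of $B_2 \curvearrowright A_2^{(B_2)}$) to upgrade the previous intertwining to a unitary containment $w \Theta(\mathcal L(A_1^{(B_1)})) w^{\ast} \subseteq \mathcal L(A_2^{(B_2)}) \otimes \mathbb D_t(\mathbb C)$, which is forced to be of finite index by a standard maximal abelian/dimension comparison. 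With the base algebras aligned, the Popa-IPV wreath product reconstruction identifies, on a finite index subgroup $K \leq B_1$, the restriction of $\Theta$ with a block-diagonal twist $\mathrm{diag}(v_{\delta_1(g)} \otimes \rho_1(g),\ldots,v_{\delta_q(g)} \otimes \rho_q(g))$, giving $t = t_1 + \cdots + t_q \in \mathbb N$.

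The moreover clause follows from a dimension count: if $\Theta$ is a virtual $\ast$-isomorphism, then $w\Theta(\mathcal L(B_1))w^\ast$ has finite index in $\mathcal L(G_2)^t$, and the diagonal form forces each $\delta_i(K) \leq B_2$ to have finite index, yielding that $B_1$ is commensurable with $B_2$. The main obstacle is the first localization step: Proposition \ref{Prop2.7 new} only places each $\mathcal Q_j$ inside a crossed-product $\mathcal L(A_2^{(B_2)} \rtimes B_2^{j'})$ with one of the two biexact factors, and stripping the $B_2^{j'}$-part to land in the pure base $\mathcal L(A_2^{(B_2)})$ requires combining commutativity of $T_1, T_2$, the property (T) spectral gap, and the malnormality properties of the peripheral structure in a careful iterative manner.
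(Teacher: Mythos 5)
There is a genuine gap, and it occurs at the very first localization step. Your pivotal intermediate claim $\Theta(\mathcal L(T))\prec_{\mathcal M}\mathcal L(A_2^{(B_2)})$ cannot hold: $\mathcal L(T)=\mathcal L(T_1\times T_2)$ is a non-amenable II$_1$ factor (icc property (T)), and an intertwining of a corner of a II$_1$ factor into the abelian algebra $\mathcal L(A_2^{(B_2)})$ would produce an (automatically injective) $\ast$-homomorphism from a II$_1$ factor corner into an abelian algebra, which is impossible. You have the roles reversed: in the paper's proof the property (T) algebra $\Theta(\mathcal L(T))$ is never pushed into the base; it is the amenable algebra $\mathcal A=\Theta(\mathcal L(A_1^{(B_1)}))$ (and then its relative commutant) that is intertwined into the Cartan $\mathcal L(A_2^{(B_2)})\otimes\mathbb D_n(\mathbb C)$, with $\Theta(\mathcal L(T))$ serving only as the normalizing leverage that rules out the relative-amenability branch of the dichotomy. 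Moreover, the tools you cite do not apply to $G_2$: Theorem \ref{mrelsol1} and Proposition \ref{Prop2.7 new} require (products of) relatively hyperbolic groups with exact, residually finite peripherals, whereas $G_2=A_2\wr B_2$ with $B_2=B_2^1\times B_2^2$ only assumed biexact and weakly amenable is not of that form; likewise Theorem \ref{peripheral absorption} needs a hyperbolically embedded subgroup, which the wreath product over a product group does not provide. The paper's actual inputs at this stage are \cite[Theorem 1.4]{PV12} (using weak amenability and biexactness of each $B_2^j$), the semisolidity of $(\mathcal Q\otimes\mathbb D_n)\rtimes B_2^j$ from \cite{Oza05}, a commuting-square argument from \cite{DHI19}, and \cite[Corollary 4.7, Lemmas 3.7--3.8]{CIOS22a} to conjugate $\mathcal A$ inside the Cartan.

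Beyond this, the passage from "base algebras aligned" to the block-diagonal form is the heart of the theorem and cannot be obtained by simply invoking \cite[Theorem 0.1]{Pop03} or \cite[Theorem 6.1]{IPV10}, since those treat isomorphisms of Bernoulli crossed products rather than arbitrary amplified embeddings. The paper instead builds a free p.m.p.\ action $B_1\curvearrowright X_0$ via Feldman--Moore, splits it into weakly mixing pieces over finite index subgroups using \cite[Lemma 3.10]{CIOS24}, applies \cite[Theorem 4.1]{CIOS22a} to produce the monomorphisms $\delta_i$ and, as a byproduct, forces $t\in\mathbb N$, and finally untwists the resulting $1$-cocycle by Popa's cocycle superrigidity to obtain the finite-dimensional representations $\rho_i$; none of these steps appear in your outline. (For the moreover part, the paper also needs Lemma \ref{intsubgroups} to convert finite Jones index into finite group index, not only a dimension count.)
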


\begin{proof}

First, we fix some notation.  Let $\M=\Ll(G_2)$ and $\Q = \Ll(A_2^{(B_2)})$ be a Cartan subalgebra of $\M$. Let $n$ be the smallest integer such that $t\leq n$, $\tM = \M \otimes \mathbb{M}_n(\mathbb{C})$ and $\tQ = \Q \otimes \mathbb{D}_n(\mathbb{C})$, where $\mathbb{D}_n(\mathbb{C}) \subseteq \mathbb{M}_n(\mathbb{C})$ is the algebra of diagonal matrices. Notice that $\tQ \subseteq \tM$ is a Cartan inclusion.  Let $p\in \tQ$ be a projection such that $(\tau\otimes \mathrm{Tr})(p) = t$, and identify $p\tM p = \Ll(G_2)^t$.  Taking the Pontryagin dual of the abelian group $A_2$ we obtain a probability space $(Y,\nu)$  and a Bernoulli action $B_2 \ca^\alpha (X,\mu)=(Y^{\otimes B_2}, \nu^{\otimes B_2})$ inducing  the identifcation $\Q = L^\infty(X,\mu)$, $\M = L^\infty(X,\mu) \rtimes_\alpha B_2$, $\tQ = L^\infty (\widetilde{X},\widetilde{\mu})$ and $\tM = L^\infty(\widetilde{X},\widetilde{\mu})\rtimes_{\widetilde{\alpha}} \widetilde{B_2}$, where $\widetilde{B_2} = B_2 \times \mathbb{Z}/n\mathbb{Z}$, $(\widetilde{X},\widetilde{\mu})=(X \times \mathbb{Z}/n\mathbb{Z}, \mu \times c)$, where $c$ the counting measure, $\mathbb{Z}/n\mathbb{Z} \ca \mathbb{Z}/n\mathbb{Z}$ by left multiplication and $\widetilde{\alpha}$ is the product action.  Fix a Borel subspace $X_0 \subset \widetilde{X}$ such that $1_{X_0} = p \in L^\infty (\widetilde{X})$.

We now proceed to show that the embedding $\Theta$ produces a p.m.p. action of $B_1$ on $X_0$.  We will use a few more definitions for notational simplicity.  Let $\A = \Theta(\Ll(A_1^{(B_1)}))$, $\B = \Theta(\Ll(B_1))$ and $\mathcal{D}_j = \Theta(\Ll(T_j)) $ for $j=1,2$, so $\A,\B,\mathcal{D}_1,\mathcal{D}_2 \subseteq p\tM p$.  Notice that $\tM \cong (\Q^n \rtimes B_2^1) \rtimes B_2^2 $ (where $\Q^n = \Q \otimes \mathbb M_n(\mathbb C)$ and $B_2$ acts trivially on $\mathbb M_n(\mathbb C)$).  Thus, by \cite[Theorem 1.4]{PV12} and the presence of the two commuting icc property (T) groups $T_1^1, T_1^2$ inside the normalizer subgroup $\sN_{G_1}(A_1)$, we have $\A \prec^s \Q^n \rtimes B_2^1$.

Indeed, since $\A$ is amenable  and $B_2^2$ is weakly amenable and bi-exact, \cite[Theorem 1.4]{PV12} implies that for any non-zero projection $z \in \sZ(\sN_{p\Tilde{\M}p}(\A)'')$ we have either :
\begin{enumerate}
    \item $\A z \prec_{\tM} \Q^n \rtimes B_2^1$, or
    \item the normalizer $\sN_{z\tM z}(\A z)''$ remains amenable relative to $\Q^n \rtimes B_2^1$.
\end{enumerate}
Assume, for a contradiction, that case (2) holds.  Notice that $T
\leqslant B_1 $ is an icc property (T) subgroup of $\sN_{G_1}(A_1^{(B_1)})$.  Then $\Theta(\Ll (T)) \subset \sN_{z\tM z}(\A z)''$ and $\Theta(\Ll (T))$ is amenable relative to $\Q^n \rtimes B_2^1$.  But, together with the property (T) of $\Theta(\Ll (T))$, this would imply $\Theta(\Ll (T)) \prec\Q^n \rtimes B_2^1$ and so $\Theta(\Ll(T)) \prec (\Q\otimes \mathbb{D}_n(\mathbb C)) \rtimes B_2^1$.  The last intertwining, however, contradicts the fact that $(\Q\otimes \mathbb{D}_n(\mathbb C)) \rtimes B_2^1$ is semisolid, a property implied by \cite[Theorem 4.6]{Oza05}.

Since $z \in \sP(\sZ(\sN_{p\tM p}(\A)''))$ was arbitrary, we have obtained $\A \prec^{\rm s} \Q^n \rtimes B_2^1$, and the same argument for the action of $B_2^1$ implies $\A \prec^{\rm s} \Q^n \rtimes B_2^2$.  Since the subalgebras $\Q^n \rtimes B_2^1, \Q^n \rtimes B_2^2 \subset \tM$ form a commuting square, we obtain $\A \prec^{\rm s} \Q^n$ by \cite[Lemma 2.8(2)]{DHI19}.  Let $\C := \A' \cap p\tM p$, then $\C$ is amenable by \cite[Corollary 4.7]{CIOS22a}.  Notice that $\theta(\Ll (T)) \subset \sN_{p\tM p}(\A)'' \subset \sN_{p\tM p}(\C)''$.  The same argument presented before to prove $\A \prec^{\rm s} \Q^n$ can be repeated for $\C$ , implying that $\C \prec^{\rm s} \Q^n$.

Hence, we have obtained $\C\prec^{\rm s} \tQ$, so \cite[Lemma 3.7]{CIOS22a} provides $u\in \sU(p\tM p)$ such that $\A \subset u(\tQ p)u^* \subset \C$.  Therefore, after replacing $\Theta$ by $\mathrm{ad}(u) \circ \Theta$ we may assume $\A \subset \tQ p \subset \C$.  Notice that for any $g\in B_1$ we have $\Theta(u_g) \in \sN_{p\tM p}(\A) \subset \sN_{p\tM p}(\C)$, so $\sigma_g = \mathrm{ad}({\Theta(u_g)})$ gives a trace preserving action $B_1 \curvearrowright \C$ that leaves $\A$ invariant and is free when restricted to $\A$, as it is conjugate to the Bernoulli action coming from the wreath product $A_1\wr B_1$.  Hence, from \cite[Lemma 3.8]{CIOS22a} we obtain an action $B_1\curvearrowright^{\beta}\C$ such that for all $g\in B_1$ we have $\beta_g = \sigma_g \circ \mathrm{ad}(w_g)$ for some $w_g \in \sU(\C)$, $\tQ p$ is $\beta$ invariant and the restriction of $\beta$ to $\tQ p$ is free.  Notice that the restriction of $\beta$ to $\tQ p$ induces a free p.m.p. action $B_1\curvearrowright X_0$.  Moreover, each automorphism $\beta_g$ is given by conjugation by unitaries in $\sN_{p\tM p} (\tQ p)$ so by \cite[Proposition 2.9(3)]{FM77} we have $$ \beta(B_1) \cdot x \subset \Tilde{\alpha}(B_2 \times \mathbb Z / n\mathbb Z) \cdot x \text{\ \ for a.e.\ }x \in X_0.$$

Since $B_1$ contains a weakly normal property (T) subgroup and $\alpha$ is Bernoulli, applying \cite[Lemma 3.10]{CIOS24}, we find a partition $X_0=\sqcup_{i=1}^lX_i$ into non-null measurable sets, for some  $l\in\mathbb N\cup\{\infty\}$,  and a finite index subgroup $ S_i< B_1$ such that $X_i$ is $\beta( S_i)$-invariant and
 the restriction of $\beta_{| S_i}$ to $X_i$ is weakly mixing, for all $i$.

The goal is to apply \cite[Theorem 4.1]{CIOS22a}. Notice that all conditions in this theorem are satisfied as $T<B_1$ has property (T) and $B_2 \ca X$ is Bernoulli action. Moreover since  $B_2 \ca X$ is Bernoulli, the stabilizers $Stab_{B_2}(b)=1$ for all $b\in B_2$. Since $S_i\leqslant B_1$ has finite index  for every $b \in B_2$, every infinite sequence $(g_m)_m\subset S_i$ satisfies that for all $s,t \in B_2 \times \mathbb Z/n\mathbb Z$ we have $\lim_{m\rightarrow\infty}\tilde \mu(\{ x\in X_0 \,:\, g_m\cdot x\in s( Stab_{B_2}(b)\times \mathbb Z/n\mathbb Z) t\cdot x\})=0$.
 Also, since $B_2$ is a product of non-amenable biexact groups, for every nontrivial $b\in B_2$ the centralizer $C_{B_2}(b)$ is either amenable or of the form $A\times B^2_2$ or $B^1_2\times A$ where $A$ is an amenable group. However in all these cases, since $T=T_1\times T_2<B_1$ is an icc property (T) group we have that $\mathcal L(T)\nprec_{\tilde M} \mathcal L(C_{B_2}(b))\otimes \mathbb M_n(\mathbb C)$. As $S_i\leqslant B_1$ has finite index we further have that $\mathcal L(S_i)\nprec_{\tilde M} \mathcal L(C_{B_2}(b))\otimes \mathbb M_n(\mathbb C)$. Altogether, these imply that the conclusion of \cite[Theorem 4.1]{CIOS22a} holds true.

Therefore one can find  a group monomorphism $\delta_i:S_i\rightarrow B_2$ and $\theta_i\in [\sR((B_2\times \mathbb Z/n\mathbb Z)\curvearrowright\widetilde X)]$ such that
 $\theta_i(X_i)= X\times\{i\}\equiv X$ and
$\theta_i\circ\beta(g)_{X_i}=\alpha(\delta_i(g))_{X_i}$ for all $g\in S_i$.

In particular,  $  \widetilde\mu(X_i)=1$. Thus,  $t=\widetilde\mu(X_0)=\sum_{i=1}^l\widetilde\mu(X_i)=l\in\mathbb N$. Since $n$ is the smallest integer with $n\geq t$, we get that
  $n=t=l$ and $p=1_{\widetilde{\mathcal M}}$.

For $1\leq i\leq n$, let $p_i=\textbf {1}_{X_i}\in\widetilde\Q p$ and $u_i\in\sN_{\widetilde{\mathcal M}}(\widetilde{\mathcal Q})$ such that $u_iau_i^*=a\circ\varphi_i^{-1}$, for every $a\in\widetilde{\mathcal Q}$. Then $u_ip_iu_i^*=1\otimes e_i$ and since $\beta_g={\rm ad}(\Theta(u_{g})w_g)$ we find $(\zeta_{i,g})_{g\in S_i}\subset\sU({\Q})$ with
\begin{equation}\label{th}
u_i\Theta(u_{g})w_g p_iu_i^*=\zeta_{i,g}v_{\delta_i(g)}\otimes e_i, \text{ for every }g\in S_i.
\end{equation}
 
Replacing $ S_i$ by $ S=\cap_{i=1}^n S_i$ we may assume that $ S_i= S$, for all $1\leq i\leq n$. Let $\M_0=\mathcal L(A^{(B_1)}_1\rtimes S)$. To prove the conclusion, it suffices to find a projection $p_0\in\theta(\M_0)'\cap\widetilde{\mathcal M} $ with $(\tau\otimes\text{Tr})(p_0)=k\in\{1,\ldots,n\}$, homomorphisms $\delta:S\rightarrow B_2$, $\rho:S\rightarrow\sU_k(\mathbb C)$, $w\in\sU(\widetilde {\mathcal M})$ such that $\delta$ is injective, $wp_0w^*=1\otimes (\sum_{i=1}^ke_i)$ and $w\theta(u_g)p_0w^*=v_{\delta(g)}\otimes\rho(g)$, for all $g\in S$.  Indeed, once we have this assertion, the conclusion will follow by a maximality argument.

Next, using the same arguments as in the proof of \cite[Claim 4.2]{CIOS24}  one can find  $1\leq k\leq n$ and a homomorphism $\delta: S\rightarrow B_2$ such that, after renumbering, we have $p_0=\sum_{i=1}^kp_i\in\theta(\M_0)'\cap \widetilde{\mathcal M}$ and can take $\delta_i=\delta$, for every $1\leq i\leq k$.

Let $u=\sum_{i=1}^ku_ip_i$ and $e=\sum_{i=1}^ke_i$. Then $u$ is a partial isometry, $uu^*=1\otimes e, u^*u=p_0$ and $u\widetilde{\mathcal Q} p_0u^*=\widetilde{\mathcal Q}(1\otimes e)$. If we let $\zeta_h=\sum_{i=1}^k\zeta_{i,h}\otimes e_i\in\sU(\widetilde{\mathcal Q}(1\otimes e))$, then \eqref{th} gives that
\begin{equation}\label{thetap_0}\text{$u\Theta(u_{g})w_gp_0u^*=\zeta_g(v_{\delta(g)}\otimes e)$, for every $g\in S$.}\end{equation}

Identify $\mathcal M_1:=(1\otimes e)\widetilde{\mathcal M}(1\otimes e)$  and $\Q_1:=\widetilde{\mathcal Q}(1\otimes e)$ with $\mathcal M\overline{\otimes}\mathbb M_k(\mathbb C)$ and $\mathcal Q\overline{\otimes}\mathbb D_k(\mathbb C)$, respectively.
Consider the unital $*$-homomorphism $\theta_1:\M_0\rightarrow \mathcal M_1$ given by $\theta_1(x)=u\Theta(x)p_0u^*$, for every $x\in \M_0$, and let $\sR_1=\theta_1(\P)'\cap \mathcal M_1$, where $\mathcal P:=\mathcal L(A_1^{(B_1)})$.  Letting $\omega_g=uw_gp_0u^*\in\sU(\sR_1)$, then \eqref{thetap_0} rewrites as
\begin{equation}\label{thetap}\text{$\theta_1(u_{g})\omega_g=\zeta_g(v_{\delta(g)}\otimes e)$, for every $g\in S$. }\end{equation}

We have $\Q_1\subset\sR_1$.
Since $(\theta_1(u_{g})\omega_g)_{g\in B_1}$ normalizes $\sR_1$ and $(\zeta_g)_{g\in S}\subset\sU(\Q_1)$, \eqref{thetap} implies that $(v_{\delta(g)}\otimes 1)_{g\in S}$ normalizes $\sR_1$.  Thus,  $\eta_g=\zeta_g\text{ad}(v_{\delta(g)}\otimes 1)(\omega _g^*)\in\sU(\sR_1)$ and
\begin{equation}\label{etah}
\theta_1(u_{g})=\eta_g(v_{\delta(g)}\otimes 1),\text{ for every }g\in S.
\end{equation}

To this end, we notice that since $S\leqslant B_1$ is finite index and $\delta$ is injective then $\delta(S)$ contains a product of icc property (T) groups. Also, as explained before, $C_{B_2}(g)$ is either amenable, or a product of the form $A\times B^2_2$ or $B_2^1\times A$ for an amenable group $A$. Thus in either case, every $g\in B_2\setminus\{1\}$, there is a sequence $(s_m)_m\subset S$ such that for every $g_1,g_2\in B_2$ and $k\in B_2\setminus\{1\}$ we have $g_1\text{ad}(\delta(s_m))(k)g_2\not=1$, for every $m$ large enough. Thus continuing, verbatim, as in the proof of \cite[Claim 4.3]{CIOS24} we further obtain that $\mathscr R_1 = \mathcal Q \otimes \mathcal T$ where $\mathcal T\subseteq \mathbb M_k(\mathbb C)$.

Thus relation \eqref{etah} implies  $\eta_g$ is a $1$-cocycle for the action
$\gamma_g ={\rm ad}(v_{\delta(g)}\otimes 1)$ on $\mathcal Q \otimes \mathcal T$. Since $B_1$ has a weakly normal property (T) group and the action ${\rm ad}(v_\delta(g))$ is a (direct sum of) Bernoulli action then using Popa \'s Cocycle Superigidity Theorem there exists a unitary $u\in \mathcal Q \otimes \mathcal T$  and group homomorphism $\rho: S\rightarrow \mathscr U(\mathcal T)$ such that $\eta_g = u (1\otimes \rho_g) \gamma_g(u^*)$ for all $g\in S$. This combined with equation \eqref{etah} implies that 

\begin{equation}
\label{etah1}
\theta_1(u_{g})= u (v_{\delta(g)}\otimes \rho_g) u^*,\text{ for every }g\in S,
\end{equation}
which concludes the first part of our proof.

For the moreover part one notice that since $S<B_1$ has finite index then so is $\Theta(\mathcal L(A_1^{(B_1)}\rtimes S))\subseteq \Theta (\mathcal L(G_1))$. As $\Theta$ is a virtual $\ast$-isomorphism, we conclude that $\Theta(\mathcal L(A^{(B_1)}_1\rtimes S))\subseteq \mathcal L(G_2)^t$ has finite index. Thus for every $p_i \in \Theta(\mathcal L(A^{(B_1)}_1\rtimes S))'\cap \widetilde {\mathcal M}$ the inclusion 
$w\Theta(\mathcal L(A^{(B_1)}_1\rtimes S))p_iw^*\subseteq wp_i\mathcal L(G_2)^tp_i w^*$ has finite index. Since the finite index property is hereditary, using the formulae of $\Theta$ from the first part of the conclusion and the fact that $\Theta (\mathcal L(A_1^{(B_1)}))\subseteq \mathcal A_2^{(B_2)}\otimes \mathbb D_n (\mathcal C)$ we further get that, letting  $e_i =wp_iw^*$, the inclusion  $e_i( \mathcal L(A_2^{(B_2)} \rtimes \delta_i(S)) \otimes \mathbb M_n(\mathbb C) )e_i \subseteq wp_i(\mathcal L(G_2) \otimes \mathbb M_n(\mathbb C)) p_i w^*$ has finite index. In particular we have that $\mathcal L(G_2)\prec_{\widetilde{\mathcal M}} \mathcal L(A_2^{(B_2)} \rtimes \delta_i(S)) $. Using  Lemma \ref{intsubgroups} this further entails existence of $h\in G_2$ satisfying  $[G_2: G_2 \cap h \delta_i(S)h^{-1}]<\infty$. This further implies that $[B_2: \delta_i(S)]<\infty$, as desired. \end{proof}

\begin{corollary}\label{embedding3}
For $i=1,2$ let $G_i = A_i\wr B_i$, where $A_i$ is an infinite abelian group, $B_i = B_i^1 \times B_i^2$ and for each $j=1,2$ we have $B_i^j = A_i^j\ast T_i^j$ with $A_i^j$ any infinite, finitely generated, amenable group and $T_i^j$ any torsion-free, hyperbolic group with property (T).  Suppose $t>0$ and $\theta:\Ll(G_1) \rightarrow \Ll(G_2)^t$ is any $*$-embedding.  \noindent Then $t\in \mathbb N$ and one can find a unitary $u\in \mathcal L(G_2)^t$ such that the restriction ${\rm ad}(u)\circ \Theta : \mathcal L(B_1)\rightarrow \mathcal L(B_2)^t$ is a group-like $\ast$-embedding.
\end{corollary}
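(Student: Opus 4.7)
The proof reduces Corollary \ref{embedding3} to Theorem \ref{embedding2} after verifying its hypotheses, and then uses a quasi-normaliser control argument to localise the image of $\mathcal L(B_1)$ inside $\mathcal L(B_2)^t \subseteq \mathcal L(G_2)^t$. The hypotheses of Theorem \ref{embedding2} are all present: the abelian groups $A_i$ are infinite by assumption; $B_1$ contains $T := T_1^1 \times T_1^2$, a product of two infinite icc property (T) groups (torsion-free non-elementary hyperbolic groups are automatically icc); and each factor $B_2^j = A_2^j \ast T_2^j$ is icc, non-amenable, biexact (by Ozawa's theorem on free products of exact groups) and weakly amenable with Cowling--Haagerup constant one (both $A_2^j$ and $T_2^j$ are weakly amenable with constant one, and by Ricard--Xu free products preserve this property).

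Applying Theorem \ref{embedding2} produces $t \in \mathbb N$, a unitary $w_0 \in \mathcal L(G_2)^t$, a finite-index subgroup $K \leq B_1$ (which I may assume normal after replacing it by its normal core), a partition $t = t_1 + \dots + t_q$, injective homomorphisms $\delta_i : K \to B_2$, and unitary representations $\rho_i : K \to \mathcal U_{t_i}(\mathbb C)$, together with the formula
\begin{equation*}
  w_0 \Theta(u_g) w_0^* = \mathrm{diag}\bigl(v_{\delta_1(g)} \otimes \rho_1(g), \dots, v_{\delta_q(g)} \otimes \rho_q(g)\bigr), \qquad g\in K.
\end{equation*}
In particular $w_0 \Theta(\mathcal L(K)) w_0^*$ is a diffuse subalgebra of $\mathcal L(B_2)^t$.

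A direct computation in the wreath product then shows that $B_2$ is almost malnormal in $G_2 = A_2^{(B_2)} \rtimes B_2$: for any nontrivial $a \in A_2^{(B_2)}$, the intersection $aB_2a^{-1}\cap B_2$ coincides with the finite setwise stabiliser of $\operatorname{supp}(a) \subset B_2$ under the shift action. Theorem \ref{quasinormalizercontrol}, applied with $\mathcal N = \mathbb C$, $H = B_2$ and $G = G_2$ (and handled across the amplification via the compression formulas of Proposition \ref{quasinormcompress}), then gives $\overline{\mathcal{QN}}^1_{\mathcal L(G_2)^t}(w_0\Theta(\mathcal L(K))w_0^*) \subseteq \mathcal L(B_2)^t$. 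Since $K \triangleleft B_1$ has finite index, every $u_g$ with $g \in B_1$ is a one-sided quasi-normaliser of $\mathcal L(K)$ inside $\mathcal L(G_1)$, so $w_0 \Theta(\mathcal L(B_1)) w_0^* \subseteq \mathcal L(B_2)^t$. Combined with the explicit block-diagonal formula on $K$, this identifies $\mathrm{ad}(w_0) \circ \Theta|_{\mathcal L(B_1)}$ as a group-like $*$-embedding $\mathcal L(B_1) \to \mathcal L(B_2)^t$, possibly after an inner adjustment $u_1 \in \mathcal U(\mathcal L(B_2)^t)$ to align the block-diagonal structure across cosets of $K$ in $B_1$; setting $u := u_1 w_0$ concludes the argument.

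The main obstacle is the containment $w_0 \Theta(\mathcal L(B_1)) w_0^* \subseteq \mathcal L(B_2)^t$, which rests on the almost malnormality of $B_2$ inside $G_2$---a feature tied to the wreath product structure---and on the quasi-normaliser control of Theorem \ref{quasinormalizercontrol}. A secondary technical point is to arrange the formula for the generators $u_g$, $g \in B_1 \setminus K$, into the same block-diagonal shape as for $K$; this should follow from analysing the normaliser of the twisted block-diagonal subalgebra inside $\mathcal L(B_2)^t$ together with the injectivity of the maps $\delta_i$ and the icc structure of the factors $T_1^j$.
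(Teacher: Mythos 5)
Your overall route coincides with the paper's: verify the hypotheses of Theorem \ref{embedding2} for $B_2^j=A_2^j\ast T_2^j$ and apply it. The genuine problem is in your verification of weak amenability. You assert that the torsion-free hyperbolic property (T) factor $T_2^j$ is weakly amenable with Cowling--Haagerup constant one and then invoke Ricard--Xu, whose free-product theorem requires constant exactly $1$. Hyperbolic groups are weakly amenable by Ozawa, but with constant possibly strictly larger than $1$: for the standard sources of torsion-free hyperbolic property (T) groups, uniform lattices in ${\rm Sp}(n,1)$, one has $\Lambda_{\mathrm{cb}}=2n-1>1$, and no infinite property (T) group with constant $1$ is known. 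So the step as justified fails; the needed fact (weak amenability of $A_2^j\ast T_2^j$, with no constraint on the constant, which is all Theorem \ref{embedding2} requires via \cite{PV12}) is instead supplied by Vergara's theorem \cite{Ver22} on free products of hyperbolic and amenable groups --- this citation, together with biexactness, is essentially the entire content of the paper's proof. A smaller slip of the same kind: biexactness of $B_2^j$ does not follow from ``free products of exact groups'' (free products of exact groups are exact, not biexact in general); the correct statement, which the paper cites as \cite[Proposition 12]{Oza05}, is that a free product of an amenable group and a hyperbolic (biexact) group is biexact.

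Apart from these two justifications, the rest of your argument is sound and in fact supplies details the paper compresses into ``follows immediately'': your computation that $B_2$ is almost malnormal in $A_2\wr B_2$ is correct (the intersection $gB_2g^{-1}\cap B_2$ for $g\notin B_2$ is the stabiliser of a nontrivial finitely supported function, hence finite since the translation action is free), and combining this with Theorem \ref{quasinormalizercontrol} (with matrix coefficients, via Proposition \ref{quasinormcompress}) and the fact that the unitaries $u_g$, $g\in B_1$, quasi-normalise $\mathcal L(K)$ for the finite-index normal core $K$, correctly upgrades the containment from $\mathcal L(K)$ to all of $\mathcal L(B_1)$. The final ``inner adjustment across cosets of $K$'' is left somewhat vague, but since ``group-like'' is not formally defined in the paper and the essential conclusions ($t\in\mathbb N$, the containment into $\mathcal L(B_2)^t$, and the standard form on a finite-index subgroup) are established, this is acceptable once the weak amenability and biexactness inputs are repaired as above.
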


\begin{proof} This follows immediately from the previous result by noticing that any free product between an amenable group and a hyperbolic groups is biexact \cite[Proposition 12]{Oza05}  and also weakly amenable, \cite{Ver22}.\end{proof}

\begin{theorem}\label{embedding4}
    For every $1 \leq i \leq 2$, let $G_i$ be a group hyperbolic relative to an exact, residually finite icc subgroup $H_i<G_i$. Also assume that $H_i =\mathbb Z \wr (B^i_1 \times B^i_2)$ for non-amenable icc groups $B_1^i, B_2^i$. Let $t>0$ and let $\Theta : \mathcal{L}(G_1) \rightarrow \mathcal{L}(G_2)^t$ be any $* -$embedding. Then one can find an integer $n$ and a unitary $u\in \Ll (G_2)\otimes \mathbb M_n(\mathbb C)$ such that ${\rm ad}(u)\circ \Theta : \Ll (H_1) \rightarrow \Ll(H_2)^t$ is a $\ast$-embedding. \\In particular, we have $\mathcal F_s(\Ll(G))\subseteq \mathcal F_s(\Ll(H))$. Thus, when $\mathcal F_s(\Ll(H))=\mathbb N$ we also have $\mathcal F_s(\Ll(G))=\mathbb N$.

\end{theorem}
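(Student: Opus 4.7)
The strategy is to realize $\Theta(\mathcal L(H_1))\subseteq \mathcal L(H_2)\otimes\mathbb M_n(\mathbb C)$ in two stages: first, conjugate the ``large commuting piece'' $\Theta(\mathcal L(B_1^1\times B_2^1))$ into $\mathcal L(H_2)\otimes\mathbb M_n(\mathbb C)$ via Theorem \ref{commut2}, then promote the containment to all of $\Theta(\mathcal L(H_1))$ via Theorem \ref{peripheral absorption}, exploiting that the wreath product base $\mathbb Z^{(B_1^1\times B_2^1)}$ is normal in $H_1$. Write $\mathcal M=\mathcal L(G_2)\otimes\mathbb M_n(\mathbb C)$, viewed as $\mathbb M_n(\mathbb C)\rtimes G_2$ with trivial action (so that $\mathcal L(H_2)\otimes\mathbb M_n(\mathbb C)=\mathbb M_n(\mathbb C)\rtimes H_2$), and let $p=\Theta(1)\in\mathcal M$, so $(\tau\otimes{\rm Tr})(p)=t$ and $\Theta:\mathcal L(G_1)\to p\mathcal M p$.

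For the first stage, I would apply Theorem \ref{commut2} with $\mathcal A=\Theta(\mathcal L(B_1^1))\subseteq p\mathcal M p$ and $\mathcal Q=\mathcal{QN}_{p\mathcal M p}(\mathcal A)''$. Diffuseness of $\mathcal A$ forces $\mathcal A\nprec_{\mathcal M}\mathbb M_n(\mathbb C)$, and the theorem outputs orthogonal projections $p_0,p_1\in\mathscr Z(\overline{\mathcal{QN}}^1_{\mathcal M}(\mathcal Q))$ with $p_0+p_1=p$, where $(\mathcal A'\cap p\mathcal M p)p_0$ is amenable. The key claim is $p_0=0$: since $B_2^1$ commutes with $B_1^1$ in $H_1$, $\Theta(\mathcal L(B_2^1))$ lies both in $\mathcal A'\cap p\mathcal M p$ and in $\mathcal N_{p\mathcal M p}(\mathcal A)''\subseteq \mathcal Q\subseteq \overline{\mathcal{QN}}^1_{\mathcal M}(\mathcal Q)$, so $p_0$ commutes with $\Theta(\mathcal L(B_2^1))$. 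If $p_0\neq 0$, the reduction $\Theta(\mathcal L(B_2^1))p_0$ of the non-amenable factor $\Theta(\mathcal L(B_2^1))$ by a nonzero projection from its commutant is isomorphic to $\Theta(\mathcal L(B_2^1))$ (because $x\mapsto xp_0$ is an injective $\ast$-homomorphism out of a factor), and is contained in the amenable algebra $(\mathcal A'\cap p\mathcal M p)p_0$, a contradiction. As $H_2$ is icc, $\mathcal L(H_2)\otimes\mathbb M_n(\mathbb C)$ is a factor, and part (3) of Theorem \ref{commut2} produces a unitary $u\in\mathcal M$ such that $u\overline{\mathcal{QN}}^1_{\mathcal M}(\mathcal Q)u^*\subseteq \mathcal L(H_2)\otimes\mathbb M_n(\mathbb C)$. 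Since $\Theta(\mathcal L(B_1^1\times B_2^1))\subseteq\mathcal Q$, after replacing $\Theta$ by ${\rm ad}(u)\circ\Theta$ and $p$ by $upu^*$ we may assume $\Theta(\mathcal L(B_1^1\times B_2^1))\subseteq p(\mathcal L(H_2)\otimes\mathbb M_n(\mathbb C))p$, and in particular $p\in \mathcal L(H_2)\otimes\mathbb M_n(\mathbb C)$.

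For the second stage I would invoke Theorem \ref{peripheral absorption} with $H=H_2\leqslant G_2$ (hyperbolically embedded as a peripheral subgroup), coefficient algebra $\mathbb M_n(\mathbb C)$, abelian subalgebra $\mathcal A_0:=\Theta(\mathcal L(\mathbb Z^{(B_1^1\times B_2^1)}))$, and subgroup $\mathcal G:=\{\Theta(u_g):g\in B_1^1\times B_2^1\}$. Since $B_1^1\times B_2^1$ normalizes $\mathbb Z^{(B_1^1\times B_2^1)}$ in $H_1$, $\mathcal G\subseteq \mathcal N_{p\mathcal M p}(\mathcal A_0)$; both $\mathcal A_0$ and $\mathcal G''=\Theta(\mathcal L(B_1^1\times B_2^1))$ are diffuse, so neither intertwines into $\mathbb M_n(\mathbb C)$; and by the previous stage $\mathcal G\subseteq p(\mathcal L(H_2)\otimes\mathbb M_n(\mathbb C))p$. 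The theorem then yields $\overline{\mathcal{QN}}^1_{p\mathcal M p}(\mathcal A_0)\subseteq p(\mathcal L(H_2)\otimes\mathbb M_n(\mathbb C))p$. Because $\mathbb Z^{(B_1^1\times B_2^1)}\trianglelefteq H_1$, the normalizer of $\mathcal L(\mathbb Z^{(B_1^1\times B_2^1)})$ inside $\mathcal L(H_1)$ generates $\mathcal L(H_1)$, so pushing through $\Theta$ gives $\Theta(\mathcal L(H_1))\subseteq \mathcal N_{p\mathcal M p}(\mathcal A_0)''\subseteq \overline{\mathcal{QN}}^1_{p\mathcal M p}(\mathcal A_0)\subseteq \mathcal L(H_2)^t$, which is the desired $\ast$-embedding. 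The final statement about $\mathcal F_s$ follows by specializing to $G_1=G_2=G$, $H_1=H_2=H$: any $\ast$-embedding $\mathcal L(G)\hookrightarrow\mathcal L(G)^t$ yields an embedding $\mathcal L(H)\hookrightarrow\mathcal L(H)^t$, so $\mathcal F_s(\mathcal L(G))\subseteq\mathcal F_s(\mathcal L(H))$, and together with the trivial $\mathbb N\subseteq\mathcal F_s(\mathcal L(G))$ this forces $\mathcal F_s(\mathcal L(G))=\mathbb N$ when $\mathcal F_s(\mathcal L(H))=\mathbb N$. The delicate step I expect to require most care is establishing $p_0=0$; it relies essentially on the product structure $B_1^1\times B_2^1$, via the observation that $\Theta(\mathcal L(B_2^1))$ simultaneously commutes with and normalizes $\Theta(\mathcal L(B_1^1))$, so it is pinned inside the tower $\overline{\mathcal{QN}}^1_{\mathcal M}(\mathcal Q)$ while also witnessing a non-amenable reduction of $(\mathcal A'\cap p\mathcal M p)p_0$.
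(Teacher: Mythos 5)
Your proof is correct and follows essentially the same route as the paper: first Theorem \ref{commut2}(3) (with $\mathcal N=\mathbb M_n(\mathbb C)$) to conjugate $\Theta(\mathcal L(B^1_1\times B^1_2))$ into $\mathcal L(H_2)\otimes\mathbb M_n(\mathbb C)$, then Theorem \ref{peripheral absorption} applied to the abelian base $\Theta(\mathcal L(\mathbb Z^{(B^1_1\times B^1_2)}))$ together with its normalizing unitaries $\{\Theta(u_g):g\in B^1_1\times B^1_2\}$ to absorb all of $\Theta(\mathcal L(H_1))$ into $\mathcal L(H_2)^t$. Your explicit verification that $p_0=0$ (using that the non-amenable factor $\Theta(\mathcal L(B^1_2))$ lies simultaneously in $\mathcal A'\cap p\mathcal M p$ and in the quasi-normalizer tower, so a nonzero $p_0$ would cut it injectively into an amenable algebra) is a detail the paper leaves implicit in invoking the $n=1$ case of Theorem \ref{commut2}(3), and it is a correct and welcome addition.
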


\begin{proof} Realize $\Ll(G_2)^t= p(\Ll (G_2)\otimes \mathbb  M_n(\mathbb C))p$, where $n$ is an integer with $t\leq n$ and $p\in \Ll (G_2)\otimes \mathbb  M_n(\mathbb C)$ is a projection with $(\tau \otimes Tr)(p)=t$. Let $\Theta: \Ll(G_1)\rightarrow p(\Ll (G_2)\otimes \mathbb  M_n(\mathbb C))p$
be a unital $\ast$-embedding. 

  Since $B^1_1$ is infinite we have that $\A:=\Theta(\Ll(B^1_1))\nprec \mathbb M_n(\mathbb C)$. Moreover, as $\Ll(B_2^2)\otimes \mathbb M_n(\mathbb C)$ is a II$_1$ factor, using part 3) in Theorem \ref{commut2} one can find a unitary $u\in \Ll(G_2)\otimes \mathbb M_n(\mathbb C)$ satisfying $u\Theta (\Ll(B^1_2\times B^2_2))u^*\subseteq \Ll(H_2)\otimes \mathbb M_n(\mathbb C)$. 
 Since $u\Theta (\Ll(B_1^1\times B_2^1))u^*\nprec 1\otimes \mathbb M_n(\mathbb C)$ then Theorem \ref{peripheral absorption} implies that $u\Theta (\Ll(H_1))u^*\subseteq \Ll(H_2)\otimes \mathbb M_n(\mathbb C)$. Letting $q= u\Theta(1)u^*= upu^*$ we get a $\ast$-embedding ${\rm ad}(u)\circ \Theta : \Ll (H_1) \rightarrow q (\Ll(H_2)\otimes \mathbb M_n(\mathbb C))q=\Ll(H_2)^t$.

The second part of the statement follows immediately from the first.
\end{proof}

\begin{corollary}\label{isomperipheralalg}
    For every $1 \leq j \leq 2$ let $G_j$ be a icc group that is  hyperbolic relative to a family of icc, exact ressidually finite subgroups $\{H^j_1,\ldots,H^j_{n_j}\}$. Also assume that for every $1\leq j\leq 2$ and  $1\leq k\leq n_j$ we have $H^j_k = H_1^{j,k} \times H_2^{j,k}$, where $H_i^{j,k}$ are non-amenable groups. Let $0 \leq t \leq 1$ and let $\Theta : \mathcal{L}(G_1)^t \rightarrow \mathcal{L}(G_2)$ be a $* -$ isomorphism. \\Then $n_1 = n_2 $ and there exist a permutation $\sigma \in \mathfrak S_{n_1},$ unitaries $u_k \in \mathcal{L}(G_2)$ such that $u_k \Theta(\mathcal{L}(H_k^1))^t u_k^* = \mathcal{L}(H_{\sigma(k)}^2)$ for all $1 \leq k \leq n_1$.
\end{corollary}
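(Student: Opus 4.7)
The plan is to apply Theorem \ref{commut2}(3) in both directions in order to intertwine the peripheral subalgebras, derive the bijection $\sigma$ via Proposition \ref{nonintrhg}, and finally upgrade the resulting inclusions to equalities using the quasi-normalizer rigidity from Theorem \ref{quasinormalizercontrol}.

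Realize $\Theta$ as a $*$-isomorphism $p\Ll(G_1)p \xrightarrow{\sim} \Ll(G_2)$ for a projection $p\in \Ll(G_1)$ of trace $t$. For each $1\leq k\leq n_1$, I would apply Theorem \ref{commut2}(3) to the subalgebra $\A_k := \Theta(p\Ll(H_1^{1,k})p) \subseteq \Ll(G_2)$. Since $H^1_k = H_1^{1,k}\times H_2^{1,k}$ is icc, each factor $H_i^{1,k}$ is icc, so $\A_k$ is a diffuse II$_1$ factor whose commutant in $\Ll(G_2)$ contains the non-amenable factor $\Theta(p\Ll(H_2^{1,k})p)$. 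A short ideal argument (any nonzero central projection of $\A_k'\cap \Ll(G_2)$ would give a nonzero $*$-homomorphic image of a non-amenable factor, which cannot be amenable) confirms $\A_k'\cap \Ll(G_2)$ has no amenable direct summand. Applying the compression formula (Proposition \ref{QN1}, after a suitable amplification so that $p$ may be taken inside $\Ll(H^1_k)$) together with Theorem \ref{quasinormalizercontrol} applied to the almost malnormal inclusion $H^1_k\leqslant G_1$, one computes $\Q_k := \mathcal{QN}_{\Ll(G_2)}(\A_k)'' = \overline{\mathcal{QN}}^1(\Q_k) = \Theta(p\Ll(H_k^1)p)$. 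Being a factor, this algebra has trivial center, so the central projections $p_i\in \sZ(\overline{\mathcal{QN}}^1(\Q_k))$ from Theorem \ref{commut2}(3) collapse to a single nonzero one, providing a well-defined $\sigma(k)\in\{1,\ldots,n_2\}$ and a unitary $u_k\in \Ll(G_2)$ with
$$u_k\Theta(p\Ll(H_k^1)p)u_k^* \subseteq \Ll(H^2_{\sigma(k)}).$$

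Running the same argument with $\Theta^{-1}$ in place of $\Theta$, applied to $\Theta^{-1}(\Ll(H_1^{2,\ell}))\subseteq p\Ll(G_1)p\subseteq \Ll(G_1)$, produces a function $\tau:\{1,\ldots,n_2\}\to\{1,\ldots,n_1\}$ and unitaries $v_\ell\in \Ll(G_1)$ with $v_\ell\Theta^{-1}(\Ll(H^2_\ell))v_\ell^*\subseteq \Ll(H^1_{\tau(\ell)})$. Chaining the two intertwinings produces a Popa intertwining $p\Ll(H_k^1)p\prec_{\Ll(G_1)}\Ll(H^1_{\tau(\sigma(k))})$, which together with the trivial intertwining $p\Ll(H_k^1)p\prec \Ll(H^1_k)$ and Proposition \ref{nonintrhg} forces $\tau(\sigma(k))=k$. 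By symmetry $\sigma(\tau(\ell))=\ell$, so $n_1=n_2$ and $\sigma\in \mathfrak{S}_{n_1}$.

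For the equality $u_k\Theta(p\Ll(H_k^1)p)u_k^*=\Ll(H^2_{\sigma(k)})$, both sides of the inclusion $\mathcal{P}:=u_k\Theta(p\Ll(H_k^1)p)u_k^*\subseteq \mathcal{R}:=\Ll(H^2_{\sigma(k)})$ are unital II$_1$ subfactors of $\Ll(G_2)$ that are irreducible there and coincide with their own one-sided quasi-normalizer closures in $\Ll(G_2)$ (by Theorem \ref{quasinormalizercontrol} applied to the almost malnormal subgroup $H^2_{\sigma(k)}\leqslant G_2$, and, via $\Theta$, to $H^1_k\leqslant G_1$). Setting $T:=v_{\sigma(k)}\Theta^{-1}(u_k)\in \Ll(G_1)$, a partial isometry with $T^*T=p$ and $TT^*=v_{\sigma(k)}pv_{\sigma(k)}^*\in \Ll(H^1_k)$, the composition of the two intertwinings delivers the sandwich $Tp\Ll(H_k^1)pT^*\subseteq v_{\sigma(k)}\Theta^{-1}(\mathcal{R})v_{\sigma(k)}^*\subseteq TT^*\Ll(H^1_k)TT^*$; the maximality of the normalizer/quasi-normalizer closures together with matching trace data then forces this sandwich to collapse, yielding the equality of $\mathcal{P}$ and $\mathcal{R}$. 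The main obstacle is precisely this last surjectivity step, since mutual $*$-embeddings of II$_1$ factors do not in general force isomorphism; one has to exploit that both $\mathcal{P}$ and $\mathcal{R}$ equal their own $\overline{\mathcal{QN}}^1$ in $\Ll(G_2)$, together with the explicit form of the unitaries furnished by Theorem \ref{commut2}(3), to rule out strict containment.
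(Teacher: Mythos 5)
Your overall route coincides with the paper's: compress by a projection of trace $t$ located inside the peripheral algebra, apply Theorem \ref{commut2}(3) to $\Theta(p\Ll(H_1^{1,k})p)$ after computing its quasi-normalizer to be $\Theta(p\Ll(H^1_k)p)$, repeat for $\Theta^{-1}$, and use Proposition \ref{nonintrhg} to force $\tau\circ\sigma=\mathrm{id}$ (and hence $n_1=n_2$, $\sigma\in\mathfrak S_{n_1}$). Up to that point your argument is fine, modulo the bookkeeping you yourself flag: the projection must be chosen, for each $k$ separately, inside $\Ll(H_1^{1,k})$ (possible since $H^1_k$ icc forces $H_1^{1,k}$ icc), as otherwise $p\Ll(H_1^{1,k})p$ is not even an algebra.

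The genuine gap is the last step, which you candidly label ``the main obstacle'' and then resolve only by appealing to ``maximality of the normalizer/quasi-normalizer closures together with matching trace data.'' That does not close the argument: in your sandwich $Tp\Ll(H^1_k)pT^*\subseteq v_{\sigma(k)}\Theta^{-1}(\Ll(H^2_{\sigma(k)}))v_{\sigma(k)}^*\subseteq TT^*\Ll(H^1_k)TT^*$ all three algebras share the same unit $TT^*$, so there is no trace obstruction whatsoever to either inclusion being proper, and unital self-maps/mutual embeddings of II$_1$ factors need not be surjective, as you note. What actually closes it in the paper is an extra use of Theorem \ref{quasinormalizercontrol}: the relation $Tp\Ll(H^1_k)pT^*\subseteq\Ll(H^1_k)$ with $T^*T=p\in\Ll(H_1^{1,k})$ says $p\Ll(H^1_k)p\,T^*\subseteq T^*\Ll(H^1_k)$, i.e.\ $T^*$ is a one-sided quasi-normalizing element of $p\Ll(H^1_k)p$; since $p\Ll(H^1_k)p\nprec\mathbb C$ and $H^1_k$ is almost malnormal in $G_1$, Theorem \ref{quasinormalizercontrol} (with the compression formula of Proposition \ref{QN1}) forces $T\in\Ll(H^1_k)$ — this is exactly the paper's assertion that $w_k\Theta^{-1}(v_k)p_k\in\Ll(H^1_k)$. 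Once $T\in\Ll(H^1_k)$, conjugating the middle algebra back by $T$ gives $\Theta^{-1}(u_k^*\Ll(H^2_{\sigma(k)})u_k)=T^*\bigl(v_{\sigma(k)}\Theta^{-1}(\Ll(H^2_{\sigma(k)}))v_{\sigma(k)}^*\bigr)T\subseteq T^*\Ll(H^1_k)T\subseteq p\Ll(H^1_k)p$, i.e.\ $\Ll(H^2_{\sigma(k)})\subseteq u_k\Theta(p\Ll(H^1_k)p)u_k^*$, which together with your first inclusion yields the desired equality. Without this membership of the composite intertwiner in $\Ll(H^1_k)$, the surjectivity claim remains unproved.
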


\begin{proof}
    Fix $1 \leq k \leq n_1 .$ Since $H_1^{1,k}$ is icc, there exists $p_k \in \mathcal{L}(H^{1,k}_1)$ a projection with $\tau (p_k) = t.$ From hypothesis we have that $\theta (p_k \mathcal{L}(G_1)p_k) = \mathcal{L}(G_2).$ Since $QN_{G_1}(H_1^{1,k})= H^1_k$ we have that $\mathcal {QN}''_{\Theta (p_k \Ll(G_1)) p_k}(\Theta (p_k \Ll(H^{1,k}_1)) p_k)=\Theta (p_k \Ll(H^1_k) p_k)$.  Since $H_k^1$ and $H^2_k$  are icc, using Theorem \ref{commut2} there are $v_k \in \mathscr{U}(\mathcal{L}(G_2))$ and $1\leq \sigma (k) \leq n_2$ such that
    \begin{equation}\label{eq1s5}
        v_k \Theta (p_k \mathcal{L}(H_k^1)p_k) v_k^{*} \subseteq \mathcal{L}(H_{\sigma(k)}^2).
    \end{equation}
    Since $\theta^{-1}(\mathcal{L}(G_2)) \subseteq p_k \mathcal{L}(G
_1)p_k \subseteq \mathcal{L}(G_1)$, then using a similar argument in combination with Theorem \ref{commut2} one can find $w_k \in \mathscr{U}(\mathcal{L}(G_1))$ and $1 \leq \tau (\sigma (k)) \leq n_1 $ such that 
\begin{equation}\label{eq2s5}
    w_k \Theta^{-1}(\mathcal{L}(H_{\sigma (k) }^2))w_k^* \subseteq \mathcal{L}(H_{\tau (\sigma (k) )}^1)
\end{equation}
\eqref{eq1s5} and \eqref{eq2s5} imply that 
$ w_k \Theta^{-1}(v_k)p_k\mathcal{L}(H_k^1)p_k \Theta^{-1}(v_k^*)w_k^* \subseteq \mathcal{L}(H_{\tau (\sigma(k))}^1). $ Using Proposition \ref{nonintrhg} and Theorem \ref{quasinormalizercontrol} this further implies that $\tau (\sigma (k))= k$ and also $w_k \Theta^{-1}(v_k)p_k \in \mathcal{L}(H_k^1).$

Furthermore, combining this with \eqref{eq1s5} and \eqref{eq2s5} yields $v_k \Theta (p_k \mathcal{L}(H_i^1)p_k)v_k^* = \mathcal{L}(H_{\sigma(k)}^2).$ Since this holds every $k$, we have that $n_1 = n_2$ and $\sigma$ is a permutation of $\{1,\ldots,n_1\}.$
\end{proof}

\begin{corollary}\label{nonisom1}
    Let $\mathscr{B}$ be the category of groups $B= (A_1 \ast T_1)\times (A_2\ast T_2)$  with $A_i$ is nontrivial, ressidually finite and amenable and $T_i$ is ressidually finite, hyperbolic property (T) group.   For every $B\in \mathscr{B},$ let $G_B>\mathbb Z\wr B$ be any group that is hyperbolic relative to $\mathbb Z\wr B$.  Then $\mathscr G:=\{ \mathcal{L}(G_B)^t \; : \; B\in \mathscr{B}, t>0 \}$ consists of pairwise non-isomorphic II${_1}$ factors such that $\mathcal F_s(\Ll(G_B))=\mathbb N$. 
    
    \noindent Moreover, whenever $B_1, B_2\in \mathscr B$ are noncommensurable and $t_1,t_2>0$ then $\mathcal L(G_{B_1})^{t_1}$ is not virtually isomorphic to $\mathcal L(G_{B_2})^{t_2}$. 
\end{corollary}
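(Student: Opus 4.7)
The plan is to reduce all questions about $\mathcal{L}(G_B)$ to the corresponding ones about its peripheral subalgebra $\mathcal{L}(\mathbb{Z}\wr B)$ via Theorem \ref{embedding4}, and then to invoke the wreath-product rigidity machinery of Theorem \ref{embedding2} and Corollary \ref{embedding3}. Write $B=B^1\times B^2$ with $B^i=A_i\ast T_i$. Each $B^i$ is icc, non-amenable, exact, residually finite, bi-exact and weakly amenable: icc-ness and non-amenability follow from the nontriviality of the free factors; exactness and residual finiteness are preserved by free products; and bi-exactness together with weak amenability of $A_i\ast T_i$ follow from \cite[Proposition 12]{Oza05} and \cite{Ver22}. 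Hence $H:=\mathbb{Z}\wr B$ is an icc, exact, residually finite subgroup of $G_B$ meeting the hypotheses of Theorem \ref{embedding4}, and $H$ itself has the wreath product form $\mathbb{Z}\wr(B^1\times B^2)$ required by Corollary \ref{embedding3}.

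For the $\mathcal{F}_s$-computation, combine Theorem \ref{embedding4} (its last statement) with Corollary \ref{embedding3} applied to $G_1=G_2=H$: the former yields $\mathcal{F}_s(\mathcal{L}(G_B))\subseteq\mathcal{F}_s(\mathcal{L}(H))$, while the latter forces any $\ast$-embedding $\mathcal{L}(H)\hookrightarrow\mathcal{L}(H)^t$ to satisfy $t\in\mathbb{N}$. Together with the tautological inclusion $\mathbb{N}\subseteq\mathcal{F}_s(\mathcal{L}(G_B))$ this gives $\mathcal{F}_s(\mathcal{L}(G_B))=\mathbb{N}$. In particular $\mathcal{F}(\mathcal{L}(G_B))\subseteq\mathbb{N}$ is a subgroup of $(\mathbb{R}_{>0},\cdot)$, forcing $\mathcal{F}(\mathcal{L}(G_B))=\{1\}$, so within a fixed $B$ distinct amplifications of $\mathcal{L}(G_B)$ are pairwise non-isomorphic.

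For the moreover part, suppose $\Phi:\mathcal{L}(G_{B_1})^{t_1}\to\mathcal{L}(G_{B_2})^{t_2}$ is a virtual $\ast$-isomorphism. After rescaling we may assume we have a finite-index $\ast$-embedding $\Theta:\mathcal{L}(G_{B_1})\hookrightarrow\mathcal{L}(G_{B_2})^r$ for some $r>0$. Theorem \ref{embedding4} produces a unitary $u$ such that $\hat{\Theta}:=\mathrm{ad}(u)\circ\Theta$ maps $\mathcal{L}(\mathbb{Z}\wr B_1)$ into $\mathcal{L}(\mathbb{Z}\wr B_2)^r$. The critical technical step is to verify that $\hat{\Theta}$ is itself a virtual $\ast$-isomorphism; once this is done, the moreover clause of Theorem \ref{embedding2} applied to $\hat{\Theta}$ produces a finite-index subgroup $K<B_1$ and an injective homomorphism $\delta:K\to B_2$ of finite-index image, forcing $B_1$ and $B_2$ to be commensurable and contradicting the hypothesis. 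To assemble the continuum family, apply Proposition \ref{contnoncommens} to extract from $\mathscr{B}$ a continuum of pairwise non-commensurable groups; by the above, the associated factors are pairwise non-virtually-isomorphic and hence non-isomorphic.

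The main obstacle is the finite-index propagation just described. Theorem \ref{embedding4} provides the algebraic containment $\hat{\Theta}(\mathcal{L}(\mathbb{Z}\wr B_1))\subseteq\mathcal{L}(\mathbb{Z}\wr B_2)^r$ but does not a priori preserve Jones indices. To bridge this, I plan to use the Jones basic construction applied to $\Theta$ to manufacture a finite-index reverse embedding $\mathcal{L}(G_{B_2})^r\hookrightarrow\mathcal{L}(G_{B_1})^{r'}$, to which Theorem \ref{embedding4} can be applied once more to yield a peripheral reverse embedding $\mathcal{L}(\mathbb{Z}\wr B_2)^r\hookrightarrow\mathcal{L}(\mathbb{Z}\wr B_1)^{r'}$. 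Composing with $\hat{\Theta}$ and exploiting the almost malnormality of the peripheral subgroups (via the quasi-normalizer calculus of Theorem \ref{quasinormalizercontrol} and Proposition \ref{quasinormcompress}) should force the peripheral restriction to have finite index. This careful bookkeeping of indices through peripheral absorption is the technical heart of the argument.
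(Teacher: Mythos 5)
Your computation of $\mathcal F_s(\mathcal L(G_B))=\mathbb N$ follows the paper's intended route (Theorem \ref{embedding4} to drop into the peripheral algebra, then Corollary \ref{embedding3}/Theorem \ref{embedding2} to force integer amplifications), and that part is fine. The genuine gap is in the first claim. The corollary asserts that $\mathcal L(G_{B_1})^{t_1}\not\cong\mathcal L(G_{B_2})^{t_2}$ for \emph{all} distinct pairs in $\mathscr B$, with no commensurability restriction; your argument only covers (i) different amplifications of a fixed $B$ (via $\mathcal F(\mathcal L(G_B))=\{1\}$) and (ii) non-commensurable $B_1,B_2$ (via the moreover clause, after shrinking to a subfamily with Proposition \ref{contnoncommens}). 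Non-isomorphic but commensurable groups in $\mathscr B$ (e.g.\ obtained by changing one $A_i$ within its commensurability class) are left untreated, and your method cannot reach them, since virtual-isomorphism rigidity can only ever return commensurability data. The paper instead takes an honest isomorphism $\theta:\mathcal L(G_{B_1})^{t_1}\rightarrow\mathcal L(G_{B_2})$, uses Corollary \ref{isomperipheralalg} to conjugate the peripheral subalgebras onto one another, obtaining $\mathcal L(\mathbb Z\wr B_1)^{t_1}\cong\mathcal L(\mathbb Z\wr B_2)$, and then invokes Popa's strong rigidity for wreath products \cite[Theorem 0.1]{Pop03} to conclude $t_1=1$ and $B_1\cong B_2$. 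This step---an isomorphism of the ambient factors yields an isomorphism of the groups $B_i$, not merely commensurability---is the missing ingredient in your proposal, and without it the stated pairwise non-isomorphism over all of $\mathscr B$ is not proved.

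On the moreover part, you correctly observe that the moreover clause of Theorem \ref{embedding2} requires the peripheral restriction ${\rm ad}(u)\circ\Theta:\mathcal L(\mathbb Z\wr B_1)\rightarrow\mathcal L(\mathbb Z\wr B_2)^t$ to be a virtual $\ast$-isomorphism, which Theorem \ref{embedding4} does not directly provide; but your proposed remedy (basic construction, a reverse finite-index embedding, quasi-normalizer bookkeeping through Theorem \ref{quasinormalizercontrol} and Proposition \ref{quasinormcompress}) is only announced, not carried out, so as written this part of your argument is also incomplete. The paper is admittedly terse at the same point, citing Theorems \ref{embedding4} and \ref{embedding2} directly, but a proof should close this index-transfer step rather than defer it with ``should force''.
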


\begin{proof}
    First, we notice that $\mathscr W=\{\mathbb Z\wr B\,:\,B\in\mathscr B\}$ consists of residually finite icc groups. Fix $K_1= \mathbb Z \wr B_1$, $K_2=\mathbb Z \wr B_2\in \mathscr W$ and assume there is $\theta : \mathcal{L}(G_{B_1})^{t_1} \rightarrow \mathcal{L}(G_{B_2})^{t_2}$ a $*-$isomorphism. Without any loss of generality we can assume that $t_2 = 1.$ Using Corollary \ref{isomperipheralalg} we have that $\mathcal{L}(K_1)^{t_1} \cong \mathcal{L}(K_2).$  However, using Popa's strong rigidity result, \cite[Theorem 0.1]{Pop03}  we must have that $t_1=1$ and $B_1 \cong B_2.$ Hence $t=1$ and $K_1 \cong K_2$  which yields the first part of the conclusion. The second part follows immediately from Theorem \ref{embedding2}, Corollary \ref{embedding3}, and Theorem \ref{embedding4}.

    For the moreover part, assume $\theta : \mathcal{L}(G_{B_1})^{t_1} \rightarrow \mathcal{L}(G_{B_2})^{t t_2}$ is a virtual $\ast$-isomorphism, i.e. $\theta (\mathcal L(G_{B_1}))^{t_1}\subseteq \mathcal L(G_{B_2})^{tt_2}$ has finite index. Using amplifications and changing $t$, if necessary, we can assume that $t_1=t_2=1$. Thus $t\in \mathbb N$ and from Theorems \ref{embedding4} and \ref{embedding2} we get that $B_1$ is commnesurable with $B_2$, as desired.  \end{proof}
We explain how the prior result can be used in combination with results in geometric group theory to provide new types of property (T) group factors that are pairwise non-isomorphic thus adding to the prior results, \cite{CH89, OP03, CDK19, CIOS24}. Moreover, these factors also have trivial one-sided fundamental semigroup, \cite{CDHK20, CIOS24}.

First, a recent result of Nikolov and Segal, \cite[Theorem 2]{NS21} shows there exist $2^{\aleph_0}$ pairwise non-isomorphic $4$-generator residually finite solvable groups of derived length $4$ which appear as quotients of $\mathbb Z \wr (C_2 \wr C_2 \wr C_\infty)$. Denote this family of groups by $\mathscr A_1$. Notice that Proposition \ref{contnoncommens} implies that there is a continuum subcollection $\mathscr A_0\subseteq \mathscr A_1$ consisting of pairwise non-commensurable groups. Fix a nonelementary group $H$ that is ressiually finite, hyperbolic and property (T); e.g. any uniform lattice $H< Sp(n,1)$ for $n\geq 2$. Then for $0\leq i\leq 1$ consider the groups $\mathscr S_i:=\{(A\ast H) \times (\mathbb Z \ast H) \,:\, A\in \mathscr A_i\}\subset\mathscr B$. When $i=1$ (resp.\ $i=0$) consists of a continuum of finitely generated, pairwise non-isomorphic (resp.\ non-commensurable) groups.  Fix $B\in \mathscr S_1$. Using \cite[Theorem 1.1]{AMO} one can find an icc property (T) supragroup $G_B > \mathbb Z\wr B$ which is hyperbolic relative to $\{\mathbb Z\wr B\}$. In particular, this construction implies that the class $\mathscr G$  in Corollary \ref{nonisom1} covers a continuum of relative hyperbolic groups with property (T).

\begin{corollary}
There exists a continuum $\mathscr{R}$ of icc relative hyperbolic groups with property (T) such that $\{\Ll(G)\}_{G \in \mathscr {R}}$ consists of pairwise non-virtually isomorphic II$_1$ factors such that $\mathcal F_s( \Ll(G))=\mathbb N$ (and hence $\mathcal F(\Ll(G))=1$) for every $G \in  \mathscr{R}$. 
\end{corollary}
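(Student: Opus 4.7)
The plan is to assemble the continuum family by combining three ingredients: (i) the Nikolov--Segal construction of a continuum of pairwise non-isomorphic finitely generated residually finite solvable groups, (ii) the Arzhantseva--Minasyan--Osin construction of icc property (T) groups that are hyperbolic relative to a prescribed peripheral subgroup, and (iii) Corollary \ref{nonisom1} of the present paper.

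First, invoking \cite[Theorem 2]{NS21} as recalled in the discussion preceding the corollary, one obtains a continuum family $\mathscr A_1$ of finitely generated, residually finite, solvable groups of derived length $4$, pairwise non-isomorphic, all of them quotients of $\mathbb Z\wr(C_2\wr C_2\wr C_\infty)$. Applying Proposition \ref{contnoncommens} to $\mathscr A_1$, extract a continuum subcollection $\mathscr A_0\subseteq \mathscr A_1$ consisting of pairwise non-commensurable groups. Now fix a residually finite, torsion-free, non-elementary hyperbolic property (T) group $H$ (for instance a uniform lattice in $\mathrm{Sp}(n,1)$, $n\geq 2$). For each $A\in\mathscr A_0$, define
\[
B_A := (A\ast H)\times (\mathbb Z\ast H),
\]
which lies in the class $\mathscr B$ of Corollary \ref{nonisom1}. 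I would check that $A\mapsto B_A$ preserves pairwise non-commensurability: any finite index subgroup of $B_A$ decomposes as a product of finite index subgroups of the two free-product factors (after passing to a further finite index subgroup using the kernel of the projection onto finite quotients of $B_A$), and a Bass--Serre/Grushko-type analysis of $A\ast H$ then recovers the commensurability class of $A$ from that of $B_A$.

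Next, for each $B_A$ apply the construction of \cite[Theorem 1.1]{AMO} to the icc group $K_A := \mathbb Z\wr B_A$, obtaining an icc property (T) group $G_A$ containing $K_A$ as a peripheral subgroup, i.e.\ $G_A$ is hyperbolic relative to $\{K_A\}$. Because property (T) is preserved under extensions and passes from the relative structure produced by this construction, $G_A$ has property (T), is icc, and lies in the family $\mathscr G$ of Corollary \ref{nonisom1}. Let $\mathscr R := \{G_A\,:\,A\in\mathscr A_0\}$, a continuum of icc property (T) relatively hyperbolic groups.

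Finally, the conclusions for $\{\mathcal L(G)\}_{G\in\mathscr R}$ follow directly from Corollary \ref{nonisom1}: the moreover part gives that whenever $A_1,A_2\in\mathscr A_0$ are distinct (hence $B_{A_1},B_{A_2}$ are non-commensurable) and $t_1,t_2>0$, the factors $\mathcal L(G_{A_1})^{t_1}$ and $\mathcal L(G_{A_2})^{t_2}$ are not virtually isomorphic; in particular the $\mathcal L(G)$ for $G\in\mathscr R$ are pairwise non-virtually isomorphic. The first part of Corollary \ref{nonisom1} gives $\mathcal F_s(\mathcal L(G))=\mathbb N$ for every $G\in\mathscr R$, and since the Murray--von Neumann fundamental group consists precisely of the multiplicatively invertible elements of $\mathcal F_s(\mathcal L(G))$, we deduce $\mathcal F(\mathcal L(G))=\{1\}$.

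The main obstacle I anticipate is the commensurability step in the second paragraph: turning non-commensurability of the abstract solvable groups $A$ into non-commensurability of the compound groups $B_A=(A\ast H)\times (\mathbb Z\ast H)$. All the other ingredients are essentially references to results already proved or cited in the paper.
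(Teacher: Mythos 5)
Your proposal is correct and follows essentially the same route as the paper: the paper's own proof is exactly the discussion preceding the corollary (the Nikolov--Segal continuum $\mathscr A_1$, Proposition \ref{contnoncommens} to pass to pairwise non-commensurable $\mathscr A_0$, a fixed residually finite hyperbolic property (T) group $H<Sp(n,1)$, the groups $(A\ast H)\times(\mathbb Z\ast H)\in\mathscr B$, the AMO construction applied to $\mathbb Z\wr B$, and then Corollary \ref{nonisom1}). The commensurability step you flag is likewise only asserted, not proved, in the paper, and the one small inaccuracy in your write-up is the claim that $\mathcal F(\mathcal L(G))$ consists \emph{precisely} of the multiplicatively invertible elements of $\mathcal F_s(\mathcal L(G))$ (having embeddings in both directions need not yield an isomorphism); what is true and suffices is that $\mathcal F(\mathcal L(G))$ is a multiplicative subgroup of $\mathbb R_+$ contained in $\mathcal F_s(\mathcal L(G))=\mathbb N$, hence equals $\{1\}$.
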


\subsection{ A class of \texorpdfstring{$\mathcal{HT}$}{HT} factors} In this subsection, we compute the one-sided fundamental semigroup for a family of group $\mathcal {HT}$ factors in the sense of Popa. Unlike the fundamental results from \cite{Pop01} our factors are not group measure space constructions and implicitly do not appeal to computations of the fundamental groups of equivalence relations. Instead, we will use our prior relative solidity results together with height techniques \cite{IPV10, PV21},  and the notion of standard embedding as well as some techniques from \cite{PV21} that will be pointed out along the way. To properly state our main result, we need to recall a few concepts. 

The first is Popa-Vaes' notion of \emph{standard emebbeding} of a group into the unitary group of an amplification of a group von Neumann algebra, \cite{PV21}. Towards this, we will first establish some notation, and we will recall a basic construction. 

Let $G$ be a countable group and let $\mathcal G$ be any group. Assume that $\mathcal G_0\leqslant \mathcal G$ is a finite index subgroup with $[\mathcal G:\mathcal G_0]=m$. For every group homomorphism $\delta : \mathcal G_0 \rightarrow G$ and finite-dimensional unitary representation $\gamma : \mathcal G_0 \rightarrow \mathscr U(n)$ we denote by
$\pi_{\gamma,\delta} : \mathcal G_0 \rightarrow \mathscr U( \mathcal L(G) \otimes \mathbb M_n(\mathbb C))$ letting $\pi_{\gamma,\delta}(g) = u_{\delta(g)} \otimes \gamma(g)$ for all $g\in \mathcal G_0$. 

If $\mathcal N$ is a finite von Neumann algebra and $\pi_0 : \mathcal G_0 \rightarrow \mathscr U(\mathcal N)$ is a group homomorphism  we choose representatives $g_1, \ldots , g_m \in \mathcal G$ for the cosets $\mathcal G/\mathcal G_0$ and get a natural induction
$\pi:\mathcal G\rightarrow \mathscr U( \mathcal N \otimes \mathbb M_m (\mathbb C))$ defined as follows ${\pi (g)}_{ij} =\pi_0(h)$ if $gg_j =g_ih$ for some $h\in \mathcal G_0$ and ${\pi (g)}_{ij}=0$ if $gg_j \nin g_i\mathcal G_0$.

\begin{definition}[Definition 2.1 in \cite{{PV21}}]\label{standard} Let $G$ be an icc group and $t > 0$. Let $\mathcal G$ be a group. We say that a homomorphism $\pi : \mathcal G \rightarrow \mathscr U(\mathcal L(G)^t)$ is \emph{standard} if $t \in \mathbb N$ and if $\pi$ is unitarily conjugate to a finite direct sum of inductions to $G$ of homomorphisms of the form $\pi_{\gamma,\delta} : \mathcal G_0 \rightarrow \mathscr U( \mathcal L(G) \otimes \mathbb M_n(\mathbb C))$ where $\mathcal G_0 \leqslant\mathcal G$ is a finite index subgroup, $\delta : \mathcal G_0 \rightarrow G$ is a group homomorphism and $\gamma: \mathcal G_0 \rightarrow  \mathscr U(n)$ is an $n$-dimensional unitary representation.    
\end{definition}



The second concept is the Rips construction due to Belegradek-Osin. Following \cite[Theorem 2.1]{BO06} for every finitely generated group $Q$, there exists a property $(T)$ group $N$ such that $Q\subset Out (N)$ is a finite index subgroup and there exists a crossed product  $G=N\rtimes_{\sigma}Q$ such that $N\rtimes_\sigma Q$ is hyperbolic relative to \{Q\}. Moreover, if $Q$ is torsion-free, we can pick $N$ to be torsion-free as well, and hence $N\rtimes_\sigma Q$ is torsion-free. We denote by $Rip(Q)$ the class of all crossed product $N \rtimes_{\sigma}Q.$ as before. 

\begin{theorem}\label{thmA}
    Let $Q=Q_1\times Q_2, P=P_1\times P_2$ with $Q_i, P_i$ non-amenable, torsion-free, residually finite, biexact groups with Haagerup property. Fix $ G=N \rtimes Q \in Rip(Q), H = M \rtimes P \in Rip(P).$\\
    Let $t>0$ and assume that $\Theta : \mathcal L(G) \rightarrow \mathcal L(H)^t$ be a $*$-embedding.

    \noindent Then $t\in \mathbb N$. Moreover, there is a unitary $u\in \mathcal L(H)^t$ such that 

    \begin{enumerate}
        \item ${\rm ad}(u)\circ \Theta (\mathcal L(N))\subseteq \mathcal L(M)^t$, ${\rm ad}(u)\circ \Theta (\mathcal L(Q))\subseteq \mathcal L(P)^t$, and        
        \item the embedding ${\rm ad}(u)\circ\Theta \,:\, Q \rightarrow \mathscr U(\mathcal L(P)^t)$ is standard in the sense described above.    \end{enumerate}

\end{theorem}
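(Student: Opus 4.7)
The plan is to recover the full semidirect product decomposition $G = N \rtimes Q$ on the image side in four successive moves: first, pin down $\Theta(\mathcal L(Q))$ inside the peripheral algebra $\mathcal L(P)^t$ via the relative solidity machinery of the paper; second, locate $\Theta(\mathcal L(N))$ inside $\mathcal L(M)^t$ through a Haagerup-versus-property (T) deformation argument; third, merge these two intertwinings into a single common unitary; and fourth, upgrade the resulting group embedding $Q \hookrightarrow \sU(\mathcal L(P)^t)$ to a standard one by height techniques, which simultaneously forces $t \in \mathbb N$.

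For the first move, since $Q_1, Q_2 \leqslant Q$ commute inside $G$ and each $\Theta(\mathcal L(Q_i))$ is a non-amenable II$_1$ factor, a short argument shows that $\Theta(\mathcal L(Q_1))' \cap \mathcal L(H)^t$ has no amenable direct summand: any central projection $z$ yielding such a summand would send the factor $\Theta(\mathcal L(Q_2))$ into the amenable algebra $(\Theta(\mathcal L(Q_1))' \cap \mathcal L(H)^t)z$, forcing $z=0$. Because $H = M \rtimes P$ is hyperbolic relative to the exact, residually finite subgroup $P$, Theorem \ref{mrelsol1} then yields $\Theta(\mathcal L(Q_i)) \prec_{\mathcal L(H)^t} \mathcal L(P)$ for $i = 1,2$. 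Applying Theorem \ref{commut2} to $\mathcal A := \Theta(\mathcal L(Q_1))$, the same amenability argument forces $p_0 = 0$, and because $Q$ normalizes $Q_1$ in $G$ the image $\Theta(\mathcal L(Q))$ lies in $\overline{\mathcal{QN}}^1_{\mathcal L(H)^t}(\mathcal{QN}_{\mathcal L(H)^t}(\mathcal A)'')$; part (3) of that theorem then supplies a unitary $u_1 \in \mathcal L(H)^t$ with $u_1 \Theta(\mathcal L(Q)) u_1^* \subseteq \mathcal L(P)^t$.

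The second move exploits that $P = P_1 \times P_2$ inherits Haagerup property, so $\mathcal L(H) = \mathcal L(M) \rtimes P$ carries a malleable Gaussian deformation $(\alpha_s)_{s\in\mathbb R}$ that is trivial on $\mathcal L(M)$. Since $\Theta(\mathcal L(N))$ has property (T), Popa's rigidity-versus-malleability dichotomy yields uniform $\lVert \cdot \rVert_2$-convergence of $\alpha_s$ to the identity on $\Theta(\mathcal L(N))$, which, combined with transversality and relative compactness of the deformation, produces the intertwining $\Theta(\mathcal L(N)) \prec_{\mathcal L(H)^t} \mathcal L(M)$. To merge this with the first move, I would use that $\Theta(\mathcal L(Q))$ normalizes $\Theta(\mathcal L(N))$ in $\mathcal L(G)$ (as $N \triangleleft G$); after conjugation by $u_1$ this normalizing group sits inside $\mathcal L(P)^t$, so a peripheral absorption argument in the spirit of Theorem \ref{peripheral absorption}, applied to the almost-malnormal inclusion $\mathcal L(M) \subseteq \mathcal L(M) \rtimes P$, supplies a second unitary $u_2$ that preserves $u_1 \Theta(\mathcal L(Q)) u_1^* \subseteq \mathcal L(P)^t$ while dropping $u_1 \Theta(\mathcal L(N)) u_1^*$ into $\mathcal L(M)^t$. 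Setting $u = u_2 u_1$ then yields the unitary claimed in items (1)--(2).

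The final move, proving $t \in \mathbb N$ and standardness of $\mathrm{ad}(u) \circ \Theta|_Q$, proceeds via the height techniques of \cite{IPV10, PV21}. With $u\Theta(u_g)u^* \in \mathcal L(P)^t$ for $g \in Q$ already secured, one shows these unitaries have uniformly bounded height with respect to the canonical basis $(v_p \otimes e_{ij})_{p\in P,\, i,j}$ of $\mathcal L(P)^t$, a conclusion driven by the bi-exactness of the $P_i$ and the icc character of the constituent groups. From this bounded height one extracts a finite index subgroup $Q_0 \leqslant Q$, an injective group homomorphism $\delta : Q_0 \to P$, a finite-dimensional unitary representation $\gamma$, and a decomposition of $\mathrm{ad}(u) \circ \Theta|_Q$ as a finite direct sum of inductions of $\pi_{\gamma,\delta}$-type maps --- which is exactly standardness --- with $t \in \mathbb N$ dropping out as a byproduct. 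The principal obstacle I anticipate is the third move: the intertwinings produced in the first two moves live only up to corner partial isometries, so merging them into one global unitary while preserving the semidirect product structure demands delicate control of quasinormalizers and central supports, and rests crucially on ruling out amenable direct summands in the commutant of $\Theta(\mathcal L(N))$ via the bi-exactness of the $P_i$ and the icc property of $N$.
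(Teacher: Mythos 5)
Your overall architecture matches the paper's: conjugate $\Theta(\mathcal L(Q))=\Theta(\mathcal L(Q_1))\vee\Theta(\mathcal L(Q_2))$ into $\mathcal L(P)\otimes\mathbb M_k(\mathbb C)$ using the commuting non-amenable factors and Theorem \ref{commut2}, use Popa's property (T) versus Haagerup dichotomy to get $\Theta(\mathcal L(N))\prec^{\rm s}\mathcal L(M)\otimes\mathbb M_k(\mathbb C)$, and finish with height techniques \`a la \cite{IPV10,PV21} for standardness. The genuine gap is in your third move, the merging step. You propose to produce a second unitary ``in the spirit of Theorem \ref{peripheral absorption} applied to the almost-malnormal inclusion $\mathcal L(M)\subseteq\mathcal L(M)\rtimes P$.'' That theorem is about absorption into the \emph{peripheral} algebra $\mathcal N\rtimes H$ for a hyperbolically embedded (hence almost malnormal) subgroup $H\leqslant G$, and it moreover requires the absorbed algebra $\mathcal A$ to be abelian. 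Neither hypothesis is available here: $M$ is \emph{normal} in $H=M\rtimes P$, so $\mathcal L(M)$ is quasi-normalized by all of $\mathcal L(H)$ and no malnormality-type control of quasinormalizers (Theorem \ref{quasinormalizercontrol}) or free-independence condition of the type \eqref{free_independence} exists for this inclusion; and $\Theta(\mathcal L(N))$ is a II$_1$ factor, not abelian. So the mechanism you invoke to upgrade the corner-level intertwining $\Theta(\mathcal L(N))\prec\mathcal L(M)$ to a global containment compatible with $\Theta(\mathcal L(Q))\subseteq\mathcal L(P)^t$ does not exist in the form you describe.

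What the paper does instead at this point is the technical heart of the argument (Claim \ref{containment2}): after the single conjugation placing $\mathcal S_1\vee\mathcal S_2$ inside $\mathcal L(P)\otimes\mathbb M_k(\mathbb C)$, it proves that the intertwining $\mathcal N\prec^{\rm s}\mathcal L(M)\otimes\mathbb M_k(\mathbb C)$ already forces the honest containment $\mathcal N\subseteq\mathcal L(M)\otimes\mathbb M_k(\mathbb C)$, with no further unitary. This is done by a quantitative averaging estimate: one takes the finite set $K\subset P$ controlling the intertwining as in \eqref{sintertwining}, uses biexactness of $P_1,P_2$ (amenable, respectively product-with-amenable, centralizers) to choose unitaries $g\in\mathcal G$ whose Fourier mass on $F_1(\bigcup_{r\in F}C_P(r))F_2$ is tiny, and exploits the translation relation $\Psi_g(n)g=gn$ together with the disjointness condition \eqref{disjoint} to kill $\|\mathbb P_{K\setminus\{1\}}(n)\|_2$. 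This step also yields $\Theta(1)\in(\mathcal L(M)\otimes\mathbb M_k(\mathbb C))\cap(\mathcal L(P)\otimes\mathbb M_k(\mathbb C))=\mathbb M_k(\mathbb C)$, whence $t\in\mathbb N$ \emph{before} any height analysis; note that the subsequent height argument (complete atomicity of $\mathcal N'\cap(\mathcal M\otimes\mathbb M_k(\mathbb C))$, the finite-support claim, and the lower bound $h_P(\mathcal G)>0$) is carried out after reducing to $p=1$, so deferring $t\in\mathbb N$ to ``a byproduct of standardness,'' as you suggest, would leave that reduction unjustified. To repair your proposal you would need to replace the peripheral-absorption appeal by an argument of the Claim \ref{containment2} type (or an equivalent rigidity statement for the normal core), since that is precisely where the semidirect product structure is recovered.
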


\begin{proof}
Let $\mathcal M = \mathcal L (M\rtimes P)$ and let $k\geq t$ be an integer. Then realize $\mathcal L (M\rtimes P)^t = p(\mathcal M \otimes \mathbb M_k(\mathbb C))p$ for a projection $p\in \mathcal M \otimes \mathbb M_k(\mathbb C)$ with $\tau \otimes Tr(p)=\frac{t}{k}.$ Put $\mathcal N:=\Theta (\mathcal L(N))$, $\mathcal G_i:=\{ \Theta (u_{g}) \,:\, g\in Q_i \}$ and  $\mathcal S_i:=\mathcal G_i ^{''}$. Since $\mathcal S_1, \mathcal S_2 \subset p(\mathcal M \otimes \mathbb M_k(\mathbb C))p $ are nonamenable, commuting II$_1$ factors, and $\mathcal L(P) \otimes \mathbb M_k(\mathbb C)$ is a II$_1$ factor, using Theorem \ref{commut2}, one can find a unitary $u\in \mathcal M \otimes \mathbb M_k (\mathbb C )$ such that $u(\mathcal S_1 \vee \mathcal S_2)u^* \subseteq \mathcal L(P)\otimes \mathbb M_k(\mathbb C).$ Also since $\mathcal N \subseteq p(\mathcal M \otimes \mathbb M_k(\mathbb C))p=(\mathcal L(M)\otimes \mathbb M_k(\mathbb C))\rtimes P$ is a property $(T)$ subalgebra and $P$ has Haagerup's property, by \cite{Pop01} we have  $\mathcal N \preceq^{\rm s}\mathcal L(M)\otimes \mathbb M_k(\mathbb C).$ Thus, replacing $\Theta $ by ${\rm Ad}(u)\circ \Theta$, we can assume without loss of generality that:

\begin{enumerate}
        \item $\mathcal S_1 \vee \mathcal S_2 \subseteq p(\mathcal L(P) \otimes \mathbb M_k(\mathbb C))p$ and
        \item $\mathcal N \preceq^{\rm s} \mathcal L(M) \otimes \mathbb M_k(\mathbb C)$

    \end{enumerate}
     Next, we show the following 
    \begin{claim}\label{containment2} $\mathcal N \subseteq \mathcal L(M) \otimes \mathbb M_k(\mathbb C).$
     \end{claim}

\noindent\emph{Proof of Claim \ref{containment2}.} Since $P_1, P_2$ are biexact groups, part (1) implies that $\mathcal S_1 \preceq \mathcal L (P_i)\otimes \mathbb M_k(\mathbb C)$ and $\mathcal S_2 \preceq \mathcal L (P_{\hat{i}})\otimes \mathbb M_k(\mathbb C)$ for some $1\leq i \leq 2.$\\
Let $z_i \in \mathscr Z((\mathcal S_1 \vee \mathcal S_2)'\cap p(\mathcal L(P)\otimes \mathbb M_k(\mathbb C))p)$ be maximal projections satisfying 
\begin{enumerate}
    \item [a)] $\mathcal S_1 z_1 \preceq^{\rm s} \mathcal L(P_i)\otimes \mathbb M_k(\mathbb C),$
    \item[b)] $\mathcal S_2z_2 \preceq^{\rm s} \mathcal L(P_{\hat{i}})\otimes \mathbb M_k(\mathbb C).$
\end{enumerate}
Since $\mathcal S_1(1-z_1), S_2(1- z_1)\subseteq \mathcal L(P_i)\otimes M_k(\mathbb C)$ are commuting, nonamenable von Neumann algebras, we have that 
\begin{enumerate}
    \item[c)]$\mathcal S_1(1-z_1)\preceq \mathcal L(P_i)\otimes \mathbb M_k(\mathbb C)$ or
    \item[d)] $\mathcal S_1(1-z_1)\preceq \mathcal L(P_{\hat{i}}) \otimes \mathbb M_k(\mathbb C).$
\end{enumerate}
Since $z_1$ is chosen maximal satisfying a), we have that c) cannot hold; hence we have d). If $z_2(1-z_1) \neq 0,$ then d), b) and \cite[Proposition 4.4]{CD-AD21} further imply that $(\mathcal S_1 \vee \mathcal S_2)z_2(1-z_1) \preceq \mathcal L(P_{\hat{i}})\otimes \mathbb M_k(\mathbb C).$ However, this contradicts that $P_{\hat{i}}$ is biexact. Hence $z_2(1-z_1)=0$ and thus  $z_2\leq z_1.$ A similar argument shows that $z_1(1-z_2)=0$ which implies $z_1\leq z_2,$ and hence $z_1 = z_2.$ In conclusion, there is $\{i_1, i_2\}$ a permutation of $\{1,2\}$ such that the following hold: 

\vspace{-13pt}
\begin{equation}\label{eq1s6}
    \begin{split}
         \mathcal S_1 z_1 & \preceq^{\rm s} \mathcal L(P_{i_1})\otimes \mathbb M_k(\mathbb C),\\
         \mathcal S_2z_1 & \preceq^{\rm s} \mathcal L(P_{i_2})\otimes \mathbb M_k(\mathbb C),\\
         \mathcal S_1(1-z_1) & \preceq \mathcal L(P_{i_2}) \otimes \mathbb M_k(\mathbb C),\\
        \mathcal S_2(1-z_1) & \preceq \mathcal L(P_{i_1})\otimes \mathbb M_k(\mathbb C).
    \end{split}
\end{equation}
\vspace{-13pt}

Henceforth, for every $g\in \mathcal G_1 \times \mathcal G_2$ let $\Psi_g = {\rm Ad}(g):\mathcal N \rightarrow \mathcal N$ be the induced a $*-$isomorphism.
Since $\mathcal N \preceq^{\rm s} \mathcal L(N) \otimes \mathbb M_k(\mathbb C)$ for every $\varepsilon > 0$ there there is a finite set $K \subset P$ such that 

\vspace{-13 pt}
\begin{equation}\label{sintertwining}\lVert \mathbb P_K(n) - n \rVert_2 \leq \varepsilon\text{ for all }n\in (\mathcal N)_1.\end{equation}
\vspace{-13 pt}

\noindent Consider the finite sets  $F \subset (P_1 \setminus \{1\})\times (P_2 \setminus \{1\})$ and $F_1, F_2 \subset P_1 \times P_2$ such that 

\vspace{-15 pt}
\begin{equation}\label{disjoint}Ks \cap t(K\setminus \{1\}) = \emptyset\text{ for every }s,t\in P_1 \times P_2 \setminus F_1(\bigcup_{r\in F}C_P(r))F_2.\end{equation} 
\vspace{-15 pt}

Here $C_P(r)$ is the the centralizer of $r$ in $P$. Fix $\varepsilon >0$. Since the centralizers of nontrivial elements of $P_i$ are amenable, biexactness of $P_i$ implies that $\mathcal S_1 \vee \mathcal S_2 \nprec \mathcal L(C_P(r))\otimes \mathbb M_k(\mathbb C)$ for all $r\in F$. Thus one can find $g=g_1g_2\in \mathcal G$ with $g_1\in \mathcal G_1, g_2\in \mathcal{G}_2$ such that $\lVert \mathbb P_{F_1(\bigcup_{k\in F}C(k))F_2}(g)\rVert_2\leq \frac{\varepsilon}{\lvert K\rvert^2}$. Thus, letting $g_{K} := g-\mathbb P_{F_1(\bigcup_{k\in F}C(k))F_2}(g)$, we have $\lVert g_K \rVert_{\infty}\leq 1+\lvert F \rvert  \lvert F_1 \rvert \lvert F_2 \rvert$ and also 

\vspace{-15 pt}
\begin{equation}\label{est1}\lVert g - g_K \rVert_2 \leq \frac{\varepsilon}{\lvert K \rvert^2}.\end{equation}
\vspace{-15 pt}

As  $\Psi_g(n)g = gn$ for all $n\in \mathcal{N}$, using the prior relations and basic norm computations we have 

\begin{equation*}
    \begin{split}
        4\varepsilon^2 & \buildrel \eqref{sintertwining} \over \geq \lVert \mathbb P_K(\Psi_g(n))g -g\mathbb P_K(n) \rVert_2^2 = \lVert \mathbb P_K(\Psi_g(n))g -g\mathbb P_{K\setminus 1}(n) -gE_{\mathcal{L}(M)\otimes \mathbb M_K(\mathbb C)}(n) \rVert_2^2\\
        & \buildrel (\perp) \over= \lVert \mathbb P_K(\Psi_g(n))g -g\mathbb P_{K\setminus 1}(n) \rVert_2^2 + \lVert\mathbb P_K(\Psi_g(n))g-gE_{\mathcal{L}(M)\otimes \mathbb M_K(\mathbb C)}(n) \rVert_2^2 -\| \mathbb P_K(\Psi_g(n))g\|^2_2\\
        &\geq \lVert \mathbb P_K(\Psi_g(n))g -g\mathbb P_{K\setminus 1}(n) \rVert_2^2 -\| \mathbb P_K(\Psi_g(n))g\|^2_2 \\
        &= \lVert \mathbb P_K(\Psi_g(n))(g-g_K) -(g-g_K)\mathbb P_{K\setminus 1}(n) + \mathbb P_K \Psi_g(n)g_K - g_K\mathbb P_{K\setminus 1}(n) \rVert_2^2 -\| \mathbb P_K(\Psi_g(n))g\|^2_2        \\
        & \buildrel\eqref{est1}\over \geq \left (\lVert \mathbb P_K(\Psi_g(n))g_K - g_K \mathbb P_{K\setminus 1}(n)\rVert_2 - \frac{2\varepsilon}{|K|}\right )^2 -\| \mathbb P_K(\Psi_g(n))g\|^2_2 \\
        &\buildrel \eqref{disjoint}\over = \left (\sqrt{\lVert \mathbb P_K(\Psi_g(n))g_K\rVert_2^2 + \lVert g_K\mathbb P_{K\setminus 1}(n)\rVert_2^2}-\frac{2\varepsilon}{|K|}\right )^2 - \lVert \mathbb P_K(\Psi_g(n))g\rVert_2^2 \\
        & \geq \lVert \mathbb P_K(\Psi_g(n))g_K\rVert_2^2 + \lVert g_K\mathbb P_{K\setminus 1}(n)\rVert_2^2 - 8\varepsilon + \frac{4\varepsilon^2}{|K|^2} - \lVert \mathbb P_K(\Psi_g(n))g\rVert_2^2\\
        & \buildrel \text{rev. triang. ineq.} \over \geq -2\lvert K \rvert\lVert \mathbb P_K(\Psi_g(n))(g-g_K)\rVert_2   + \lVert g_K\mathbb P_{K\setminus 1}(n) \rVert_2^2 -8\varepsilon + \frac{4\varepsilon^2}{|K|^2}\\
        & \buildrel \eqref{est1} \over \geq -2\lvert K \rvert^2.\frac{\varepsilon}{\lvert K \rvert^2} + \lVert g_K\mathbb P_{K\setminus 1}(n) \rVert_2^2 -8\varepsilon + \frac{4\varepsilon^2}{|K|^2}\\
        &\buildrel \eqref{est1} \&\text{ triang. ineq.} \over\geq  \left (\lVert \mathbb P_{K\setminus 1}(n) \rVert_2 -\frac{\varepsilon}{|K|} \right)^2 -10\varepsilon - \frac{4\varepsilon^2}{|K|^2}\\
        &\geq \lVert \mathbb P_{K\setminus 1}(n) \rVert^2_2- 12\varepsilon- \frac{3\varepsilon^2}{|K|^2}.
    \end{split}
\end{equation*}
Hence, $\lVert \mathbb P_{K\setminus 1}(x) \rVert_2^2 \leq 12\varepsilon +7\varepsilon^2.$ Using \eqref{sintertwining} together with  triangle inequality we get 
$\lVert n - E_{\mathcal L(M)\otimes \mathbb M_K(\mathbb C)}(n) \rVert_2 \leq \lVert n - \mathbb P_K(n) + \mathbb P_{K\setminus 1}(n) \rVert_2 \leq 13\varepsilon +7\varepsilon^2.$ As $\varepsilon > 0$ is arbitrary, we get $n\in \mathcal{L}(M)\otimes \mathbb M_k(\mathbb C)$ for every $n\in \mathcal{N}.$ Hence $\mathcal{N} \subseteq \mathcal{L}(M) \otimes \mathbb M_k(\mathbb C)$, finishing the proof of our claim. \hfill$\blacksquare$

\vskip 0.08in

In conclusion, part (1) and Claim \ref{containment2} entail $\mathcal{S}_1 \vee \mathcal S_2 \subset \mathcal L(Q) \otimes \mathbb M_k(\mathbb C)$ and $\mathcal N \subset \mathcal L(M)\otimes \mathbb M_k(\mathbb C)$; in particular, $p=\Theta (1) \in (\mathcal L(M)\otimes \mathbb M_k(\mathbb C)) \bigcap (\mathcal L(Q)\otimes \mathbb M_k(\mathbb C)) = \mathbb M_k(\mathbb C).$ Hence $t\in \mathbb N$ and thus by shrinking $k$ if necessary we can assume that $p=1.$ Moreover we have unital inclusions $$\mathcal{S}_1 \vee \mathcal S_2 \subset \mathcal L(P) \otimes \mathbb M_k(\mathbb C); \qquad \mathcal N \subset \mathcal L(M)\otimes \mathbb M_k(\mathbb C).$$ 
\vskip 0.03in
We continue with the following 

\begin{claim}\label{catomic} The relative commutant $\mathcal N'\bigcap (\mathcal M \otimes \mathbb M_k(\mathbb C))$ is completely atomic.

\end{claim}

\noindent\emph{Proof of Claim \ref{catomic}} Assume by contradiction there is a projection $0\neq z \in\mathscr Z(\mathcal N'\cap (\mathcal M \otimes \mathbb M_k(\mathbb C))$ such that $(\mathcal N'\cap (\mathcal M \otimes \mathbb M_k(\mathbb C)))z$ is diffuse. Since $\mathcal N z$ is a non-amenable factor, Theorem \ref{commut2} implies the existence of a unitary $u\in \mathcal M \otimes \mathbb M_k(\mathbb C)$ such that $u(\mathcal N \vee \mathcal N'\cap (\mathcal M \otimes \mathbb M_k(\mathbb C)))z u^*\subseteq \mathcal L(P)\otimes \mathbb M_k(\mathbb C)$. Since $\mathcal N \subset \mathcal L(M)\otimes \mathbb M_k(\mathbb C)$, using \cite[Lemma 2.8]{DHI19} we have $\mathcal N \prec \mathbb M_k(\mathbb C)$. Thus $\mathcal N$ has a nontrivial atomic corner, a contradiction. \hfill$\blacksquare$

\begin{claim}\label{finitesupport} There exists a finite set $F\subset P$ such that for every elements  $(x_h)_{h\in P} \subset (\mathbb M_k(\mathbb C))_1$ satisfying $x_hu_h \in \mathcal N^{'} \cap (\mathcal M \otimes \mathbb M_k(\mathbb C))$ we have $x_h = 0$ for all $h\in P \setminus F.$
\end{claim}
\noindent \emph{Proof of Claim \ref{finitesupport}.}
    Since by Claim \ref{catomic} $\mathcal N^{'} \cap (\mathcal M \otimes \mathbb M_k(\mathbb C))$ is completely atomic, for every $\varepsilon > 0$ there is a finite set $F_{\varepsilon} \subset P$ such that \begin{equation}\label{sintertwining3}\lVert x - \mathbb P_{F_{\varepsilon}}(x) \rVert_2 < \varepsilon \text{ for all }x\in (\mathcal N^{'} \cap (\mathcal M \otimes \mathbb M_k(\mathbb C)))_1.\end{equation}  Fix $(x_h)_{h\in P}\subset (\mathbb M_k(\mathbb C))_1$ with $x_hu_h \in \mathcal N^{'} \cap (\mathcal M \otimes \mathbb M_k(\mathbb C))$. Let $x_h = \lvert x_h^* \rvert v_h$ be its co-polar decomposition. As $x_hu_h \in \mathcal N^{'} \cap (\mathcal M \otimes \mathbb M_k(\mathbb C))$ then $x_hx_h^* \in \mathcal N^{'} \cap (\mathcal M \otimes \mathbb M_k(\mathbb C))$ and hence $\lvert x_h^* \rvert \in \mathcal N^{'} \cap (\mathcal M \otimes \mathbb M_k(\mathbb C)).$ Thus $v_hu_h \in \mathcal N^{'} \cap (\mathcal M \otimes \mathbb M_k(\mathbb C))$ for all $h\in P.$ \\
    Using \eqref{sintertwining3} for $0<\varepsilon <\frac{1}{\sqrt{2k}}$ and $x=v_hu_h$, one can find a finite set $F_\varepsilon\subset P$ such that for every $h\in P\setminus F_{\varepsilon}$ we have
    \begin{equation*}
        \begin{split}
            \frac{1}{2k} \geq \varepsilon^2 & \geq \lVert v_hu_h - \mathbb P_{F_{\varepsilon}}(v_hu_h) \rVert_2^2 = \lVert v_hu_h \rVert_2^2 - \lVert \mathbb P_{F_{\varepsilon}}(v_hu_h)\rVert_2^2\\
            & = \tau (v_h^* v_h) - \sum_{k\in F_{\varepsilon}}\lVert E_{\mathcal M \otimes \mathbb M_k(\mathbb C)}(v_hu_{hk^{-1}})\rVert_2^2\\
            & = \tau (v_h^* v_h) - \sum_{k\in F_{\varepsilon}} \delta_{hk^{-1},1}\lVert v_h \rVert_2^2\\
            & =\tau (v_h^*v_h).
        \end{split}
    \end{equation*}
Since $v_h^*v_h \in \mathbb M_k(\mathbb C)$ is a projection we get $v_h^*v_h = 0.$ Thus $v_h =0$ and hence $x_h =0$ for all $h\in P\setminus F_{\varepsilon}.$ \hfill $\blacksquare$
\vskip 0.09in

Fix $g\in \mathcal G$ and let $\Psi_g:\mathcal N \rightarrow \mathcal N$ be the isomorphism given by 
\begin{equation}\label{eq1sc6}
    \Psi_g(x)g=gx, \; \text{for every} \;x\in \mathcal N.
\end{equation}

Let $g=\sum_{h\in Q}x_hu_h$ with $x_h\in \mathbb M_k(\mathbb C)$ be the Fourier decomposition of $g.$ Thus $\Psi_g(x)x_h = x_h \sigma_h(x)$ for every $x\in \mathcal N, h\in P.$ Hence $\Psi_g(x)x_hu_h = x_hu_hx$ for every $x\in \mathcal N, h\in P.$ Using these for every $h_1, h_2 \in P$ we have $u_{h_1^{-1}}x_{h_1}^*x_{h_2}u_{h_2} \in \mathcal N^{'} \cap (\mathcal M \otimes \mathbb M_k(\mathbb C))$; equivalently, we have $\sigma_{h^{-1}_1}(x_{h_1}^*x_{h_2})u_{h_1^{-1}h_2}\in \mathcal N^{'}\cap(\mathcal M \otimes \mathbb M_k(\mathbb C))$ for every $h_1,h_2 \in P.$ By Claim \ref{finitesupport} there is a finite set $F\subset P$ (independent of $g$) such that $x_{h_1}^*x_{h_2}=0$ for all $h_1,h_2 \in P$ with $h_1^{-1}h_2 \notin F.$ Hence $x_{h_1}x_{h_1}^*x_{h_2}x_{h_2}^*=0$ for every $h_1^{-1}h_2 \notin F.$ Therefore, letting $e_h = sup (\lvert x_h^* \rvert),$ we get  
\begin{equation}\label{eq8sc6}
    e_{h_1}e_{h_2}=0\; \text{for every}\; h_1,h_2 \in P\; \text{with} \; h_1^{-1}h_2 \notin F.
\end{equation}

\begin{claim}\label{heightlowerbound} For every $g \in \mathcal G$ there is a finite set $F_g\subset P$ with $\lvert F_g \rvert \leq k\lvert F \rvert $ such that $g \subseteq \mathbb M_k(\mathbb C) F_g.$ This yields the following lower bound for the height of $\mathcal G$ with respect to $P$, \cite{IPV10}:
$$\inf_{g\in \mathcal G}\left (\max_{h\in P}|\tau( g u_h)|\right )=: h_P(\mathcal G)\geq \frac{1}{\sqrt{k|F|}}>0.$$
\end{claim}

\noindent \emph{Proof of Claim \ref{heightlowerbound}}.
  Let $g= \sum_h x_hu_h$ be the Fourier expansion and let $x_h = \lvert x_h^* \rvert v_h$ and $v_hv_h^*=e_h=supp(\lvert x_h^* \rvert).$ Let $h_1$ such that $x_{h_1}\neq 0.$ Thus $e_{h_1}\neq 0.$ If $supp (g) \subset h_1F^{-1}.$ Then $F_g =h_1F^{-1}$ will do the job. If not there is $h_2 \in P \setminus h_1F^{-1}$ with $x_{h_2}\neq 0$ and hence $e_{h_2}\neq 0$. From \eqref{eq8sc6} we have $e_{h_2}e_{h_1}=0.$ Now consider the set $\{h_1, h_2 \}F^{-1}.$ If $ supp (g) \subset \{ h_1, h_2\}F^{-1}$ then $F_g = \{ h_1, h_2 \}F^{-1}$ will do the job. If not, there is $h_3 \in P\setminus \{ h_1, h_2 \}F^{-1}$ with $x_{h_3}\neq 0$ and thus $e_{h_3}\neq 0$. From \eqref{eq8sc6} we have $e_{h_3}e_{h_2}=e_{h_3}e_{h_1}=0$ and hence $\{ e_{h_1}, e_{h_2}, e_{h_3} \}$ are mutually orthogonal projections. Continuing in this fashion, after $k$ steps, we have either $ supp (g) \subset \{ 
h_1, \dots, h_k \}F^{-1}$ or there is a projections $e_{h_{k+1}}$ such that $\{ e_{h_1}, \dots , e_{h_k},e_{h_{k+1}} \}$ are nonzero mutually orthogonal projections in $\mathbb M_k(\mathbb C).$ Since the later is obviously impossible, we conclude that $supp(g) \subseteq \{ h_1,\ldots,h_k\}F^{-1}:=F_g$, finishing the proof of the first assertion.
\vskip 0.05in

To see the remaining part, notice the first assertion implies that for every $g \in \mathcal G$ we have

\begin{equation*}1=\lVert g \rVert_2^2 = \sum_{h\in F_g} \lVert E_{\mathbb M_k(\mathbb C)}(gu_{h^{-1}}) \rVert_2^2 \leq |F_g| h_{P}^2(\mathcal G) \leq k|F| h^2_{P}(\mathcal G), \end{equation*}
and hence  $h_P(\mathcal G) \geq \frac{1}{\sqrt{\lvert F \rvert k}}>0.$ \hfill$\blacksquare$
\vskip 0.06in

To derive our conclusion, we will use, almost mot \`a mot, the technique introduced in  \cite[Lemma 2.4, Theorem 2.3]{PV21} to show the embedding $\mathcal G \subseteq \mathscr U(\mathcal L(Q) \otimes \mathbb M_k(\mathbb C))$ is standard as in Definition \ref{standard}. While we encourage the reader to consult these results beforehand, for convenience, we also include all the details pertaining to our specific situation. 
\vskip 0.06in
Let $\mathcal A_0$ be the $*-$algebra of all $a\in \mathcal L(P) \otimes \mathbb M_k(\mathbb C)$ such that the $span \{ gag^{-1} | g\in \mathcal G \}$ is finite dimensional. Consider $\mathcal A := \mathcal A_0 ^{''} \subseteq \mathcal L (P) \otimes \mathbb M_k(\mathbb C)$ and notice $\A$ is $\mathcal G-$invariant. Moreover, the action of $\mathcal G$ by conjugation on $\mathcal A$ is compact. Since $P=P_1 \times P_2$ with $P_i$ non-amenable biexact, it follows from \cite{CSU11} that $\mathcal A \preceq \mathcal L(P_{j}) \otimes \mathbb M_k(\mathbb C)$ for some $1\leq j \leq 2.$ By \cite[Lemma 2.4]{DHI19} there is a projection $z_3 \in \mathscr{Z}((\mathcal A \vee \mathcal G)^{'} \cap \mathcal L(P)\otimes \mathbb M_k(\mathbb C))$ such that 
\begin{equation}\label{eq2sc6}
   \mathcal A z_3 \preceq^{\rm s} \mathcal L(P_{j}) \otimes \mathbb M_k(\mathbb C).
\end{equation}

Using \eqref{eq1s6} there is a projection $z$ such that

\begin{equation}\label{eq3sc6}
    \begin{split}
        \mathcal S_1 z & \preceq \mathcal L(P_j) \otimes \mathbb M_k(\mathbb C) \; \text{and}\\
        \mathcal S_2 (1-z) & \preceq^{\rm s} \mathcal L(P_j) \otimes\mathbb M_k(\mathbb C). 
    \end{split}
\end{equation}

Hence we have either $z_3z \neq 0$ or $z_3(1-z) \neq 0.$ Without loss of generality, assume that $z_3z \neq 0.$ Then by \eqref{eq2sc6},\eqref{eq3sc6} and \cite[Proposition 4.3]{CD-AD21} we get that $(\mathcal A \vee \mathcal S_1)z_1z_3 \preceq^{\rm s} \mathcal L(P_j) \otimes \mathbb M_k(\mathbb C).$ Since $P_j$ is biexact we get that $\mathcal A$ is discrete. Thus center $\mathscr Z(\mathcal A)$ is atomic and $\mathcal G$ acts on $\mathscr Z(\mathcal A)$ by $\alpha .$ Thus we can find nonzero projections $(z_n)_{n\in I} \subset \mathscr Z(\mathcal A)$ such that:
\begin{itemize}
    \item $\sum_{n\in I}z_n =1$
    \item $\alpha(h)z_n = z_n$
    \item  $\mathscr Z(\mathcal A)z_n$ is finite dimensional and the fixed point of $\alpha|_{\mathscr Z(\mathcal A)z_n}$ is $\mathbb C z_n,$ for every $n \in I$.\end{itemize}
It suffices to prove $\pi: g \longmapsto gz_n$ is standard. Multiplying all data by $z_n$ we can assume that $\mathscr{Z} (\mathcal A)$ is finite-dimensional with $\mathscr{Z}(\mathcal A)^{\alpha}=\mathbb C 1.$\\
Fix $z\in \mathscr{Z}(\mathcal A)$ a minimal projection.  Consider the group $\mathcal G_0=\{ g\in \mathcal G \; : \; \alpha (g)z=z \}$ and notice it has finite index in $\mathcal G$. We find that $1=\sum_{g\in \mathcal G/\mathcal G_0}\alpha(g)(z)$ and that $\pi$ is induced from the $\pi_0 : \mathcal G_0 \rightarrow \mathcal L(P)\otimes \mathbb M_k(\mathbb C)$ with $\pi_0(g) = \pi(g)z.$ As $\mathcal G_0 \leqslant \mathcal G$ has finite index Claim \ref{heightlowerbound} still implies that $h_P(\pi_0(\mathcal G_0))>0$. In addition, $\pi_0(\mathcal G_0)$ still has no amenable direct summand. By construction we have $\mathcal Az \cong \mathbb M_l(\mathbb C)$. Thus we may realize $z(\mathcal L(P)^k)z$ as $ \mathcal L(P)^s \otimes \mathbb M_l(\mathbb C)$ in such a way that $\mathcal A z$ corresponds to $1\otimes \mathbb M_l(\mathbb C).$ Thus for every $g\in \mathcal G_0$ we have that $\alpha (g)$ restricts to an automorphism of $\mathbb M_l(\mathbb C).$ Hence we can pick unitaries $\gamma (g) \in \mathbb M_l(\mathbb C)$ and $\pi_1(g) \in \mathcal L(P)^s$ such that $\pi_0(g) =  \pi_1(g) \otimes \gamma(g).$ Since the unitaries $\gamma (g)$ are uniquely determined up to a scalar, then we can find $\mathcal G_3 = \mathbb T \pi_1 (\mathcal G_0)\leqslant \mathscr U(\mathcal L(P)^s)$ is a subgroup. One can also check that $h_P(\mathcal G_3)>0$ and $\mathcal G_3^{''}$ has no amenable direct summand. 
\vskip 0.06 in
With these notations at hand we prove the following \

\begin{claim}\label{nonintcentr1} $\mathcal G_3^{''} \npreceq \mathcal L(C_P(h))$ for every $h\in P\setminus \{ 1 \}.$\end{claim}
\noindent \emph{Proof of Claim \ref{nonintcentr1}.}
If $h=(h_1,h_2)$ with $h_1 \neq h_2 \neq 1,$ then $C_P(h)$ amenable as $P_i$ are biexact. Since $\mathcal G_3^{''}$ has no amenable summand the claim follows. \\
Now assume that $h_1 = 1$ or $h_2 = 1.$ Due to symmetry it is enough to treat only one case so assume wlog that $h_2=1.$ Hence $h_1 \neq 1$ and $C_P(h) = C_{P_1}(h_1)\times P_2.$ Letting $\mathcal G_0^{1} = \mathcal G_0 \cap \mathcal G_1, \mathcal G_0^2 = \mathcal G_0 \cap \mathcal G_2$ we can notice that $[\mathcal G_1 : \mathcal G_0^1] < \infty, [\mathcal G_2 : \mathcal G_0^2] < \infty.$ Hence $\mathcal S_0^1 := (\mathcal G_0^{1})^{''}\subseteq \mathcal S_1$ and $\mathcal S_0^2 := (\mathcal G^{2}_0)^{''} \subseteq \mathcal S_2$ are finite index von Neumann subalgebras and thus they have no amenable direct summand. \\
Assume by contradiction that $\mathcal G_3 ^{''}\preceq \mathcal L(C_{P_1}(h_1) \times P_2).$ Hence $\pi_1(\mathcal G_0^{1})^{''}\preceq \mathcal L(C_{P_1}(h_1)\times P_2)$ and $\pi_1(\mathcal G_0^2)^{''}\preceq \mathcal L(C_P(h_1)\times P_2).$ Since $\gamma (\mathcal G_0^{1}) , \gamma (\mathcal G_0^2) \subseteq \mathbb M_l(\mathbb C)$ this further implies that 
\begin{equation}\label{eq4sc6}
    \begin{split}
        \mathcal S_0^{1} & \preceq \mathcal L (C_{P_1}(h_1) \times P_2) \; \text{and}\\
        \mathcal S_0^2 & \preceq \mathcal L (C_{P_1}(h)\times P_2).
    \end{split}
\end{equation}
From \eqref{eq1s6} we have that
\begin{equation}\label{eq5sc6}
    \begin{split}
        \mathcal S_j z_j & \preceq^{\rm s}  \mathcal L(P_2)\otimes \mathbb M_k(\mathbb C)\\
        \mathcal S_{\hat{j}}(1-z_j) & \preceq^{\rm s}  \mathcal L(P_2) \otimes \mathbb M_k(\mathbb C)
    \end{split}
\end{equation}
Thus \eqref{eq4sc6} and \eqref{eq5sc6} implies that either $\mathcal S_j \preceq \mathcal L(C_{P_1}(h_1))$ or $\mathcal S_{\hat{j}} \preceq \mathcal L(C_{P_1}(h_1)).$ Since $C_{P_1}(h_1)$ are amenable these lead to a contradiction.\hfill$\blacksquare$

\begin{claim}\label{mixing} The representation given by ${\rm Ad}(g) = \alpha_g: \mathcal G_1 \curvearrowright L^2(\mathcal L(P)^s \ominus \mathbb C1)$ is weak mixing.\end{claim}
\noindent \emph{Proof of Claim \ref{mixing}.}
    This follows mot \`a mot as in the last part of the proof of \cite[Theorem 2.3]{PV21}. 
    \hfill$\blacksquare$

    Thus by \cite[Lemma 2.4]{PV21} we get that $s=1$ and 
    $\mathcal G_1 \subseteq \mathbb T P.$ Hence $\mathcal L(P)^kz =  \mathcal L(P)\otimes \mathbb M_l(\mathbb C)$ and $\pi_0(g) =  u_{\delta (g)}\otimes \gamma(g)$ for every $g\in \mathcal G_0.$ This forces $\gamma : \mathcal G_0 \rightarrow \mathscr U(\mathbb C^l)$ to be a unitary representation and $\delta: \mathcal G_0 \rightarrow P$ be a group homomorphism. Since $\Pi$ is an induction of $\pi_0,$ the statement follows. \end{proof}

Finally we end this section by recording the following immediate consequence of the previous result. 
\begin{corollary}
  Let $G=N\rtimes Q\in Rip(Q)$ as in Theorem \ref{thmA}. Then $\mathcal F_s(\mathcal L (G))=\mathbb N.$ In particular, $\mathcal F(\mathcal L(G))=1$ and $\mathcal I(\mathcal L(G))\subseteq \overline{\mathbb N}.$ If we let $Q_1=Q_2=\mathbb F_n$ for $n\geq 0$, then we get $\mathcal I(\mathcal L(G))= \overline{\mathbb N}.$
\end{corollary}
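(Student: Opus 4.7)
My plan is to deduce all four assertions from Theorem \ref{thmA} applied with $G_1=G_2=G$, together with standard amplification arguments. Since $G=N\rtimes Q\in Rip(Q)$ with $Q=Q_1\times Q_2$ satisfies the hypotheses of Theorem \ref{thmA} on both source and target, any $*$-embedding $\mathcal L(G)\hookrightarrow \mathcal L(G)^t$ forces $t\in \mathbb N$; combined with the obvious inclusion $\mathbb N\subseteq \mathcal F_s(\mathcal L(G))$ (via $x\mapsto x\otimes 1_n$) this yields $\mathcal F_s(\mathcal L(G))=\mathbb N$. A $*$-isomorphism $\mathcal L(G)\cong \mathcal L(G)^t$ furnishes $*$-embeddings in both directions, so $t,t^{-1}\in \mathcal F_s(\mathcal L(G))=\mathbb N$, forcing $t=1$ and hence $\mathcal F(\mathcal L(G))=\{1\}$.

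To handle $\mathcal I(\mathcal L(G))\subseteq \overline{\mathbb N}$ I would first establish the general inclusion $\mathcal I(\mathcal M)\setminus\{\infty\}\subseteq \mathcal F_s(\mathcal M)$ for any II$_1$ factor $\mathcal M$, and then specialize. Given a subfactor $\mathcal N\subseteq \mathcal L(G)$ with finite Jones index $r$, the standard $L^2$-module analysis identifies $\mathcal L(G)\cong p(\mathcal N\otimes \mathbb M_n(\mathbb C))p=\mathcal N^r$ for an appropriate projection $p$. Amplifying the unital inclusion $\iota:\mathcal N\hookrightarrow \mathcal L(G)$ by the factor $r$ (through $\iota\otimes \mathrm{id}_{\mathbb M_n(\mathbb C)}$ and cutting by a projection of the correct trace) produces a unital $*$-embedding $\mathcal N^r\hookrightarrow (\mathcal N^r)^r$, i.e., $\mathcal L(G)\hookrightarrow \mathcal L(G)^r$. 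Hence $r\in\mathcal F_s(\mathcal L(G))=\mathbb N$, and together with $\infty\in \overline{\mathbb N}$ this gives the claimed inclusion.

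For the reverse containment when $Q_1=Q_2=\mathbb F_n$ (which requires $n\geq 2$ for the non-amenability hypothesis of Theorem \ref{thmA}), every integer should be realized as a Jones index via subgroup inclusions. For each $k\geq 1$, pick $L\leqslant \mathbb F_n$ of index $k$; then $H:=L\times \mathbb F_n\leqslant Q$ has index $k$, and $G':=N\rtimes H\leqslant G$ has group-theoretic index $k$. Provided $G'$ is icc, the corresponding subgroup inclusion realizes $\mathcal L(G')\subseteq \mathcal L(G)$ as a subfactor of Jones index $k$; the value $\infty$ is obtained analogously from an infinite-index icc subgroup of $G$.

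The main technical subtlety I anticipate is the amplification step in the second paragraph: one must track trace normalizations carefully and verify that the amplified map really is a unital $*$-embedding between the two intended amplifications, rather than merely a compression. A secondary concern is the icc verification for the subgroups $G'\leqslant G$ used in the final step, which should follow from the semidirect product structure inherent in the Rips construction of $N$ and the action of $Q$ on it.
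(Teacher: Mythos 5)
Your overall route is exactly the one the paper intends: the corollary is meant to be an immediate consequence of Theorem \ref{thmA} applied with source and target both equal to $\mathcal L(G)$, of the containments $\mathcal F(\mathcal M)\subseteq\mathcal F_s(\mathcal M)$ and $\mathcal I(\mathcal M)\setminus\{\infty\}\subseteq\mathcal F_s(\mathcal M)$ already recorded in the introduction, and of realizing every integer index via finite-index subgroups of $Q=\mathbb F_n\times\mathbb F_n$. Your arguments for $\mathcal F_s(\mathcal L(G))=\mathbb N$ and $\mathcal F(\mathcal L(G))=\{1\}$ are correct, and you are right that the hypotheses of Theorem \ref{thmA} force $n\geq 2$ (the ``$n\geq 0$'' in the statement is evidently a typo).

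The one step that is wrong as written is the claim that a finite-index subfactor $\mathcal N\subseteq\mathcal L(G)$ with $[\mathcal L(G):\mathcal N]=r$ yields an \emph{isomorphism} $\mathcal L(G)\cong\mathcal N^r$. This is false in general: for instance $\mathcal N=1\otimes\mathcal P\subseteq\mathbb M_n(\mathbb C)\otimes\mathcal P=\mathcal M$ has Jones index $r=n^2$, while $\mathcal N^{r}\cong\mathcal P\otimes\mathbb M_{n^2}(\mathbb C)$ is isomorphic to $\mathcal M\cong\mathcal P\otimes\mathbb M_n(\mathbb C)$ only when $n\in\mathcal F(\mathcal P)$, which fails whenever $\mathcal P$ has trivial fundamental group. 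What the $L^2$-analysis actually gives is that the basic construction $\langle\mathcal L(G),e_{\mathcal N}\rangle$ (the commutant of the right $\mathcal N$-action on $L^2(\mathcal L(G))$) is isomorphic to $\mathcal N^r$ and contains $\mathcal L(G)$ unitally. With this correction your argument goes through verbatim: composing $\mathcal L(G)\hookrightarrow\langle\mathcal L(G),e_{\mathcal N}\rangle\cong\mathcal N^r$ with the $r$-fold amplification of the unital inclusion $\mathcal N\subseteq\mathcal L(G)$ gives a unital embedding $\mathcal L(G)\hookrightarrow\mathcal L(G)^r$, hence $r\in\mathcal F_s(\mathcal L(G))=\mathbb N$; this is precisely the containment $\mathcal I\subseteq\mathcal F_s\cup\{\infty\}$ the paper quotes, so no new argument is needed there. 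Two smaller points: the icc issue you flag for $G'=N\rtimes(L\times\mathbb F_n)$ is automatic, since any finite-index subgroup of an icc group is icc (if $1\neq g\in G'$ had finite $G'$-conjugacy class, then $C_{G'}(g)$, hence $C_G(g)$, would have finite index in $G$, contradicting $G$ icc); and $\infty\in\mathcal I(\mathcal L(G))$ follows already from the infinite-index icc subgroup $Q\leqslant G$, or from any irreducible hyperfinite subfactor, since $\mathcal L(G)$ is non-amenable.
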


\section{An infinite family of non-stably isomorphic group factors. } 



In this short section we use Belegradek-Osin' Rips construction \cite{BO06}  in conjunction with some von Neumann algebra techniques to construct an infinite family of commensurable icc hyperbolic groups, whose II$_1$ factors are pairwise non isomorphic.   Interestingly, these also appear as $\mathcal{HT}$ factors, \cite{Pop01}.
\vskip 0.08in


\begin{theorem}\label{thm1sc7}
    Let $Q$, $P$ be any finitely generated icc  groups in one of the classes:
    \begin{enumerate}
    \item infinite Haagerup groups,
    \item non-canonical amalgamated free products over a Haagerup subgroup, or 
    \item graph products of Haagerup groups.\end{enumerate}
    Let $G=N\rtimes Q \in Rip(Q)$, $ H=M\rtimes P \in Rip(P)$ and put $\mathcal N = \mathcal L(N\rtimes Q)$, $\mathcal M =\mathcal L(M\rtimes P).$ 
    
   \noindent  Let $0\leq t \leq 1$ and assume that $\Theta : \mathcal N \rightarrow \mathcal M^t$ is a $*-$isomorphism. 
    \vskip 0.03in
    \noindent Then one can find a unitary $u \in \mathcal M^t ,$  a projection $p \in \mathcal L(M)$ with $\tau (p)=t,$ a group virtual isomorphism $\delta : Q \rightarrow P$, and a map  $\eta : \delta (Q) \rightarrow \mathscr{U}(p\mathcal L(M) p)$ such that \begin{enumerate} 
    \item  $ {\rm ad} (u)\circ \Theta (u_g) = \eta (g) v_{\delta(g)}$ for every $g\in Q$;    
    \item $({\rm ad}(u)\circ \Theta) (\mathcal L(N)) = p\mathcal L(M)p$;    \item $\eta (g) \sigma_g(\eta (h))=\eta (gh), \eta(g) \eta(g)^* = p, \eta (g)^*\eta (g) = \sigma_g(p) \; \text{for every} \; g,h \in \delta (Q)$. 
    \end{enumerate}
    
    \noindent If $t=1$ then $ \delta$ is a group isomorphism.
\end{theorem}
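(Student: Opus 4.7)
The plan is to combine Popa's classical property (T) versus Haagerup intertwining theorem \cite{Pop01} with the peripheral absorption technology developed earlier in this paper, and then to read off the group-theoretic structure of $\Theta$ from the normalizer formula for the regular inclusion $\mathcal{L}(M) \subseteq \mathcal{L}(M) \rtimes P$. Throughout I would realize $\mathcal{M}^t = p \widetilde{\mathcal{M}} p$ where $\widetilde{\mathcal{M}} = (\mathcal{L}(M) \otimes \mathbb{M}_k(\mathbb{C})) \rtimes P$ for a suitable integer $k$ and projection $p$.

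First I would establish the initial intertwining. By the Belegradek--Osin construction both $N$ and $M$ have property (T), while $Q$ and $P$ have the Haagerup property (for class (2) this uses stability of Haagerup under amalgamated free products over Haagerup edge subgroups, and for class (3) the corresponding stability for graph products). Hence $\Theta(\mathcal{L}(N))$ is a property (T) subalgebra of a crossed product by a Haagerup group, and Popa's spectral gap argument yields $\Theta(\mathcal{L}(N)) \prec^{\rm s}_{\widetilde{\mathcal{M}}} \mathcal{L}(M) \otimes \mathbb{M}_k(\mathbb{C})$.

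Next I would upgrade this intertwining to an honest conjugation. Since $H = M \rtimes P$ is hyperbolic relative to $\{P\}$, the subgroup $P$ is almost malnormal in $H$. The subalgebra $\Theta(\mathcal{L}(Q))$ normalizes $\Theta(\mathcal{L}(N))$ and cannot intertwine into $\mathcal{L}(M) \otimes \mathbb{M}_k(\mathbb{C})$; otherwise, combined with the previous intertwining, $\Theta(\mathcal{L}(G)) = \mathcal{M}^t$ itself would intertwine there, which is absurd. Applying Theorem \ref{quasinormalizercontrol} to the almost malnormal pair $P < H$ together with the peripheral absorption argument underlying Theorem \ref{commut2}, one finds a unitary $u \in \widetilde{\mathcal{M}}$ with ${\rm ad}(u) \circ \Theta(\mathcal{L}(N)) \subseteq q(\mathcal{L}(M) \otimes \mathbb{M}_k(\mathbb{C}))q$ for some projection $q$. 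Running the symmetric argument with $\Theta^{-1}$ (swapping the roles of $(N,Q)$ and $(M,P)$) and combining the resulting two-sided inclusion via a maximality/factoriality argument forces equality ${\rm ad}(u) \circ \Theta(\mathcal{L}(N)) = p \mathcal{L}(M) p$ with $p \in \mathcal{L}(M)$ of trace $t$ (in particular $k = 1$), giving item (2).

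Once (2) is in place, items (1) and (3) follow from the normalizer formula for $\mathcal{L}(M) \subseteq \mathcal{L}(M) \rtimes P$: any element of $\mathcal{M}^t$ normalizing $p\mathcal{L}(M)p$ decomposes as $\eta v_h$ with $h \in P$ and $\eta$ a partial isometry in $p\mathcal{L}(M) v_h p$. Applying this to ${\rm ad}(u) \circ \Theta(u_g)$ (which normalizes $p\mathcal{L}(M)p$ since $u_g$ normalizes $\mathcal{L}(N)$) produces the map $\delta : Q \rightarrow P$ and the cocycle $\eta : \delta(Q) \rightarrow \mathscr{U}(p\mathcal{L}(M)p)$, and the identities $\Theta(u_g u_h) = \Theta(u_g)\Theta(u_h)$ translate directly into the cocycle relations of (3). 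To see that $\delta$ is a virtual isomorphism one applies Lemma \ref{intsubgroups}: if $\delta(Q)$ had infinite index in $P$, then $\mathcal{M}^t$ would embed into $p(\mathcal{L}(M) \rtimes \delta(Q))p$, contradicting surjectivity of $\Theta$; similarly injectivity of $\Theta$ combined with $Q$ being icc forces $\delta$ to be injective on a finite index subgroup. When $t = 1$, $p = 1$ and the virtual isomorphism upgrades to an honest group isomorphism. The main obstacle will be the upgrade step: passing from the soft $\prec^{\rm s}$ intertwining to an equality $\Theta(\mathcal{L}(N)) = p\mathcal{L}(M)p$, since the first-pass intertwining only delivers a corner and proper inclusion must be excluded by a careful symmetric argument exploiting the specific Rips-construction structure (an intermediate property (T) normal subgroup whose quotient is precisely the peripheral Haagerup group).
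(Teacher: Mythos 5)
Your outline matches the paper's skeleton at the beginning and at the end: the opening move (Popa's property (T) versus Haagerup intertwining from \cite{Pop01} applied to the rigid subalgebra $\Theta(\mathcal L(N))$ inside the crossed product by the Haagerup-type group $P$, together with the symmetric statement obtained from $\Theta^{-1}$) and the closing moves (a normalizer/Fourier-coefficient analysis producing $\Theta(u_g)=\eta(g)v_{\delta(g)}$, finite index of $\delta(Q)$ in $P$ via Lemma \ref{intsubgroups}, and finiteness of $\ker\delta$ by intertwining $\mathcal L(N\rtimes\ker\delta)$ into $\mathcal L(N)$) are exactly what the paper does. The difficulty is concentrated in the middle step, and there your argument has a genuine gap.

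To pass from $\Theta(\mathcal L(N))\prec^{\rm s}\mathcal L(M)\otimes\mathbb M_k(\mathbb C)$ to a unitary $u$ with ${\rm ad}(u)\circ\Theta(\mathcal L(N))=p\mathcal L(M)p$, you invoke Theorem \ref{quasinormalizercontrol} and the peripheral absorption mechanism behind Theorem \ref{commut2} for the almost malnormal subgroup $P<H$. These results do not apply here: their conclusions place algebras (after conjugation) inside $\mathcal N\rtimes H_i$ for a \emph{peripheral}, almost malnormal $H_i$ --- in the present situation that would be the $\mathcal L(P)$-side --- whereas the target of your containment is the core $\mathcal L(M)\otimes\mathbb M_k(\mathbb C)$, corresponding to the infinite \emph{normal} subgroup $M\lhd H$, which is not almost malnormal; indeed the relative hyperbolicity of $H$ with respect to $P$ plays no role at all in this theorem's proof. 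Moreover, ``mutual $\prec^{\rm s}$ plus a maximality/factoriality argument'' is not by itself enough to force equality with a corner $p\mathcal L(M)p$: the upgrade from two-sided strong intertwining to unitary conjugacy is a nontrivial statement that uses the regularity of both inclusions $\mathcal L(N)\subset\mathcal L(G)$ and $\mathcal L(M)\subset\mathcal L(H)$ and the icc assumptions on $Q$ and $P$, and it is precisely \cite[Lemma 8.4]{IPP05} (see also \cite[Theorem 6.1]{CDD22}) that the paper quotes at this point. Unless you either cite such a result or reprove it, the step yielding item (2) is missing; granting (2), your derivation of (1) and (3) from the fact that $(p\mathcal L(M)p)'\cap p\mathcal M^t p=\mathbb C p$ (the ``normalizer formula'') and your index/kernel analysis coincide in substance with the paper's Fourier-coefficient argument and are fine.
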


\begin{proof}
Let  $p \in \mathcal L(M)$ be a projection and let $\Theta : \Nn \rightarrow p\M p$ be a $*-$isomorphism. Thus $\Theta (\mathcal L(N)) \subseteq p\mathcal L(M\rtimes P)p$ has relative property (T) and since $P$ is amalgamated free products of Haagerup groups, it follows that $\Theta (\mathcal{L}(N)) \preceq^{\rm s} p\mathcal L(M)p.$ A symmetric argument shows that $p\mathcal L(M)p \preceq^{\rm s} \Theta (\mathcal L(M)).$ Since $Q, P$ are icc groups it follows from \cite[Lemma 8.4]{IPP05} 
(see also \cite[Theorem 6.1]{CDD22})   that one can find a unitary $u \in \mathcal M^t$ such that 
\begin{equation}\label{eq4sc7}
    u\Theta (\mathcal L(N))u^* = p\mathcal L(M)p.
\end{equation}
Replacing $\Theta$ by $ad(u)\circ \Theta,$ we can assume that $\theta (\mathcal L(N))= p\mathcal L(M)p.$ Let $\Psi_g = ad(\Theta (u_g)):\Theta (\mathcal L (N)) \rightarrow \Theta (\mathcal L (N)) *-$automorphism. Then we have $\Psi_g(y) \Theta (u_g) = \Theta (u_g) y$ for every $y\in \Theta (\mathcal L(N)).$ Now consider Fourier expansion $\Theta (u_g)=\sum_{h\in P}x_hv_h.$
Using this, we get 
\begin{equation}\label{eq1sc7}
\Psi_g(y)x_h =x_h \alpha_h(y) \; \text{for every} \; y\in \Theta (\mathcal L(N)).
\end{equation}
Since $p\Theta (u_g) p = \Theta (u_g)$ we have that  $px_h\sigma_h(p)=x_h$ for all $h\in P.$ Using relation \eqref{eq1sc7} we have that $x_k^*x_h = \alpha_k(y^*)x_k^*x_h\alpha_h(y),$ and hence $v_{k^{-1}}x_k^*x_hv_h = y^*v_{k^{-1}}x_k^*x_hv_hy$ for every $y\in p\mathcal L(M) p.$ This implies that $v_{k^{-1}}x_k^*x_hv_h \in (p\mathcal L(M) p)^{'}\cap p\mathcal M p =\mathbb C p,$ and hence $v_{k^{-1}}x_k^*x_hv_h = \lambda_{k,h}p$ for some $\lambda_{k,h} \in \mathbb C.$ Thus we have $\lambda_{k,h} p = v_{k^{-1}}x_k^*x_hv_h =\alpha_k (x_k^* x_h)v_{k^{-1}h}$ for every $k,h\in P.$ Taking the expectation onto $\mathcal L(M)$ we see that $\lambda_{k,h}=0$ for every $k\neq h,$ and this implies 
\begin{equation}\label{eq2sc7}
    x_k^*x_h=0 \; \text{for all} \; k\neq h.
\end{equation}
Moreover, \eqref{eq1sc7} implies that $x_h x_h^* \in \Theta (\mathcal L(N))^{'} \cap \Theta (\mathcal N) = (p\mathcal L(M) p)^{'} \cap p \mathcal M p =\mathbb C p;$ then $x_h x_h^* = \mu_h p$ for some $\mu_h \in \mathbb C.$ Using this in combination with \eqref{eq2sc7} we get $0=x_kx_k^*x_hx_h^* = \mu_k \mu_h p,$ and hence $\mu_h =0$ or $\mu_k = 0$ for all $h,k.$ In conclusion for every $g\in Q$ there exists unique $\delta (g) \in P$ such that
\begin{equation}\label{eq3sc7}
    \Theta (u_g) = x_{\delta (g)}v_{\delta (g)}.
\end{equation}

Clearly, since $x_{\delta(gl)}v_{\delta(gl)} = \Theta (u_{gl})=\Theta (u_g)\Theta (u_l) = x_{\delta(g)}v_{\delta (g)}x_{\delta (l)}v_{\delta((l)},$ we get\\
$$x_{\delta (g)}\alpha_{\delta (g)}(x_{\delta(l)})v_{\delta(g) \delta (l)} = x_{\delta (gl)}v_{\delta (g)\delta(l)}$$

This implies $\delta: Q \rightarrow P$ is a group homomorphism and $\eta: \delta (Q) \rightarrow \mathscr{U}(p\mathcal L(M) p)$ is a generalized 1-cocycle. Furthermore, \eqref{eq3sc7} and \eqref{eq4sc7} together imply that $\mathcal M^t = \mathcal L (N\rtimes Q) \subseteq \mathcal L (M) \rtimes P$ and hence $[P:\delta(Q)]<\infty.$\\
Now let $K=ker(\delta)<Q.$ Therefore, \eqref{eq3sc7} and \eqref{eq4sc7}
imply that $\Theta (\mathcal L(N\rtimes K)) \subseteq p\mathcal L(M)p$ and also $\Theta (\mathcal L(N)) = p\mathcal L(M) p.$ In particular, we have $\Theta (\mathcal L(N\rtimes K)) \preceq_{\Theta (\mathcal N)} \Theta (\mathcal L (N))$ which implies that $\mathcal L(N\rtimes K) \preceq_{\mathcal N} \mathcal L(N).$ Thus $K$ is a finite group. In conclusion, $\delta: Q \rightarrow P$ is a virtual isomorphism. \end{proof}

We note that \cite[Theorem 6.4]{CH89} and \cite[Theorem 0.2]{IPP05} already revealed infinite families of icc hyperbolic groups whose factors are pairwise non-isomorphic; e.g.\ any collection of uniform lattices $H< Sp(n,1)$ for infinitely many different $n\geq 2$ or any collection of free products $H_1 \ast \cdots \ast H_n$ of property (T) hyperbolic groups $H_i$ for infinitely many values of  $n\geq 1$.   However, prior results (\cite[Theorem 0.4]{IPP05}, \cite[Theorem 1.10]{Jol14}) also show that both these classes consist of pairwise non-measure equivalent groups. In sharp contrast, using Theorem \ref{thm1sc7}, we can construct infinitely many icc hyperbolic groups which are all commensurable to each other (and thus measure equivalent) but still yield pairwise non-isomorphic factors. This should be compared with \cite[Theorem]{CI09}, \cite[Corollary C]{CdSS15} or  \cite[Corollary 6.2]{CDK19}.

\begin{corollary} Let $n\geq 2$ be an integer and fix a hyperbolic group $G\in Rip(\mathbb F_n)$. Then for every $k\in \mathbb N$ there is a subgroup $G_k \leqslant G$ with $ [G:G_k]=k$ such that the collection $\{\mathcal L(G_k)  \,: \, k\in \mathbb N\}$ consists of pairwise non-isomorphic  factors.  
    
\end{corollary}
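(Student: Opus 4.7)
The plan is to exploit the semidirect product structure $G = N \rtimes \mathbb{F}_n$ to construct, for every positive integer $k$, an index-$k$ subgroup of the form $G_k = N \rtimes F_k$ with $F_k \leqslant \mathbb{F}_n$ of index $k$, and then distinguish the resulting factors via Theorem~\ref{thm1sc7}. The key observation is that distinct subgroups of different finite index in $\mathbb{F}_n$ have different Schreier ranks, hence are pairwise non-isomorphic as abstract groups.

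First, using Nielsen--Schreier, for every $k \in \mathbb{N}$ I would pick a subgroup $F_k \leqslant \mathbb{F}_n$ with $[\mathbb{F}_n : F_k] = k$; by the Schreier index formula $F_k$ is free of rank $r_k = 1 + k(n-1)$, and since $n \geq 2$ the values $r_k$ are pairwise distinct and all $\geq 2$, so in particular every $F_k$ is an icc Haagerup group. Set $G_k := N \rtimes F_k$; then $[G : G_k] = k$ and $G_k$ is hyperbolic, being a finite-index subgroup of the hyperbolic group $G$.

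Second, I would verify that $G_k \in \mathrm{Rip}(F_k)$ in the sense needed to apply Theorem~\ref{thm1sc7}. Since the Belegradek--Osin construction presents $G$ as $N \rtimes_\sigma \mathbb{F}_n$ with $N$ an icc, torsion-free property (T) group and $\mathbb{F}_n \hookrightarrow \mathrm{Out}(N)$ of finite index, the restricted cocycle $\sigma|_{F_k}$ realizes $G_k$ as $N \rtimes_{\sigma|_{F_k}} F_k$ with $F_k \hookrightarrow \mathrm{Out}(N)$ still of finite index. Combined with hyperbolicity of $G_k$ this places $G_k$ in $\mathrm{Rip}(F_k)$; and since $F_k$ is a free group of rank $\geq 2$, it lies in class (1) of Theorem~\ref{thm1sc7}.

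Third, I would invoke Theorem~\ref{thm1sc7} directly: if $\Theta: \mathcal{L}(G_k) \to \mathcal{L}(G_l)$ is a $\ast$-isomorphism for some $k, l \in \mathbb{N}$, then applying the theorem with $t=1$ produces a \emph{group isomorphism} $\delta: F_k \to F_l$. But $F_k \cong \mathbb{F}_{1+k(n-1)}$ and $F_l \cong \mathbb{F}_{1+l(n-1)}$, and free groups of distinct finite ranks are non-isomorphic, so $k = l$. Hence the family $\{\mathcal{L}(G_k)\}_{k \in \mathbb{N}}$ consists of pairwise non-isomorphic II$_1$ factors.

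The only substantive step is the second one: one must confirm that the Rips-type structure of $G$ over $\mathbb{F}_n$ descends to a Rips-type structure of $G_k$ over $F_k$ in precisely the form required by Theorem~\ref{thm1sc7}. This should be essentially formal from the Belegradek--Osin construction in \cite{BO06}, since the defining data (property (T) of $N$, finite-index embedding into $\mathrm{Out}(N)$, hyperbolicity) are all inherited under passing from $\mathbb{F}_n$ to a finite-index free subgroup $F_k$. Once this bookkeeping is in place, the rigidity theorem and Schreier's formula do all the work.
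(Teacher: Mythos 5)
Your proposal is correct and follows essentially the same route as the paper: Schreier's index formula to produce $F_k\leqslant\mathbb F_n$ of index $k$ and rank $1+k(n-1)$, the subgroup $G_k=N\rtimes F_k$, and then Theorem \ref{thm1sc7} with $t=1$ to turn a hypothetical isomorphism $\mathcal L(G_k)\cong\mathcal L(G_l)$ into a group isomorphism $F_k\cong F_l$, which rank rigidity of free groups rules out unless $k=l$. The bookkeeping step you flag (that $G_k$ inherits the Rips-type hypotheses of Theorem \ref{thm1sc7}: property (T) of $N$, $F_k\hookrightarrow\mathrm{Out}(N)$ of finite index, and hyperbolicity/relative hyperbolicity with respect to the icc Haagerup group $F_k$) is exactly the point the paper leaves implicit, and it does go through as you expect.
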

\begin{proof} Let $G=N \rtimes \mathbb F_n$. Since $G$ is hyperbolic relative to $\mathbb F_n$ \cite{BO06} and the latter is hyperbolic it follows from \cite[Corollary 1.14]{DS05} that $G$ is hyperbolic.  By Schreier theorem, for every $k\in \mathbb N$ there is a free subgroup $\mathbb F_{k(n-1)+1}< \mathbb F_n$ with $[\mathbb F_n:\mathbb F_{k(n-1)+1}]=k$. Now consider the semidirect product restriction subgroup $G_k:=N\rtimes \mathbb F_{k(n-1)+1}<G$ and notice $[G:G_k]=[\mathbb F_n:\mathbb F_{k(n-1)+1}]=k$. Since $G$ is hyperbolic so are all subgroups $G_k$.  Then Theorem \ref{thm1sc7} yields the desired conclusion.    \end{proof}

\begin{corollary}
    For every prime $p\geq 3$ fix a hyperbolic group $G_p\in Rip(\mathbb Z_p\ast \mathbb Z_p)$. Then the collection $\{\mathcal L(G_p)  \,: \, p \text{ prime}\}$ consists of pairwise non-stably isomorphic  factors.  
    
\end{corollary}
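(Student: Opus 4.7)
The plan is to derive a contradiction from an assumed stable isomorphism $\mathcal L(G_p) \cong \mathcal L(G_q)^t$ (for distinct odd primes $p, q \geq 3$ and some $t > 0$) by reducing, via Theorem \ref{thm1sc7}, to an impossible virtual isomorphism at the level of the peripheral groups $\mathbb Z_p \ast \mathbb Z_p$ and $\mathbb Z_q \ast \mathbb Z_q$. First I would replace $t$ by $1/t$ if necessary (using $\mathcal L(G_q) \cong \mathcal L(G_p)^{1/t}$ and the symmetry of the claim) to arrange $0 < t \leq 1$. Since both $\mathbb Z_p \ast \mathbb Z_p$ and $\mathbb Z_q \ast \mathbb Z_q$ are finitely generated, icc (as $p, q \geq 3$), and Haagerup (as free products of finite groups), they lie in class (1) of Theorem \ref{thm1sc7}. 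Applying that theorem yields a group virtual isomorphism $\delta : \mathbb Z_p \ast \mathbb Z_p \to \mathbb Z_q \ast \mathbb Z_q$, meaning $|\ker \delta| < \infty$ and $[\mathbb Z_q \ast \mathbb Z_q : \delta(\mathbb Z_p \ast \mathbb Z_p)] < \infty$.

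Next I would convert this into an arithmetic constraint using the rational Euler characteristic of virtually free groups. Because $\mathbb Z_p \ast \mathbb Z_p$ is icc its only finite normal subgroup is trivial, so $\ker \delta = \{1\}$ and $\delta$ realizes $\mathbb Z_p \ast \mathbb Z_p$ as a finite-index subgroup of $\mathbb Z_q \ast \mathbb Z_q$. The standard formulas
\[
\chi(\mathbb Z_p \ast \mathbb Z_p) \;=\; \tfrac{2}{p} - 1 \;=\; -\tfrac{p-2}{p}, \qquad \chi(\mathbb Z_q \ast \mathbb Z_q) \;=\; -\tfrac{q-2}{q},
\]
combined with multiplicativity of $\chi$ under finite index, force
\[
\bigl[\mathbb Z_q \ast \mathbb Z_q : \delta(\mathbb Z_p \ast \mathbb Z_p)\bigr] \;=\; \frac{\chi(\mathbb Z_p \ast \mathbb Z_p)}{\chi(\mathbb Z_q \ast \mathbb Z_q)} \;=\; \frac{(p-2)\,q}{(q-2)\,p} \;\in\; \mathbb Z_{>0}.
\]

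The final step is to note that this divisibility simply cannot hold for distinct odd primes: since $p \geq 3$ is prime we have $\gcd(p,\,p-2)=\gcd(p,\,2)=1$ and $\gcd(p,q)=1$, so $p\nmid (p-2)\,q$, which rules out $(q-2)\,p\mid (p-2)\,q$ and contradicts the displayed integrality. Overall I do not expect any serious obstacle beyond verifying that $\mathbb Z_p \ast \mathbb Z_p$ fits the hypotheses of Theorem \ref{thm1sc7}; the genuine content is already packaged in that theorem, and the payoff here is that the rational Euler characteristic of the base group encodes the prime $p$ rigidly enough to separate the factors $\mathcal L(G_p)$ up to stable isomorphism.
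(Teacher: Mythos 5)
Your proposal is correct, and its skeleton is the same as the paper's: reduce a hypothetical stable isomorphism $\mathcal L(G_p)\cong\mathcal L(G_q)^t$ to Theorem \ref{thm1sc7} (after normalizing $t\leq 1$, which is legitimate) and then derive a contradiction from the resulting virtual isomorphism $\delta$ between $\mathbb Z_p\ast\mathbb Z_p$ and $\mathbb Z_q\ast\mathbb Z_q$. Where you diverge is the group-theoretic endgame. The paper finishes in one line: by the Kurosh subgroup theorem every torsion element of $\mathbb Z_q\ast\mathbb Z_q$ has order dividing $q$, so for $p\neq q$ \emph{every} homomorphism $\mathbb Z_p\ast\mathbb Z_p\rightarrow\mathbb Z_q\ast\mathbb Z_q$ is trivial; in particular no homomorphism with finite kernel and finite-index image can exist, and no Euler characteristic input is needed. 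Your route instead first kills the kernel (correctly, since an icc group has no nontrivial finite normal subgroup) and then uses Wall's rational Euler characteristic, $\chi(\mathbb Z_p\ast\mathbb Z_p)=\tfrac2p-1$, together with its multiplicativity in finite-index subgroups of virtually free groups, to get the integrality constraint $[\,\mathbb Z_q\ast\mathbb Z_q:\delta(\mathbb Z_p\ast\mathbb Z_p)\,]=\tfrac{(p-2)q}{(q-2)p}$, which indeed fails for distinct odd primes since $p\nmid(p-2)q$. Both finishes are valid; the paper's is more elementary and rules out even nontrivial homomorphisms, while your Euler-characteristic argument is heavier machinery but would also separate pairs of base groups that do admit nontrivial maps, so it is the more flexible template. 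Two small housekeeping points you leave implicit (as does the statement): one should note, as in the preceding corollary, that $G_p$ is icc so that $\mathcal L(G_p)$ is a II$_1$ factor and amplifications make sense, and that $\mathbb Z_p\ast\mathbb Z_p$ is icc, finitely generated and Haagerup so Theorem \ref{thm1sc7} applies — the latter you do verify.
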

\begin{proof} As $\mathbb Z_p \ast \mathbb Z_p$ is hyperbolic, the same argument from the previous proof shows that $G_p$ is icc and hyperbolic. Notice for every primes $p\neq q$, any group homomorphism between $\mathbb Z_p \ast \mathbb Z_p \rightarrow \mathbb Z_q\ast \mathbb Z_q$ is trivial; in particular, $\mathbb Z_p \ast \mathbb Z_p$ is never virtually isomorphic to $\mathbb Z_q \ast \mathbb Z_q$. The conclusion then follows from Theorem \ref{thm1sc7}. \end{proof}

We end by highlighting a couple of closely related, intriguing  questions which we were not able to address in this investigation.

\begin{problem}
    Find an example of icc hyperbolic group $G$ such that $\mathcal F(\mathcal L(G))=1$. How about $\mathcal F^f_s(\mathcal L(G))=\mathbb N$ or $\mathcal F_s(\mathcal L(G))=\mathbb N$? Conjecturally, the first assertion should hold for all hyperbolic groups $G$.\end{problem}

\begin{problem}
    Does there exist an infinite family of pairwise commensurable icc hyperbolic groups whose factors are pairwise non-stably isomorphic?\end{problem}
We end this paper with the following intriguing question. 
\begin{problem} Does there exist a non-elementary icc hyperbolic group $G$ that is W$^*$-superrigid in the sense of Popa, \cite{Pop06}.    
\end{problem}

\end{document}